\newcommand{\newabstract}[1]{
  \par\bigskip
  \csname otherlanguage*\endcsname{#1}
  \csname captions#1\endcsname
  \item[\hskip\labelsep\scshape\abstractname.]
}
\newtheorem{Thm}{Theorem}[section]
\newtheorem{Cor}[Thm]{Corollary}
\newtheorem{Lem}[Thm]{Lemma}
\newtheorem{Prop}[Thm]{Proposition}
\newtheorem*{Thm*}{Theorem}
\theoremstyle{definition}
\newtheorem{Def}[Thm]{Definition}
\newtheorem{Ex}[Thm]{Example}
\theoremstyle{remark}
\newtheorem{Rmk}[Thm]{Remark}
\numberwithin{equation}{section}
\begin{document}

\title[Coherent cohomology and automorphic forms]{Coherent cohomology of Shimura varieties and automorphic forms}
\author{Jun Su}
\address{Department of Pure Mathematics and Mathematical Statistics, University of Cambridge, Wilberforce Road, Cambridge, CB3 0WB, UK}
\email{js2539@cantab.ac.uk}
\date{September,~2022}
\begin{abstract}
We show that the cohomology of canonical extensions of automorphic vector bundles over toroidal compactifications of Shimura varieties can be computed by relative Lie algebra cohomology of automorphic forms. Our result is inspired by and parallel to Borel and Franke's work on the cohomology of automorphic local systems on locally symmetric spaces, and also generalises a theorem of Mumford.
\newabstract{french}
Nous montrons que la cohomologie des extensions canoniques des fibr\'{e}s vectoriels automorphes sur les compactifications toro\"{i}dales des vari\'{e}t\'{e}s de Shimura peut \^{e}tre calcul\'{e}e par la cohomologie relative des alg\`{e}bres de Lie des formes automorphes. Notre r\'{e}sultat est inspir\'{e} par les travaux de Borel et Franke sur la cohomologie des syst\`{e}mes locaux automorphes sur les espaces localement sym\'{e}triques, et g\'{e}n\'{e}ralise \'{e}galement un th\'{e}or\`{e}me de Mumford.
\end{abstract}
\maketitle
\tableofcontents

\section*{Introduction}

The cohomology of Shimura varieties are of interest and importance in the Langlands program as there Galois representations and Hecke modules meet each other (for the first time). Let $(G,X)$ be a Shimura datum, $Sh(G,X)_\mathbb{K}$ be the associated Shimura variety at a neat level $\mathbb{K}$ and $\mathrm{Sh}_\mathbb{K}$ be its complex analytification, then there are the Betti cohomology $H^*(\mathrm{Sh}_\mathbb{K},\mathbb{C})$ and more generally cohomology of local systems on $\mathrm{Sh}_\mathbb{K}$ arising from algebraic representations of $G$. In this case the Galois representations spring from \'{e}tale cohomology, while on the other hand at first it is in general unknown whether the natural Hecke module structures on these cohomology come from automorphic representations of $G$. This question can actually be asked for every locally symmetric space: for simplicity we assume the locally symmetric space is of the form
\[S_\mathbb{K}:=G(\mathbb{Q})\backslash G(\mathbb{A})/K\mathbb{K},\]
where $G$ is a semisimple group over $\mathbb{Q}$, $K$ is an open subgroup of a maximal compact subgroup of $G(\mathbb{R})$ and $\mathbb{K}\subseteq G(\mathbb{A}_f)$ is a neat compact open subgroup, then every finite-dimensional representation $E$ of $G(\mathbb{R})$ defines a local system
\[\underline{E}:=G(\mathbb{Q})\backslash((G(\mathbb{A})/K\mathbb{K})\times E)\]
on $S_\mathbb{K}$ (which we will refer to as an \it automorphic local system\rm). There are standard Hecke-equivariant isomorphisms
\begin{equation}\label{CinftyB}
H^i(S_\mathbb{K},\underline{E})\cong H^i_{(\mathfrak{g},K)}(C^\infty(G(\mathbb{Q})\backslash G(\mathbb{A})/\mathbb{K})^{K\mbox{-}\mathrm{fin}}\otimes E),
\end{equation}
where $\mathfrak{g}$ is the complexified Lie algebra of $G$. Borel \cite{Borel84}, \cite{Borel83} conjectured that in (\ref{CinftyB}) $C^\infty(G(\mathbb{Q})\backslash G(\mathbb{A})/\mathbb{K})$ can be replaced by $\mathcal{A}(G)^\mathbb{K}$, i.e. we have
\begin{equation}\label{AGB}
H^i(S_\mathbb{K},\underline{E})\cong H^i_{(\mathfrak{g},K)}(\mathcal{A}(G)^\mathbb{K}\otimes E),
\end{equation}
where $\mathcal{A}(G)$ denotes the space of automorphic forms on $G$. Note that in the above settings there is nothing special about $\mathbb{Q}$ among all number fields: for $G$ over any number field $F$, the locally symmetric spaces and the automorphic sheaves thereon arising from $G$ or $\mathrm{Res}_{F/\mathbb{Q}}G$ are the same, while $\mathcal{A}(G)$ and $\mathcal{A}(\mathrm{Res}_{F/\mathbb{Q}}G)$ are canonically isomorphic as Hecke modules. The difficulty of the conjecture is rooted in the non-compactness of $S_\mathbb{K}$ and hence the cases where $G$ is anisotropic over $\mathbb{Q}$ are somewhat straightforward. Borel himself showed that in (\ref{CinftyB}) one can replace $C^\infty(G(\mathbb{Q})\backslash G(\mathbb{A})/\mathbb{K})^{K\mbox{-}\mathrm{fin}}$ with the space $C^\infty_{\mathrm{umg}}(G)^\mathbb{K}$ of $K$-finite smooth functions on $G(\mathbb{Q})\backslash G(\mathbb{A})/\mathbb{K}$ with uniformly moderate growth (see Definition \ref{mg}) \cite[3.2]{Borel83}, \cite[Theorem 1]{Borel90}, where the latter differs from $\mathcal{A}(G)^\mathbb{K}$ by a finiteness condition under differentiation by elements of the center $\mathfrak{Z}(\mathfrak{g})$ of the universal enveloping algebra of $\mathfrak{g}$. Harder and Casselman-Speh solved the rank-$1$ cases of the conjecture, and the full conjecture was eventually proved by Franke in his famous paper \cite{Franke98}.

For Shimura varieties it is also natural to consider coherent sheaf cohomology groups like $H^q(\mathrm{Sh}_\mathbb{K},\Omega^p)$. A suitable class of coefficients are the automophic vector bundles introduced by Harris and Milne in \cite{Harris85} and \cite{Milne88}. These vector bundles are parametrized by representations of a parabolic subgroup $P_h$ (arising from a $h\in X$) of $G_\mathbb{C}$ and they are analytifications of algebraic vector bundles defined over number fields specified by the representations. They have the name as every holomorphic or nearly holomorphic (vector-valued) automorphic form can be intepreted as a global section of one of them. The family of these vector bundles is closed under tensor operations and includes the sheaves $\Omega^p_{\mathrm{Sh}_\mathbb{K}}$ of holomorphic $(p,0)$-forms for all $p$. However, the cohomology of these vector bundles may not always be useful. The basic example is that in the modular curve case these cohomology groups are torsion-free modules over the ring of polynomials in the $j$-invariant and hence are either $0$ or infinite-dimensional. In general for an automorphic vector bundle $\widetilde{V}$ arising from a representation $V$ of $P_h$, analogous to (\ref{CinftyB}) we have
\[H^i(\mathrm{Sh}_\mathbb{K},\widetilde{V})\cong H^i_{(\mathfrak{p}_h,K_h)}(C^\infty(G(\mathbb{Q})\backslash G(\mathbb{A})/\mathbb{K})\otimes V)\]
(see (\ref{CinftyC})), where $\mathfrak{p}_h$ is the Lie algebra of $P_h$, $K_h\subseteq G(\mathbb{R})$ is the stabilizer of a point in $X$ and when $K_h$ is non-compact $H^i_{(\mathfrak{p}_h,K_h)}(-)$ abusively denotes cohomology of the complex \cite[2.127]{KV95} usually used to compute relative Lie algebra cohomology. In contrast to (\ref{AGB}) the right hand side above could be strictly larger than
\[H^i_{(\mathfrak{p}_h,K_h)}(\mathcal{A}(G)^\mathbb{K}\otimes V).\]
In fact by \cite[9.2,10.1]{Lan16} and the main theorem \ref{main} of this paper, when $i$ is $1$ less than the codimension of the boundary in the Baily-Borel compactification this happens for every $\widetilde{V}$ after twisted by a sufficiently high power of the canonical bundle. A remedy for this problem is built upon the theory of toroidal compactifications of locally symmetric varieties introduced by Ash, Mumford, Rapoport and Tai in \cite{AMRT75} (following the work of Igusa on Siegel modular varieties and Hirzebruch on Hilbert modular surfaces) and the canonical extensions of automorphic vector bundles over these compactifications introduced by Mumford and Harris in \cite{Mumford77} and \cite{Harris89}. The aim of this paper is to show that after applying these constructions we get favorable cohomology groups (see Theorem \ref{main}):

\begin{Thm*} Let $\widetilde{V}$ be an automorphic vector bundle over $\mathrm{Sh}_\mathbb{K}$ arising from a representation $V$ of $P_h$, $\widetilde{V}^{\mathrm{can}}$ be its canonical extension over an admissible toroidal compactification $\mathrm{Sh}_{\mathbb{K},\Sigma}$ of $\mathrm{Sh}_\mathbb{K}$, then there are Hecke-equivariant isomorphisms
\begin{equation}\label{AGC}
H^i(\mathrm{Sh}_{\mathbb{K},\Sigma},\widetilde{V}^{\mathrm{can}})\cong H^i_{(\mathfrak{p}_h,K_h)}(\mathcal{A}(G)^\mathbb{K}\otimes V).
\end{equation}
\end{Thm*}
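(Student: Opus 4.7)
The strategy is to mirror the Borel--Franke proof of \eqref{AGB} in the coherent setting, inserting the space $C^\infty_{umg}(G)^\mathbb{K}$ of uniformly-moderate-growth functions as an intermediate step between the geometric side and $\mathcal{A}(G)^\mathbb{K}$.

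The first step is to produce a Dolbeault-style resolution of $\widetilde{V}^{can}$ and to identify the resulting complex of global sections with a relative Lie algebra cochain complex. On the open part $Sh_\mathbb{K}$ this is essentially classical: trivialising $\widetilde{V}$ via the compact dual/flag variety of $P_h$ and applying the $\bar\partial$-resolution identifies $H^i(Sh_\mathbb{K},\widetilde{V})$ with $H^i_{(\mathfrak{p}_h,K_h)}(C^\infty(G(\mathbb{Q})\backslash G(\mathbb{A})/\mathbb{K})\otimes V)$. Near the boundary of $Sh_{\mathbb{K},\Sigma}$, I would work in the standard toroidal charts and analyse how the canonical extension interacts with a sheaf of smooth $V$-valued forms. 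The objective is to show that sections of $\widetilde{V}^{can}$, pulled back to $Sh_\mathbb{K}$, correspond precisely to smooth $V$-valued sections with uniformly moderate growth along the boundary (in the sense of Definition \ref{mg}). Combined with a check that the resolving soft sheaves remain $\bar\partial$-acyclic on $Sh_{\mathbb{K},\Sigma}$, this yields the intermediate isomorphism
\[
H^i(Sh_{\mathbb{K},\Sigma},\widetilde{V}^{can}) \;\cong\; H^i_{(\mathfrak{p}_h,K_h)}\bigl(C^\infty_{umg}(G)^\mathbb{K}\otimes V\bigr).
\]

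The second step is to descend from $C^\infty_{umg}$ to $\mathcal{A}(G)$. Here I would adapt Franke's spectral filtration: decompose $C^\infty_{umg}(G)^\mathbb{K}$ by associate classes of cuspidal Levi data and by generalised $\mathfrak{Z}(\mathfrak{g})$-eigenspaces, and show that on each graded piece the inclusion of automorphic forms is a $(\mathfrak{p}_h,K_h)$-quasi-isomorphism after tensoring with $V$. The crux is that $(\mathfrak{p}_h,K_h)$-cohomology against a finite-dimensional $V$ can only detect a discrete set of infinitesimal characters, so only the $\mathfrak{Z}(\mathfrak{g})$-finite part---which is exactly $\mathcal{A}(G)^\mathbb{K}$---contributes to the cohomology. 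This piece is essentially Franke's theorem transplanted from $(\mathfrak{g},K)$- to $(\mathfrak{p}_h,K_h)$-cohomology.

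The main obstacle is the first step, and specifically the geometric-to-analytic translation at the boundary: one must verify that the sheaves of smooth $V$-valued $(0,q)$-forms used to resolve $\widetilde{V}^{can}$ are $\bar\partial$-acyclic on $Sh_{\mathbb{K},\Sigma}$ despite the toroidal divisors, and that their pullbacks produce \emph{uniformly} moderate growth---simultaneous control of all derivatives---rather than merely moderate growth of the section itself. The toroidal charts interact nontrivially with Siegel sets in $G(\mathbb{A})$, and matching the growth on each factor requires an explicit comparison. By contrast, the Franke-style descent in the second step should go through with essentially bookkeeping changes once $P_h$ replaces the role played by the maximal compact $K$ in the topological setting.
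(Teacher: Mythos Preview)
Your overall architecture is close to the paper's, but you have collapsed what are in fact two distinct steps into one, and the collapse hides a genuine difficulty.

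The fine Dolbeault-type resolution of $\widetilde{V}^{can}$ on the toroidal compactification does \emph{not} produce $C^\infty_{umg}$ directly. The local model on a $\sigma$-chart identifies the relevant sheaf with smooth functions whose derivatives (by the vector fields $z_i\partial_{z_i},\bar z_i\partial_{\bar z_i},\partial_{z_k},\partial_{\bar z_k}$) are each bounded by some polynomial in $\log\frac{1}{|z_i|}$, with the polynomial allowed to depend on the derivative. Translated back to $G$, this is exactly $C^\infty_{dmg}$: every $Df$ has moderate growth, but with its own exponent $N_D$. The Dolbeault lemma with growth conditions (solving $\bar\partial u=f$ while keeping control of all derivatives) can be proved in this $dmg$/$dlg$ regime, but there is no reason the solution operator should preserve a \emph{single} uniform exponent; indeed the estimates one gets raise the log-exponent by one at each application. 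So the resolution yields
\[
H^i(Sh_{\mathbb{K},\Sigma},\widetilde{V}^{can})\ \cong\ H^i_{(\mathfrak{p}_h,K_h)}\bigl(C^\infty_{dmg}(G)^\mathbb{K}\otimes V\bigr),
\]
not the $umg$ version you wrote. Passing from $dmg$ to $umg$ is a separate regularization theorem: one constructs, for each Casimir eigenvalue parameter $c$, operators $T_c$ on $C^\infty_{dmg}$ (built from $C_0$-semigroups generated by the Casimir on weighted $L^2$ spaces) so that $R_c:=\mathrm{Id}+\tfrac12(C_\mathfrak{g}+c)T_c$ lands in $C^\infty_{umg}$, and then uses the Okamoto--Ozeki formula $\Box=-\tfrac12(C_\mathfrak{g}+c_V)$ on the relative complex (the coherent analogue of Kuga's formula) to exhibit a chain homotopy. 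This is the step you have omitted, and it is not a bookkeeping matter.

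A smaller remark on your second step: you propose to rerun Franke's filtration and Eisenstein analysis for $(\mathfrak{p}_h,K_h)$-cohomology. The paper avoids this entirely. One observes that $\mathrm{Hom}_{(\mathfrak{p}_o,K_o)}(V^*,-)$ factors through $\mathfrak{Fin}_\mathcal{J}$ for a suitable finite-codimension ideal $\mathcal{J}\subset\mathfrak{Z}(\mathfrak{m}_G)$ (via the Harish-Chandra map for the pair $\mathfrak{p}_o\subset\mathfrak{m}_G$), and then the Grothendieck spectral sequence for $\mathrm{Hom}_{(\mathfrak{p}_o,K_o)}(V^*,-)\circ\mathcal{F}\circ\mathfrak{Fin}_\mathcal{J}$ together with Franke's $\mathfrak{Fin}_\mathcal{J}$-acyclicity of $C^\infty_{umg}(G)$ (used as a black box) gives the comparison with $\mathcal{A}(G)$ immediately. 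This is much lighter than reproving Franke's structure theorem in the new setting.
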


We note that the insight to consider the left hand side above originated in Harris' paper \cite{Harris90}. Toroidal compactifications of a locally symmetric variety are usually not unique and form an inverse system indexed by some combinatorial data $\Sigma$, but the cohomology of canonical extensions of an automorphic vector bundle to different toroidal compactifications are naturally isomorphic. There are plenty of toroidal compactifications $\mathrm{Sh}_{\mathbb{K},\Sigma}$ of $\mathrm{Sh}_\mathbb{K}$ that are smooth and such that the boundary $Z:=\mathrm{Sh}_{\mathbb{K},\Sigma}\smallsetminus\mathrm{Sh}_\mathbb{K}$ is a normal crossings divisor. In this case $(\Omega^p_{\mathrm{Sh}_\mathbb{K}})^{\mathrm{can}}$ is the sheaf $\Omega^p_{\mathrm{Sh}_{\mathbb{K},\Sigma}}(\log{Z})$ of holomorphic $(p,0)$-forms on $\mathrm{Sh}_\mathbb{K}$ with log poles along $Z$, whose cohomology groups appear a lot in algebraic geometry. For instance, global sections of the log canonical bundle $\omega_{\mathrm{Sh}_{\mathbb{K},\Sigma}}(Z)$ and its powers make the log canonical ring of Iitaka of $\mathrm{Sh}_{\mathbb{K}}$. This led Mumford to compute the global sections of canonical extensions of general automorphic vector bundles and obtain the degree $0$ case of our theorem as \cite[Proposition 3.3]{Mumford77}. As another example, the groups $H^q(\mathrm{Sh}_{\mathbb{K},\Sigma},\Omega^p(\log{Z}))$ form the $E_1$ page of a spectral sequence computing $H^*(\mathrm{Sh}_\mathbb{K},\mathbb{C})$ and are thus related to the mixed Hodge structures attached to $\mathrm{Sh}_\mathbb{K}$. In fact, Faltings \cite{Faltings83}, \cite{Faltings84} had independently suggested studying the higher cohomology of canonical extensions and has since demonstrated their importance for the study of the Hodge structures attached to Siegel modular forms \cite{Faltings87}.

From the representation theoretic point of view, an advantage of coherent cohomology over the cohomology of automorphic local systems is that the right hand side of (\ref{AGC}) detects some more automorphic representations than the right hand side of (\ref{AGB}). The examples include weight $1$ modular forms and every cuspidal automorphic representation whose archimedean factor is a non-degenerate limit of discrete series. Our result in particular implies the algebraicity of Hecke eigenvalues arising from these automorphic representations. The attachment of Galois representations to those automorphic representations or Hecke eigenclasses in coherent cohomology is an interesting and important project. In degree $0$, Deligne-Serre \cite{DS74} did it for weight $1$ modular forms, and Taylor \cite{Taylor91} took care of Siegel modular forms of low weights. The interest in associating Galois representations to eigenclasses in higher coherent cohomology was recently refuelled by the work \cite{CG12} of Calegari-Geraghty, where the full power of their result partly relies on the existence of those Galois representations. Since then Emerton-Reduzzi-Xiao \cite{ERX14}, Boxer \cite{Boxer15}, Goldring-Koskivirta \cite{GK15} and Pilloni-Stroh \cite{PS16} have carried out the constructions for Hilbert modular varieties, PEL-type and Hodge-type Shimura varieties respectively. Along this line our result helps to verify the automorphy of these Galois representations. We refer the readers to \cite[1.3]{Boxer15} for some more reasons for studying higher coherent cohomology of Shimura varieties.

The proof of our theorem is built on the strategy and machinery developed by Borel and Franke in their work towards (\ref{AGB}). Their proof can be summarized as a trilogy:
\[\begin{aligned}
H^i(S_\mathbb{K},\underline{E})&\cong H^i_{(\mathfrak{g},K)}(C^\infty_{\mathrm{dmg}}(G)^\mathbb{K}\otimes E)\\
&\cong H^i_{(\mathfrak{g},K)}(C^\infty_{\mathrm{umg}}(G)^\mathbb{K}\otimes E)\\
&\cong H^i_{(\mathfrak{g},K)}(\mathcal{A}(G)^\mathbb{K}\otimes E),
\end{aligned}\]
where $C^\infty_{\mathrm{dmg}}(G)^\mathbb{K}$ is cut out in $C^\infty(G(\mathbb{Q})\backslash G(\mathbb{A})/\mathbb{K})^{K\mbox{-}\mathrm{fin}}$ by (non-uniformly) moderate growth conditions on all left-invariant derivatives (see Definition \ref{mg}). Borel proved the first two isomorphisms and Franke proved the third as well as gave a new proof of the second. The four proofs are based on very different ideas and techniques. In the first step Borel extended $\underline{E}$ to a local system on the Borel-Serre compactification of $S_\mathbb{K}$, on which he is able to construct a fine resolution of the local system whose global sections can be identified with the complex computing the right hand side. The key point here is to compare the global growth conditions defining $C^\infty_{\mathrm{dmg}}(G)^\mathbb{K}$ and the local growth conditions characterizing the resolution (which guarantee the fineness and exactness), and the comparison in turn reduces to estimations of the coefficients of invariant differential operators on $S_\mathbb{K}$ together with their derivatives in certain local coordinates. For the second step only Franke's method adapts to our case. His proof is to use certain endomorphisms of $C^\infty_{\mathrm{dmg}}(G)^\mathbb{K}$ to construct a homotopy inverse to the inclusion between the two complexes computing cohomology. To show that this approach works one needs to apply Kuga's formula, and the desired endomorphisms are contructed using the theory of semigroups of operators. The third step is acknowledged to be the hardest, for which Franke considered a left-exact functor $\mathfrak{Fin}_\mathcal{J}$ from the category of $(\mathfrak{g},K)$-modules to itself which preserves injectives, cuts out the same subspace in $C^\infty_{\mathrm{umg}}(G)^\mathbb{K}$ and $\mathcal{A}(G)^\mathbb{K}$ and the composition with which leaves the functor $H^0_{(\mathfrak{g},K)}(-\otimes E)$ invariant. The Grothendieck spectral sequence then suggests that it suffices to show that $C^\infty_{\mathrm{umg}}(G)^\mathbb{K}$ and $\mathcal{A}(G)^\mathbb{K}$ are both $\mathfrak{Fin}_\mathcal{J}$-acyclic, where the latter is trivial. The $\mathfrak{Fin}_\mathcal{J}$-acyclicity of $C^\infty_{\mathrm{umg}}(G)^\mathbb{K}$ is the main result of \cite{Franke98}, whose proof involves a delicate study of the structure of $C^\infty_{\mathrm{umg}}(G)^\mathbb{K}$ based on Langlands' theory of Eisenstein series \cite{Langlands76}.

Our proof of (\ref{AGC}) proceeds in three parallel steps
\[\begin{aligned}
H^i(\mathrm{Sh}_{\mathbb{K},\Sigma},\widetilde{V}^{\mathrm{can}})&\cong H^i_{(\mathfrak{p}_h,K_h)}(C^\infty_{\mathrm{dmg}}(G)^\mathbb{K}\otimes V)\\
&\cong H^i_{(\mathfrak{p}_h,K_h)}(C^\infty_{\mathrm{umg}}(G)^\mathbb{K}\otimes V)\\
&\cong H^i_{(\mathfrak{p}_h,K_h)}(\mathcal{A}(G)^\mathbb{K}\otimes V).
\end{aligned}\]
In the first step the role of the Borel-Serre compactification is now played by a toroidal compactification. As such a compactification is usually not canonical, the local coordinates appearing in the corresponding growth conditions comparison, Proposition \ref{dmglocal}, are somewhat more subtle. Note that there are growth conditions on infinitely many derivatives to compare. We first deal with the growth conditions on functions themselves and prove Proposition \ref{mglocal}, where the key estimates come from Krivine's Positivstellensatz in real algebraic geometry. Then we use a simple ring-theoretic lemma \ref{DerCor} to reduce the remaining comparison to the estimate (\ref{GLdlg}), which essentially follows from the relation (\ref{DDF}) between two decompositions (\ref{D}) and (\ref{DF}) of a Hermitian symmetric domain. The comparison ensures the fineness of the resolution we construct, and then we prove a Dolbeault lemma \ref{dlgr} for the exactness. For the second step we first reduce the problem to the existence of certain endomorphisms of $C^\infty_{\mathrm{umg}}(G)^\mathbb{K}$ in Proposition \ref{RcTc} using the Okamoto-Ozeki formula \ref{OO} in place of Kuga's formula. The endomorphisms we need should be the same as what Franke needed, but as Borel's proof of this step had already existed Franke didn't give a fully-detailed verification of the required properties of the endomorphisms he constructed, and we carry this out in Sections \ref{reg2}-\ref{reg3}. In particular one needs some sufficient conditions making linear operators intertwine semigroups of operators, which we don't find in the literature, so we develop them ourselves in Section \ref{reg2}. Besides, we hope Proposition \ref{Sobolev2} exhibits the contrast between $C^\infty_{\mathrm{dmg}}(G)^\mathbb{K}$ and $C^\infty_{\mathrm{umg}}(G)^\mathbb{K}$ clearly. Finally in the third step we feel fortunate to find out that though the pair $(\mathfrak{g},K)$ changes to $(\mathfrak{p}_h,K_h)$, we are still able to apply Franke's $\mathfrak{Fin}_\mathcal{J}$-acyclicity result without going into its proof. What we do is to find a suitable ideal $\mathcal{J}\subseteq\mathfrak{Z}(\mathfrak{g})$ of finite codimension, bring the forgetful functor into play and show that it behaves well.

Here is an outline of the paper. In section \ref{section1} we recall the construction of automorphic vector bundles, toroidal compactifications and canonical extensions and collect facts about them. Sections \ref{section2}-\ref{section3} make the first step of the proof of the main theorem, where in \ref{sheaves1} we define for every automorphic vector bundle a complex of sheaves on the toroidal compactification and perform some first analysis of it; in \ref{sheaves2} we prove the comparison Proposition \ref{dmglocal}; in \ref{sheaves3} we deal with growth conditions on differential forms; in \ref{fineresol1} we connect the left end of the complex with the canonical extension; in \ref{fineresol2} we prove the exactness of the complex; and in \ref{fineresol3} we clarify the Hecke action. Sections \ref{section4} and \ref{section5} are devoted to the remaining two steps of the proof respectively.

\section*{Notations and conventions}

In this paper, rings are commutative rings with units, complexes are cochain complexes. $\mathbb{A}$\index{$\mathbb{A}$} and $\mathbb{A}_f$\index{$\mathbb{A}_f$} are the ad\`{e}les and finite ad\`{e}les of $\mathbb{Q}$. $\mathcal{A}^i_M$ and $\mathcal{C}^\infty_M:=\mathcal{A}^0_M$ are the sheaves of smooth $i$-forms and functions on a smooth manifold $M$, $\mathcal{A}^{p,q}_M$ is the sheaf of smooth $(p,q)$-forms when $M$ is a complex manifold.

Lie algebras of algebraic groups and Lie groups are denoted by small gothic letters, e.g. if a group $G$ is defined over a field $F$ and $E/F$ is an extension then $\mathfrak{g}_E:=\mathrm{Lie}(G)\otimes_F E\index{$\mathfrak{g}_\mathbb{R}$}$, and $E=\mathbb{C}$\index{$\mathfrak{g}$} will be omitted. The universal enveloping algebra of a Lie algebra $\mathfrak{g}$ and its center are denoted as $\mathfrak{U}(\mathfrak{g})$\index{$\mathfrak{U}(\mathfrak{g})$} and $\mathfrak{Z}(\mathfrak{g})$\index{$\mathfrak{Z}(-)$}.

For a reductive group $G$ over $\mathbb{Q}$, let $A_G$\index{$A_G$} be the maximal split torus in the center of $G$ and $X^*(G)$ be the group of $\mathbb{Q}$-characters of $G$, we denote
\[[G]\index{$[G]$}:=G(\mathbb{Q})\backslash G(\mathbb{A})/A_G(\mathbb{R})^\circ,\]
\[G(\mathbb{A})^1\index{$G(\mathbb{A})^1$}:=\big\{(g_v)\in G(\mathbb{A}):\forall\chi\in X^*(G),\textstyle\prod|\chi(g_v)|_v=1\big\}\]
and $G(\mathbb{R})^1:=G(\mathbb{R})\cap G(\mathbb{A})^1$\index{$G(\mathbb{R})^1$}. \it Smooth functions\rm\label{sm} on adelic groups are defined as follows: a function on an open subset $U\subseteq G(\mathbb{A})$ is smooth if for some compact open subgroup $\mathbb{K}\subseteq G(\mathbb{A}_f)$\index{$\mathbb{K}$} it is pulled back from a smooth function on an open set in the manifold $G(\mathbb{A})/\mathbb{K}$ (in particular, $U$ should be $\mathbb{K}$-invariant). In this paper we often view invariant functions and functions on quotients as the same, like in the above case we may identify $C^\infty(U/\mathbb{K})$ and $C^\infty(U)^\mathbb{K}$.

For a $(\mathfrak{g},K)$-module $M$, we denote
\[C^i(M)\index{$C^i(-)$}=C^i_{(\mathfrak{g},K)}(M):=\mathrm{Hom}_K(\textstyle\bigwedge^i(\mathfrak{g}/\mathfrak{k}),M),\]
where $\bigwedge^i$ means the $i$-th exterior power. We let $d:C^i(M)\rightarrow C^{i+1}(M)$ be the maps given in \cite[2.127b]{KV95}, then the complex $(C^*(M),d)$ computes the $(\mathfrak{g},K)$-cohomology of $M$ and in this paper we call it the \it relative Chevalley-Eilenberg (abbreviated C-E) complex \rm for $H^*_{(\mathfrak{g},K)}(M)$.

In a finite-dimensional real vector space, by a \it cone \rm we mean a convex subset $C$ such that $\mathbb{R}_{>0}C=C$ and that $C$ does not contain an entire line.

The unitary dual of a compact group $K$ is denoted by $\widehat{K}$\index{$\widehat{K}$}\index{$\widehat{K}_o$}. For every $\pi\in\widehat{K}$ we denote the $\pi$-isotypic component of a $K$-representation $M$ as $M_\pi$\index{$(-)_\pi$}, and for every $S\subseteq\widehat{K}$ we write $M_S:=\bigoplus_{\pi\in S}M_\pi\index{$(-)_S$}$ and call it the \it $S$-isotypic part \rm of $M$.

As for discs, we denote
\[\Delta(z_0,\rho)\index{$\Delta(\cdot,\cdot)$}:=\{z\in\mathbb{C}:|z-z_0|<\rho\},\Delta^*(\rho)\index{$\Delta^*(\cdot)$}:=\Delta(0,\rho)\backslash\{0\},\]
\[\Delta^{n,r}(\rho)\index{$\Delta^{n,r}(\cdot)$}:=\Delta^*(\rho)^r\times\Delta(0,\rho)^{n-r},\]
and $\Delta:=\Delta(0,\frac{1}{3})$\index{$\Delta$}, $\Delta^{n,r}:=\Delta^{n,r}(\frac{1}{3})$\index{$\Delta^{n,r}$} for $\rho>0$ and integers $0\leq r\leq n$.

\section{Coherent cohomology of Shimura varieties}\label{section1}

In this section we review the constructions involved in the definition of coherent cohomology of Shimura varieties: automorphic vector bundles, toroidal compactifications and the canonical extensions of automorphic vector bundles over them, with an emphasise on facts that will be needed.

\subsection{Automorphic vector bundles}\label{avb}

Let $(G,X)$ be a Shimura datum. Let $G=A_GM_G$ be the Langlands decomposition, i.e. $A_G$ is the maximal $\mathbb{Q}$-split torus in the center $Z_G$\index{$Z_G$} of $G$, $M_G$\index{$M_G$} is the intersection of the kernels of all $\mathbb{Q}$-characters of $G$. To avoid technical complications, we assume that $G$ is connected and
\[A_G\textrm{ is also the maximal }\mathbb{R}\textrm{-split torus in }Z_G.\label{assum}\tag{$\dagger$}\]
For each compact open subgroup $\mathbb{K}\subseteq G(\mathbb{A}_f)$, the Shimura variety $Sh(G,X)_\mathbb{K}$\index{$Sh(G,X)_\mathbb{K}$} associated to $(G,X)$ at level $\mathbb{K}$ is a quasi-projective variety over a number field $E(G,X)$\index{$E(G,X)$} whose complex analytification is
\[\mathrm{Sh}_\mathbb{K}\index{$\mathrm{Sh}_\mathbb{K}$}:=G(\mathbb{Q})\backslash(X\times G(\mathbb{A}_f)/\mathbb{K}).\]
Constructions related to Shimura varieties are usually indexed by the level $\mathbb{K}$, which we often omit when it is understood. In this paper we only consider levels $\mathbb{K}$ that are \it neat \rm as in \cite[1.1]{Harris89}, in which case $Sh(G,X)_\mathbb{K}$ is smooth and $\mathrm{Sh}_\mathbb{K}$ is a complex manifold.

Fix a point $h\in X$. Recall that $h$ stands for a morphism from $\mathbb{S}=\mathrm{Res}_{\mathbb{C}/\mathbb{R}}\mathbb{G}_m$ to $G_\mathbb{R}$ which induces a Hodge structure on $\mathfrak{g}_\mathbb{R}$:
\[\mathfrak{g}=\mathfrak{g}_\mathbb{C}=\mathfrak{g}^{-1,1}\oplus\mathfrak{g}^{0,0}\oplus\mathfrak{g}^{1,-1},\]
and $X$ is its $G(\mathbb{R})$-conjugacy class. Let $K_h\subseteq G(\mathbb{R})$ be the stabilizer of $h$, then $X\cong G(\mathbb{R})/K_h$ and $\mathfrak{g}^{0,0}=\mathfrak{k}_h$. Let $D$ be the connected component of $X$ containing $h$, $G(\mathbb{R})^+\subseteq G(\mathbb{R})$ and $G(\mathbb{Q})^+\subseteq G(\mathbb{Q})$ be the subgroups that stabilize $D$, then $K_h\subseteq G(\mathbb{R})^+$ and $D\cong G(\mathbb{R})^+/K_h$. We know $G(\mathbb{Q})$ acts on $\pi_0(X)$ transitively \cite[2.1.2]{Deligne77}, hence $\mathrm{Sh}_\mathbb{K}\cong G(\mathbb{Q})^+\backslash(D\times G(\mathbb{A}_f)/\mathbb{K})$. Let $\{\gamma\}$ be a set of representatives of $G(\mathbb{Q})^+\backslash G(\mathbb{A}_f)/\mathbb{K}$, then
\begin{equation}\label{gamma}
\mathrm{Sh}_\mathbb{K}\cong\coprod_{\{\gamma\}}\Gamma(\gamma)\backslash D
\end{equation}
where $\Gamma(\gamma):=G(\mathbb{Q})^+\cap\gamma\mathbb{K}\gamma^{-1}$. $\mathbb{K}$ being neat implies that each $\Gamma(\gamma)$ is \it neat \rm as in \cite[17.1]{Borel69}.

Denote $\mathfrak{p}_+:=\mathfrak{g}^{-1,1},\mathfrak{p}_-:=\mathfrak{g}^{1,-1}$ and $\mathfrak{p}_{h}:=\mathfrak{k}_h\oplus\mathfrak{p}_-$, then $\mathfrak{p}_{h}$ is a parabolic subalgebra of $\mathfrak{g}$ with nilpotent radical $\mathfrak{p}_-$. Let $P_h$ be the parabolic subgroup of $G_\mathbb{C}$ with Lie algebra $\mathfrak{p}_{h}$, then $K_h\subseteq P_h(\mathbb{C})$ and $\widecheck{D}:=G(\mathbb{C})/P_h(\mathbb{C})$ is isomorphic to the compact dual of $D$. We know that the unique $G(\mathbb{R})$-equivariant map $\beta:X\rightarrow\widecheck{D}$ sending $h$ to the coset $P_h(\mathbb{C})$ is an open immersion of complex manifolds.

Let $(\rho,V)$ be any finite-dimensional holomorphic representation of $P_h(\mathbb{C})$, then it defines a $G(\mathbb{C})$-homogeneous holomorphic vector bundle
\[\mathcal{E}_V:=G(\mathbb{C})\times^{P_h(\mathbb{C})}V\rightarrow\widecheck{D},\]
and $\beta^*(\mathcal{E}_V)\times(G(\mathbb{A}_f)/\mathbb{K})$ is a $G(\mathbb{A})$-homogeneous holomorphic vector bundle over $X\times(G(\mathbb{A}_f)/\mathbb{K})$. Under the assumption (\ref{assum}), when $\mathbb{K}$ is neat $G(\mathbb{Q})$ acts freely on $X\times(G(\mathbb{A}_f)/\mathbb{K})$, therefore
\begin{equation}\label{autovb}
\widetilde{V}=\widetilde{V}_\mathbb{K}:=G(\mathbb{Q})\backslash(\beta^*(\mathcal{E}_V)\times G(\mathbb{A}_f)/\mathbb{K})
\end{equation}
is a holomorphic vector bundle over $\mathrm{Sh}_\mathbb{K}$, which is called an \it automorphic vector bundle\rm. It is shown in \cite[4.8]{Harris85} that $\widetilde{V}$ is the analytification of an algebraic vector bundle defined over a number field $k_V$ determined by $V$.

\begin{Ex}
$V\cong\bigwedge^p(\mathfrak{g}/\mathfrak{p}_h)^*\Rightarrow\mathcal{E}_V\cong\Omega^p_{\widecheck{D}}$ and $\widetilde{V}\cong\Omega^p_{\mathrm{Sh}_\mathbb{K}}$.
\end{Ex}

If $g\in G(\mathbb{A}_f)$ and $\mathbb{K}'\subseteq g\mathbb{K}g^{-1}$ is an open subgroup, denote by $t_g$ the composition of the map $\mathrm{Sh}_{g\mathbb{K}g^{-1}}\rightarrow\mathrm{Sh}_\mathbb{K}$ induced by right translation by $g$ and the covering map $\mathrm{Sh}_{\mathbb{K}'}\rightarrow\mathrm{Sh}_{g\mathbb{K}g^{-1}}$, then by construction there is a natural isomorphism $t_g^*\widetilde{V}_\mathbb{K}\cong\widetilde{V}_{\mathbb{K}'}$.

\subsubsection{Automorphic vector bundles as sheaves}

For computational purpose it is favorable to write $\widetilde{V}$ as a sheaf. Let $\pi_\mathbb{C}:G(\mathbb{C})\rightarrow\widecheck{D}$ be the quotient map, then by definition the smooth and holomorphic sections of $\mathcal{E}_V$ over an open subset $U\subseteq\widecheck{D}$ are $(C^\infty(\pi_\mathbb{C}^{-1}(U))\otimes V)^{P_h(\mathbb{C})}$ and $(\mathcal{O}(\pi_\mathbb{C}^{-1}(U))\otimes V)^{P_h(\mathbb{C})}$ respectively, where $\mathcal{O}(\pi_\mathbb{C}^{-1}(U))$ stands for holomorphic functions on $\pi_\mathbb{C}^{-1}(U)$. When $U$ is small enough so that the principal $P_h(\mathbb{C})$-bundle $\pi_\mathbb{C}$ is trivial over $U$, a trivialisation
\[\pi_\mathbb{C}^{-1}(U)\xrightarrow[\sim]{(t_\mathbb{C},\pi_\mathbb{C})}P_h(\mathbb{C})\times U\]
gives rise to a $P_h(\mathbb{C})$-equivariant holomorphic map $t_\mathbb{C}:\pi_\mathbb{C}^{-1}(U)\rightarrow P_h(\mathbb{C})$. Then for each open subset $W\subseteq U$ there is an isomorphism
\begin{equation}\label{tc}
\begin{gathered}
(C^\infty(\pi_\mathbb{C}^{-1}(W))\otimes V)^{P_h(\mathbb{C})}\xrightarrow{\sim}C^\infty(\pi_\mathbb{C}^{-1}(W)/P_h(\mathbb{C}))\otimes V,\\
f\mapsto t_\mathbb{C}\cdot f:=(g\mapsto\rho(t_\mathbb{C}(g))f(g)),
\end{gathered}
\end{equation}
wherein $(\mathcal{O}(\pi_\mathbb{C}^{-1}(W))\otimes V)^{P_h(\mathbb{C})}\xrightarrow{\sim}\mathcal{O}(\pi_\mathbb{C}^{-1}(W)/P_h(\mathbb{C}))\otimes V$ since $t_\mathbb{C}$ is holomorphic. Let $\mathfrak{g}$ act on $C^\infty(\pi_\mathbb{C}^{-1}(W))$ by right differentiation, then $\mathcal{O}(\pi_\mathbb{C}^{-1}(W)/P_h(\mathbb{C}))$ consists of smooth functions $h$ on $\pi_\mathbb{C}^{-1}(W)/P_h(\mathbb{C})$ such that
\[Xh+iX'h=0,\forall X\in\mathfrak{g}/\mathfrak{p}_h,X'\in (iX+\mathfrak{p}_h)/\mathfrak{p}_h.\]
As $\mathfrak{g}_\mathbb{R}\twoheadrightarrow\mathfrak{g}_\mathbb{R}/\mathfrak{k}_h\xrightarrow{\sim}\mathfrak{g}/\mathfrak{p}_h$, the above condition is equivalent to
\begin{equation}\label{hol1}
Xh+iX'h=0,\forall X\in\mathfrak{g}_\mathbb{R},X'\in (iX+\mathfrak{p}_h)\cap\mathfrak{g}_\mathbb{R}.
\end{equation}
Now suppose $U\subseteq D$ and let $\pi_\mathbb{R}$ be the quotient map $G(\mathbb{R})^+\rightarrow D$, then since $\pi_\mathbb{C}$ and $\pi_\mathbb{R}$ are principal $P_h(\mathbb{C})$ and $K_h$-bundles respectively, the restriction map induces an isomorphism
\[(C^\infty(\pi_\mathbb{C}^{-1}(W))\otimes V)^{P_h(\mathbb{C})}\cong(C^\infty(\pi_\mathbb{R}^{-1}(W))\otimes V)^{K_h}.\]
By (\ref{tc}) and (\ref{hol1}), elements of $(\mathcal{O}(\pi_\mathbb{C}^{-1}(W))\otimes V)^{P_h(\mathbb{C})}$ in the left hand side correspond to functions $f$ in the right hand side such that
\begin{equation}\label{hol2}
X(t_\mathbb{C}\cdot f)+iX'(t_\mathbb{C}\cdot f)=0,\forall X\in\mathfrak{g}_\mathbb{R},X'\in (iX+\mathfrak{p}_h)\cap\mathfrak{g}_\mathbb{R},
\end{equation}
where $t_\mathbb{C}$ is temporarily restricted to $\pi_\mathbb{R}^{-1}(W)$. As $X+iX'\in\mathfrak{p}_h$, we have
\begin{equation}\label{hol3}
\begin{gathered}
X(t_\mathbb{C}\cdot f)+iX'(t_\mathbb{C}\cdot f)=t_\mathbb{C}\cdot (Xf+iX'f)+(Xt_\mathbb{C}+iX't_\mathbb{C})\cdot f,\\
Xt_\mathbb{C}+iX't_\mathbb{C}=(X+iX')t_\mathbb{C},\\
((X+iX')t_\mathbb{C})\cdot f=t_\mathbb{C}\cdot ((d\rho(X+iX'))f),
\end{gathered}
\end{equation}
where in the second equality $(X+iX')$ is view as a real tangent vector field, and the equality holds as $t_\mathbb{C}$ is holomorphic; in the third equality $d\rho:\mathfrak{p}_h\rightarrow\mathrm{End}(V)$ is the derivative of $\rho:P_h(\mathbb{C})\rightarrow\mathrm{GL}(V)$, $d\rho(X+iX')$ acts on the value of $f$ and the equality holds as $t_\mathbb{C}$ is $P_h(\mathbb{C})$-equivariant. If we define the $\mathfrak{g}$-action on $C^\infty(\pi_\mathbb{R}^{-1}(W))$ as the $\mathbb{C}$-linear extension of the $\mathfrak{g}_\mathbb{R}$-action by right differentiation, then (\ref{hol2}) means that $f$ is annihilated by $\mathfrak{p}_h$ under the diagonal action, i.e. we have
\[\mathcal{E}_V(W)\cong(C^\infty(\pi_\mathbb{R}^{-1}(W))\otimes V)^{(\mathfrak{p}_h,K_h)}.\]
This isomorphism is independent of the trivialisation, so by gluing we get $\mathcal{E}_V|_D$ is the sheaf $U\mapsto(C^\infty(\pi_\mathbb{R}^{-1}(U))\otimes V)^{(\mathfrak{p}_h,K_h)}$. Doing the same for other connected components of $X$, we see that $\beta^*(\mathcal{E}_V)$ is the sheaf $U\mapsto(C^\infty(\pi_\mathbb{R}^{-1}(U))\otimes V)^{(\mathfrak{p}_h,K_h)}$, where $\pi_\mathbb{R}$ has been extended to the quotient map $G(\mathbb{R})\rightarrow X$. Combining with (\ref{autovb}) we get $\widetilde{V}$ is the sheaf
\[U\mapsto(C^\infty(\pi^{-1}(U))\otimes V)^{(\mathfrak{p}_h,K_h)},\]
where $\pi=\pi_\mathbb{K}:G(\mathbb{Q})\backslash G(\mathbb{A})/\mathbb{K}\rightarrow\mathrm{Sh}_\mathbb{K}$ is the quotient map.

With this expression, the morphism $\widetilde{V}_\mathbb{K}\rightarrow t_{g*}\widetilde{V}_{\mathbb{K}'}$ adjoint to $t_g^*\widetilde{V}_\mathbb{K}\cong\widetilde{V}_{\mathbb{K}'}$ is induced from maps
\[\begin{aligned}
C^\infty(\pi_\mathbb{K}^{-1}(U))\otimes V&\hookrightarrow C^\infty(\pi_{\mathbb{K}'}^{-1}(t_g^{-1}(U)))\otimes V,\\
f&\mapsto f(\cdot\,g),
\end{aligned}\]
where $f$ is viewed as a function on an open subset of $G(\mathbb{A})$. Now let
\[\pi_o=\pi_{o,\mathbb{K}}\textrm{ be the quotient map }[G]/\mathbb{K}=G(\mathbb{Q})\backslash G(\mathbb{A})/A_G(\mathbb{R})^\circ\mathbb{K}\rightarrow\mathrm{Sh}_\mathbb{K},\]
\[\mathfrak{p}_o:=\mathfrak{p}_h\cap\mathfrak{m}_G,\]
\[K_o\textrm{ be the maximal compact subgroup of }K_h,\]
then $\mathfrak{p}_h=\mathfrak{p}_o\oplus\mathfrak{a}_G$ and thanks to (\ref{assum}), $K_h=K_o A_G(\mathbb{R})^\circ$. For every $g\in G(\mathbb{A})$, let $a_g$ be the unique element in $A_G(\mathbb{R})^\circ$ such that $g\in a_g G(\mathbb{A})^1$. Let $G(\mathbb{A}_f)$ act on $V$ as $gv:=a_g^{-1}v$ and denote the $P_h(\mathbb{C})\times G(\mathbb{A}_f)$-module we get as $V_o$. Then there are isomorphisms
\[\begin{gathered}
(C^\infty(\pi^{-1}(U))\otimes V)^{A_G(\mathbb{R})^\circ}\xrightarrow{\sim}C^\infty(\pi_o^{-1}(U))\otimes V_o,\\
f\mapsto(g\mapsto f(a_g^{-1}g)),
\end{gathered}\]
which induce
\[\widetilde{V}_\mathbb{K}(U)\cong(C^\infty(\pi^{-1}(U))\otimes V)^{(\mathfrak{p}_h,K_h)}\cong(C^\infty(\pi_o^{-1}(U))\otimes V_o)^{(\mathfrak{p}_o,K_o)}.\]
These isomorphisms are compatible with restrictions, and with the new expression $\widetilde{V}_\mathbb{K}\rightarrow t_{g*}\widetilde{V}_{\mathbb{K}'}$ is induced from maps
\begin{equation}\label{Vo}
\begin{gathered}
C^\infty(\pi_{o,\mathbb{K}}^{-1}(U))\otimes V_o\hookrightarrow C^\infty(\pi_{o,\mathbb{K}'}^{-1}(t_g^{-1}(U)))\otimes V_o,\\
f\mapsto gf(\cdot\,g).
\end{gathered}
\end{equation}

\subsubsection{Cohomology of $\widetilde{V}$}

To demonstrate the usage of the expressions above, we compute the cohomology of $\widetilde{V}$, which is a toy model of coherent cohomology of Shimura varieties. We first note that $\widetilde{V}_\mathbb{K}\rightarrow t_{g*}\widetilde{V}_{\mathbb{K}'}$ induces maps
\[t_g^*:H^i(\mathrm{Sh}_\mathbb{K},\widetilde{V}_\mathbb{K})\rightarrow H^i(\mathrm{Sh}_{\mathbb{K}'},\widetilde{V}_{\mathbb{K}'}),\]
and hence $\varinjlim_{t_e^*}H^i(\mathrm{Sh}_\mathbb{K},\widetilde{V}_\mathbb{K})$ has a $G(\mathbb{A}_f)$-module structure.

Fix a level $\mathbb{K}$. Tensoring $\widetilde{V}$ with the Dolbeault complex $(\mathcal{A}^{0,*}_{\mathrm{Sh}_\mathbb{K}},\overline\partial)$ of $\mathrm{Sh}_\mathbb{K}$ gives a fine resolution of it. The previous analysis exhibits that $\widetilde{V}\otimes_{\mathcal{O}_{\mathrm{Sh}_\mathbb{K}}}\mathcal{C}^\infty_{\mathrm{Sh}_\mathbb{K}}$ is the sheaf
\[U\mapsto(C^\infty(\pi^{-1}(U))\otimes V)^{K_h}\cong(C^\infty(\pi_o^{-1}(U))\otimes V)^{K_o}.\]
For any $i\geq 0$, $\varpi\in(\widetilde{V}\otimes_{\mathcal{O}_{\mathrm{Sh}_\mathbb{K}}}\mathcal{A}^i_{\mathrm{Sh}_\mathbb{K}})(U)$ and tangent vectors $X_1,...,X_i\in(T_x U)_\mathbb{C}$ at a point $x$, $\varpi(\bigwedge_{j=1}^iX_j)$ can be viewed as an element of $(C^\infty(\pi_o^{-1}(x))\otimes V)^{K_o}$. Define
\[\varphi:(\widetilde{V}\otimes_{\mathcal{O}_{\mathrm{Sh}_\mathbb{K}}}\mathcal{A}^i_{\mathrm{Sh}_\mathbb{K}})(U)\rightarrow\mathrm{Hom}(\textstyle\bigwedge^i(\mathfrak{g}/\mathfrak{a}_G),C^\infty(\pi_o^{-1}(U))\otimes V)\]
as follows: for every $y\in\pi_o^{-1}(U)$, identify $T_y\pi_o^{-1}(U)$ with $\mathfrak{g}_\mathbb{R}/\mathfrak{a}_{G,\mathbb{R}}$ via left-invariant vector fields on $G(\mathbb{R})$, then for any $\varpi$ and $Y_1,...,Y_i\in\mathfrak{g}/\mathfrak{a}_G\cong(T_y\pi_o^{-1}(U))_\mathbb{C}$, define
\begin{equation}\label{Ai}
\varphi(\varpi)(Y_1\!\wedge\!...\!\wedge\!Y_i)(y):=\varpi(d\pi_{o,y}Y_1\!\wedge\!...\!\wedge\!d\pi_{o,y}Y_i)(y)\in V.
\end{equation}
We see that if some $Y_j\in\mathfrak{k}_h/\mathfrak{a}_G$, then $d\pi_{o,y}Y_j\equiv 0$ and hence $\varphi(\varpi)(\bigwedge_{j=1}^i Y_j)=0$, so the image of $\varphi$ belongs to
\[\mathrm{Hom}(\textstyle\bigwedge^i(\mathfrak{g}/\mathfrak{k}_h),C^\infty(\pi_o^{-1}(U))\otimes V).\]
Moreover for any $y\in\pi_o^{-1}(U),Y\in\mathfrak{g}/\mathfrak{a}_G$ and $k\in K_o$,
\[Y\in(T_y\pi_o^{-1}(U))_\mathbb{C}\textrm{ and }\mathrm{Ad}(k)Y\in(T_{yk^{-1}}\pi_o^{-1}(U))_\mathbb{C}\]
push forward to a same tangent vector in $(T_{\pi_o(y)}U)_\mathbb{C}$, so the image of $\varphi$ belongs to
\[\mathrm{Hom}_{K_o}(\textstyle\bigwedge^i(\mathfrak{g}/\mathfrak{k}_h),C^\infty(\pi_o^{-1}(U))\otimes V).\]
As $\pi_o$ is a principal $K_o$-bundle, $\varphi$ actually induces an isomorphism
\[(\widetilde{V}\otimes_{\mathcal{O}_{\mathrm{Sh}_\mathbb{K}}}\mathcal{A}^i_{\mathrm{Sh}_\mathbb{K}})(U)\cong\mathrm{Hom}_{K_o}(\textstyle\bigwedge^i(\mathfrak{g}/\mathfrak{k}_h),C^\infty(\pi_o^{-1}(U))\otimes V).\]
At each $y\in\pi_o^{-1}(U)$ the decomposition $\mathfrak{g}/\mathfrak{k}_h\cong\mathfrak{p}_+\oplus\mathfrak{p}_-$ corresponds to
\[(T_{\pi_o(y)}U)_\mathbb{C}\cong T^{1,0}_{\pi_o(y)}U\oplus T^{0,1}_{\pi_o(y)}U,\]
so $\varphi$ also induces an isomorphism
\begin{equation}\label{A0i}
\begin{aligned}
(\widetilde{V}\otimes_{\mathcal{O}_{\mathrm{Sh}_\mathbb{K}}}\mathcal{A}^{0,i}_{\mathrm{Sh}_\mathbb{K}})(U)&\cong\mathrm{Hom}_{K_o}(\textstyle\bigwedge^i\mathfrak{p}_-,C^\infty(\pi_o^{-1}(U))\otimes V)\\
&\cong\mathrm{Hom}_{K_o}(\textstyle\bigwedge^i(\mathfrak{p}_o/\mathfrak{k}_o),C^\infty(\pi_o^{-1}(U))\otimes V),
\end{aligned}
\end{equation}
where the right hand side is exactly the $i$-th term of the relative Chevalley-Eilenberg complex for $H^*_{(\mathfrak{p}_o,K_o)}(C^\infty(\pi_o^{-1}(U))^{K_o\mbox{-}\mathrm{fin}}\otimes V)$, and by \cite[4.2.3]{Harris90} we have the following commutative diagram:
\begin{equation}\label{partial}
\begin{tikzcd}
(\widetilde{V}\otimes_{\mathcal{O}_{\mathrm{Sh}_\mathbb{K}}}\mathcal{A}^{0,i}_{\mathrm{Sh}_\mathbb{K}})(U)\rar{\overline\partial}\dar{\sim}&(\widetilde{V}\otimes_{\mathcal{O}_{\mathrm{Sh}_\mathbb{K}}}\mathcal{A}^{0,i+1}_{\mathrm{Sh}_\mathbb{K}})(U)\dar{\sim}\\
\mathrm{Hom}_{K_o}(\bigwedge^i(\mathfrak{p}_o/\mathfrak{k}_o),C^\infty\otimes V)\rar{d}&\mathrm{Hom}_{K_o}(\bigwedge^{i+1}(\mathfrak{p}_o/\mathfrak{k}_o),C^\infty\otimes V),
\end{tikzcd}
\end{equation}
where $C^\infty=C^\infty(\pi_o^{-1}(U))^{K_o\mbox{-}\mathrm{fin}}$ and $d$ is the differential in the relative C-E complex. Therefore we get
\begin{equation}\label{CinftyC}
\begin{aligned}
H^i(\mathrm{Sh}_\mathbb{K},\widetilde{V})&\cong H^i_{(\mathfrak{p}_o,K_o)}(C^\infty(\pi_o^{-1}(\mathrm{Sh}_\mathbb{K}))^{K_o\mbox{-}\mathrm{fin}}\otimes V)\\
&=H^i_{(\mathfrak{p}_o,K_o)}(C^\infty([G]/\mathbb{K})^{K_o\mbox{-}\mathrm{fin}}\otimes V).
\end{aligned}
\end{equation}
Taking direct limit, we get an isomorphism
\[\varinjlim_{t_e^*}H^i(\mathrm{Sh}_\mathbb{K},\widetilde{V}_\mathbb{K})\cong H^i_{(\mathfrak{p}_o,K_o)}(C^\infty([G])^{K_o\mbox{-}\mathrm{fin}}\otimes V_o)\]
which by (\ref{Vo}) is $G(\mathbb{A}_f)$-equivariant when $C^\infty([G])^{K_o\mbox{-}\mathrm{fin}}\otimes V_o$ is endowed with the diagonal $G(\mathbb{A}_f)$-action.

\subsection{Toroidal compactifications}

Toroidal compactifications were initially constructed for general locally symmetric varieties in \cite{AMRT75} (following the work of Igusa on Siegel modular varieties and Hirzebruch on Hilbert modular surfaces), and in the Shimura variety case \cite{Harris89} and \cite{Pink90} constructed canonical models of them over the reflex field. In this paper we mostly work around a single point, so here we mainly follow \cite{AMRT10} to describe the local structure of these compactifications.

Let $\Gamma\backslash D$ be a connected component of $\mathrm{Sh}_\mathbb{K}$ as in (\ref{gamma}). Recall that $D$ is $G(\mathbb{R})^\circ$-equivariantly embedded in $\widecheck{D}=G(\mathbb{C})/P_h(\mathbb{C})$ by the Borel embedding $\beta$, let $\overline{D}$ be the closure of $D$ in $\widecheck{D}$, then a \it boundary component \rm $F$ of $D$ is a maximal connected analytic submanifold of $\overline{D}$ (in particular, $D$ is a boundary component of itself). By \cite[Proposition III.3.6]{AMRT10} there is a parabolic subgroup $\mathscr{P}(F)\subseteq G_\mathbb{R}$ such that
\[\mathscr{P}(F)(\mathbb{R})\cap G(\mathbb{R})^\circ=P(F):=\big\{g\in G(\mathbb{R})^\circ:gF=F\big\}.\]
$F$ is said to be \it rational \rm if $\mathscr{P}(F)$ is defined over $\mathbb{Q}$; when $F$ runs through rational boundary components of $D$, $\mathscr{P}(F)$ runs through products of maximal $\mathbb{Q}$-parabolics of the $\mathbb{Q}$-factors of $G$ (we refer to a group itself also as a maximal parabolic).

In \cite[Theorem III.3.10]{AMRT10} a Levi decomposition of $\mathscr{P}(F)$ according to the choice of $h\in X$ is defined, which we denote as $\mathscr{P}(F)=\mathscr{W}(F)\rtimes\mathscr{L}(F)$. As in \cite[III.4.1]{AMRT10} the Levi subgroup $\mathscr{L}(F)$ decomposes further as an almost direct product of connected subgroups:
\begin{equation}\label{GhGl}
\mathscr{L}(F)=\mathscr{G}_h(F)\cdot\mathscr{G}_l(F)\cdot\mathscr{M}(F)\cdot Z_{G_\mathbb{R}}^\circ.
\end{equation}
This decomposition can be characterized as follows: let $G_h(F),G_l(F)$ and $M(F)$ be the identity component of the real points of $\mathscr{G}_h(F),\mathscr{G}_l(F)$ and $\mathscr{M}(F)$ respectively, then $G_l(F),M(F)$ act trivially on $F$ while $G_h(F)$ maps onto $\mathrm{Aut}(F)^\circ$ with a finite kernel; $M(F)$ is compact while $G_l(F),G_h(F)$ have no compact factors. Write $W(F):=\mathscr{W}(F)(\mathbb{R})$, then the map
\[\begin{gathered}
W(F)\rtimes(G_h(F)\cdot G_l(F))\rightarrow D,\\
wg_hg_l\mapsto wg_hg_l\cdot h
\end{gathered}\]
induces a diffeomorphism
\begin{equation}\label{D}
D\cong (W(F)\rtimes(G_h(F)\cdot G_l(F)))/((G_h(F)\cap K_o)\cdot(G_l(F)\cap K_o)),
\end{equation}
where $G_h(F)\cap K_o$ and $G_l(F)\cap K_o$ are maximal compact subgroups of $G_h(F)$ and $G_l(F)$ \cite[III.4.3]{AMRT10}.

Let $\mathscr{U}(F)$ be the center of $\mathscr{W}(F)$, then $U(F):=\mathscr{U}(F)(\mathbb{R}),U(F)_\mathbb{C}:=\mathscr{U}(F)(\mathbb{C})$ are vector spaces. By \cite[Theorem III.4.1(1)]{AMRT10}, $\mathscr{G}_h(F),\mathscr{M}(F)$ commute with $\mathscr{U}(F)$. Let $\omega_F\in\mathfrak{u}(F)_\mathbb{R}$ be the element defined in \cite[III.4.2]{AMRT10}, then by \cite[Theorem III.4.1(2)]{AMRT10} the centralizer of $\exp{\omega_F}\in U(F)$ in $G_l(F)$ equals $G_l(F)\cap K_o$ and its $G_l(F)$-orbit $C(F)$ is an open cone in $U(F)$. Furthermore by \cite[Proposition III.3.11]{AMRT10}, $\exp(-i\omega_F)h\in\widecheck{D}$ is fixed by $G_l(F)$. Set
\[D(F):=U(F)_\mathbb{C}D=\bigcup_{g\in U(F)_\mathbb{C}}gD\subseteq\widecheck{D},\]
then the map
\[\begin{gathered}
(iU(F)\times W(F))\rtimes G_h(F)\rightarrow D(F),\\
uwg_h\mapsto uwg_h\cdot\exp(-i\omega_F)h
\end{gathered}\]
induces a diffeomorphism
\begin{equation}\label{DF}
D(F)\cong((iU(F)\times W(F))\rtimes G_h(F))/(G_h(F)\cap K_o)
\end{equation}
\cite[Lemma III.4.6]{AMRT10}. Particularly $U(F)_\mathbb{C}$ acts freely on $D(F)$, and the decomposition (\ref{DF}) is good for exhibiting this principal homogeneous structure. If we compose (\ref{D}) with the inclusion $D\subseteq D(F)$, then the image of $wg_hg_l$ is
\begin{equation}\label{DDF}
\begin{split}
wg_hg_l\cdot h&=wg_hg_l\exp(i\omega_F)\cdot\exp(-i\omega_F)h\\
&=\exp(i\mathrm{Ad}(g_l)\omega_F)wg_hg_l\cdot\exp(-i\omega_F)h\\
&=\exp(i\mathrm{Ad}(g_l)\omega_F)wg_h\cdot\exp(-i\omega_F)h,
\end{split}
\end{equation}
i.e. the image of $\exp(i\mathrm{Ad}(g_l)\omega_F)wg_h\in(iU(F)\times W(F))\rtimes G_h(F)$ under (\ref{DF}).

When $F$ is rational, $U(F)_\mathbb{Z}:=\Gamma\cap U(F)$ is an arithmetic subgroup of $U(F)$ and hence a lattice in the real vector space, $T(F):=U(F)_\mathbb{Z}\backslash U(F)_\mathbb{C}$ is a complex torus with cocharacter group $U(F)_\mathbb{Z}$, and $U(F)_\mathbb{Z}\backslash D(F)$ is a principal $T(F)$-bundle over $D(F)':=U(F)_\mathbb{C}\backslash D(F)$. Let $\sigma\subseteq U(F)$ be a \it rational polyhedral cone\rm, i.e. a cone generated by finitely many elements of $U(F)_\mathbb{Z}$, then as in \cite[I.1]{AMRT10} it gives rise to a torus embedding $T(F)\hookrightarrow T_\sigma$ which induces an embedding of fiber bundles over $D(F)'$:
\begin{equation}\label{p}
\begin{tikzcd}
U(F)_\mathbb{Z}\backslash D\rar[hook]{j_\sigma}\drar{p_F}&T_\sigma\times^{T(F)}(U(F)_\mathbb{Z}\backslash D(F))\dar{p_\sigma}\\
&D(F)'.
\end{tikzcd}
\end{equation}
Let $(U(F)_\mathbb{Z}\backslash D)_\sigma$ be the interior of closure of $U(F)_\mathbb{Z}\backslash D$ in the right hand side, then the main theorem of \cite{AMRT10} asserts that for suitable collections of $\sigma$ these $(U(F)_\mathbb{Z}\backslash D)_\sigma$ glue together to give compactifications of $\Gamma\backslash D$. Denote by $\overline{\Gamma}_F$ the group of automorphisms of $U(F)$ induced by $\Gamma\cap P(F)$, then it acts on $C(F)$. Let $\Sigma:=\coprod_F\Sigma_F$, where $F$ runs through all rational boundary components of $D$, each $\Sigma_F$ is a \it $\overline{\Gamma}_F$-admissible polyhedral decomposition \rm of $C(F)$ as in \cite[Definition II.4.10]{AMRT10} and they satisfy the compatibility conditions in \cite[Definition II.5.1]{AMRT10}, then $\Sigma$ is called a \it $\Gamma$-admissible collection of polyhedra \rm and we have:

\begin{Prop}\cite[Theorem III.5.2]{AMRT10}
There exists a unique Hausdorff complex analytic space $(\Gamma\backslash D)_\Sigma$ containing $\Gamma\backslash D$ as an open dense subset such that, for every rational boundary component $F$ of $D$, there are open analytic morphisms $(\phi_\sigma)_{\sigma\in\Sigma_F}$ making the following diagrams commutative:
\begin{equation}\label{phi}
\begin{tikzcd}
U(F)_\mathbb{Z}\backslash D\rar[hook]{j_\sigma}\dar{\phi_F}&(U(F)_\mathbb{Z}\backslash D)_\sigma\dar[dashed]{\phi_\sigma}\\
\Gamma\backslash D\rar[hook]{j}&(\Gamma\backslash D)_\Sigma
\end{tikzcd}
\end{equation}
and such that every point of $(\Gamma\backslash D)_\Sigma$ is in the image of one of the maps $\phi_\sigma$. Furthermore, $(\Gamma\backslash D)_\Sigma$ is compact.
\end{Prop}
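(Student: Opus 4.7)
The plan is to construct $(\Gamma\backslash D)_\Sigma$ by gluing the local charts $(U(F)_\mathbb{Z}\backslash D)_\sigma$ from \eqref{p} along the combinatorial pattern prescribed by $\Sigma$, then dividing by the action of the arithmetic group. First I would fix a single rational boundary component $F$ and check that for $\sigma\in\Sigma_F$ the open embedding $j_\sigma$ sits inside the partial compactification $T_\sigma\times^{T(F)}(U(F)_\mathbb{Z}\backslash D(F))$, whose local structure is explicit from the theory of torus embeddings: in affine charts of $T_\sigma$ it is simply $U(F)_\mathbb{Z}\backslash D(F)$ with extra coordinates labeled by generators of $\sigma^\vee\cap U(F)_\mathbb{Z}^*$. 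Face inclusions $\tau\subseteq\sigma$ inside $\Sigma_F$ then yield compatible open inclusions of charts, giving a locally analytic space associated to each $F$.

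Second I would build transition data between boundary components. The admissibility axioms of \cite[Definition II.5.1]{AMRT10} arrange that whenever $F'$ is a proper boundary component of $F$ one has $U(F)\subseteq U(F')$, and the cones of $\Sigma_{F'}$ that touch $C(F)$ appear as faces of cones of $\Sigma_F$; this geometric input lets me identify a neighborhood of the $F$-stratum inside $(U(F')_\mathbb{Z}\backslash D)_\tau$ with an open subset of $(U(F)_\mathbb{Z}\backslash D)_\sigma$, supplying the required cocycle. Passing to the quotient by $\overline{\Gamma}_F$ (which acts on $\Sigma_F$ with only finitely many orbits by admissibility and on the chart properly discontinuously, since $\Gamma$ is neat) and then identifying charts across $\Gamma$-orbits of rational boundary components produces the candidate analytic space together with maps $\phi_\sigma$ realizing \eqref{phi}.

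The main obstacle I expect is to verify Hausdorffness and compactness of the glued space. Hausdorffness reduces to a properness statement for the $\Gamma$-action on a suitable analytic closure of $D$: two candidate limit points living in different strata must admit disjoint chart neighborhoods, and the argument I would run adapts the reduction-theoretic separation lemmas from \cite[Chapter II]{AMRT10}, invoking admissibility to control how cones from different components fit together and neatness of $\mathbb{K}$ to rule out fixed points. Once Hausdorffness is in hand, compactness follows from three finiteness inputs: finitely many $\Gamma$-orbits of rational boundary components (a consequence of reduction theory for maximal $\mathbb{Q}$-parabolics of $G$), finitely many $\overline{\Gamma}_F$-orbits of cones in each $\Sigma_F$ by admissibility, and the compactness of each toric orbit closure in $T_\sigma$, so $(\Gamma\backslash D)_\Sigma$ is covered by finitely many compact pieces. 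Uniqueness of $(\Gamma\backslash D)_\Sigma$ with the stated universal-looking property then follows from the fact that the $\phi_\sigma$ are open and their images cover the space, forcing the analytic structure to be determined on overlaps with $\Gamma\backslash D$.
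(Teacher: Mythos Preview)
The paper does not prove this proposition: it is stated with attribution \cite[Theorem III.5.2]{AMRT10} and no argument is given in the paper itself, since the result is quoted as background from the Ash--Mumford--Rapoport--Tai book. So there is no ``paper's own proof'' to compare your proposal against.

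That said, your outline is a reasonable high-level sketch of the construction carried out in \cite{AMRT10}: gluing the toric partial compactifications along face inclusions, passing to the quotient by the arithmetic group, and invoking reduction theory for Hausdorffness and compactness. The genuinely delicate points in \cite{AMRT10} are precisely the ones you flag as obstacles---the separation argument (their ``Main Theorem I'') and the finiteness inputs---and your sketch correctly identifies where the work lies without supplying the actual estimates. If you intend this as more than a pointer to the literature, be aware that the Hausdorffness step is not a routine adaptation: it requires the careful analysis of Siegel sets and the geometry of the cones $C(F)$ that occupies much of \cite[Chapter III]{AMRT10}.
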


Such a $(\Gamma\backslash D)_\Sigma$ is called a \it toroidal compactification \rm of $\Gamma\backslash D$. When $\Gamma$ is neat, each $\phi_\sigma$ is a local homeomorphism \cite[III.7]{AMRT10}, therefore if every $\sigma\in\Sigma_F\subseteq\Sigma$ is generated by a subset of a basis of $U(F)_\mathbb{Z}$ then $(\Gamma\backslash D)_\Sigma$ is smooth and $(\Gamma\backslash D)_\Sigma-\Gamma\backslash D$ is a normal crossings divisor (i.e. around each point in $(\Gamma\backslash D)_\Sigma$ there exist local coordinates $z_1,...,z_n$ such that $(\Gamma\backslash D)_\Sigma-\Gamma\backslash D$ is locally defined by the equation $z_1...z_r=0$ for some $0\leq r\leq n$), in which case $\Sigma$ and the compactification will be called \it SNC\rm.

Let $\Sigma'$ be a \it refinement \rm of $\Sigma$, i.e. $\Sigma'$ is a $\Gamma$-admissible collection of polyhedra such that every $\sigma'\in\Sigma'_F$ is contained in a $\sigma\in\Sigma_F$, then the identity on $\Gamma\backslash D$ extends to a proper surjection $(\Gamma\backslash D)_{\Sigma'}\rightarrow(\Gamma\backslash D)_\Sigma$ \cite[2.3.3]{Harris89}. We see that for any two $\Gamma$-admissible collections of polyhedra $\Sigma$ and $\Sigma'$, $\Sigma''_F:=\{\sigma\cap\sigma':\sigma\in\Sigma_F,\sigma'\in\Sigma'_F\}$ is a $\overline{\Gamma}_F$-admissible polyhedral decomposition of $C(F)$ for each $F$, and $\Sigma'':=\coprod_F\Sigma''_F$ is a common refinement of $\Sigma$ and $\Sigma'$. \cite[Corollary III.7.6]{AMRT10} shows that every $\Gamma$-admissible collection of polyhedra $\Sigma$ admits an SNC refinement $\Sigma'$ such that $(\Gamma\backslash D)_{\Sigma'}\rightarrow(\Gamma\backslash D)_\Sigma$ is projective. \cite[IV.2.1]{AMRT10} shows that there exists a $\Sigma$ such that $(\Gamma\backslash D)_\Sigma$ is projective, hence $\Gamma\backslash D$ has a projective SNC toroidal compactification.

\subsubsection{$\sigma$-neighborhoods and $\sigma$-coordinates}

Suppose $(\Gamma\backslash D)_\Sigma$ is SNC, then each point of it is in the image of $\phi_\sigma$ for a $\sigma$ generated by a basis of $U(F)_\mathbb{Z}$, say $\{\xi_i\}_{1\leq i\leq r}$. Let $\{l_j:U(F)_\mathbb{C}\rightarrow\mathbb{C}\}_{1\leq j\leq r}$ be dual to $\{\xi_i\}_{1\leq i\leq r}$, $q_j:=e^{2\pi il_j}(1\leq j\leq r)$, then $T(F)\hookrightarrow T_\sigma$ is the embedding $\mathbb{C}_{q_1}^\times\times...\times\mathbb{C}_{q_r}^\times\hookrightarrow\mathbb{C}_{q_1}\times...\times\mathbb{C}_{q_r}$. Let $T(F)\rightarrow iU(F)$ be the map descended from taking imaginary parts of elements in $U(F)_\mathbb{C}$, then it induces a $T(F)$-action on $iU(F)$ and we have $T_\sigma\times^{T(F)}iU(F)\cong\mathbb{R}_{\geq 0}^r$ via the following commutative diagram:
\[\begin{tikzcd}
T(F)\cong\mathbb{C}_{q_1}^\times\times...\times\mathbb{C}_{q_r}^\times\rar[hook]\dar&T_\sigma\cong\mathbb{C}_{q_1}\times...\times\mathbb{C}_{q_r}\dar{(|q_1|,...,|q_r|)}\\
iU(F)\rar[hook]{j_q:=(q_1,...,q_r)}&\mathbb{R}_{\geq 0}^r.
\end{tikzcd}\]
Let $y_F:U(F)_\mathbb{Z}\backslash D(F)\rightarrow iU(F)$ be descended from projection to the first factor in (\ref{DF}). Apply $T_\sigma\times^{T(F)}$ to it, we get
\begin{equation}\label{y}
\begin{tikzcd}
U(F)_\mathbb{Z}\backslash D(F)\rar[hook]{j_\sigma}\dar{y_F}&T_\sigma\times^{T(F)}(U(F)_\mathbb{Z}\backslash D(F))\dar{y_\sigma}\\
iU(F)\rar[hook]{j_q}&\mathbb{R}_{\geq 0}^r.
\end{tikzcd}
\end{equation}

\begin{Lem}
Every point in $(U(F)_\mathbb{Z}\backslash D)_\sigma$ has a neighborhood $\Omega$ such that
\[y_F(j_\sigma^{-1}(\Omega))=j_q^{-1}(y_\sigma(\Omega))\subseteq u+i\sigma\]
for some $u\in iC(F)$.
\end{Lem}

\begin{proof}
Notice that in every neighborhood $U$ of a point $y\in\mathbb{R}_{\geq 0}^r$ we can find a $c=(c_1,...,c_r)$ such that
\[U_c:=\big\{(x_1,...,x_r):0\leq x_j\leq c_j,j=1,...,r\big\}\subseteq\mathbb{R}_{\geq 0}^r\]
is also a neighborhood of $y$, and we have $j_q^{-1}(U_c)=j_q^{-1}(c)+i\sigma$. When $y$ lies in $y_\sigma((U(F)_\mathbb{Z}\backslash D)_\sigma)$, we can take
\[U=\textrm{the interior of closure of }j_q(y_F(U(F)_\mathbb{Z}\backslash D))\textrm{ in }\mathbb{R}_{\geq 0}^r,\]
then $j_q^{-1}(c)$ lies in $y_F(U(F)_\mathbb{Z}\backslash D)$, which equals $iC(F)$ by (\ref{DDF}).
\end{proof}

The free action of $U(F)_\mathbb{C}$ on $D(F)\cong\pi_\mathbb{C}^{-1}(D(F))/P_h(\mathbb{C})$ implies that $\pi_\mathbb{C}^{-1}(D(F))$ is a principal $U(F)_\mathbb{C}\times P_h(\mathbb{C})$ bundle over
\[D(F)'\cong U(F)_\mathbb{C}\backslash\pi_\mathbb{C}^{-1}(D(F))/P_h(\mathbb{C}),\]
and
\[\Psi_F:U(F)_\mathbb{Z}\backslash\pi_\mathbb{C}^{-1}(D(F))\rightarrow D(F)'\]
is a principal $T(F)\times P_h(\mathbb{C})$ bundle. Let $\Omega'\subseteq D(F)'$ be an open subset over which $\Psi_F$ is trivial and take a trivialisation
\begin{equation}\label{t1}
\Psi_F^{-1}(\Omega')\xrightarrow[\sim]{(\Psi_F,\widetilde{t}_T,t_P)}\Omega'\times T(F)\times P_h(\mathbb{C}),
\end{equation}
then $t_P:\Psi_F^{-1}(\Omega')\rightarrow P_h(\mathbb{C})$ is a $T(F)$-invariant, $P_h(\mathbb{C})$-equivariant holomorphic map and $\widetilde{t}_T$ descends to a $T(F)$-equivariant holomorphic map
\begin{equation}\label{t2}
t_T:p_F^{-1}(\Omega')\cong\Psi_F^{-1}(\Omega')/P_h(\mathbb{C})\rightarrow T(F)
\end{equation}
which extends to a map $t_\sigma:p_\sigma^{-1}(\Omega')\rightarrow T_\sigma$.

\begin{Def}\label{sigma}
(i) An open subset $\Omega\subseteq(U(F)_\mathbb{Z}\backslash D)_\sigma$ is a \it $\sigma$-neighborhood \rm of a point $x\in(\Gamma\backslash D)_\Sigma$ if
\begin{itemize}
\item it is relatively compact in $(U(F)_\mathbb{Z}\backslash D)_\sigma$,
\item it maps homeomorphically to a neighborhood of $x$ in $(\Gamma\backslash D)_\Sigma$ under $\phi_\sigma$,
\item $y_F(j_\sigma^{-1}(\Omega))\subseteq u+i\sigma$ for some $u\in iC(F)$ and
\item the bundle $\Psi_F$ is trivial over a neighborhood $\Omega'$ of $\overline{p_\sigma(\Omega)}$.
\end{itemize}
(ii) Coordinates $\underline{z}=(z_1,...,z_n)$ on $\Omega$ are \it $\sigma$-coordinates \rm if
\begin{itemize}
\item $z_i=c\cdot q_i\circ t_\sigma,1\leq i\leq r$ for some $t_\sigma$ as above and $c\in\mathbb{R}^\times$,
\item $(z_{r+1},...,z_n)$ are pulled back from (holomorphic) coordinates on $\Omega'$ by $p_\sigma$.
\end{itemize}
For technical convenience we also assume that
\begin{itemize}
\item $z_1,...,z_n$ take values of magnitude $<\frac{1}{3}$.
\end{itemize}
\end{Def}

We see that a sufficiently small open neighborhood of a point in $\phi_\sigma^{-1}(x)$ is a $\sigma$-neighborhood of $x$ and admits $\sigma$-coordinates. For every open subset $U\subseteq\Omega$ it holds that $j^{-1}(\phi_\sigma(U))=\phi_F(j_\sigma^{-1}(U))$, therefore for every sheaf $\mathcal{F}$ on $\Gamma\backslash D$ there is an natural isomorphism
\begin{equation}\label{sheafisom1}
(\phi_\sigma^*j_*\mathcal{F})|_\Omega\cong(j_{\sigma*}\phi_F^*\mathcal{F})|_\Omega.
\end{equation}
Similarly, if we view $\underline{z}$ as an embedding $\Omega\hookrightarrow\Delta^n$, then
\[\forall U\subseteq\Omega,\underline{z}(j_\sigma^{-1}(U))=j_{n,r}^{-1}(\underline{z}(U)),\]
where $j_{n,r}$ is the inclusion of $\Delta^{n,r}$ into $\Delta^n$. For clarity we denote by $\underline{z}_{n,r}$ and $j_\Omega$ the restrictions of $\underline{z}$ and $j_\sigma$ to $\underline{z}^{-1}(\Delta^{n,r})=j_\sigma^{-1}(\Omega)$, then we have
\begin{equation}\label{sheafisom2}
j_{\Omega*}\underline{z}_{n,r}^*\mathcal{F}\cong\underline{z}^*j_{n,r*}\mathcal{F}
\end{equation}
for every sheaf $\mathcal{F}$ on $\Delta^{n,r}$.

\subsubsection{Toroidal compactifications of $\mathrm{Sh}_\mathbb{K}$}

In \cite{Harris89} a family of compactifications of $\mathrm{Sh}_\mathbb{K}$ with nice functorial properties were introduced. They are parameterized by simplicial data $\Sigma$ as in \cite[2.5]{Harris89}, where to each $\Sigma$ and each $\gamma$ as in (\ref{gamma}) a $\Gamma(\gamma)$-admissible collection of polyhedra $\Sigma(\gamma)$ is associated, and the compactification will be $\mathrm{Sh}_{\mathbb{K},\Sigma}:=\coprod_{\{\gamma\}}(\Gamma(\gamma)\backslash D)_{\Sigma(\gamma)}$. Such a $\Sigma$ is called a \it $\mathbb{K}$-admissible collection of polyhedra \rm and $\mathrm{Sh}_{\mathbb{K},\Sigma}$ is called an \it admissible toroidal compactifications \rm of $\mathrm{Sh}_\mathbb{K}$ which is \it SNC \rm if each $\Sigma(\gamma)$ is SNC.

A $\mathbb{K}$-admissible collection of polyhedra $\Sigma$ is automatically admissible for every open subgroup $\mathbb{K}'\subseteq\mathbb{K}$, and the covering map $\mathrm{Sh}_{\mathbb{K}'}\rightarrow\mathrm{Sh}_\mathbb{K}$ extends to a surjection $\mathrm{Sh}_{\mathbb{K}',\Sigma}\rightarrow\mathrm{Sh}_{\mathbb{K},\Sigma}$ with finite fibers \cite[2.5.7 (c)]{Harris89}. Meanwhile, for every $g\in G(\mathbb{A}_f)$ there is a $g\mathbb{K}g^{-1}$-admissible collection of polyhedra $\Sigma^g$ such that $t_g:\mathrm{Sh}_{g\mathbb{K}g^{-1}}\rightarrow\mathrm{Sh}_\mathbb{K}$ extends to an isomorphism $\mathrm{Sh}_{g\mathbb{K}g^{-1},\Sigma^g}\rightarrow\mathrm{Sh}_{\mathbb{K},\Sigma}$ \cite[4.3.3]{Harris89}. Therefore for every open subgroup $\mathbb{K}'\subseteq g\mathbb{K}g^{-1}$ there is a map $t_{g,\Sigma}:\mathrm{Sh}_{\mathbb{K}',\Sigma^g}\rightarrow\mathrm{Sh}_{\mathbb{K},\Sigma}$.

\cite[2.8]{Harris89} shows that when $\Sigma$ is \it projective \rm and \it equivariant \rm as in \cite[2.7]{Harris89}, there is a projective variety $Sh(G,X)_{\mathbb{K},\Sigma}$ over $E(G,X)$ containing $Sh(G,X)_\mathbb{K}$ whose complex analytification gives $\mathrm{Sh}_{\mathbb{K},\Sigma}$. In this case $\Sigma^g$ is a projective equivariant $\mathbb{K'}$-admissible collection of polyhedra and $t_{g,\Sigma}$ is defined over $E(G,X)$ \cite[1.3.4]{Harris90}. It is also indicated in \cite[2.8]{Harris89} that there exists a projective equivariant $\Sigma$ such that $\mathrm{Sh}_{\mathbb{K},\Sigma}$ is SNC.

For $\mathbb{K}$-admissible collections of polyhedra there is an analogous notion of \it refinements\rm, and if $\Sigma'$ is a refinement of $\Sigma$ then the identity on $\mathrm{Sh}_\mathbb{K}$ extends to a proper surjection $r_{\Sigma',\Sigma}:\mathrm{Sh}_{\mathbb{K},\Sigma'}\rightarrow\mathrm{Sh}_{\mathbb{K},\Sigma}$ \cite[2.5.7 (b)]{Harris89}. We see that when $\Sigma$ and $\Sigma'$ are both projective and equivariant, $r_{\Sigma',\Sigma}$ is defined over $E(G,X)$. Finally, any two projective equivariant $\mathbb{K}$-admissible collections of polyhedra admits a common refinement that is also projective and equivariant \cite[2.7]{Harris90}.

\subsection{Canonical extensions of automorphic vector bundles}

Let $\widetilde{V}$ be an automorphic vector bundle over $\mathrm{Sh}_\mathbb{K}$ arising from a finite dimensional holomorphic representation $V$ of $P_h(\mathbb{C})$, $F$ be a rational boundary component of $D$ and $j_\sigma,p_F,p_\sigma,\phi_F,\phi_\sigma$ be the maps in (\ref{p}) and (\ref{phi}). Recall that $V$ defines a $G(\mathbb{C})$-homogeneous vector bundle $\mathcal{E}_V=G(\mathbb{C})\times^{P_h(\mathbb{C})}V$ on $\widecheck{D}$. As $U(F)_\mathbb{C}$ acts freely on $D(F)$, $\mathcal{E}_V|_{D(F)}$ descends to a holomorphic vector bundle
\begin{equation}\label{EVF}
\mathcal{E}_{V,F}:=U(F)_\mathbb{C}\backslash(\mathcal{E}_V|_{D(F)})
\end{equation}
on $D(F)'$. Then $p_F^*\mathcal{E}_{V,F}\cong\phi_F^*\widetilde{V}$ and over $(U(F)_\mathbb{Z}\backslash D)_\sigma$ the natural morphism
\[p_\sigma^*\mathcal{E}_{V,F}\rightarrow j_{\sigma*}j_\sigma^*p_\sigma^*\mathcal{E}_{V,F}=j_{\sigma*}p_F^*\mathcal{E}_{V,F}\]
is an embedding of sheaves.
\begin{Def}\label{Vcan}
Let $j:\mathrm{Sh}_\mathbb{K}\hookrightarrow\mathrm{Sh}_{\mathbb{K},\Sigma}$ be an admissible toroidal compactification. The \it canonical extension \rm of $\widetilde{V}$ over $\mathrm{Sh}_{\mathbb{K},\Sigma}$, if it existed, is the subsheaf
\[\widetilde{V}^{\mathrm{can}}=\widetilde{V}^{\mathrm{can}}_{\mathbb{K},\Sigma}\subseteq j_*\widetilde{V}\]
such that for every $\sigma\in\Sigma(\gamma)_F$ of full dimensions and every $\sigma$-neighborhood $\Omega$ of a point in $(\Gamma(\gamma)\backslash D)_{\Sigma(\gamma)}\subseteq\mathrm{Sh}_{\mathbb{K},\Sigma}$, inside
\[(\phi_\sigma^*j_*\widetilde{V})|_\Omega\cong(j_{\sigma*}\phi_F^*\widetilde{V})|_\Omega\cong(j_{\sigma*}p_F^*\mathcal{E}_{V,F})|_\Omega\]
the subsheaf $(\phi_\sigma^*\widetilde{V}^{\mathrm{can}})|_\Omega$ coincides with $(p_\sigma^*\mathcal{E}_{V,F})|_\Omega$, where the first isomorphism above is (\ref{sheafisom1}).
\end{Def}

The definition implies that $\widetilde{V}^{\mathrm{can}}$, if it existed, is a vector bundle over $\mathrm{Sh}_{\mathbb{K},\Sigma}$. The vector bundle $\widetilde{V}^{\mathrm{can}}$ was first constructed in \cite{Mumford77} when $\mathrm{Sh}_{\mathbb{K},\Sigma}$ is SNC and $\mathfrak{p}_-$ acts trivially on $V$, while the general definition as above was formulated in \cite[4.1]{Harris89}. \cite[Theorem 3.1]{Mumford77} actually implies the existence of $\widetilde{V}^{\mathrm{can}}$ in all cases, while \cite[4.2]{Harris89} proves that in a different way by reducing to the case where $\mathrm{Sh}_{\mathbb{K},\Sigma}$ is SNC and $V$ comes from a representation of $G(\mathbb{R})$ and applying Deligne's existence theorem \cite{Deligne70} (which is the origin of the name ``canonical extensions''; \cite{FC90} gave a similar proof in the case of Siegel modular varieties). \cite[4.2]{Harris89} also shows that when $\Sigma$ is projective and equivariant $\widetilde{V}^{\mathrm{can}}$ is the analytification of an algebraic vector bundle defined over $k_V$, in which case $H^i(\mathrm{Sh}_{\mathbb{K},\Sigma},\widetilde{V}^{\mathrm{can}})$ is equipped with a $k_V$-rational structure via GAGA.

\begin{Ex}\cite[Proposition 3.4 a)]{Mumford77}
When $\mathrm{Sh}_{\mathbb{K},\Sigma}$ is SNC, denote $Z:=\mathrm{Sh}_{\mathbb{K},\Sigma}\smallsetminus\mathrm{Sh}_\mathbb{K}$, then $(\Omega^p_{\mathrm{Sh}_\mathbb{K}})^{\mathrm{can}}=\Omega^p_{\mathrm{Sh}_{\mathbb{K},\Sigma}}(\log{Z})\subseteq j_*\Omega^p_{\mathrm{Sh}_\mathbb{K}}$.
\end{Ex}

It follows from the definition that if $\Sigma'$ is a refinement of $\Sigma$ then the identity map on $\widetilde{V}$ extends to an isomorphism $i_{\Sigma',\Sigma}:r_{\Sigma',\Sigma}^*\widetilde{V}^{\mathrm{can}}_\Sigma\cong\widetilde{V}^{\mathrm{can}}_{\Sigma'}$. Moreover, we have:

\begin{Prop}\label{cohisom}\cite[2.4]{Harris90}
The morphism $\widetilde{V}^{\mathrm{can}}_\Sigma\rightarrow r_{\Sigma',\Sigma*}\widetilde{V}^{\mathrm{can}}_{\Sigma'}$ adjoint to $i_{\Sigma',\Sigma}$ is also an isomorphism, and $R^i r_{\Sigma',\Sigma*}\widetilde{V}^{\mathrm{can}}_{\Sigma'}$ vanishes for $i>0$. Hence there are natural isomorphisms
\[H^i(\mathrm{Sh}_{\mathbb{K},\Sigma},\widetilde{V}^{\mathrm{can}}_\Sigma)\cong H^i(\mathrm{Sh}_{\mathbb{K},\Sigma'},\widetilde{V}^{\mathrm{can}}_{\Sigma'}).\]
\end{Prop}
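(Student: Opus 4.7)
The plan is to reduce the sheaf-level claims to a classical toric vanishing result via the projection formula and the local torus-embedding model of a toroidal compactification, and then deduce the cohomology isomorphism from the Leray spectral sequence. Properness of $r_{\Sigma',\Sigma}$ makes the derived pushforwards behave well throughout.

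First I would feed the isomorphism $i_{\Sigma',\Sigma}\colon r_{\Sigma',\Sigma}^*\widetilde{V}^{can}_\Sigma\xrightarrow{\sim}\widetilde{V}^{can}_{\Sigma'}$ into the projection formula: since $\widetilde{V}^{can}_\Sigma$ is locally free of finite rank,
\[Rr_{\Sigma',\Sigma*}\widetilde{V}^{can}_{\Sigma'}\cong Rr_{\Sigma',\Sigma*}(r_{\Sigma',\Sigma}^*\widetilde{V}^{can}_\Sigma)\cong\widetilde{V}^{can}_\Sigma\otimes_{\mathcal{O}_{Sh_{\mathbb{K},\Sigma}}}Rr_{\Sigma',\Sigma*}\mathcal{O}_{Sh_{\mathbb{K},\Sigma'}}.\]
Both bullet points of the proposition thus reduce to $r_{\Sigma',\Sigma*}\mathcal{O}_{Sh_{\mathbb{K},\Sigma'}}\cong\mathcal{O}_{Sh_{\mathbb{K},\Sigma}}$ and the vanishing $R^i r_{\Sigma',\Sigma*}\mathcal{O}_{Sh_{\mathbb{K},\Sigma'}}=0$ for $i>0$. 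Both are local on $Sh_{\mathbb{K},\Sigma}$, so I may work inside a $\sigma$-neighborhood.

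Next I would verify these $\mathcal{O}$-statements locally using Definition \ref{sigma} and the diagrams \eqref{p}, \eqref{phi}. Fix a point $x\in(\Gamma(\gamma)\backslash D)_{\Sigma(\gamma)}$ and a $\sigma$-neighborhood $\Omega$ of $x$ with $\sigma\in\Sigma(\gamma)_F$. After shrinking so that the principal $T(F)\times P_h(\mathbb{C})$-bundle $\Psi_F$ is trivial over a neighborhood of $\overline{p_\sigma(\Omega)}$, the chart $\phi_\sigma$ presents $\Omega$ as an open in a product $T_\sigma\times B$ with $B$ an open subset of $D(F)'$. The cones $\sigma'\in\Sigma'(\gamma)_F$ contained in $\sigma$ form a subfan $\Sigma'|_\sigma$ refining $\sigma$, and the corresponding $\sigma'$-charts cover $r_{\Sigma',\Sigma}^{-1}(\phi_\sigma(\Omega))$; under the same trivialization the map $r_{\Sigma',\Sigma}$ becomes $(t,z)\mapsto(\pi(t),z)$, where $\pi\colon T_{\Sigma'|_\sigma}\to T_\sigma$ is the toric morphism induced by the refinement. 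The desired identities then follow from the classical fact that a proper birational equivariant morphism between normal toric varieties satisfies $\pi_*\mathcal{O}=\mathcal{O}$ and $R^i\pi_*\mathcal{O}=0$ for $i>0$, a consequence of the rationality of toric singularities (or of a direct \v{C}ech calculation on the $T$-invariant affine cover).

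Combining the two steps gives the first two assertions, and the cohomology comparison follows immediately from the Leray spectral sequence
\[E_2^{p,q}=H^p(Sh_{\mathbb{K},\Sigma},R^q r_{\Sigma',\Sigma*}\widetilde{V}^{can}_{\Sigma'})\Rightarrow H^{p+q}(Sh_{\mathbb{K},\Sigma'},\widetilde{V}^{can}_{\Sigma'}),\]
which collapses to its $q=0$ row. The main technical obstacle is the bookkeeping in the local model: one must check that every $\sigma'\in\Sigma'(\gamma)_F$ with $\sigma'\subseteq\sigma$ produces a $\sigma'$-chart mapping into the fixed $\sigma$-chart, that these jointly cover the preimage $r_{\Sigma',\Sigma}^{-1}(\phi_\sigma(\Omega))$, and that the transition data between overlapping $\sigma'$-charts are compatible with the product decomposition of $\Omega$. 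Once this local picture is pinned down, the rest of the argument is formal.
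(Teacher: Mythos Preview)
Your approach is correct and is essentially the standard argument. Note, however, that the paper does not supply its own proof of this proposition: it is stated with the citation \cite[2.4]{Harris90} and no proof environment follows. Your projection-formula reduction to $Rr_{\Sigma',\Sigma*}\mathcal{O}\cong\mathcal{O}$, followed by the local toric model and the classical vanishing for proper birational toric morphisms, is precisely the argument one expects (and is what underlies the cited result).

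The only point of contact with the paper's own work is the Remark after Proposition~\ref{Propdmg}, which observes that the isomorphism $H^i(Sh_{\mathbb{K},\Sigma},\widetilde{V}^{can})\cong H^i_{(\mathfrak{p}_o,K_o)}(C^\infty_{dmg}(G)^\mathbb{K}\otimes V)$ independently reproves the cohomology isomorphism of Proposition~\ref{cohisom} when both $\Sigma$ and $\Sigma'$ are SNC. That route is genuinely different from yours: it bypasses the sheaf-level statements $r_{\Sigma',\Sigma*}\widetilde{V}^{can}_{\Sigma'}\cong\widetilde{V}^{can}_\Sigma$ and $R^ir_{\Sigma',\Sigma*}\widetilde{V}^{can}_{\Sigma'}=0$ entirely, instead identifying both cohomology groups with the same relative Lie algebra cohomology (which is manifestly independent of $\Sigma$). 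Your argument is more general (no SNC hypothesis) and yields the stronger sheaf-level conclusions; the paper's alternative is a pleasant by-product of its main construction but only recovers the cohomology comparison in the SNC case.
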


When $\Sigma$ and $\Sigma'$ are projective and equivariant, $i_{\Sigma',\Sigma}$ is defined over $k_V$ and hence so is the isomorphism above. Then thanks to the existence of common refinements, $H^i(\mathrm{Sh}_{\mathbb{K},\Sigma},\widetilde{V}^{\mathrm{can}}_\Sigma)$ for all projective equivariant $\Sigma$ are naturally isomorphic over $k_V$ and will be denoted as $H^i(\widetilde{V}^{\mathrm{can}}_\mathbb{K})$. We will refer to them as \it coherent cohomology of Shimura varieties\rm.

For every $t_{g,\Sigma}:\mathrm{Sh}_{\mathbb{K}',\Sigma^g}\rightarrow\mathrm{Sh}_{\mathbb{K},\Sigma}$, by \cite[4.3]{Harris89} there is a natural isomorphism $i_g:t_{g,\Sigma}^*\widetilde{V}^{\mathrm{can}}_{\mathbb{K},\Sigma}\cong\widetilde{V}^{\mathrm{can}}_{\mathbb{K}',\Sigma^g}$, which induces maps
\[t_{g,\Sigma}^*:H^i(\mathrm{Sh}_{\mathbb{K},\Sigma},\widetilde{V}^{\mathrm{can}}_{\mathbb{K},\Sigma})\rightarrow H^i(\mathrm{Sh}_{\mathbb{K}',\Sigma^g},\widetilde{V}^{\mathrm{can}}_{\mathbb{K}',\Sigma^g}).\]
When $\Sigma$ is projective and equivariant, $t_{g,\Sigma}^*$ is defined over $k_V$, and
since $i_{\Sigma',\Sigma}$ and $i_g$ always commute, all $t_{g,\Sigma}^*$ give rise to the same map
\begin{equation}\label{tg*}
t_g^*:H^i(\widetilde{V}^{\mathrm{can}}_\mathbb{K})\rightarrow H^i(\widetilde{V}^{\mathrm{can}}_{\mathbb{K}'}).
\end{equation}
These maps make $\varinjlim_{t_e^*}H^i(\widetilde{V}^{\mathrm{can}}_\mathbb{K})$ an admissible $G(\mathbb{A}_f)$-module defined over $k_V$.

\section{The sheaf \texorpdfstring{$\mathcal{C}^\infty_{\mathrm{dmg}}$}{} on SNC toroidal compactifications}\label{section2}

Let $j:\mathrm{Sh}_\mathbb{K}\hookrightarrow\mathrm{Sh}_{\mathbb{K},\Sigma}$ be an SNC admissible toroidal compactification, $V$ be a finite-dimensional holomorphic representation of $P_h(\mathbb{C})$. In this and the next section we prove that
\begin{equation}\label{dmg1}
H^i(\mathrm{Sh}_{\mathbb{K},\Sigma},\widetilde{V}^{\mathrm{can}})\cong H^i_{(\mathfrak{p}_o,K_o)}(C^\infty_{\mathrm{dmg}}(G)^\mathbb{K}\otimes V),
\end{equation}
where the $(\mathfrak{p}_o,K_o)$-module $C^\infty_{\mathrm{dmg}}(G)$ will be defined below. The method is to find a fine resolution of $\widetilde{V}^{\mathrm{can}}$ whose global sections form the relative Chevalley-Eilenberg complex computing the right hand side above. In this section we construct a subcomplex of sheaves
\[(\mathcal{I}_V^*,j_*\overline\partial)\subseteq(j_*(\widetilde{V}\otimes_{\mathcal{O}_{\mathrm{Sh}_\mathbb{K}}}\mathcal{A}^{0,*}_{\mathrm{Sh}_\mathbb{K}}),j_*\overline\partial)\]
with desired global sections and show that the subsheaf $\mathcal{C}^\infty_{\mathrm{dmg}}:=\mathcal{I}_\mathbb{C}^0\subseteq j_*\mathcal{C}^\infty_{\mathrm{Sh}_\mathbb{K}}$ can be characterized as follows:

\begin{Def}\label{lgsheaves}
Recall that $j_{n,r}$ denotes the inclusion $\Delta^{n,r}\subseteq\Delta^n$ and on $\Delta^n$, $j_{n,r*}\mathcal{C}^\infty_{\Delta^{n,r}}$ is the sheaf $U\mapsto C^\infty(j_{n,r}^{-1}(U))$. Let $\mathcal{C}^\infty_{\mathrm{lg},r}$ be the sheafification of the subpresheaf of $j_{n,r*}\mathcal{C}^\infty_{\Delta^{n,r}}$ consisting of functions $f\in C^\infty(j_{n,r}^{-1}(\cdot))$ that are bounded by a polynomial in $\log{\frac{1}{|z_1|}},...,\log{\frac{1}{|z_r|}}$, and $\mathcal{C}^\infty_{\mathrm{dlg},r}$ be the subsheaf
\begin{equation}\label{dlg}
U\mapsto\big\{f\in\mathcal{C}^\infty_{\mathrm{lg},r}(U):\forall\,\textrm{superposition }D\textrm{ of elements of }\underline{D}_z,Df\in\mathcal{C}^\infty_{\mathrm{lg},r}(U)\big\},
\end{equation}
where $\underline{D}_z=(z_i\frac{\partial}{\partial z_i},\overline{z}_i\frac{\partial}{\partial\overline{z}_i},\frac{\partial}{\partial z_k},\frac{\partial}{\partial\overline{z}_k})_{1\leq i\leq r<k\leq n}$.
\end{Def}

\begin{Prop}\label{dmglocal}
Let $\Omega\subseteq(U(F)_\mathbb{Z}\backslash D)_\sigma$ be a $\sigma$-neighborhood of a point in $\mathrm{Sh}_{\mathbb{K},\Sigma}$ with $\sigma$-coordinates $\underline{z}=(z_1,...,z_n)$ as in Definition \ref{sigma}. Then inside
\begin{equation}\label{sheafisom4}
(\phi_\sigma^*j_*\mathcal{C}^\infty_{\mathrm{Sh}_\mathbb{K}})|_\Omega\cong(j_{\sigma*}\phi_F^*\mathcal{C}^\infty_{\mathrm{Sh}_\mathbb{K}})|_\Omega\cong j_{\Omega*}\underline{z}_{n,r}^*\mathcal{C}^\infty_{\Delta^{n,r}}\cong\underline{z}^*j_{n,r*}\mathcal{C}^\infty_{\Delta^{n,r}}
\end{equation}
the subsheaves $(\phi_\sigma^*\mathcal{C}^\infty_{\mathrm{dmg}})|_\Omega$ and $(\underline{z}^*\mathcal{C}^\infty_{\mathrm{dlg},r})|_\Omega$ coincide, where the first and third isomorphisms above are (\ref{sheafisom1}) and (\ref{sheafisom2}) respectively.
\end{Prop}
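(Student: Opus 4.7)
The plan is to reduce the proof to two ingredients: a derivative-free comparison of the growth conditions on the functions themselves, and a compatibility between the two families of derivations appearing in the definitions of $\mathcal{C}^\infty_{dmg}$ and $\mathcal{C}^\infty_{dlg,r}$.

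First I would prove the derivative-free version (Proposition~\ref{mglocal}): inside the common sheaf~\eqref{sheafisom4}, the subsheaf of adelically moderate-growth functions agrees with $\underline{z}^*\mathcal{C}^\infty_{lg,r}$. The very definition of $\sigma$-coordinates gives $\log(1/|z_i|) = 2\pi\,\mathrm{Im}(l_i\circ t_\sigma)+O(1)$, so the $z_i$-log-growth scale matches the linear coordinates on $iU(F)$ dual to the generators $\xi_i$ of $\sigma$. Via the decomposition~\eqref{DF}, the $iU(F)$-component is in turn comparable on a $\sigma$-neighborhood to a Siegel-type height on $G(\mathbb{A})$, which is what controls moderate growth adelically. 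The main obstacle here — and, I expect, for the proposition as a whole — is to upgrade this qualitative comparison to a two-sided polynomial estimate that is \emph{uniform} on the $\sigma$-neighborhood: the cones $\sigma\subseteq C(F)$ are semi-algebraic but in general neither simplicial nor linearly cut out of $C(F)$, so producing explicit polynomial inequalities relating the two heights requires polynomial certificates, which Krivine's Positivstellensatz is designed to supply.

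Next I would promote the comparison to all iterated derivatives via a ring-theoretic mechanism (Lemma~\ref{DerCor}): if a subring $R$ of a sheaf of $C^\infty$-functions is stable under two families of derivations and each family can be written as an $R$-linear combination of the other, then the maximal subsheaves of $R$ closed under iterated application of either family coincide. Taking $R$ to be the common subring of dmg/lg functions identified in Step~1, the remaining task is to express each generator of $\underline{D}_z$ as an $R$-linear combination of left-invariant vector fields on $G(\mathbb{R})$ (acting by right differentiation on $C^\infty$), and conversely. This is the content of the estimate~\eqref{GLdlg}: it follows from comparing \eqref{D} and \eqref{DF} via the identity \eqref{DDF}, whose derivative in the $G_l(F)$-direction has coefficients that are polynomial in $\mathrm{Im}(l_i)$, hence in $\log(1/|z_i|)$, after passing to $\sigma$-coordinates. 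A holomorphic/antiholomorphic separation gives the split between the $z_i\partial/\partial z_i,\ \overline{z}_i\partial/\partial\overline{z}_i$ generators for $i\le r$ and the $\partial/\partial z_k,\ \partial/\partial\overline{z}_k$ generators for $k>r$.

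Putting these steps together yields the equality $(\phi_\sigma^*\mathcal{C}^\infty_{dmg})|_\Omega = (\underline{z}^*\mathcal{C}^\infty_{dlg,r})|_\Omega$ asserted by the proposition. The conceptual heart of the argument is thus concentrated in the Positivstellensatz input of Step~1 and the explicit change-of-basis estimate in Step~2; the passage from the function-level comparison to the all-derivatives comparison is then formal via Lemma~\ref{DerCor}.
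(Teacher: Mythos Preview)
Your proposal is correct and follows essentially the same route as the paper: establish the derivative-free comparison $\mathcal{C}^\infty_{mg}=\mathcal{C}^\infty_{lg,r}$ (Proposition~\ref{mglocal}) with Krivine's Positivstellensatz as the key input, then upgrade to all derivatives via Lemma~\ref{DerCor} once the change-of-basis estimate~\eqref{GLdlg} is extracted from the relation~\eqref{DDF} between the decompositions~\eqref{D} and~\eqref{DF}.

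One small correction to your paraphrase of Lemma~\ref{DerCor}: the subring $R$ you take (the common mg/lg ring from Step~1) is \emph{not} stable under either family of derivations---that is precisely why the dmg/dlg subrings are strictly smaller. The actual hypothesis of the lemma is that the change-of-basis matrix lies in $GL_{2n}(R_{\underline{D}_z})=GL_{2n}(\mathcal{C}^\infty_{dlg,r})$, i.e.\ the coefficients and all their iterated $\underline{D}_z$-derivatives must have log growth; this is exactly the content of~\eqref{GLdlg}, and the paper verifies it by passing through an intermediate $T(F)$-invariant frame $\underline{D}_v$ on $D(F)$ (so the transition $\underline{D}_z\leftrightarrow\underline{D}_v$ has bounded coefficients pulled back from $D(F)'$, while $\underline{D}_v\leftrightarrow\underline{D}_\mathfrak{b}$ has coefficients in $\mathcal{O}_{\mathscr{B}_l}$, which are not literally polynomial in the $\log(1/|z_i|)$ but are bounded by such polynomials together with all their derivatives, again by the Positivstellensatz argument of Proposition~\ref{OB}).
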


The proposition together with the fact that the sheaves $\mathcal{I}_V^i$ are $\mathcal{C}^\infty_{\mathrm{dmg}}$-modules will imply the fineness of these sheaves, and we will show that $(\mathcal{I}_V^*,j_*\overline\partial)$ forms a resolution of $\widetilde{V}^{\mathrm{can}}$ in the next section.

\begin{Rmk}
A key ingredient in the proof of Proposition \ref{dmglocal} is the estimate (\ref{GLdlg}) of the coefficients of left-invariant differential operators as well as their derivatives in $\sigma$-coordinates, which is analogous to what Borel \cite{Borel90} did for local coordinates on the Borel-Serre compactification. The strategy to prove (\ref{dmg1}) has been indicated in \cite[2.4]{Harris88}, despite that it was inaccurately rested on \cite{Borel90}.
\end{Rmk}

\subsection{The sheaves \texorpdfstring{$\mathcal{I}_V^i$}{} and \texorpdfstring{$\mathcal{C}^\infty_{\mathrm{mg}}$}{}}\label{sheaves1}

\subsubsection{Growth conditions}

\begin{Def}\label{mg}
(i) Let $G$ be a linear algebraic group over $\mathbb{R}$, $U\subseteq G(\mathbb{R})$ be an open subset, then $\mathfrak{g}_\mathbb{R}$ and hence $\mathfrak{U}(\mathfrak{g})$ act on $C^\infty(U)$ by right differentiation. Define
\[C^\infty_{\mathrm{mg}}(U):=\big\{f\in C^\infty(U):\exists F\in\mathcal{O}_G\textrm{ s.t. }|f|\leq F\big\},\]
\[C^\infty_{\mathrm{dmg}}(U):=\big\{f\in C^\infty(U):\forall D\in\mathfrak{U}(\mathfrak{g}),Df\in C^\infty_{\mathrm{mg}}(U)\big\},\]
\[C^\infty_{\mathrm{umg}}(U):=\big\{f\in C^\infty(U):\exists F\in\mathcal{O}_G\textrm{ s.t. }\forall D\in\mathfrak{U}(\mathfrak{g}),\exists c_D>0\textrm{ s.t. }|Df|\leq c_D F\big\}.\]
(ii) Let $G$ be a linear algebraic group over $\mathbb{Q}$, $U\subseteq G(\mathbb{A})$ be an open subset right-invariant by some compact open subgroup of $G(\mathbb{A}_f)$. For $\star=mg,dmg$ and $umg$, define
\[C^\infty_\star(U):=\big\{f\in C^\infty(U):\forall y\in G(\mathbb{A}_f),f(\cdot\,y)\in C^\infty_\star(Uy^{-1}\cap G(\mathbb{R}))\big\}.\]
When $G(\mathbb{R})$ has a prescribed maximal compact subgroup $K$, denote
\[C^\infty_\star(G):=C^\infty_\star([G])^{K\mbox{-}\mathrm{fin}}\]
for $\star=dmg$ and $umg$.
\end{Def}

\begin{Rmk}\label{rho}
(i) It follows from the definition that $C^\infty_{\mathrm{umg}}(U)\subseteq C^\infty_{\mathrm{dmg}}(U)\subseteq C^\infty_{\mathrm{mg}}(U)$ and $C^\infty_{\mathrm{umg}}(U),C^\infty_{\mathrm{dmg}}(U)$ are $\mathfrak{g}$-submodules of $C^\infty(U)$.\\
(ii) The growth condition for $C^\infty_{\mathrm{mg}}(G(\mathbb{R}))$ is equivalent to the one in \cite[7]{Borel66}, since the norm $\|\cdot\|$ there belongs to $\mathcal{O}_G$ while every element of $\mathcal{O}_G$ is bounded by a multiple of some $\|\cdot\|^N$.\\
(iii) On arithmetic quotients there are usually few functions descended from polynomials on the group. However some other nice functions play the role of $\|\cdot\|$: by \cite[p.187 Proposition]{Franke98}, for any reductive group $G$ over $\mathbb{Q}$ and maximal compact subgroup $\mathcal{K}\subseteq G(\mathbb{A})$, there exists a {\hypersetup{hidelinks}\hyperref[sm]{smooth function}} $\rho:[G]/\mathcal{K}\rightarrow\mathbb{R}_{>0}$ with positive infimum such that
\begin{equation}\label{dlog}
\forall D\in\mathfrak{U}(\mathfrak{g}),|\frac{D\rho}{\rho}|\textrm{ is bounded}
\end{equation}
and
\[C^\infty_{\mathrm{mg}}([G])=\big\{f\in C^\infty([G]):\exists c>0,N\in\mathbb{Z}_{>0}\textrm{ s.t. }|f|\leq c\rho^N\big\}.\]
Such a $\rho$ will be called a \it weight function\rm.
\end{Rmk}

\begin{Prop}\label{ring}
$C^\infty_{\mathrm{mg}}(U)$ and $C^\infty_{\mathrm{dmg}}(U)$ are subrings of $C^\infty(U)$.
\end{Prop}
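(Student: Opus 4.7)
The plan is to reduce the adelic statement to the real case: by Definition \ref{mg}(ii) a function on $U\subseteq G(\mathbb{A})$ lies in $C^\infty_\star(U)$ iff each translate $f(\cdot\,y)$ lies in $C^\infty_\star(Uy^{-1}\cap G(\mathbb{R}))$, and the ring structure is preserved under translation, so it suffices to treat $U\subseteq G(\mathbb{R})$.

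For $C^\infty_{mg}(U)$, closure is immediate because $\mathcal{O}_G$ is an $\mathbb{R}$-subalgebra of $C^\infty(G(\mathbb{R}))$: if $|f_i|\leq F_i$ with $F_i\in\mathcal{O}_G$ for $i=1,2$, then $|f_1+f_2|\leq F_1+F_2$ and $|f_1 f_2|\leq F_1 F_2$, with both majorants again in $\mathcal{O}_G$.

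For $C^\infty_{dmg}(U)$, additivity is obvious since each $D\in\mathfrak{U}(\mathfrak{g})$ acts linearly. Closure under multiplication is the Leibniz rule coming from the Hopf algebra structure on $\mathfrak{U}(\mathfrak{g})$: the coproduct $\Delta\colon\mathfrak{U}(\mathfrak{g})\to\mathfrak{U}(\mathfrak{g})\otimes\mathfrak{U}(\mathfrak{g})$, characterized by $\Delta(X)=X\otimes 1+1\otimes X$ on primitive elements $X\in\mathfrak{g}$, satisfies
\[
D(f_1 f_2)=\sum_i (D'_i f_1)\,(D''_i f_2),\qquad \Delta(D)=\sum_i D'_i\otimes D''_i,
\]
for right differentiation acting on $C^\infty(U)$. (This is proved for $X\in\mathfrak{g}$ by the ordinary product rule on $G(\mathbb{R})$ and extended to $\mathfrak{U}(\mathfrak{g})$ by induction on the filtration degree, using that $\Delta$ is a ring homomorphism.) If $f_1,f_2\in C^\infty_{dmg}(U)$ then $D'_i f_1,D''_i f_2\in C^\infty_{mg}(U)$ for every $i$, and by the previous paragraph each product $(D'_i f_1)(D''_i f_2)$ and hence the finite sum $D(f_1 f_2)$ lies in $C^\infty_{mg}(U)$. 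This holds for all $D\in\mathfrak{U}(\mathfrak{g})$, so $f_1 f_2\in C^\infty_{dmg}(U)$.

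There is no real obstacle here; the only mild point is to make sure the Leibniz identity above is invoked cleanly for higher-order $D$, which is exactly what the Hopf-algebra formulation packages.
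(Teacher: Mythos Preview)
Your proof is correct and follows essentially the same approach as the paper: reduce the adelic case to the real case by Definition~\ref{mg}(ii), use that $\mathcal{O}_G$ is a ring for $C^\infty_{mg}$, and invoke the Leibniz rule for $C^\infty_{dmg}$. The paper's own proof is a three-line version of exactly this; your Hopf-algebra formulation of the higher Leibniz rule is just a clean way to say what the paper leaves implicit.
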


\begin{proof}
In the real case $C^\infty_{\mathrm{mg}}(U)$ is a ring because $\mathcal{O}_G$ is, and then the Leibniz rule implies that $C^\infty_{\mathrm{dmg}}(U)$ is also a ring. The adelic case follows from the real case by definition.
\end{proof}

In practice we only consider these growth conditions for $K_o$-finite functions, in which case the following observation is essential:

\begin{Lem}\label{Borel}
Let $B\subseteq G$ be linear algebraic groups over $\mathbb{R}$, $K\subseteq G(\mathbb{R})$ be a compact subgroup, $U\subseteq B(\mathbb{R})$ be an open subset such that $UK$ is open in $G(\mathbb{R})$ and $U\times K\rightarrow UK,(u,k)\mapsto uk$ is bijective, then we have commutative diagrams
\[\begin{tikzcd}
C^\infty_\star(U)\otimes C^\infty(K)^{K\mbox{-}\mathrm{fin}}\rar{\sim}\dar[hook]&C^\infty_\star(UK)^{K\mbox{-}\mathrm{fin}}\dar[hook]\\
C^\infty(U)\otimes C^\infty(K)^{K\mbox{-}\mathrm{fin}}\rar{\sim}&C^\infty(UK)^{K\mbox{-}\mathrm{fin}},
\end{tikzcd}\]
where $\star=mg,dmg$ and the bottom horizontal map is
\begin{equation}\label{fugk}
f\otimes g\mapsto(uk\mapsto f(u)g(k)).
\end{equation}
\end{Lem}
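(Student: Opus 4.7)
The plan is to first establish the bottom horizontal isomorphism via Peter--Weyl and then check that the top map is a bijection in both directions. For the bottom: given $F\in C^\infty(UK)^{K\text{-fin}}$, the right $K$-translates span a finite-dimensional space, so I may choose $K$-finite $g_1,\dots,g_N\in C^\infty(K)$ (matrix coefficients of finitely many $K$-irreducibles) with $F(uk)=\sum_i f_i(u)g_i(k)$. For an $L^2(K)$-dual system $\{g_i^*\}$ one recovers $f_i(u)=\int_K F(uk)g_i^*(k)\,dk\in C^\infty(U)$, so the bottom map is bijective.

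For the forward direction of the top ($f_i\in C^\infty_\star(U)\Rightarrow F\in C^\infty_\star(UK)$), the case $\star=mg$ uses compactness of $K$ (boundedness of the $g_i$) and surjectivity of $\mathcal O_G\twoheadrightarrow\mathcal O_B$ to produce $F'\in\mathcal O_G$ with $|F(uk)|\leq F'(u)$. To upgrade this to a majorant of the form $F''(uk)$ with $F''\in\mathcal O_G$, I exploit local finiteness of the right regular action of the compact $K$ on $\mathcal O_G$: writing $F'(u)=F'(uk\cdot k^{-1})=\sum_j\gamma_j(k)F'_j(uk)$ with $F'_j\in\mathcal O_G$ spanning a $K$-stable subspace and $\gamma_j\in C(K)$ bounded, the crude inequality $|x|\leq 1+x^2$ gives $F''=C\sum_j(1+F'_j{}^2)\in\mathcal O_G$. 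The case $\star=dmg$ is handled by differentiating the identity $uk\exp(tX)=u\exp\!\bigl(t\,\mathrm{Ad}(k)X\bigr)k$; splitting $\mathfrak g_\mathbb R=\mathfrak b_\mathbb R\oplus\mathfrak k$ and invoking first-order BCH yields
\[(XF)(uk)=(Y(k)f)(u)\,g(k)+f(u)\,(\widetilde{Z(k)}g)(k),\]
where $\mathrm{Ad}(k)X=Y(k)+Z(k)$ depends $K$-finitely on $k$. Iterating, any $D\in\mathfrak U(\mathfrak g)$ applied to $f\otimes g$ is a finite sum $\sum_\alpha(D_\alpha f)(u)\,h_\alpha(k)$ with $D_\alpha\in\mathfrak U(\mathfrak b)$ and $h_\alpha\in C^\infty(K)^{K\text{-fin}}$, reducing $dmg$-growth on $UK$ to $mg$-growth of the finitely many $D_\alpha f$.

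For the reverse direction ($F\in C^\infty_\star(UK)^{K\text{-fin}}\Rightarrow f_i\in C^\infty_\star(U)$), the $mg$ case follows by integrating an $\mathcal O_G$-majorant of $F$ against $g_i^*$ and using the same compact-$K$-module trick to obtain an $\mathcal O_B$-majorant of $f_i$. For $dmg$, I introduce the ``partial'' derivative $\widetilde Y$ on the $U$-factor, which satisfies $\widetilde YF(uk)=(Yf)(u)g(k)$. The identity $u\exp(tY)k=uk\exp\!\bigl(t\,\mathrm{Ad}(k^{-1})Y\bigr)$ gives $\widetilde YF(uk)=\sum_j a_j(k)(X_jF)(uk)$ with $X_j$ a basis of $\mathfrak g_\mathbb R$ and $a_j\in C(K)^{K\text{-fin}}$. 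Iterating, for $D\in\mathfrak U(\mathfrak b)$ the function $(Df_i)(u)$ becomes an integral over $K$ of $\mathfrak U(\mathfrak g)$-derivatives of $F$ against $K$-finite weights, so the $mg$ case applies.

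The principal bookkeeping difficulty is the $\mathrm{Ad}(k)$-twist caused by the failure of the decomposition $g=uk$ to be preserved by right $\mathfrak g$-translation. Compactness of $K$ (boundedness of $\mathrm{Ad}(K)$) and the $K$-finiteness of matrix coefficients of $\mathrm{Ad}$ ensure that the twist contributes only $C^\infty(K)^{K\text{-fin}}$ coefficients, which is exactly what is needed both to keep the forward implication inside $C^\infty_\star$ and to run the inverse formula via $\{g_i^*\}$. The remaining subtlety, bridging polynomial growth on $B(\mathbb R)$ and on $G(\mathbb R)$, is handled uniformly by the same local-finiteness principle for the right regular action of $K$ on $\mathcal O_G$.
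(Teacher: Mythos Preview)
Your argument is correct and follows the same overall architecture as the paper's proof: establish the bottom isomorphism via $K$-finiteness, then check both inclusions for $\star=mg$ and deduce $\star=dmg$ by the $\mathrm{Ad}(k)$-twist computation with the splitting $\mathfrak g_{\mathbb R}=\mathfrak b_{\mathbb R}\oplus\mathfrak k$. Two implementation details differ. First, to recover the $f_i$ from $F$ the paper observes that each $f_i$ is a finite linear combination of the functions $u\mapsto F(uk_j)$ for suitable $k_j\in K$ (since point evaluations span the dual of the finite-dimensional $C^\infty(K)_S$), whereas you use an $L^2(K)$-dual system and integration; the paper's device is slightly cleaner because it immediately gives an $\mathcal O_G$-majorant without any averaging. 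Second, to pass an $\mathcal O_B$-bound on $u$ to an $\mathcal O_G$-bound on $uk$, the paper invokes the submultiplicative norm $\|\cdot\|$ from \cite[7]{Borel66} (so that $\|uk\|\ge C\|u\|$ for $k$ in the compact $K$), while you use local finiteness of the right regular $K$-action on $\mathcal O_G$ together with $|x|\le 1+x^2$. Both tricks are valid; yours is more intrinsic to the algebraic-group setting and avoids importing the specific norm, while the paper's is a one-line estimate once that norm is available.
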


\begin{proof}
We first show that (\ref{fugk}) is an isomorphism by giving the inverse. For every $h\in C^\infty(UK)^{K\mbox{-}\mathrm{fin}}$, $\mathrm{span}\{h(\cdot\,k):k\in K\}$ is a finite-dimensional representation of $K$. Let $S\subseteq\widehat{K}$ be the finite set of $K$-types it contains, then the image of
\[F_h:U\rightarrow C^\infty(K)^{K\mbox{-}\mathrm{fin}},\quad u\mapsto h(u\,\cdot)\]
is contained in $C^\infty(K)_S$, which is finite-dimensional. As evaluations at points in $K$ span the dual of $C^\infty(K)_S$, we see that $F_h$ is smooth, i.e.
\[F_h\in C^\infty(U)\otimes C^\infty(K)_S\subseteq C^\infty(U)\otimes C^\infty(K)^{K\mbox{-}\mathrm{fin}}.\]
It is clear that $h\mapsto F_h$ and (\ref{fugk}) are inverse to each other.

The above proof indicates that if $g_1,...,g_n\in C^\infty(K)^{K\mbox{-}\mathrm{fin}}$ are linearly independent and $h=f_1\otimes g_1+...+f_n\otimes g_n$ via (\ref{fugk}), then
\[\textrm{each }f_m\textrm{ is a linear combination of }u\mapsto h(uk_i)\textrm{ for finitely many }k_i\in K.\label{lincomb}\tag{$\ddagger$}\]
Therefore if $h\in C^\infty_{\mathrm{mg}}(UK)^{K\mbox{-}\mathrm{fin}}$ then every $f_m$ is bounded by some element of $\mathcal{O}_G$, which restricts to an element of $\mathcal{O}_B$, so $f_m\in C^\infty_{\mathrm{mg}}(U)$ and we see that
\[C^\infty_{\mathrm{mg}}(UK)^{K\mbox{-}\mathrm{fin}}\subseteq C^\infty_{\mathrm{mg}}(U)\otimes C^\infty(K)^{K\mbox{-}\mathrm{fin}}.\]
Conversely, if $f\in C^\infty_{\mathrm{mg}}(U)$, it is bounded by some element of $\mathcal{O}_G$ since $\mathcal{O}_G\rightarrow\mathcal{O}_B$ is surjective, and without loss of generality we can assume the element is some $c\|\cdot\|^N$. As the norm $\|\cdot\|$ is continuous and satisfies $\|xy\|\leq\|x\|\|y\|$ \cite[7]{Borel66} and $K$ is compact, there exists a $C>0$ such that
\[\|uk\|\geq C\|u\|,\forall u\in U,k\in K,\]
so $f\otimes 1$ and hence any $f\otimes g$ are bounded by another multiple of $\|\cdot\|^N$. Thus it also holds that
\[C^\infty_{\mathrm{mg}}(U)\otimes C^\infty(K)^{K\mbox{-}\mathrm{fin}}\subseteq C^\infty_{\mathrm{mg}}(UK)^{K\mbox{-}\mathrm{fin}}.\]

Next since we have
\[Y_1...Y_l(h(\cdot\,k))=(\mathrm{Ad}(k)^{-1}Y_1...\mathrm{Ad}(k)^{-1}Y_lh)(\cdot\,k),\forall k\in K,Y_1,...,Y_l\in\mathfrak{b},\]
(\ref{lincomb}) also implies that
\[C^\infty_{\mathrm{dmg}}(UK)^{K\mbox{-}\mathrm{fin}}\subseteq C^\infty_{\mathrm{dmg}}(U)\otimes C^\infty(K)^{K\mbox{-}\mathrm{fin}}.\]
On the other hand, note that $\mathfrak{g}=\mathfrak{b}\oplus\mathfrak{k}$ by assumption, let $Y_1,...,Y_p$ be a basis of $\mathfrak{b}$, $Z_1,...,Z_q$ be a basis of $\mathfrak{k}$, then for every $X\in\mathfrak{g}$, there exist $c_1,...,c_p,d_1,...,d_q$ in $C^\infty(K)^{K\mbox{-}\mathrm{fin}}$ such that
\[\mathrm{Ad}(k)X=\sum_{i=1}^p c_i(k)Y_i+\mathrm{Ad}(k)\sum_{j=1}^q d_j(k)Z_j,\forall k\in K,\]
and then for $h=f\otimes g$,
\[\begin{aligned}
Xh(uk)&=\left.\frac{\mathrm{d}}{\mathrm{d}t}\right|_{t=0}h(uke^{tX})\\
&=\sum_{i=1}^p\left.\frac{\mathrm{d}}{\mathrm{d}t}\right|_{t=0}h(ue^{tc_i(k)Y_i}k)+\sum_{j=1}^q\left.\frac{\mathrm{d}}{\mathrm{d}t}\right|_{t=0}h(uke^{td_j(k)Z_j})\\
&=\left.\frac{\mathrm{d}}{\mathrm{d}t}\right|_{t=0}(\sum_{i=1}^p f(ue^{tc_i(k)Y_i})g(k)+\sum_{j=1}^q f(u)g(ke^{td_j(k)Z_j}))\\
&=\sum_{i=1}^p Y_if(u)c_i(k)g(k)+\sum_{j=1}^q f(u)d_j(k)Z_jg(k),
\end{aligned}\]
that is,
\[Xh=\sum_{i=1}^p Y_if\otimes c_ig+\sum_{j=1}^q f\otimes d_jZ_jg.\]
By iterating this formula we see for every $D\in\mathfrak{U}(\mathfrak{g})$, $Dh$ is a finite sum $\sum D'f\otimes g'$, where every $D'\in\mathfrak{U}(\mathfrak{b})$ and every $g'\in C^\infty(K)^{K\mbox{-}\mathrm{fin}}$. Therefore
\[C^\infty_{\mathrm{dmg}}(U)\otimes C^\infty(K)^{K\mbox{-}\mathrm{fin}}\subseteq C^\infty_{\mathrm{dmg}}(UK)^{K\mbox{-}\mathrm{fin}}.\qedhere\]
\end{proof}

\subsubsection{Definition of the sheaves}

\begin{Def}\label{mgsheaves}
For each finite-dimensional holomorphic representation $V$ of $P_h(\mathbb{C})$ and $i\geq 0$, according to (\ref{A0i}) $j_*(\widetilde{V}\otimes_{\mathcal{O}_{\mathrm{Sh}_\mathbb{K}}}\mathcal{A}^{0,i}_{\mathrm{Sh}_\mathbb{K}})$ is the sheaf
\[U\mapsto\mathrm{Hom}_{K_o}(\textstyle\bigwedge^i(\mathfrak{p}_o/\mathfrak{k}_o),C^\infty(\pi_o^{-1}(U))\otimes V).\]
Let $\mathcal{I}_{V}^i=\mathcal{I}_{V,\mathbb{K},\Sigma}^i$ be the sheafification of the subpresheaf
\[U\mapsto\mathrm{Hom}_{K_o}(\textstyle\bigwedge^i(\mathfrak{p}_o/\mathfrak{k}_o),C^\infty_{\mathrm{dmg}}(\pi_o^{-1}(U))\otimes V),\]
where elements of $C^\infty(\pi_o^{-1}(U))$ are viewed as functions on an open subset of $G(\mathbb{A})$. When $V=\mathbb{C}$ is the trivial representation, write
\[\mathcal{A}^{0,i}_{\mathrm{dmg}}:=\mathcal{I}_\mathbb{C}^i\textrm{ and }\mathcal{C}^\infty_{\mathrm{dmg}}:=\mathcal{I}_\mathbb{C}^0,\]
so $\mathcal{C}^\infty_{\mathrm{dmg}}$ is the sheafification of the subpresheaf $U\mapsto C^\infty_{\mathrm{dmg}}(\pi_o^{-1}(U))^{K_o}$ of
\[j_*\mathcal{C}^\infty_{\mathrm{Sh}_\mathbb{K}}:U\mapsto C^\infty(j^{-1}(U))\cong C^\infty(\pi_o^{-1}(U))^{K_o}.\]
Analogously, let $\mathcal{C}^\infty_{\mathrm{mg}}$ be the sheafification of subpresheaf $U\mapsto C_{\mathrm{mg}}^\infty(\pi_o^{-1}(U))^{K_o}$.
\end{Def}

\begin{Prop}
$\mathcal{C}^\infty_{\mathrm{mg}},\mathcal{C}^\infty_{\mathrm{dmg}}$ are sheaves of rings, and every $\mathcal{I}_V^i$ is a $\mathcal{C}^\infty_{\mathrm{dmg}}$-module.
\end{Prop}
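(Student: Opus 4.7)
The plan is to reduce both assertions to algebraic facts at the level of the defining subpresheaves and then invoke the general principle that, if $\mathcal{F}_0$ is a subpresheaf of a sheaf $\mathcal{F}$ such that the ring (resp.\ module) operations of $\mathcal{F}$ restrict to $\mathcal{F}_0$, then the sheafification of $\mathcal{F}_0$ sits inside $\mathcal{F}$ as a subsheaf of rings (resp.\ modules). This follows in a single step: given finitely many sections of the sheafification over $U$, a common refinement of the covers witnessing each in $\mathcal{F}_0$ exhibits every ring-theoretic combination of them as again locally in $\mathcal{F}_0$.

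First I would treat the sheaves of rings. The ambient sheaf $j_*\mathcal{C}^\infty_{Sh_\mathbb{K}}$ is a sheaf of rings, and by Proposition \ref{ring} the spaces $C^\infty_{mg}(\pi_o^{-1}(U))$ and $C^\infty_{dmg}(\pi_o^{-1}(U))$ are subrings of $C^\infty(\pi_o^{-1}(U))$. Since $K_o$ acts on $C^\infty(\pi_o^{-1}(U))$ by right translation, which is a ring automorphism, the subspaces of $K_o$-invariants are again subrings. Hence the subpresheaves used in Definition \ref{mgsheaves} are subpresheaves of rings of $j_*\mathcal{C}^\infty_{Sh_\mathbb{K}}$, and the principle above applies.

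Next I would handle the $\mathcal{C}^\infty_{dmg}$-module structure on $\mathcal{I}_V^i$. The ambient sheaf $j_*(\widetilde{V}\otimes_{\mathcal{O}_{Sh_\mathbb{K}}}\mathcal{A}^{0,i})$ is a module over $j_*\mathcal{C}^\infty_{Sh_\mathbb{K}}$, with scalars acting on the values of a $\mathrm{Hom}_{K_o}$-element. At the presheaf level, for $f\in C^\infty_{dmg}(\pi_o^{-1}(U))^{K_o}$ and
\[\omega\in\mathrm{Hom}_{K_o}\bigl({\bigwedge}^i(\mathfrak{p}_o/\mathfrak{k}_o),C^\infty_{dmg}(\pi_o^{-1}(U))\otimes V\bigr),\]
the product $(f\omega)(X):=f\cdot\omega(X)$ lies in $C^\infty_{dmg}(\pi_o^{-1}(U))\otimes V$ because $C^\infty_{dmg}$ is a ring (Proposition \ref{ring} again), while the $K_o$-invariance of $f$ together with the $K_o$-equivariance of $\omega$ make $f\omega$ $K_o$-equivariant. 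Thus the defining subpresheaves satisfy the hypotheses of the general principle, and sheafification produces the claimed $\mathcal{C}^\infty_{dmg}$-module structure on $\mathcal{I}_V^i$.

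I do not expect any genuine obstacle in this proposition; the substance is already packaged in Proposition \ref{ring}, and the only mildly formal point is the sheafification lemma, which I would either invoke as standard or dispatch in one line by the common-refinement argument sketched above.
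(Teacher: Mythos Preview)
Your proposal is correct and follows essentially the same approach as the paper: the paper's proof is the single sentence ``Proposition \ref{ring} implies that the presheaves defining these sheaves satisfy the claims, and sheafification preserves these properties,'' and you have simply unpacked each clause of that sentence in more detail.
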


\begin{proof}
Proposition \ref{ring} implies that the presheaves defining these sheaves satisfy the claims, and sheafification preserves these properties.
\end{proof}

\begin{Prop}\label{mgsections}
For every open subset $U\subseteq\mathrm{Sh}_{\mathbb{K},\Sigma}$, $\mathcal{C}^\infty_{\mathrm{mg}}(U)$ consists of functions $f\in C^\infty(\pi_o^{-1}(U))^{K_o}$ such that for every relatively compact open subset $W\subseteq U$, $f|_{\pi_o^{-1}(W)}\in C^\infty_{\mathrm{mg}}(\pi_o^{-1}(W))^{K_o}$. Similarly, $\mathcal{I}_V^i(U)$ consists of homomorphisms
\[\varphi\in\mathrm{Hom}_{K_o}(\textstyle\bigwedge^i(\mathfrak{p}_o/\mathfrak{k}_o),C^\infty(\pi_o^{-1}(U))\otimes V)\]
such that for any $Y_1,...,Y_i\in\mathfrak{p}_o/\mathfrak{k}_o$ and $W$ as above,
\[\varphi(Y_1\!\wedge\!...\!\wedge\!Y_i)|_{\pi_o^{-1}(W)}\in C^\infty_{\mathrm{dmg}}(\pi_o^{-1}(W))\otimes V.\]
In particular, since $\mathrm{Sh}_{\mathbb{K},\Sigma}$ is compact,
\[\begin{aligned}
\mathcal{I}_V^i(\mathrm{Sh}_{\mathbb{K},\Sigma})&=\mathrm{Hom}_{K_o}(\textstyle\bigwedge^i(\mathfrak{p}_o/\mathfrak{k}_o),C^\infty_{\mathrm{dmg}}(\pi_o^{-1}(\mathrm{Sh}_{\mathbb{K},\Sigma}))\otimes V)\\
&=\mathrm{Hom}_{K_o}(\textstyle\bigwedge^i(\mathfrak{p}_o/\mathfrak{k}_o),C^\infty_{\mathrm{dmg}}(G)^\mathbb{K}\otimes V).
\end{aligned}\]
\end{Prop}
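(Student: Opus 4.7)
The content is to identify sections of the sheafification of a subpresheaf of a sheaf, so I would start by recording the following generality: if $\mathcal{P}\subseteq\mathcal{F}$ is a subpresheaf of a sheaf $\mathcal{F}$ and $\mathcal{P}^+$ denotes its sheafification, then $\mathcal{P}^+(U)$ consists of those $s\in\mathcal{F}(U)$ such that every $x\in U$ admits an open neighborhood $U_x\subseteq U$ with $s|_{U_x}\in\mathcal{P}(U_x)$. Applied here, a section $\varphi$ of $\mathcal{C}^\infty_{mg}$ (resp.\ $\mathcal{I}_V^i$) over $U$ is a section of the ambient sheaf whose restriction to some open cover $\{U_\alpha\}$ of $U$ lies in the presheaf $U\mapsto C^\infty_{mg}(\pi_o^{-1}(U))^{K_o}$ (resp.\ its $\mathrm{Hom}_{K_o}$-analogue). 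This reduces the proposition to comparing the local condition ``$\varphi$ is locally in the presheaf'' with the global condition ``$\varphi$ satisfies the moderate growth bound on $\pi_o^{-1}(W)$ for every relatively compact open $W\subseteq U$.''

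For the direction from local to relatively-compact: pick such a cover $\{U_\alpha\}$ and a relatively compact open $W\subseteq U$ with $\overline{W}\subseteq U$. Compactness of $\overline{W}$ extracts a finite subcover $U_{\alpha_1},\dots,U_{\alpha_n}$, on each of which $\varphi(Y_1\wedge\cdots\wedge Y_i)$ satisfies an $mg$ bound. The key ingredient, which is really the only input beyond bookkeeping, is that these finitely many bounds can be combined into one bound on $\pi_o^{-1}(W)\subseteq\bigcup_\ell\pi_o^{-1}(U_{\alpha_\ell})$: this is where I invoke the weight function $\rho$ of Remark \ref{rho}, which is a single global function with positive lower bound, so local bounds $|f|\leq c_\ell\rho^{N_\ell}$ become a uniform bound $|f|\leq c\rho^N$ after taking $N=\max_\ell N_\ell$ and absorbing the constants using $\rho^{-1}$ bounded above. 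Applying the same argument to every $Df$ with $D\in\mathfrak{U}(\mathfrak{g})$ upgrades $mg$ to $dmg$, and applying it to each of finitely many basis elements $Y_1,\dots,Y_i$ of $\mathfrak{p}_o/\mathfrak{k}_o$ handles $\mathcal{I}_V^i$.

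The converse direction is immediate: given $\varphi$ satisfying the condition on all relatively compact open $W\subseteq U$, each $x\in U$ admits an open neighborhood $W_x$ with $\overline{W_x}\subseteq U$ and $\overline{W_x}$ compact (using that $Sh_{\mathbb{K},\Sigma}$ is a compact Hausdorff space), and the hypothesis applied to $W_x$ places $\varphi|_{W_x}$ in the presheaf. Finally, the global-sections formula is the special case $U=W=Sh_{\mathbb{K},\Sigma}$, which is compact and hence trivially relatively compact in itself. The identifications $C^\infty_{dmg}(\pi_o^{-1}(Sh_{\mathbb{K},\Sigma}))=C^\infty_{dmg}([G]/\mathbb{K})=C^\infty_{dmg}([G])^\mathbb{K}$ together with the automatic $K_o$-finiteness of any $K_o$-homomorphism out of the finite-dimensional $\bigwedge^i(\mathfrak{p}_o/\mathfrak{k}_o)$ convert the target to $C^\infty_{dmg}(G)^\mathbb{K}\otimes V$, yielding the stated expression.

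I do not expect a genuine obstacle here; the whole proof is a compactness argument tied together by the single global weight function $\rho$, and the role of that proposition is essentially to confirm that the sheafification does not introduce more global sections than one would naively expect.
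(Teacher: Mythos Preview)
Your proposal is correct and follows the same compactness argument as the paper's proof, which is terse and simply says that a relatively compact $W$ is covered by finitely many $U_\alpha$ and concludes directly. You supply the one detail the paper leaves implicit, namely how to merge finitely many moderate-growth bounds into a single one, and your use of the weight function $\rho$ is a clean way to do this (equivalently one could use the norm $\|\cdot\|$ via the remark after Definition~\ref{mg}).
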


\begin{proof}
By definition for every $f\in\mathcal{C}^\infty_{\mathrm{mg}}(U)$ there is an open cover $\{U_\alpha\}$ of $U$ such that
\[f|_{\pi_o^{-1}(U_\alpha)}\in C^\infty_{\mathrm{mg}}(\pi_o^{-1}(U_\alpha))^{K_o}\]
for each $\alpha$. As $W$ is relatively compact in $U$, it is covered by finitely many $U_\alpha$, so $f|_{\pi_o^{-1}(W)}\in C^\infty_{\mathrm{mg}}(\pi_o^{-1}(W))^{K_o}$. Conversely as manifolds are locally compact, every $f$ as described in the proposition belongs to $\mathcal{C}^\infty_{\mathrm{mg}}(U)$. The proof for $\mathcal{I}_V^i$ is similar.
\end{proof}

By the same arguments we have:

\begin{Prop}\label{lgsections}
For every open subset $U\subseteq\Delta^n$, $\mathcal{C}^\infty_{\mathrm{lg},r}(U)$ consists of functions $f\in C^\infty(j_{n,r}^{-1}(U))$ such that for every relatively compact open subset $W\subseteq U$, $f|_{j_{n,r}^{-1}(W)}$ is bounded by a polynomial in $\log{\frac{1}{|z_1|}},...,\log{\frac{1}{|z_r|}}$.
\end{Prop}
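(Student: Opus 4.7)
The plan is to mimic the argument of Proposition \ref{mgsections} line by line, since the only ingredients used there are (i) the defining property of sheafification, (ii) local compactness of the ambient manifold, and (iii) closure of the local growth condition under taking maxima over finite covers; all three transfer verbatim to the polylogarithmic situation.

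For the forward inclusion, suppose $f \in \mathcal{C}^\infty_{lg,r}(U)$. By definition of sheafification there exists an open cover $\{U_\alpha\}$ of $U$ such that $f|_{j_{n,r}^{-1}(U_\alpha)}$ lies in the defining presheaf, that is, is bounded by some polynomial $P_\alpha$ in $\log\tfrac{1}{|z_1|},\ldots,\log\tfrac{1}{|z_r|}$. If $W \subseteq U$ is relatively compact, then $\overline{W}$ is compact and hence covered by finitely many $U_{\alpha_1},\ldots,U_{\alpha_N}$. On $j_{n,r}^{-1}(W)$ the function $|f|$ is then bounded by $\max_i P_{\alpha_i}$, and since the maximum (or sum) of finitely many polynomials with nonnegative values on the first-quadrant domain of the variables $\log\tfrac{1}{|z_i|}$ is itself dominated by a single polynomial in those variables, we obtain the required global polynomial bound on $j_{n,r}^{-1}(W)$.

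For the reverse inclusion, suppose $f \in C^\infty(j_{n,r}^{-1}(U))$ satisfies the polynomial bound on every relatively compact open $W \subseteq U$. Since $\Delta^n$ is a locally compact Hausdorff manifold, each point $x \in U$ admits an open neighborhood $U_x \subseteq U$ which is relatively compact in $U$. The restriction $f|_{j_{n,r}^{-1}(U_x)}$ then belongs to the defining presheaf by hypothesis. Since $\{U_x\}_{x\in U}$ is an open cover of $U$, the universal property of sheafification shows $f \in \mathcal{C}^\infty_{lg,r}(U)$.

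There is no real obstacle: the statement is purely formal, tracking the interplay between sheafification and a locally-defined growth condition. The only place to be slightly careful is checking that ``polynomial in $\log\tfrac{1}{|z_i|}$'' is stable under taking finite maxima, but this is immediate because each such quantity is nonnegative on $j_{n,r}^{-1}(U)$ and the sum of the relevant polynomials dominates each of them.
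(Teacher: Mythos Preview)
Your proposal is correct and follows exactly the approach the paper intends: the paper proves this proposition with the single line ``By the same arguments we have,'' referring back to the proof of Proposition~\ref{mgsections}, and your write-up is precisely that argument spelled out for the polylogarithmic growth condition.
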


Consider the differentials
\[j_*\overline\partial:j_*(\widetilde{V}\otimes_{\mathcal{O}_{\mathrm{Sh}_\mathbb{K}}}\mathcal{A}^{0,i}_{\mathrm{Sh}_\mathbb{K}})\rightarrow j_*(\widetilde{V}\otimes_{\mathcal{O}_{\mathrm{Sh}_\mathbb{K}}}\mathcal{A}^{0,i+1}_{\mathrm{Sh}_\mathbb{K}}).\]
By (\ref{partial}), the formula \cite[2.127b]{KV95} and Definition \ref{mg}, $\mathcal{I}_{V}^i$ is mapped into $\mathcal{I}_{V}^{i+1}$, so we get a complex of sheaves
\[(\mathcal{I}_{V}^*,j_*\overline\partial).\]
Then Proposition \ref{mgsections} implies that the global sections of this complex can be identified with the relative Chevalley-Eilenberg complex for
\[H^*_{(\mathfrak{p}_o,K_o)}(C^\infty_{\mathrm{dmg}}(G)^\mathbb{K}\otimes V).\]

\subsubsection{$(\phi_\sigma^*\mathcal{C}^\infty_{\mathrm{mg}})|_\Omega$ and $(\phi_\sigma^*\mathcal{A}^{0,i}_{\mathrm{dmg}})|_\Omega$}

To establish Proposition \ref{dmglocal} we want to treat $\mathcal{C}^\infty_{\mathrm{dmg}}$ and $\mathcal{C}^\infty_{\mathrm{dlg},r}$ in parallel. We've already seen the parallelism between Propositions \ref{mgsections} and \ref{lgsections}, while a relation between $\mathcal{C}^\infty_{\mathrm{mg}}$ and $\mathcal{C}^\infty_{\mathrm{dmg}}$ similar to (\ref{dlg}) would also be desirable. Here we formulate such a relation locally.

Let $\Omega\subseteq(U(F)_\mathbb{Z}\backslash D)_\sigma$ be a $\sigma$-neighborhood of a point in $\mathrm{Sh}_{\mathbb{K},\Sigma}$ for a $\sigma\in\Sigma(\gamma)_F$. When $\gamma$ is understood, denote by
\[\pi_F:U(F)_\mathbb{Z}\backslash G(\mathbb{R})^+/A_G(\mathbb{R})^\circ\rightarrow U(F)_\mathbb{Z}\backslash D\]
the quotient map, then we have a commutative diagram
\[\begin{tikzcd}
U(F)_\mathbb{Z}\backslash G(\mathbb{R})^+/A_G(\mathbb{R})^\circ\arrow[rr]\dar{\pi_F}&&{[G]/\mathbb{K}}\dar{\pi_o}\\
U(F)_\mathbb{Z}\backslash D\rar{\phi_F}&\Gamma(\gamma)\backslash D\rar[hook]&\mathrm{Sh}_\mathbb{K},
\end{tikzcd}\]
where the top horizontal map is induced from $G(\mathbb{R})^+\cong G(\mathbb{R})^+\times\{\gamma\}\subseteq G(\mathbb{A})$ and both vertical maps are principal $K_o$-bundles. As $\phi_\sigma$ maps $\Omega$ homeomorphically to $\phi_\sigma(\Omega)$, the map from $\pi_F^{-1}(\Omega)$ to $\pi_o^{-1}(\phi_\sigma(\Omega))$ is a bijection and induces $(\mathfrak{g},K_o)$-module isomorphisms
\begin{equation}\label{ringisom1}
\begin{tikzcd}
C^\infty_\star(\pi_o^{-1}(\phi_\sigma(U)))^{K_o\mbox{-}\mathrm{fin}}\rar{\sim}\dar[hook]&C^\infty_\star(\pi_F^{-1}(U))^{K_o\mbox{-}\mathrm{fin}}\dar[hook]\\
C^\infty(\pi_o^{-1}(\phi_\sigma(U)))^{K_o\mbox{-}\mathrm{fin}}\rar{\sim}&C^\infty(\pi_F^{-1}(U))^{K_o\mbox{-}\mathrm{fin}}
\end{tikzcd}
\end{equation}
for every open subset $U\subseteq\Omega$, where $\star=mg$ or $dmg$, elements of $C^\infty(\pi_F^{-1}(U))$ are viewed as functions on an open subset of $G(\mathbb{R})$ and the identification of growth conditions follows from definition.

Take a subgroup $\mathscr{B}$ of $G_\mathbb{R}$ as follows: in (\ref{GhGl}), take a minimal $\mathbb{R}$-parabolic $\mathscr{P}_l\subseteq\mathscr{G}_l(F)$ and a Langlands decomposition $\mathscr{P}_l=\mathscr{N}_l\mathscr{A}_l\mathscr{M}_l$ such that $\mathscr{M}_l(\mathbb{R})^\circ\subseteq G_l(F)\cap K_o$, then $\mathscr{B}_l:=\mathscr{N}_l\mathscr{A}_l$ is an $\mathbb{R}$-split solvable subgroup of $\mathscr{G}_l(F)$ satisfying that $G_l(F)=\mathscr{B}_l(\mathbb{R})^\circ(G_l(F)\cap K_o)$. Take an $\mathbb{R}$-split solvable subgroup $\mathscr{B}_h\subseteq\mathscr{G}_h(F)$ such that $G_h(F)=\mathscr{B}_h(\mathbb{R})^\circ(G_h(F)\cap K_o)$ in the same way and let
\[\mathscr{B}:=\mathscr{W}(F)\rtimes(\mathscr{B}_h\cdot\mathscr{B}_l\cdot A_G),\]
then $G(\mathbb{R})^+=\mathscr{B}(\mathbb{R})^\circ K_o$ and $\pi_F$ restricts to a homeomorphism
\[\pi_\mathscr{B}:U(F)_\mathbb{Z}\backslash\mathscr{B}(\mathbb{R})^\circ/A_G(\mathbb{R})^\circ\xrightarrow{\sim}U(F)_\mathbb{Z}\backslash D,\]
where the domain has a decomposition
\begin{equation}\label{B}
U(F)_\mathbb{Z}\backslash\mathscr{B}(\mathbb{R})^\circ/A_G(\mathbb{R})^\circ\cong U(F)_\mathbb{Z}\backslash(W(F)\rtimes(\mathscr{B}_h(\mathbb{R})^\circ\times\mathscr{B}_l(\mathbb{R})^\circ)).
\end{equation}
For each open subset $U\subseteq\Omega$, let $U_\mathscr{B}$ be the preimage of $\pi_\mathscr{B}^{-1}(U)$ in $\mathscr{B}(\mathbb{R})^\circ$, then $U_\mathscr{B}K_o$ is the preimage of $\pi_F^{-1}(U)$ in $G(\mathbb{R})^+$ which is open and $U_\mathscr{B}\subseteq\mathscr{B}(\mathbb{R})^\circ$ intersects $K_o$ trivially, thus Lemma \ref{Borel} gives
\begin{equation}\label{ringisom2}
\begin{tikzcd}
C^\infty_{\mathrm{mg}}(\pi_F^{-1}(U))^{K_o}\rar{\sim}\dar[hook]&C^\infty_{\mathrm{mg}}(\pi_\mathscr{B}^{-1}(U))\dar[hook]\\
C^\infty(\pi_F^{-1}(U))^{K_o}\rar{\sim}&C^\infty(\pi_\mathscr{B}^{-1}(U))
\end{tikzcd}
\end{equation}
and
\begin{equation}\label{modisom}
\begin{tikzcd}
\mathrm{Hom}_{K_o}(\bigwedge^i(\mathfrak{p}_o/\mathfrak{k}_o),C^\infty_{\mathrm{dmg}}(\pi_F^{-1}(U)))\rar{\sim}\dar[hook]&C^\infty_{\mathrm{dmg}}(\pi_\mathscr{B}^{-1}(U))\otimes(\bigwedge^i(\mathfrak{p}_o/\mathfrak{k}_o))^*\dar[hook]\\
\mathrm{Hom}_{K_o}(\bigwedge^i(\mathfrak{p}_o/\mathfrak{k}_o),C^\infty(\pi_F^{-1}(U)))\rar{\sim}&C^\infty(\pi_\mathscr{B}^{-1}(U))\otimes(\bigwedge^i(\mathfrak{p}_o/\mathfrak{k}_o))^*,
\end{tikzcd}
\end{equation}
where elements of $C^\infty(\pi_\mathscr{B}^{-1}(U))$ are thought of as functions on $U_\mathscr{B}\subseteq\mathscr{B}(\mathbb{R})^\circ$ and in the second diagram we've incorporated that
\[\mathrm{Hom}_{K_o}(\pi,C^\infty(K_o)^{K_o\mbox{-}\mathrm{fin}})\cong\pi^*\]
for every finite-dimensional representation $\pi$ of the compact group $K_o$.

Combining (\ref{ringisom2}), (\ref{modisom}) and (\ref{ringisom1}) we get:

\begin{Prop}\label{sheafB}
$(i)$ $(\phi_\sigma^*j_*\mathcal{C}^\infty_{\mathrm{Sh}_\mathbb{K}})|_\Omega$ is isomorphic to the sheaf
\[U\mapsto C^\infty(\pi_\mathscr{B}^{-1}(U)),\]
in which $(\phi_\sigma^*\mathcal{C}^\infty_{\mathrm{mg}})|_\Omega$ is the sheafification of the subpresheaf
\[U\mapsto C^\infty_{\mathrm{mg}}(\pi_\mathscr{B}^{-1}(U)).\]
$(ii)$ Similarly, $(\phi_\sigma^*j_*\mathcal{A}^{0,i}_{\mathrm{Sh}_\mathbb{K}})|_\Omega$ is isomorphic to the sheaf
\[U\mapsto C^\infty(\pi_\mathscr{B}^{-1}(U))\otimes(\textstyle\bigwedge^i)^*,\]
in which $(\phi_\sigma^*\mathcal{A}^{0,i}_{\mathrm{dmg}})|_\Omega$ is the sheafification of the subpresheaf
\[U\mapsto C^\infty_{\mathrm{dmg}}(\pi_\mathscr{B}^{-1}(U))\otimes(\textstyle\bigwedge^i)^*.\]
\end{Prop}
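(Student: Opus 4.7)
The plan is to thread together the three isomorphisms \eqref{ringisom1}, \eqref{ringisom2} and \eqref{modisom} already set up in the preceding paragraphs, and to verify that the identifications they provide are compatible with sheafification. The only substantive input beyond bookkeeping is Lemma \ref{Borel}, which has already been used to produce \eqref{ringisom2} and \eqref{modisom}.

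Since $\phi_\sigma$ restricts to a homeomorphism of $\Omega$ onto its image $\phi_\sigma(\Omega)\subseteq Sh_{\mathbb{K},\Sigma}$, the pullback functor $\phi_\sigma^*$ on sheaves over the target amounts to ordinary restriction along this homeomorphism: for any sheaf $\mathcal{F}$ on $Sh_{\mathbb{K},\Sigma}$ and any open $U\subseteq\Omega$, sections of $(\phi_\sigma^*\mathcal{F})|_\Omega$ over $U$ coincide with sections of $\mathcal{F}$ over $\phi_\sigma(U)$. In particular, pullback along $\phi_\sigma|_\Omega$ commutes with sheafification.

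For (i), I would apply this observation with $\mathcal{F}=j_*\mathcal{C}^\infty_{Sh_\mathbb{K}}$, whose sections over $\phi_\sigma(U)$ are $C^\infty(\pi_o^{-1}(\phi_\sigma(U)))^{K_o}$. The bottom row of \eqref{ringisom1} identifies this with $C^\infty(\pi_F^{-1}(U))^{K_o}$, and the bottom row of \eqref{ringisom2} then identifies it with $C^\infty(\pi_\mathscr{B}^{-1}(U))$; composing gives the first claim. The top rows of the same diagrams transport the defining subpresheaf $U\mapsto C^\infty_{mg}(\pi_o^{-1}(\phi_\sigma(U)))^{K_o}$ of $\mathcal{C}^\infty_{mg}$ onto the subpresheaf $U\mapsto C^\infty_{mg}(\pi_\mathscr{B}^{-1}(U))$, and sheafifying produces the stated description of $(\phi_\sigma^*\mathcal{C}^\infty_{mg})|_\Omega$ as a subsheaf of the ambient sheaf just identified.

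For (ii), the argument is entirely parallel: taking $\mathcal{F}=j_*\mathcal{A}^{0,i}$, its sections over $\phi_\sigma(U)$ are $\mathrm{Hom}_{K_o}(\bigwedge^i(\mathfrak{p}_o/\mathfrak{k}_o),C^\infty(\pi_o^{-1}(\phi_\sigma(U))))$ by \eqref{A0i} applied with $V=\mathbb{C}$; the bottom row of \eqref{ringisom1} turns this into $\mathrm{Hom}_{K_o}(\bigwedge^i,C^\infty(\pi_F^{-1}(U)))$, and the bottom row of \eqref{modisom} turns it into $C^\infty(\pi_\mathscr{B}^{-1}(U))\otimes(\bigwedge^i)^*$. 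The top rows similarly transport the defining subpresheaf of $\mathcal{A}^{0,i}_{dmg}$ onto $U\mapsto C^\infty_{dmg}(\pi_\mathscr{B}^{-1}(U))\otimes(\bigwedge^i)^*$. There is no real obstacle here; the genuine analytic work on growth conditions is deferred to Proposition \ref{dmglocal}, for which the present proposition is simply the reformulation that lets one compute sections of $\phi_\sigma^*\mathcal{C}^\infty_{mg}$ in terms of functions on the $\mathscr{B}$-slice $U_\mathscr{B}\subseteq\mathscr{B}(\mathbb{R})^o$.
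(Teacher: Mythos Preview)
Your proposal is correct and matches the paper's approach exactly: the paper simply states that the proposition follows by ``Combining \eqref{ringisom2}, \eqref{modisom} and \eqref{ringisom1},'' and you have spelled out precisely how those identifications thread together, together with the routine observation that $\phi_\sigma|_\Omega$ is a homeomorphism so that pullback commutes with sheafification.
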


Now consider the following constructions: for a ring $A$, denote by $\mathrm{Der}(A)$ the set of $\mathbb{Z}$-derivations $D:A\rightarrow A$.

\begin{Def}
For every collection $\underline{D}$ of elements of $\mathrm{Der}(A)$, denote by $\mathfrak{U}(\underline{D})$ the subring of $\mathrm{End}_\mathbb{Z}(A)$ generated by the derivations in $\underline{D}$.
\end{Def}

\begin{Def}
For $A,\underline{D}$ as above and every subring $B\subseteq A$, define
\[B_{\underline{D}}:=\big\{b\in B:\forall D\in\mathfrak{U}(\underline{D}),Db\in B\big\}.\]
It is the largest subring of $B$ preserved by the derivations in $\underline{D}$.
\end{Def}

\begin{Ex}
In Definition \ref{mg}, let $\underline{D}$ be a basis of $\mathfrak{g}_\mathbb{R}$, then
\[C^\infty_{\mathrm{dmg}}(U)=C^\infty_{\mathrm{mg}}(U)_{\underline{D}}.\]
\end{Ex}

\begin{Ex}\label{Dz}
(\ref{dlg}) can also be written as
\[\mathcal{C}^\infty_{\mathrm{dlg},r}(U)=\mathcal{C}^\infty_{\mathrm{lg},r}(U)_{\underline{D}_z}.\]
\end{Ex}

Elements of $\mathfrak{b}_\mathbb{R}$ give rise to left-invariant vector fields on $\mathscr{B}(\mathbb{R})^\circ$ and act on the rings $C^\infty(\pi_\mathscr{B}^{-1}(U))$, where $\mathfrak{a}_{G,\mathbb{R}}$ acts trivially since functions in $C^\infty(\pi_\mathscr{B}^{-1}(U))$ are all $A_G(\mathbb{R})^\circ$-invariant. Note that $\dim{\mathfrak{b}_\mathbb{R}/\mathfrak{a}_{G,\mathbb{R}}}=\dim_{\mathbb{R}}{\mathrm{Sh}_\mathbb{K}}=2n$.

\begin{Prop}\label{Db1}
Let $\underline{D}_\mathfrak{b}$ be a basis of $\mathfrak{b}_\mathbb{R}/\mathfrak{a}_{G,\mathbb{R}}$, then for every open subset $U\subseteq\Omega$ we have
\[\phi_\sigma^*\mathcal{C}^\infty_{\mathrm{dmg}}(U)=(\phi_\sigma^*\mathcal{C}^\infty_{\mathrm{mg}}(U))_{\underline{D}_\mathfrak{b}}.\]
\end{Prop}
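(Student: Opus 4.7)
The plan is to split the claimed equality into its two inclusions, mediated by Proposition \ref{sheafB}(i) and the computation in the proof of Lemma \ref{Borel}. Throughout, I identify a section $f$ of $\phi_\sigma^*\mathcal{C}^\infty_{mg}$ over $U\subseteq\Omega$ with the corresponding $K_o$-invariant function $\tilde f\in C^\infty(\pi_F^{-1}(U))^{K_o}$ via \eqref{ringisom1} and \eqref{ringisom2}, writing $\tilde f\leftrightarrow f\otimes 1$ under the Lemma \ref{Borel} decomposition. Running the argument of Proposition \ref{mgsections} for $\mathcal{I}_\mathbb{C}^0=\mathcal{C}^\infty_{dmg}$ identifies $\phi_\sigma^*\mathcal{C}^\infty_{dmg}(U)$ with the set of such $\tilde f$ for which $D\tilde f|_{\pi_F^{-1}(W)}\in C^\infty_{mg}(\pi_F^{-1}(W))$ for every relatively compact open $W\subseteq U$ and every $D\in\mathfrak{U}(\mathfrak{g})$.

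The inclusion $\phi_\sigma^*\mathcal{C}^\infty_{dmg}(U)\subseteq(\phi_\sigma^*\mathcal{C}^\infty_{mg}(U))_{\underline{D}_\mathfrak{b}}$ is the easy direction: any $Y\in\underline{D}_\mathfrak{b}$ lifts to an element of $\mathfrak{b}_\mathbb{R}\subseteq\mathfrak{g}_\mathbb{R}$, well defined on $A_G(\mathbb{R})^o$-invariant functions since $\mathfrak{a}_{G,\mathbb{R}}$ acts trivially; under $\tilde f\leftrightarrow f\otimes 1$ the left-invariant $Y$-differentiation on $\mathscr{B}(\mathbb{R})^o$ coincides with the restriction to $\pi_\mathscr{B}^{-1}(U)$ of the $Y$-derivative of $\tilde f$, and iteration yields the inclusion. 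For the reverse inclusion the key step is to specialise the formula displayed in the proof of Lemma \ref{Borel},
\[Xh=\sum_{m,i}(Y_i f_m)\otimes(c_i g_m)+\sum_{m,j}f_m\otimes(d_j\cdot Z_j g_m),\]
to $h=f\otimes 1$. Since $Z_j\cdot 1=0$, this collapses to $X\tilde f=\sum_i(Y_i f)\otimes c_i$, a finite sum of $\mathfrak{U}(\mathfrak{b}_\mathbb{R})$-derivatives of $f$ tensored with elements of $C^\infty(K_o)^{K_o-fin}$. An easy induction on the length of $D$ then gives
\[D\tilde f=\sum_l(D_l f)\otimes g_l,\qquad D_l\in\mathfrak{U}(\mathfrak{b}_\mathbb{R}),\ g_l\in C^\infty(K_o)^{K_o-fin},\]
for every $D\in\mathfrak{U}(\mathfrak{g})$.

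Because $\mathfrak{a}_{G,\mathbb{R}}$ is central in $\mathfrak{g}_\mathbb{R}$ and acts trivially on the $A_G(\mathbb{R})^o$-invariant $f$, each $D_l f$ depends only on the image of $D_l$ under the surjection $\mathfrak{U}(\mathfrak{b}_\mathbb{R})\twoheadrightarrow\mathfrak{U}(\mathfrak{b}_\mathbb{R}/\mathfrak{a}_{G,\mathbb{R}})$, whose image on $C^\infty$-functions agrees with that of $\mathfrak{U}(\underline{D}_\mathfrak{b})$. Hence if $f\in(\phi_\sigma^*\mathcal{C}^\infty_{mg}(U))_{\underline{D}_\mathfrak{b}}$, every $D_l f$ is locally moderate growth on $\mathscr{B}$. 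The norm-comparison from the proof of Lemma \ref{Borel} (boundedness of $\|k^{-1}\|$ on the compact $K_o$ gives $\|u\|\leq C\|uk\|$ for all $u\in U_\mathscr{B}$, $k\in K_o$) transports this bound to each $(D_l f)\otimes g_l$ on $\pi_F^{-1}(W)$, and summing yields $D\tilde f\in C^\infty_{mg}(\pi_F^{-1}(W))$ for every relatively compact $W\subseteq U$; therefore $f\in\phi_\sigma^*\mathcal{C}^\infty_{dmg}(U)$. The main technical point is the inductive reduction from $\mathfrak{U}(\mathfrak{g})$- to $\mathfrak{U}(\mathfrak{b})$-derivatives on $\tilde f$; once this is in hand the growth comparison between $\mathscr{B}$ and $G$ is routine via Lemma \ref{Borel}.
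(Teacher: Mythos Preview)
Your proof is correct and follows essentially the same route as the paper. The only difference is packaging: the paper invokes Proposition \ref{sheafB}(ii) with $i=0$, which already identifies $\phi_\sigma^*\mathcal{C}^\infty_{dmg}(U)$ with the sections described via $C^\infty_{dmg}(\pi_\mathscr{B}^{-1}(W))$ on the $\mathscr{B}$-side, so the reduction from $\mathfrak{U}(\mathfrak{g})$ to $\mathfrak{U}(\mathfrak{b})$ is already absorbed there and the proposition becomes a direct unpacking of the definition $C^\infty_{dmg}=(C^\infty_{mg})_{\underline{D}_\mathfrak{b}}$ on $\mathscr{B}$; you instead stay on the $G$-side and re-derive that reduction by applying the iterated formula from the proof of Lemma \ref{Borel}.
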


\begin{proof}
Combining Proposition \ref{sheafB} and the proof of Proposition \ref{mgsections}, we see that for $\star=mg,dmg$, $\phi_\sigma^*\mathcal{C}^\infty_\star(U)$ consists of $f\in C^\infty(\pi_\mathscr{B}^{-1}(U))$ such that for every relatively compact open subset $W\subseteq U$, $f|{\pi_\mathscr{B}^{-1}(W)}\in C^\infty_\star(\pi_\mathscr{B}^{-1}(W))$. Thus
\[\begin{aligned}
f\in\phi_\sigma^*\mathcal{C}^\infty_{\mathrm{dmg}}(U)&\Leftrightarrow\forall W,\forall D\in\mathfrak{U}(\underline{D}_\mathfrak{b}),D(f|_{\pi_\mathscr{B}^{-1}(W)})\in C^\infty_{\mathrm{mg}}(\pi_\mathscr{B}^{-1}(W))\\
&\Leftrightarrow\forall W,\forall D\in\mathfrak{U}(\underline{D}_\mathfrak{b}),(Df)|_{\pi_\mathscr{B}^{-1}(W)}\in C^\infty_{\mathrm{mg}}(\pi_\mathscr{B}^{-1}(W))\\\
&\Leftrightarrow\forall D\in\mathfrak{U}(\underline{D}_\mathfrak{b}),Df\in\phi_\sigma^*\mathcal{C}^\infty_{\mathrm{mg}}(U)\\
&\Leftrightarrow f\in(\phi_\sigma^*\mathcal{C}^\infty_{\mathrm{mg}}(U))_{\underline{D}_\mathfrak{b}},
\end{aligned}\]
where $\forall W$ means ``for every relatively compact open subset $W\subseteq U$''.
\end{proof}

\subsection{Proof of Proposition \ref{dmglocal}}\label{sheaves2}

Our strategy to prove Proposition \ref{dmglocal} is to first show that $(\phi_\sigma^*\mathcal{C}^\infty_{\mathrm{mg}})|_\Omega=(\underline{z}^*\mathcal{C}^\infty_{\mathrm{lg},r})|_\Omega$, and then prove that the collections $\underline{D}_\mathfrak{b}$ and $\underline{D}_z$ of derivations define the same subsheaf of rings.

\subsubsection{Identification of $(\phi_\sigma^*\mathcal{C}^\infty_{\mathrm{mg}})|_\Omega$ and $(\underline{z}^*\mathcal{C}^\infty_{\mathrm{lg},r})|_\Omega$}

\begin{Prop}\label{mglocal}
$(\phi_\sigma^*\mathcal{C}^\infty_{\mathrm{mg}})|_\Omega=(\underline{z}^*\mathcal{C}^\infty_{\mathrm{lg},r})|_\Omega$.
\end{Prop}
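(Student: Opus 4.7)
The plan is to verify the equality of two subsheaves of the ambient sheaf $(\phi_\sigma^*j_*\mathcal{C}^\infty_{Sh_\mathbb{K}})|_\Omega$. Using Proposition \ref{sheafB}(i), the sections of $(\phi_\sigma^*\mathcal{C}^\infty_{mg})|_\Omega$ over a small open $U\subseteq\Omega$ are smooth functions on $\pi_\mathscr{B}^{-1}(U)$ bounded by some regular function on $\mathscr{B}$; on the other side, the sections of $(\underline{z}^*\mathcal{C}^\infty_{lg,r})|_\Omega$ are smooth functions on the same set bounded by a polynomial in $\log\frac{1}{|z_1|},\ldots,\log\frac{1}{|z_r|}$. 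The link between the two bounds is the identity $\log\frac{1}{|z_i|}=2\pi y_{F,i}-\log|c|$ for $1\leq i\leq r$, where the $y_{F,i}$ are the real linear coordinates on $iU(F)$ dual to the basis $\{\xi_i\}$ of $U(F)_\mathbb{Z}$ chosen with $\sigma$; by \eqref{DDF} and \eqref{B}, each $y_{F,i}$ pulls back under $\pi_\mathscr{B}$ to the function $g_l\mapsto l_i(\mathrm{Ad}(g_l)\omega_F)$ on $\mathscr{B}_l(\mathbb{R})^o$, hence to the restriction of a regular function in $\mathcal{O}_\mathscr{B}$.

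Next I would determine which coordinates on $\mathscr{B}$ are unbounded on $\pi_\mathscr{B}^{-1}(U)$ for $U$ relatively compact in $\Omega$. Fixing a compact fundamental domain $\mathcal{F}\subseteq U(F)_\mathbb{R}$ for $U(F)_\mathbb{Z}$ and using the decomposition \eqref{B}, the $\mathscr{B}_h(\mathbb{R})^o$- and $W(F)/U(F)$-factors remain in relatively compact sets, since the image of $U$ under $p_\sigma$ lies in a relatively compact subset of $D(F)'$, which these two factors parameterize; the $U(F)$-factor is confined to $\mathcal{F}$. Thus the only unbounded direction is $\mathscr{B}_l(\mathbb{R})^o$, and by Definition \ref{sigma}(i) its image under $y_F\circ\pi_\mathscr{B}$ lies in the shifted closed cone $u+i\overline{\sigma}\subseteq iC(F)$.

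The inclusion $(\underline{z}^*\mathcal{C}^\infty_{lg,r})|_\Omega\subseteq(\phi_\sigma^*\mathcal{C}^\infty_{mg})|_\Omega$ is then immediate from the first paragraph: a polynomial in the $\log\frac{1}{|z_i|}$'s is dominated by a polynomial in the regular functions $y_{F,i}$, hence by an element of $\mathcal{O}_\mathscr{B}$. For the opposite inclusion, the analysis above reduces the problem to showing that the restriction of any $F\in\mathcal{O}_{\mathscr{B}_l}$ to the basic semialgebraic set
\[S:=\{g_l\in\mathscr{B}_l(\mathbb{R})^o\mid\mathrm{Ad}(g_l)\omega_F\in u+i\overline{\sigma}\}\]
is bounded in absolute value by a polynomial in $y_{F,1},\ldots,y_{F,r}$. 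Describing $S$ by finitely many linear-algebraic inequalities cutting out $\overline{\sigma}$ in $iU(F)$, I would apply Krivine's Positivstellensatz to the polynomial $M(y_F)^2-F^2$ on $S$, for a sufficiently large polynomial $M$ in the $y_{F,i}$'s, to obtain a representation forcing $M(y_F)^2\geq F^2$ on $S$ and hence the required polynomial bound.

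The main obstacle is this final application of Krivine's theorem: one must show that some polynomial in $y_{F,1},\ldots,y_{F,r}$ really dominates every regular function on $\mathscr{B}_l$ on $S$. The delicate point is that the $y_{F,i}$ are precisely the exponential-growth coordinates in the Langlands decomposition of $\mathscr{B}_l$, so that sufficiently large polynomials in them control every matrix entry of a faithful algebraic representation of $\mathscr{B}_l$ on the subset $S$, where the $y_{F,i}$'s are in addition bounded below. Checking that the fundamental-domain choice and the splitting of $\mathscr{B}(\mathbb{R})^o$ as $\mathscr{B}'(\mathbb{R})^o\times A_G(\mathbb{R})^o$ do not spoil the argument on the remaining (bounded) factors is then a routine bookkeeping matter.
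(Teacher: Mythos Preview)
Your overall architecture matches the paper's: reduce to $\mathscr{B}_l$ via the relative compactness of the $w$- and $b_h$-factors, transport the growth conditions through the map $\mu:b_l\mapsto\mathrm{Ad}(b_l)\omega_F$ to $iU(F)$, and compare polynomial bounds there with bounds from $\mathcal{O}_{\mathscr{B}_l}$. The easy inclusion is fine.

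Two issues. First, a minor imprecision: the identity $\log\frac{1}{|z_i|}=2\pi y_{F,i}-\log|c|$ is not quite right. The coordinates $z_i$ come from the trivialization $t_T$ of the $T(F)$-bundle, so $\log\frac{1}{|z_i|}$ is a linear functional of $y_t$, not of $y_F$; these differ by the bounded correction $y'\circ p_F$ (the paper's \eqref{y'}). This does not break the argument, since the difference is bounded on $\Omega$, but your claimed formula is off by that term.

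Second, and more seriously, your use of Krivine for the hard inclusion is circular. Applying the Positivstellensatz to $M(y_F)^2-F^2$ presupposes you already know $M(y_F)^2\geq F^2$ on $S$; it gives a certificate of positivity, not a proof that some dominating $M$ exists. Your fallback heuristic via the Langlands decomposition of $\mathscr{B}_l$ is not a complete argument either: the $y_{F,i}$ are not the torus characters of $\mathscr{A}_l$ but rather polynomial functions of $b_l$ (linear coordinates on $iU(F)$ pulled back by $\mu$), and it is not clear a priori why matrix entries of $\mathscr{B}_l$ should be polynomially bounded in them on $S$. The paper closes this gap structurally: since $\mu$ induces an open immersion of $\overline{\mathscr{B}_l}:=\mathscr{B}_l/\mathscr{F}$ (a finite quotient) into the affine space $\mathscr{U}(F)$, one has $\mathcal{O}_{\overline{\mathscr{B}_l}}=\mathcal{O}_{\mathscr{U}(F)}[f^{-1}]$ for a single polynomial $f=\prod f_j$; a short \'etale argument shows $\mathcal{O}_{\mathscr{B}_l}$ is then generated over this by fractional powers of the $f_j$. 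Thus every element of $\mathcal{O}_{\mathscr{B}_l}$ is a polynomial in the linear coordinates on $iU(F)$ and in $f_j^{-1}$, and Krivine is applied in the correct direction: each $f_j$ is strictly positive on the basic semialgebraic set $u+i\sigma$, so the Positivstellensatz produces a polynomial $g_j$ with $f_j g_j\geq 1$ there, whence $f_j^{-1}\leq g_j$. That is the missing idea.
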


Recall that $\pi_\mathscr{B}:U(F)_\mathbb{Z}\backslash\mathscr{B}(\mathbb{R})^\circ/A_G(\mathbb{R})^\circ\rightarrow U(F)_\mathbb{Z}\backslash D$ is a homeomorphism. We wish to verify that on $\pi_\mathscr{B}^{-1}(\Omega)\cong j_\sigma^{-1}(\Omega)$, the bounds given by elements of $\mathcal{O}_\mathscr{B}$ and by $\log{\frac{1}{|z_1|}},...,\log{\frac{1}{|z_r|}}$ are equivalent.

For a point in $U(F)_\mathbb{Z}\backslash\mathscr{B}(\mathbb{R})^\circ/A_G(\mathbb{R})^\circ$, by its $w$, $b_h$ and $b_l$-factors we mean its image in $U(F)_\mathbb{Z}\backslash W(F)$, $\mathscr{B}_h(\mathbb{R})^\circ$ and $\mathscr{B}_l(\mathbb{R})^\circ$ under (\ref{B}) respectively, then

\begin{Prop}
The $w$ and $b_h$-factors of points in $\pi_\mathscr{B}^{-1}(\Omega)$ range in relatively compact sets.
\end{Prop}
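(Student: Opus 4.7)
The plan is to trace through the decompositions \eqref{D}, \eqref{DDF} and \eqref{DF} and exploit that $\Omega$ being relatively compact in $(U(F)_\mathbb{Z}\backslash D)_\sigma$ forces its image under $p_\sigma$ to be relatively compact in $D(F)'$; this will pin down every component except $b_l$.

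First I would make the map $p_F\circ\pi_\mathscr{B}$ explicit. A point $(w,b_h,b_l)\in W(F)\rtimes(\mathscr{B}_h(\mathbb{R})^o\times\mathscr{B}_l(\mathbb{R})^o)$ corresponds via \eqref{D} to $wb_hb_l\cdot h\in D$, and by \eqref{DDF} its image in $D(F)$ is represented in the coordinates of \eqref{DF} by $(i\,\mathrm{Ad}(b_l)\omega_F,\,w,\,b_h)$. Passing to $D(F)'=U(F)_\mathbb{C}\backslash D(F)$ kills the $iU(F)$ factor and simultaneously the $U(F)$-part of $w\in W(F)$, so the image in $D(F)'$ is represented by $(\bar w,b_h)$ in $(V(F)\rtimes G_h(F))/(G_h(F)\cap K_o)$, where $V(F):=W(F)/U(F)$ is a real vector space and $\bar w$ denotes the image of $w$.

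Second, since $\Omega$ is relatively compact in $(U(F)_\mathbb{Z}\backslash D)_\sigma$ and $p_\sigma$ is continuous, $\overline{p_\sigma(\Omega)}$ is compact in $D(F)'$; restricting to the open part and using the identification just computed, the $(\bar w,b_h)$-images of points of $\pi_\mathscr{B}^{-1}(\Omega)$ lie in a relatively compact subset of $(V(F)\rtimes G_h(F))/(G_h(F)\cap K_o)$. This immediately bounds $\bar w\in V(F)$; and since the Langlands-type product $G_h(F)=\mathscr{B}_h(\mathbb{R})^o\cdot(G_h(F)\cap K_o)$ with trivial intersection makes $\mathscr{B}_h(\mathbb{R})^o\xrightarrow{\sim}G_h(F)/(G_h(F)\cap K_o)$ a homeomorphism, the $b_h$-factor itself lies in a relatively compact subset of $\mathscr{B}_h(\mathbb{R})^o$.

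Third, for the $w$-factor in $U(F)_\mathbb{Z}\backslash W(F)$: since $U(F)$ is central in $W(F)$, the short exact sequence $1\to U(F)\to W(F)\to V(F)\to 1$ descends, after quotienting on the left by the lattice $U(F)_\mathbb{Z}\subseteq U(F)$, to a continuous projection $U(F)_\mathbb{Z}\backslash W(F)\to V(F)$ with fiber the compact real torus $U(F)_\mathbb{Z}\backslash U(F)$. The image of the $w$-factor in $V(F)$ is $\bar w$, which is bounded by the second step, so combined with compactness of the fiber, the $w$-factor lies in a relatively compact subset of $U(F)_\mathbb{Z}\backslash W(F)$. The only real subtlety is the bookkeeping in step one, i.e.\ checking that the passage to $D(F)'$ really does conflate the $iU(F)$ imaginary part (which depends on $b_l$) with the central part of $w$, so that $\bar w$ in $D(F)'$ only remembers $w$ modulo $U(F)$ and carries no surviving $b_l$-dependence; once this is clear, the proposition reduces to the two obvious relative compactness statements above. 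Note that the hypothesis $y_F(j_\sigma^{-1}(\Omega))\subseteq u+i\sigma$ plays no role here and is saved for the $q$-coordinate estimates later.
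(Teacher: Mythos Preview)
Your proof is correct and follows essentially the same route as the paper's. The paper phrases the compact-fiber step as ``$(iU(F)\times U(F)_\mathbb{Z})\backslash D(F)$ is an $(S^1)^r$-bundle over $D(F)'$'', which is exactly your observation that $U(F)_\mathbb{Z}\backslash W(F)\to V(F)$ has compact torus fibers; you simply unpack the identification $D(F)'\cong (V(F)\rtimes G_h(F))/(G_h(F)\cap K_o)$ more explicitly before invoking it.
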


\begin{proof}
From (\ref{DDF}) we see that the $w$ and $b_h$-factors of a point in $U(F)_\mathbb{Z}\backslash D$ is determined by its image in $(iU(F)\times U(F)_\mathbb{Z})\backslash D(F)$, which is a $(S^1)^r$-bundle over $D(F)'$. By definition $\Omega$ is relatively compact in $(U(F)_\mathbb{Z}\backslash D)_\sigma$, thus $p_\sigma(\Omega)$ is relatively compact in $D(F)'$ and so is its preimage in $(iU(F)\times U(F)_\mathbb{Z})\backslash D(F)$. So $w$ and $b_h$-factors of points in $\pi_\mathscr{B}^{-1}(\Omega)$ vary in relatively compact ranges.
\end{proof}

By the proposition, on $\pi_\mathscr{B}^{-1}(\Omega)$ the bound given by elements of $\mathcal{O}_\mathscr{B}$ is equivalent to the one given by elements of $\mathcal{O}_{\mathscr{B}_l}$.

We see from (\ref{DDF}) that $y_F$ in (\ref{y}) is the composition of taking $b_l$-factor and the map
\[\mu:\mathscr{B}_l(\mathbb{R})^\circ\rightarrow iU(F),\quad b_l\mapsto\exp(i\mathrm{Ad}(b_l)\omega_F),\]
which is injective since $\mathscr{B}_l(\mathbb{R})^\circ$ intersects $K_o$ trivially and has image $iC(F)$. Therefore elements of $\mathcal{O}_{\mathscr{B}_l}$ can be viewed as functions on $iC(F)$. On the other hand let $\Omega'$ be as in Definition \ref{sigma}, $y_t$ be the composition of $t_T$ as in (\ref{t2}) with $T(F)\rightarrow iU(F)$, then
\begin{equation}\label{logz}
\log{\frac{1}{|z_j|}}=-2\pi il_j\circ y_t-\log{c},1\leq j\leq r.
\end{equation}
By definition both $y_F$ and $y_t$ are smooth and $T(F)$-equivariant, so there is a smooth map $y':\Omega'\rightarrow iU(F)$ such that
\begin{equation}\label{y'}
y_F=y_t+y'\circ p_F.
\end{equation}
Since $p_\sigma(\Omega)$ is relatively compact in $\Omega'$, $y'$ is bounded on it, so a function on $j_\sigma^{-1}(\Omega)$ is bounded by a polynomial in $\log{\frac{1}{|z_1|}},...,\log{\frac{1}{|z_r|}}$, i.e. a polynomial on $iU(F)$ pulled back by $y_t$ if and only if it is bounded by a polynomial on $iU(F)$ pulled back by $y_F$. Therefore Proposition \ref{mglocal} will follow from:

\begin{Prop}\label{OB}
$\mathcal{O}_{\mathscr{B}_l}$ contains all the polynomial functions on $iC(F)\subseteq iU(F)$. On the other hand, on $y_F(j_\sigma^{-1}(\Omega))$ elements of $\mathcal{O}_{\mathscr{B}_l}$ as well as their partial derivatives are all bounded by polynomials.
\end{Prop}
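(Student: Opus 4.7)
The first assertion follows from algebraicity of the adjoint representation. Since $\mathrm{Ad}:\mathscr{G}_l(F)\to GL(\mathfrak{g}_\mathbb{R})$ is a morphism of $\mathbb{R}$-algebraic varieties and $\omega_F\in\mathfrak{u}(F)_\mathbb{R}$, the assignment $b_l\mapsto i\,\mathrm{Ad}(b_l)\omega_F\in i\mathfrak{u}(F)_\mathbb{R}$ is the restriction to real points of an $\mathbb{R}$-morphism $\mathscr{B}_l\to i\mathfrak{u}(F)$; composed with the (algebraic) exponential identification $i\mathfrak{u}(F)_\mathbb{R}\xrightarrow{\sim}iU(F)$, which is available because $U(F)$ is unipotent abelian, this shows that $\mu$ is algebraic. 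Pulling back any polynomial on $iU(F)$ along $\mu$ therefore yields an element of $\mathcal{O}_{\mathscr{B}_l}$, which under the bijection $\mu:\mathscr{B}_l(\mathbb{R})^o\xrightarrow{\sim}iC(F)$ gives exactly the first claim.

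For the second claim, fix an affine embedding $\mathscr{B}_l\hookrightarrow\mathbb{A}^d_\mathbb{R}$ with coordinate functions $b_1,\dots,b_d$ generating $\mathcal{O}_{\mathscr{B}_l}$ as an $\mathbb{R}$-algebra. By the Leibniz and chain rules, every element of $\mathcal{O}_{\mathscr{B}_l}$ (viewed via $\mu^{-1}$ as a function on $iC(F)$) and every iterated partial derivative of it in linear coordinates on $iU(F)$ is a polynomial combination of the $b_j\circ\mu^{-1}$ and their partial derivatives of some order. It therefore suffices to bound each $b_j\circ\mu^{-1}$ and each of its higher partial derivatives polynomially on $y_F(j_\sigma^{-1}(\Omega))\subseteq u+i\sigma$. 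The differential of $\mu$ is injective everywhere (the infinitesimal stabilizer of $\omega_F$ in $\mathfrak{g}_l(F)_\mathbb{R}$ lies in $\mathfrak{k}_o$, which meets $\mathfrak{b}_l$ trivially), so the inverse $\mu^{-1}:iC(F)\to\mathscr{B}_l(\mathbb{R})^o$ is a semialgebraic diffeomorphism, and consequently each $b_j\circ\mu^{-1}$ together with all its partial derivatives are continuous semialgebraic functions on $iC(F)$.

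The proof then concludes with a standard consequence of Krivine's Positivstellensatz: every continuous semialgebraic function on a closed semialgebraic subset of $\mathbb{R}^N$ is dominated in absolute value by some polynomial. Applied on the closure of $u+i\sigma$ to each $b_j\circ\mu^{-1}$ and to each of its higher partial derivatives, this yields the required polynomial bounds. The main obstacle is precisely this polynomial-domination step: the algebraicity inputs are formal, whereas extracting explicit polynomial bounds on the coordinate functions of $\mu^{-1}$ from the algebraicity of $\mu$ requires genuine input from real algebraic geometry — exactly what the remark following Proposition \ref{dmglocal} is anticipating.
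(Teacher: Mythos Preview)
Your argument is correct and reaches the same conclusion, but by a genuinely different route from the paper.

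For the first assertion the two proofs agree in spirit: both use that $\mu$ comes from an algebraic morphism. The paper goes further, however, and extracts a very concrete description of $\mathcal{O}_{\mathscr{B}_l}$: it shows $\overline{\mathscr{B}_l}:=\mathscr{B}_l/\mathscr{F}$ (with $\mathscr{F}$ the finite scheme-theoretic stabiliser of $\exp(i\omega_F)$) embeds as a principal affine open of the affine space $\mathscr{U}(F)$, so that $\mathcal{O}_{\overline{\mathscr{B}_l}}=\mathcal{O}_{\mathscr{U}(F)}[f^{-1}]$ for a single polynomial $f=f_1\cdots f_m$; then $\mathcal{O}_{\mathscr{B}_l}$ is generated over this by certain fractional powers of the $f_j$. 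This structural statement does double duty: it gives $\mathcal{O}_{\mathscr{U}(F)}\subseteq\mathcal{O}_{\mathscr{B}_l}$ and, crucially, shows $\mathcal{O}_{\mathscr{B}_l}$ is already closed under partial derivatives in the linear coordinates on $iU(F)$. The second assertion then reduces to bounding finitely many explicit functions, namely $f_1^{-1},\dots,f_m^{-1}$, and the paper applies the Positivstellensatz in its cleanest form (for $1/f$ with $f>0$ on a basic closed semialgebraic set).

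Your approach bypasses this explicit algebra in favour of general Nash-function machinery: $\mu^{-1}$ is a semialgebraic diffeomorphism, the $b_j\circ\mu^{-1}$ and all their iterated partials are Nash (hence continuous semialgebraic) on $iC(F)$, and one then invokes the standard polynomial-growth bound for continuous semialgebraic functions on closed semialgebraic sets. Two small points worth tightening: the stability of Nash functions under differentiation is what makes the partial derivatives semialgebraic, and you should cite that rather than leave it implicit; and the polynomial-growth statement you use is usually attributed to \L{}ojasiewicz-type inequalities or cell decomposition (e.g.\ \cite[Prop.~2.6.2]{BCR98}) rather than to Krivine's Positivstellensatz proper. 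With those citations in place your argument is complete and somewhat shorter, at the cost of invoking heavier off-the-shelf real-algebraic geometry where the paper stays more elementary and self-contained.
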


\begin{proof}
First notice that $\mu$ is defined via an algebraic representation of $\mathscr{B}_l$, under which the orbit of $\exp(i\omega_F)$ is open since the tangent map of $\mu$ at the identity is surjective, and the stabilizer $\mathscr{F}$ of $\exp(i\omega_F)$ is finite, say of order $d$, since it intersects $\mathscr{B}_l(\mathbb{R})^\circ$ trivially. Denote $\overline{\mathscr{B}_l}:=\mathscr{B}_l/\mathscr{F}$, then it is affine and embeds into $i\mathscr{U}(F)$ as an open subscheme.

\begin{Lem}
When $A$ is a unique factorization domain, every affine open subscheme $U\subseteq\mathrm{Spec}\,A$ is of the form $D(f)\cong\mathrm{Spec}\,A_f$ for some $f\in A$.
\end{Lem}

\begin{proof}
Granted the affineness, it suffices to see that $\mathcal{O}(U)\subseteq\mathrm{Frac}(A)$ equals some $A_f$. As $U$ is quasi-compact it's of the form $D(f_1)\cup...\cup D(f_n)$, let $f:=\mathrm{gcd}(f_1,...,f_n)$, then clearly $A_f\subseteq\mathcal{O}(U)$. On the other hand take any $g/h\in \mathcal{O}(U)$ with $g$ and $h$ being coprime and let $h_0$ be any irreducible factor of $h$, then $h_0$ is a factor of every $f_i$ and hence a factor of $f$. Therefore $\mathcal{O}(U)=A_f$.
\end{proof}

By the lemma there exists a polynomial $f$ such that
\[\mathcal{O}_{\overline{\mathscr{B}_l}}=\mathcal{O}_{i\mathscr{U}(F)}[f^{-1}],\]
and we can take $f$ to be a product $f_1...f_m$ of distinct irreducible polynomials $f_1,...,f_m\in\mathcal{O}_{i\mathscr{U}(F)}$. Then
\[\mathcal{O}_{\overline{\mathscr{B}_l}}^\times=\big\{af_1^{k_1}...f_m^{k_m}:a\in\mathbb{R}^\times,k_1,...,k_m\in\mathbb{Z}\big\}.\]

Next consider the Levi subgroup $\mathscr{A}_l\subseteq\mathscr{B}_l$, which is an $\mathbb{R}$-split torus, as a quotient of $\mathscr{B}_l$. $\mathscr{F}$ maps injectively into $\mathscr{A}_l$ since unipotent groups over a field of characteristic $0$ have no finite subgroups. Denote the quotient of $\mathscr{A}_l$ as $\overline{\mathscr{A}_l}$, then $\mathcal{O}_{\mathscr{A}_l}$ is generated over $\mathcal{O}_{\overline{\mathscr{A}_l}}$ by some fractional powers of elements in $\mathcal{O}_{\overline{\mathscr{A}_l}}^\times\,$. Now we have the following commutative diagram, in which every vertical map is an \'{e}tale cover of degree $d$:
\[\begin{tikzcd}
\mathscr{B}_l\rar[hook]\dar&\overline{\mathscr{B}_l}\times_{\overline{\mathscr{A}_l}}\mathscr{A}_l\rar\dar&\mathscr{A}_l\dar\\
\overline{\mathscr{B}_l}\rar[equal]&\overline{\mathscr{B}_l}\rar&\overline{\mathscr{A}_l}.
\end{tikzcd}\]
It follows that $\mathscr{B}_l\cong\overline{\mathscr{B}_l}\times_{\overline{\mathscr{A}_l}}\mathscr{A}_l$, and thus
\[\mathcal{O}_{\mathscr{B}_l}\cong\mathcal{O}_{\overline{\mathscr{B}_l}}\otimes_{\mathcal{O}_{\overline{\mathscr{A}_l}}}\mathcal{O}_{\mathscr{A}_l}.\]
Therefore $\mathcal{O}_{\mathscr{B}_l}$ is generated over $\mathcal{O}_{\overline{\mathscr{B}_l}}=\mathcal{O}_{i\mathscr{U}(F)}[f_1^{-1},...,f_m^{-1}]$ by some products of fractional powers of $f_1,...,f_m$. For any such product $f_1^{q_1}...f_m^{q_m}$ and partial derivative $\partial_{\vec{v}}$ along a vector $\vec{v}\in iU(F)$, we have
\[\partial_{\vec{v}}f_1^{q_1}...f_m^{q_m}=f_1^{q_1}...f_m^{q_m}\sum_{i=1}^m q_i\frac{\partial_{\vec{v}}f_i}{f_i}\in f_1^{q_1}...f_m^{q_m}\mathcal{O}_{\overline{\mathscr{B}_l}},\]
therefore $\mathcal{O}_{\mathscr{B}_l}$ contains all partial derivatives of its elements.

It remains to show that on $y_F(j_\sigma^{-1}(\Omega))$, elements of $\mathcal{O}_{\mathscr{B}_l}$ are bounded by polynomials, and it suffices to prove this for $f_1^{-1},...,f_m^{-1}$. By definition $y_F(j_\sigma^{-1}(\Omega))$ can be enlarged to some $W=u+i\sigma\subseteq iC(F)$, which is a \it basic semialgebraic set\rm, i.e. a set of the form
\[W=\big\{x\in\mathbb{R}^r:g_1(x)\geq 0,...,g_s(x)\geq 0\big\}\]
for some polynomials $g_1,...,g_s$, and then we apply the following corollary of Krivine's Positivstellensatz:

\begin{Lem}
Let $W\subseteq\mathbb{R}^r$ be a basic semialgebraic set, $f$ be a polynomial taking positive values on $W$, then $f^{-1}$ is bounded by a polynomial $g$ on $W$.
\end{Lem}

\begin{proof}
In \cite[4.4.3 (ii)]{BCR98} take $V=\mathbb{R}^r$, then it asserts that there exist polynomials $g,h$ that are non-negative on $W$ and satisfy $fg=1+h$, thus on $W$,
\[fg\geq 1\Rightarrow f^{-1}\leq g.\qedhere\]
\end{proof}

Since $f_1,...,f_m$ lie in $\mathcal{O}_{\overline{\mathscr{B}_l}}^\times$, they are nonzero on $W\subseteq iC(F)$ and without loss of generality we may assume that they are positive, then the lemma concludes that $f_1^{-1},...,f_m^{-1}$ are bounded by polynomials on $W$.
\end{proof}

\subsubsection{A ring-theoretic lemma}

\begin{Lem}\label{Der}
Let $C\subseteq A$ be rings, $D_1,...,D_N\in\mathrm{Der}(A)$ be derivations preserving $C$, $D'_i=\sum_{j=1}^N c_{ij}D_j(i=1,...,N)$ be linear combinations of them with coefficients in $C$, then\\
$(i)$ every superposition of $D'_1,...,D'_N$ is a linear combination of superpositions of $D_1,...,D_N$ with coefficients in $C$;\\
$(ii)$ if $(c_{ij})_{1\leq i,j\leq N}\in\mathrm{GL}_N(C)$, then every superposition of $D_1,...,D_N$ is also a linear combination of superpositions of $D'_1,...,D'_N$ with coefficients in $C$.
\end{Lem}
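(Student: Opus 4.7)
The plan is to prove (i) by induction on the length of the superposition, exploiting the commutator identity that derivations preserving $C$ have with multiplication operators by elements of $C$, and then to deduce (ii) from (i) by inverting the matrix $(c_{ij})$.

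For (i), the key observation is that if $D\in\mathrm{Der}(A)$ preserves $C$ and $c\in C$, then viewing $c$ as the multiplication operator on $A$, we have the identity $D\circ c=c\circ D+D(c)$ in $\mathrm{End}_\mathbb{Z}(A)$, and crucially $D(c)\in C$. I will induct on $k$, the length of a superposition $D'_{i_1}\cdots D'_{i_k}$. The base case $k=1$ is the definition of $D'_i$. For the inductive step, assuming $D'_{i_2}\cdots D'_{i_k}=\sum_\alpha c_\alpha D_{\alpha_1}\cdots D_{\alpha_{l(\alpha)}}$ with $c_\alpha\in C$, I write
\[
D'_{i_1}\circ D'_{i_2}\cdots D'_{i_k}=\Bigl(\sum_j c_{i_1 j}D_j\Bigr)\circ\sum_\alpha c_\alpha D_{\alpha_1}\cdots D_{\alpha_{l(\alpha)}}
\]
and push each $D_j$ past $c_\alpha$ using $D_j\circ c_\alpha=c_\alpha D_j+D_j(c_\alpha)$. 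Since $D_j$ preserves $C$, both $c_{i_1 j}c_\alpha$ and $c_{i_1 j}D_j(c_\alpha)$ lie in $C$, and the resulting expression is a $C$-linear combination of superpositions of the $D_j$.

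For (ii), I use that $A$ is commutative (rings in this paper are commutative), so the multiplication operators on $A$ by elements of $C$ commute with one another and form a copy of $C$ inside $\mathrm{End}_\mathbb{Z}(A)$. Therefore the matrix $M=(c_{ij})\in GL_N(C)$ has a two-sided inverse $M^{-1}=(c'_{ji})$ with entries in $C$, and matrix multiplication in $\mathrm{End}_\mathbb{Z}(A)$ makes sense because all the entries commute. A direct computation gives
\[
\sum_i c'_{ji}D'_i=\sum_i c'_{ji}\sum_k c_{ik}D_k=\sum_k\Bigl(\sum_i c'_{ji}c_{ik}\Bigr)D_k=D_j,
\]
expressing each $D_j$ as a $C$-linear combination of the $D'_i$. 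Since the $D'_i$ themselves preserve $C$ (being $C$-linear combinations of derivations that preserve $C$), part (i) applied to this inverted expression yields (ii).

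The argument is essentially elementary once one has the Leibniz-style commutation rule; the only subtlety is making sure the matrix inversion in (ii) is legitimate in the noncommutative ring $\mathrm{End}_\mathbb{Z}(A)$, which is resolved by noting that multiplication operators by elements of $C$ sit inside a commutative subring of $\mathrm{End}_\mathbb{Z}(A)$. I do not anticipate a serious obstacle here; the lemma is a bookkeeping tool that will be applied in the proof of Proposition \ref{dmglocal} to pass between the derivations $\underline{D}_\mathfrak{b}$ coming from a basis of $\mathfrak{b}_\mathbb{R}/\mathfrak{a}_{G,\mathbb{R}}$ and the derivations $\underline{D}_z$ in $\sigma$-coordinates, once the change-of-basis matrix between them is shown to lie in $GL$ over the appropriate subring.
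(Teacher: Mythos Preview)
Your proof is correct and follows essentially the same approach as the paper: part (i) by induction on the length of the superposition (the paper merely says ``clear by induction''), and part (ii) by observing that the $D'_i$ also preserve $C$, inverting the matrix, and applying (i) with the roles of $\underline{D}$ and $\underline{D}'$ exchanged. Your explicit treatment of the commutation $D\circ c=c\circ D+D(c)$ and of the matrix inversion is more detailed than the paper's, but the substance is identical.
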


\begin{proof} (i) is clear by induction; as for (ii), we notice that $D'_1,...,D'_N$ also preserve $C$, so when $D_1,...,D_N$ are linear combinations of $D'_1,...,D'_N$ with coefficients in $C$ we can apply (i) with the roles of $D_1,...,D_N$ and $D'_1,...,D'_N$ exchanged.
\end{proof}

\begin{Cor}
In (ii), for every subring $B\subseteq A$ containing $C$, we have
\[B_{\underline{D}}=B_{\underline{D}'},\]
where $\underline{D}=(D_1,...,D_N)$, $\underline{D}'=(D'_1,...,D'_N)$.
\end{Cor}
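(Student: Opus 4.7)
The plan is to deduce the corollary directly from Lemma \ref{Der} by unwinding the definition of $B_{\underline{D}}$ and $B_{\underline{D}'}$ on the level of monomials (superpositions).

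First I would note that the condition ``$\forall D\in\mathfrak{U}(\underline{D}),\,Db\in B$'' is equivalent to ``every superposition $D_{i_1}\cdots D_{i_k}$ of the $D_i$ sends $b$ into $B$'', because $\mathfrak{U}(\underline{D})$ is by definition the (non-commutative) subring of $\mathrm{End}_{\mathbb{Z}}(A)$ generated by $D_1,\dots,D_N$, so every element is a $\mathbb{Z}$-linear combination of such superpositions, and $B$ is closed under addition. The same observation applies to $\underline{D}'$.

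Next, to prove $B_{\underline{D}'}\subseteq B_{\underline{D}}$, take $b\in B_{\underline{D}'}$ and any superposition $D_{i_1}\cdots D_{i_k}$ of the $D_i$. By Lemma \ref{Der}(ii), this superposition can be written as a finite sum
\[
D_{i_1}\cdots D_{i_k}=\sum_{\alpha}c_\alpha\,D'_{\alpha_1}\cdots D'_{\alpha_{l_\alpha}}
\]
with coefficients $c_\alpha\in C\subseteq B$. Since $b\in B_{\underline{D}'}$, each $D'_{\alpha_1}\cdots D'_{\alpha_{l_\alpha}}b$ lies in $B$, and because $B$ is a ring containing $C$, the whole $C$-linear combination lies in $B$. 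Hence $D_{i_1}\cdots D_{i_k}b\in B$, and by the first paragraph $b\in B_{\underline{D}}$.

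The reverse inclusion $B_{\underline{D}}\subseteq B_{\underline{D}'}$ is symmetric, now invoking Lemma \ref{Der}(i) in place of (ii) to expand every superposition of the $D'_i$ as a $C$-linear combination of superpositions of the $D_i$. No step is really an obstacle here: the only substantive input is the matrix invertibility hypothesis, which is precisely what makes Lemma \ref{Der}(ii) available and thereby gives the inclusion $B_{\underline{D}'}\subseteq B_{\underline{D}}$ (the opposite inclusion needs only Lemma \ref{Der}(i) and no invertibility). Thus the corollary follows.
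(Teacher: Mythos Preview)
Your proof is correct and follows essentially the same approach as the paper: both arguments unwind the definition of $B_{\underline{D}}$ and use Lemma~\ref{Der}(i) (resp.\ (ii), equivalently the symmetry under the invertibility hypothesis) to push one inclusion through, with the other following symmetrically. Your remark that $B_{\underline{D}}\subseteq B_{\underline{D}'}$ needs only part~(i) and no invertibility is a correct and slightly sharper observation than the paper makes explicit, but the overall route is the same.
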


\begin{proof}
From the proof of (ii) we see that under the assumption there, the roles of $\underline{D}$ and $\underline{D}'$ are symmetric, so it suffices to show $B_{\underline{D}}\subseteq B_{\underline{D}'}$. Now by part (i), every element of $\mathfrak{U}(\underline{D}')$ is a linear combination of elements of $\mathfrak{U}(\underline{D})$ with coefficients in $C$, so $b\in B_{\underline{D}}$ implies that $b\in B_{\underline{D}'}$.
\end{proof}

Notice that $C=B_{\underline{D}}$ satisfies the condition of Lemma \ref{Der}, in this special case the corollary states that

\begin{Lem}\label{DerCor}
Let $B\subseteq A$ be rings, $\underline{D}=(D_1,...,D_N)$ be a collection of derivations of $A$. If $D'_i=\sum_{j=1}^N c_{ij}D_j(i=1,...,N)$ satisfies that $(c_{ij})_{1\leq i,j\leq N}\in\mathrm{GL}_N(B_{\underline{D}})$, then
\[B_{\underline{D}}=B_{\underline{D}'},\]
where $\underline{D'}=(D'_1,...,D'_N)$.
\end{Lem}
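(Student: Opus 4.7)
The plan is simply to specialize the preceding Corollary with $C:=B_{\underline{D}}$. So the first task is to verify that this choice of $C$ satisfies the hypotheses of Lemma \ref{Der}, namely that $C$ is a subring of $A$ preserved by each $D_i$. Subring-closure is immediate from the Leibniz rule: if $b_1,b_2\in B_{\underline{D}}$, then for any $D\in\mathfrak{U}(\underline{D})$, expanding $D(b_1b_2)$ via repeated Leibniz keeps us inside $B$, and $D(b_1\pm b_2)\in B$ is clear; stability under the $D_i$'s is tautological from the definition of $B_{\underline{D}}$, since $D_i\circ D\in\mathfrak{U}(\underline{D})$ for any $D\in\mathfrak{U}(\underline{D})$.

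Next I would observe that by assumption $(c_{ij})\in GL_N(B_{\underline{D}})=GL_N(C)$, so the coefficients expressing $D'_i$ as a combination of the $D_j$'s all lie in $C$, exactly as Lemma \ref{Der}(ii) requires. Moreover, $B$ itself contains $C=B_{\underline{D}}$ (since any element of $B_{\underline{D}}$ is in $B$ by definition), so $B$ is an admissible choice for the ambient subring in the Corollary.

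The Corollary then applies verbatim and yields $B_{\underline{D}}=B_{\underline{D}'}$, which is the claim. I do not foresee a genuine obstacle: the whole content of the lemma is to notice that $B_{\underline{D}}$ is the \emph{canonical} choice of coefficient ring making the Corollary non-vacuous, and the two verifications above are formal. The only tiny point worth being careful about is the passage from ``$D_i$ preserves $C$'' to ``$\mathfrak{U}(\underline{D})$ preserves $C$'', but this is by induction on word length in the $D_i$'s using the same Leibniz expansion.
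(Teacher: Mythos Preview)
Your proposal is correct and is exactly the paper's own argument: the paper simply observes that $C:=B_{\underline{D}}$ satisfies the hypotheses of Lemma~\ref{Der} and then reads off the statement as the special case of the preceding Corollary. Your verification that $B_{\underline{D}}$ is a subring preserved by each $D_i$ just spells out what the paper leaves implicit.
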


\begin{Cor}\label{dmglocalpf}
If we have
\begin{equation}\label{GLdlg}
\underline{D}_\mathfrak{b}\in\underline{D}_z\mathrm{GL}_{2n}(\underline{z}^*\mathcal{C}^\infty_{\mathrm{dlg},r}(\Omega)),
\end{equation}
where $\underline{D}_z$ and $\underline{D}_\mathfrak{b}$ are both viewed as tuples of vector fields on $j_\sigma^{-1}(\Omega)$, then Proposition \ref{dmglocal} holds.
\end{Cor}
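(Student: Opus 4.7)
The plan is to deduce Proposition \ref{dmglocal} by a direct ring-theoretic assembly of the pieces already in place: Proposition \ref{mglocal}, Proposition \ref{Db1}, Example \ref{Dz}, and Lemma \ref{DerCor}. The hypothesis \eqref{GLdlg} is precisely the input that Lemma \ref{DerCor} needs, so once the identifications are lined up correctly the result falls out. I will argue section-by-section on opens $U\subseteq\Omega$, since both sheaves in question are subsheaves of $(\phi_\sigma^*j_*\mathcal{C}^\infty_{Sh_\mathbb{K}})|_\Omega$ and equality of subsheaves can be checked on any basis of opens.

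First, for each open $U\subseteq\Omega$, introduce the common ring
\[
B(U):=\phi_\sigma^*\mathcal{C}^\infty_{mg}(U)=\underline{z}^*\mathcal{C}^\infty_{lg,r}(U),
\]
where the equality is Proposition \ref{mglocal}. This $B(U)$ sits inside the ambient smooth-function ring on $\pi_\mathscr{B}^{-1}(U)\cong\underline{z}(j_{n,r}^{-1}(U))$; both $\underline{D}_\mathfrak{b}$ (left-invariant vector fields coming from $\mathfrak{b}_\mathbb{R}/\mathfrak{a}_{G,\mathbb{R}}$) and $\underline{D}_z$ (the logarithmic/holomorphic derivations in the $\sigma$-coordinates) act on it as derivations. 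Because $\underline{z}$ restricts to a homeomorphism from $j_\sigma^{-1}(\Omega)$ onto an open subset of $\Delta^{n,r}$, sheaf pullback along $\underline{z}$ commutes with taking $\underline{D}_z$-invariant subrings, and thus Example \ref{Dz} gives
\[
\underline{z}^*\mathcal{C}^\infty_{dlg,r}(U)=B(U)_{\underline{D}_z}.
\]
On the other hand, Proposition \ref{Db1} gives
\[
\phi_\sigma^*\mathcal{C}^\infty_{dmg}(U)=B(U)_{\underline{D}_\mathfrak{b}}.
\]

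Second, unpack the hypothesis \eqref{GLdlg}: there is a matrix $C\in GL_{2n}\bigl(\underline{z}^*\mathcal{C}^\infty_{dlg,r}(\Omega)\bigr)$ such that, viewing both as $2n$-tuples of vector fields on $j_\sigma^{-1}(\Omega)$, we have $\underline{D}_\mathfrak{b}=\underline{D}_z\cdot C$. Restricting the entries of $C$ along $U\hookrightarrow\Omega$ (which is permitted since $\mathcal{C}^\infty_{dlg,r}$ is a sheaf, so restriction lands in $\underline{z}^*\mathcal{C}^\infty_{dlg,r}(U)=B(U)_{\underline{D}_z}$), the resulting matrix $C|_U$ lies in $GL_{2n}(B(U)_{\underline{D}_z})$. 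This is exactly the hypothesis of Lemma \ref{DerCor} with $A$ the ambient smooth function ring, $B=B(U)$, $\underline{D}=\underline{D}_z$ and $\underline{D}'=\underline{D}_\mathfrak{b}$. The lemma concludes
\[
B(U)_{\underline{D}_\mathfrak{b}}=B(U)_{\underline{D}_z},
\]
i.e. $\phi_\sigma^*\mathcal{C}^\infty_{dmg}(U)=\underline{z}^*\mathcal{C}^\infty_{dlg,r}(U)$. Since this holds for every open $U\subseteq\Omega$, the two subsheaves of $(\phi_\sigma^*j_*\mathcal{C}^\infty_{Sh_\mathbb{K}})|_\Omega$ in \eqref{sheafisom4} coincide, which is Proposition \ref{dmglocal}.

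There is essentially no obstacle here: the corollary is a clean formal consequence once \eqref{GLdlg} is granted, and the only point needing minor care is that the identifications between $\phi_\sigma^*$-pullbacks, $\underline{z}^*$-pullbacks, and section rings on $\pi_\mathscr{B}^{-1}(U)$ are compatible with the two systems of derivations, which is guaranteed by Proposition \ref{sheafB} and by $\underline{z}$ being a local homeomorphism onto $\Delta^{n,r}$. The genuine difficulty of Proposition \ref{dmglocal} is hidden in establishing the hypothesis \eqref{GLdlg} itself, which requires the estimates on coefficients of invariant differential operators relative to $\sigma$-coordinates (and uses the relation \eqref{DDF} between the decompositions \eqref{D} and \eqref{DF}).
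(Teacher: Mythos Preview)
Your proof is correct and follows essentially the same approach as the paper: define the common ring $B(U)=\phi_\sigma^*\mathcal{C}^\infty_{mg}(U)=\underline{z}^*\mathcal{C}^\infty_{lg,r}(U)$ via Proposition \ref{mglocal}, identify the two target sheaves as $B(U)_{\underline{D}_\mathfrak{b}}$ and $B(U)_{\underline{D}_z}$ using Proposition \ref{Db1} and Example \ref{Dz}, then apply Lemma \ref{DerCor} with the hypothesis \eqref{GLdlg} restricted to $U$. The paper's proof is the same four-step chain of equalities, just written more tersely.
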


\begin{proof}[Proof of Corollary \ref{dmglocalpf}]
(\ref{GLdlg}) implies that
\begin{equation}\label{GLdlgU}
\underline{D}_\mathfrak{b}\in\underline{D}_z\mathrm{GL}_{2n}(\underline{z}^*\mathcal{C}^\infty_{\mathrm{dlg},r}(U))
\end{equation}
for every open subset $U\subseteq\Omega$, and then
\[\begin{aligned}
\phi_\sigma^*\mathcal{C}^\infty_{\mathrm{dmg}}(U)&=(\phi_\sigma^*\mathcal{C}^\infty_{\mathrm{mg}}(U))_{\underline{D}_\mathfrak{b}}&\textrm{(Proposition \ref{Db1})}\\
&=(\underline{z}^*\mathcal{C}^\infty_{\mathrm{lg},r}(U))_{\underline{D}_\mathfrak{b}}&\textrm{(Proposition \ref{mglocal})}\\
&=(\underline{z}^*\mathcal{C}^\infty_{\mathrm{lg},r}(U))_{\underline{D}_z}&\textrm{((\ref{GLdlgU})+Lemma \ref{DerCor})}\\
&=\underline{z}^*\mathcal{C}^\infty_{\mathrm{dlg},r}(U)&\textrm{(Example \ref{Dz})}.
\end{aligned}\]
\end{proof}

\subsubsection{Vector fields on a $\sigma$-neighborhood}

It remains to verify the condition (\ref{GLdlg}) in Corollary \ref{dmglocalpf}. We first note that in Definition \ref{sigma} $z_1,...,z_n$ actually can be defined as functions on $p_\sigma^{-1}(\Omega')$, and then $\underline{D}_z=(z_i\frac{\partial}{\partial z_i},\overline{z}_i\frac{\partial}{\partial\overline{z}_i},\frac{\partial}{\partial z_k},\frac{\partial}{\partial\overline{z}_k})_{1\leq i\leq r<k\leq n}$ is
extended to a collection of $T(F)$-invariant vector fields on $p_\sigma^{-1}(\Omega')$, which gives a basis of the tangent space at every point of $p_F^{-1}(\Omega')$.

Meanwhile, (\ref{DF}) implies that
\begin{equation}\label{DF2}
D(F)\cong(iU(F)\times W(F))\rtimes\mathscr{B}_h(\mathbb{R})^\circ,
\end{equation}
which means each point of $D(F)$ can be written as $uwb_h\cdot\exp(-i\omega_F)h$ with $u\in iU(F)$, $w\in W(F)$ and $b_h\in\mathscr{B}_h(\mathbb{R})^\circ$ in a unique way. Let $X_1,...,X_r$ be a basis of $i\mathfrak{u}_\mathbb{R}$, $X_{r+1},...,X_{2n}$ be a basis of $\mathfrak{w}_\mathbb{R}\oplus\mathfrak{b}_{h,\mathbb{R}}$. Consider the vector fields
\begin{equation}\label{uwbh}
uwb_h\cdot\exp(-i\omega_F)h\mapsto\left.\frac{\mathrm{d}}{\mathrm{d}t}\right|_{t=0}uwb_he^{tX_j}\cdot\exp(-i\omega_F)h,1\leq j\leq 2n
\end{equation}
on $D(F)$, we see from (\ref{DF2}) that they give a basis of the tangent space at each point, and as (\ref{DF2}) is $U(F)_\mathbb{C}$-equivariant, these vector fields are $U(F)_\mathbb{C}$-invariant and hence descends to $T(F)$-invariant vector fields on $U(F)_\mathbb{Z}\backslash D(F)$. Denote them as $v(X_1),...,v(X_{2n})$ and let $\underline{D}_v:=(v(X_1),...,v(X_{2n}))$, then analogous to (\ref{y'}) there is a smooth map $g':\Omega'\rightarrow\mathrm{GL}_{2n}(\mathbb{R})$ such that
\[\underline{D}_v(x)=\underline{D}_z(x) g'(p_F(x)),\forall x\in p_F^{-1}(\Omega').\]

\begin{Prop}\label{Dv}
\begin{equation}\label{g'GLdlg}
(g'\circ p_F)|_{j_\sigma^{-1}(\Omega)}\in\mathrm{GL}_{2n}(z^*\mathcal{C}^\infty_{\mathrm{dlg},r}(\Omega)),
\end{equation}
and hence
\[\underline{D}_v\in\underline{D}_z\mathrm{GL}_{2n}(\underline{z}^*\mathcal{C}^\infty_{\mathrm{dlg},r}(\Omega)).\]
\end{Prop}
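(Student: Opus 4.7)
The plan is to exploit the fact that both $\underline{D}_z$ and $\underline{D}_v$ consist of $T(F)$-invariant vector fields, so the change-of-basis matrix $g'$ descends from $p_F^{-1}(\Omega')$ to a smooth function on $\Omega'$ valued in $GL_{2n}(\mathbb{R})$, as already noted before the statement. The crucial observation is that through the $\sigma$-coordinates $(z_1,\ldots,z_n)$ on $\Omega$, the pullback $g'\circ p_F$ depends only on $(z_{r+1},\ldots,z_n)$: by Definition \ref{sigma} these are the ones pulled back from $\Omega'$ via $p_\sigma$, while $(z_1,\ldots,z_r)$ parametrize the $T_\sigma$-fibers of $p_\sigma$, along which any function of the form $h\circ p_\sigma$ is constant.

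First I would invoke the definition of a $\sigma$-neighborhood to note that $\overline{p_\sigma(\Omega)}$ is compact in $\Omega'$; since $g'$ is smooth on $\Omega'$, both $g'$ and all its partial derivatives are bounded on $\overline{p_\sigma(\Omega)}$. Moreover, since $g'$ takes values in $GL_{2n}(\mathbb{R})$ and $\det g'$ is a continuous nonvanishing function on the compact set, it is bounded away from zero, so $(g')^{-1}$ is smooth on a neighborhood of $\overline{p_\sigma(\Omega)}$ with all partial derivatives bounded there as well.

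Next I would analyze the action of $\underline{D}_z$ on an entry of $g'\circ p_F$. Since the entry is independent of $z_1,\ldots,z_r,\bar{z}_1,\ldots,\bar{z}_r$, the derivations $z_i\frac{\partial}{\partial z_i}$ and $\bar{z}_i\frac{\partial}{\partial\bar{z}_i}$ for $i\le r$ annihilate it. Consequently any superposition $D$ of elements of $\underline{D}_z$ applied to this entry either vanishes or, by the chain rule, equals the $p_\sigma$-pullback of a partial derivative of the corresponding entry of $g'$ in the coordinates $(z_{r+1},\ldots,z_n)$ on $\Omega'$; either way it is bounded on $j_\sigma^{-1}(\Omega)$. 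A bounded function is trivially in $\mathcal{C}^\infty_{lg,r}$, so each entry of $g'\circ p_F$ lies in $\mathcal{C}^\infty_{dlg,r}$ by Example \ref{Dz}. Applying the identical reasoning to $(g')^{-1}$, whose entries are rational expressions in the entries of $g'$ divided by $\det g'\neq 0$, yields \eqref{g'GLdlg}. The second assertion $\underline{D}_v\in\underline{D}_z GL_{2n}(\underline{z}^*\mathcal{C}^\infty_{dlg,r}(\Omega))$ is then immediate from the defining relation $\underline{D}_v(x)=\underline{D}_z(x)\,g'(p_F(x))$.

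The argument is essentially syntactic; no serious obstacle is anticipated. The only point worth emphasizing is the independence of $g'\circ p_F$ from the torus coordinates $z_1,\ldots,z_r$, which is precisely the reason that the inevitable logarithmic singularities along the boundary divisor never enter into the change-of-basis matrix between $\underline{D}_v$ and $\underline{D}_z$, so that the nontrivial logarithmic bounds contemplated by $\mathcal{C}^\infty_{dlg,r}$ are not actually used here.
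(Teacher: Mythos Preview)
Your proof is correct and follows essentially the same approach as the paper: both argue that $g'\circ p_F$ is pulled back from a smooth function on a neighborhood of the compact set $\overline{p_\sigma(\Omega)}$, hence it and all its $\underline{D}_z$-derivatives are bounded on $j_\sigma^{-1}(\Omega)$, which suffices for membership in $\mathcal{C}^\infty_{dlg,r}$. The only cosmetic difference is that the paper packages the invertibility by testing against all $f\in\mathcal{O}_{GL_{2n/\mathbb{R}}}$ at once (so that $1/\det$ is handled automatically), whereas you treat the entries of $g'$ and of $(g')^{-1}$ separately; the content is the same.
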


\begin{proof}
(\ref{g'GLdlg}) means that $(f\circ g'\circ p_F)|_{j_\sigma^{-1}(\Omega)}\in z^*\mathcal{C}^\infty_{\mathrm{dlg},r}(\Omega)$ for every $f\in\mathcal{O}_{\mathrm{GL}_{2n/\mathbb{R}}}$. In fact, $f\circ g'\circ p_F$ is pulled back from a smooth function on a neighborhood of $\overline{p_\sigma(\Omega)}$, so it is bounded on $j_\sigma^{-1}(\Omega)$ and in particular $(f\circ g'\circ p_F)|_{j_\sigma^{-1}(\Omega)}\in z^*\mathcal{C}^\infty_{\mathrm{lg},r}(\Omega)$. Moreover, every derivative of $f\circ g'\circ p_F$ is the same kind of function, so $(f\circ g'\circ p_F)|_{j_\sigma^{-1}(\Omega)}$ actually lies in $z^*\mathcal{C}^\infty_{\mathrm{dlg},r}(\Omega)$.
\end{proof}

Similarly, (\ref{D}) implies that
\begin{equation}\label{D2}
D\cong W(F)\rtimes(\mathscr{B}_h(\mathbb{R})^\circ\times\mathscr{B}_l(\mathbb{R})^\circ)
\end{equation}
and each point of $D$ can be uniquely written as $wb_hb_l\cdot h$ with $w\in W(F)$, $b_h\in\mathscr{B}_h(\mathbb{R})^\circ$ and $b_l\in\mathscr{B}_l(\mathbb{R})^\circ$. For every $X\in\mathfrak{w}_\mathbb{R}\oplus\mathfrak{b}_{h,\mathbb{R}}\oplus\mathfrak{b}_{l,\mathbb{R}}$, the vector field
\[wb_hb_l\cdot h\mapsto\left.\frac{\mathrm{d}}{\mathrm{d}t}\right|_{t=0}wb_hb_le^{tX}\cdot h\]
on $D$ is $U(F)_\mathbb{Z}$-invariant and thus descends to a vector field $\widetilde{v}(X)$ on $U(F)_\mathbb{Z}\backslash D$. Without loss of generality assume the basis $\underline{D}_\mathfrak{b}$ of $\mathfrak{b}_\mathbb{R}/\mathfrak{a}_{G,\mathbb{R}}\cong\mathfrak{w}_\mathbb{R}\oplus\mathfrak{b}_{h,\mathbb{R}}\oplus\mathfrak{b}_{l,\mathbb{R}}$ consists of a basis $\widetilde{X}_1,...,\widetilde{X}_r$ of $\mathfrak{b}_{l,\mathbb{R}}$ and the above basis $X_{r+1},...,X_{2n}$ of $\mathfrak{w}_\mathbb{R}\oplus\mathfrak{b}_{h,\mathbb{R}}$, then the corresponding collection of vector fields is $(\widetilde{v}(\widetilde{X}_1),...,\widetilde{v}(\widetilde{X}_r),\widetilde{v}(X_{r+1}),...,\widetilde{v}(X_{2n}))$. By (\ref{D2}) it also gives a basis of the tangent space at each point.

We want to compare $\underline{D}_\mathfrak{b}$ with $\underline{D}_v$. By (\ref{DDF}), at the point
\[wb_hb_l\cdot h=\mu(b_l)wb_h\cdot\exp(-i\omega_F)h,\]
the vector field given by $X$ takes the value
\begin{equation}\label{wbh}
\begin{aligned}
\left.\frac{\mathrm{d}}{\mathrm{d}t}\right|_{t=0}wb_hb_le^{tX}\cdot h&=\left.\frac{\mathrm{d}}{\mathrm{d}t}\right|_{t=0}wb_he^{t\mathrm{Ad}(b_l)X}b_l\cdot h\\
&=\left.\frac{\mathrm{d}}{\mathrm{d}t}\right|_{t=0}\mu(b_l)wb_he^{t\mathrm{Ad}(b_l)X}\cdot\exp(-i\omega_F)h
\end{aligned}
\end{equation}
when $X\in\mathfrak{w}_\mathbb{R}\oplus\mathfrak{b}_{h,\mathbb{R}}$, and
\begin{equation}\label{bl1}
\left.\frac{\mathrm{d}}{\mathrm{d}t}\right|_{t=0}wb_hb_le^{tX}\cdot h=\left.\frac{\mathrm{d}}{\mathrm{d}t}\right|_{t=0}\mu(b_le^{tX})wb_h\cdot\exp(-i\omega_F)h
\end{equation}
when $X\in\mathfrak{b}_{l,\mathbb{R}}$.

\begin{Prop}
$X\mapsto\mathrm{Ad}(e^{-i\omega_F})X-X$ defines a map $\vartheta:\mathfrak{b}_{l,\mathbb{R}}\rightarrow i\mathfrak{u}_\mathbb{R}$, and the right hand side of (\ref{bl1}) equals
\begin{equation}\label{bl2}
\left.\frac{\mathrm{d}}{\mathrm{d}t}\right|_{t=0}\mu(b_l)wb_he^{t\mathrm{Ad}(b_l)\vartheta(X)}\cdot\exp(-i\omega_F)h.
\end{equation}
\end{Prop}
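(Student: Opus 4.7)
The first task is to show that $\vartheta$ actually lands in $i\mathfrak{u}_\mathbb{R}$. Since $\mathscr{U}(F)$ is the center of the unipotent radical $\mathscr{W}(F)$ of $\mathscr{P}(F)$, the Lie subalgebra $\mathfrak{u}(F)$ is an abelian ideal of $\mathfrak{p}(F)$. For any $X\in\mathfrak{b}_{l,\mathbb{R}}\subseteq\mathfrak{p}(F)_\mathbb{R}$ we therefore have $[\omega_F,X]\in\mathfrak{u}(F)_\mathbb{R}$, and then $[\omega_F,[\omega_F,X]]=0$ by abelianness of $\mathfrak{u}(F)$. Hence the power series for the adjoint action truncates after the linear term, giving
\[\mathrm{ad}(e^{-i\omega_F})X = X - i[\omega_F,X],\]
so $\vartheta(X) = -i[\omega_F,X] \in i\mathfrak{u}_\mathbb{R}$ is well-defined and manifestly $\mathbb{R}$-linear in $X$.

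For the equality of \eqref{bl1} and \eqref{bl2}, I would promote it to an equality of smooth curves in $D(F)$ and then differentiate at $t=0$. By the same truncation argument applied to $e^{tX}$, we have exactly $\mathrm{ad}(e^{tX})\omega_F = \omega_F + t[X,\omega_F]$. Since $b_l \in \mathscr{B}_l \subseteq \mathscr{L}(F)$ normalizes $\mathscr{U}(F)$, the two summands still lie in the abelian algebra $\mathfrak{u}_\mathbb{R}$ after applying $\mathrm{ad}(b_l)$, so their exponentials commute and
\[\mu(b_l e^{tX}) = \exp\bigl(i\,\mathrm{ad}(b_l)\,\mathrm{ad}(e^{tX})\omega_F\bigr) = \exp\bigl(i\,\mathrm{ad}(b_l)\omega_F\bigr)\exp\bigl(it\,\mathrm{ad}(b_l)[X,\omega_F]\bigr) = \mu(b_l)\,e^{t\,\mathrm{ad}(b_l)\vartheta(X)}.\]
The factor $e^{t\,\mathrm{ad}(b_l)\vartheta(X)}$ lies in $U(F)_\mathbb{C}$, which by \cite[Theorem III.4.1]{AMRT10} commutes with $\mathscr{W}(F)$ (as $\mathscr{U}(F)$ is central there) and with $\mathscr{G}_h(F)$, hence with $wb_h \in W(F)\cdot\mathscr{B}_h(\mathbb{R})^o$. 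Sliding it past $wb_h$ yields the curve-level identity
\[\mu(b_l e^{tX})\,wb_h\cdot\exp(-i\omega_F)h = \mu(b_l)\,wb_h\,e^{t\,\mathrm{ad}(b_l)\vartheta(X)}\cdot\exp(-i\omega_F)h,\]
and differentiating at $t=0$ turns the left-hand side into \eqref{bl1} and the right-hand side into \eqref{bl2}.

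There is essentially no obstacle here beyond bookkeeping: the only substantive input is the truncation of the adjoint series, which rests entirely on $\mathfrak{u}(F)$ being an abelian ideal of $\mathfrak{p}(F)$, and all other steps are formal manipulations inside $U(F)_\mathbb{C}$ using the centrality properties of $\mathscr{U}(F)$ recalled in Section \ref{section1}. In particular, no Hermitian-symmetric-domain input is needed for this proposition.
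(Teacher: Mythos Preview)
Your first paragraph is fine and in fact sharper than the paper's: you make the truncation of $\mathrm{ad}(e^{-i\omega_F})$ explicit via $[\omega_F,[\omega_F,X]]=0$.

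There is, however, a genuine gap in the second part. You assert that ``by the same truncation argument'' one has $\mathrm{ad}(e^{tX})\omega_F=\omega_F+t[X,\omega_F]$ \emph{exactly}, and then build a curve-level identity on this. But the roles of $X$ and $\omega_F$ are not symmetric: the earlier truncation used that $\omega_F$ and $[\omega_F,X]$ both lie in the abelian $\mathfrak{u}$, so $[\omega_F,[\omega_F,X]]=0$. Here you would need $[X,[X,\omega_F]]=0$, and this fails in general, since $X\in\mathfrak{b}_{l,\mathbb{R}}$ merely normalizes $\mathfrak{u}_\mathbb{R}$ and can act on it with nonzero square (already in the Siegel case, where $\mathfrak{g}_l=\mathfrak{gl}_g$ acts on $\mathfrak{u}=\mathrm{Sym}_g$). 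So your curve-level identity $\mu(b_le^{tX})=\mu(b_l)e^{t\,\mathrm{ad}(b_l)\vartheta(X)}$ is false for $t\neq 0$.

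The repair is immediate: you only need this identity modulo $O(t^2)$, which is automatic, and that suffices for the derivative at $t=0$. Alternatively, do what the paper does: compute $\mu(b_le^{tX})=b_le^{tX}e^{i\omega_F}e^{-tX}b_l^{-1}$ exactly as the product $\mu(b_l)\,e^{t\,\mathrm{ad}(b_le^{-i\omega_F})X}\,e^{-t\,\mathrm{ad}(b_l)X}$ by conjugation, then use that $\left.\frac{d}{dt}\right|_{t=0}e^{tA}e^{-tB}=A-B$ with $A-B=\mathrm{ad}(b_l)\vartheta(X)$, and finally commute past $wb_h$. Either way, drop the claim of an exact curve identity.
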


\begin{proof}
First note that since $\mathscr{B}_l$ normalizes $\mathscr{U}$, $[\mathfrak{b}_{l,\mathbb{R}},i\mathfrak{u}_\mathbb{R}]\subseteq i\mathfrak{u}_\mathbb{R}$, so $\vartheta$ has image in $i\mathfrak{u}_\mathbb{R}$. Next, we have
\[\begin{aligned}
\mu(b_le^{tX})&=b_le^{tX}e^{i\omega_F}e^{-tX}b_l^{-1}\\
&=b_le^{i\omega_F}e^{t\mathrm{Ad}(e^{-i\omega_F})X}e^{-tX}b_l^{-1}\\
&=\mu(b_l)e^{t\mathrm{Ad}(b_le^{-i\omega_F})X}e^{-t\mathrm{Ad}(b_l)X}.
\end{aligned}\]
Hence
\[\begin{aligned}
\left.\frac{\mathrm{d}}{\mathrm{d}t}\right|_{t=0}\mu(b_le^{tX})wb_h&=\left.\frac{\mathrm{d}}{\mathrm{d}t}\right|_{t=0}\mu(b_l)e^{t\mathrm{Ad}(b_le^{-i\omega_F})X-t\mathrm{Ad}(b_l)X}wb_h\\
&=\left.\frac{\mathrm{d}}{\mathrm{d}t}\right|_{t=0}\mu(b_l)e^{t\mathrm{Ad}(b_l)\vartheta(X)}wb_h\\
&=\left.\frac{\mathrm{d}}{\mathrm{d}t}\right|_{t=0}\mu(b_l)wb_he^{t\mathrm{Ad}(b_l)\vartheta(X)},
\end{aligned}\]
where the last step is because $\mathscr{W}\rtimes\mathscr{B}_h$ centralizes $\mathscr{U}$.
\end{proof}

Comparing (\ref{wbh}), (\ref{bl1}) and (\ref{bl2}) with (\ref{uwbh}), we get
\[\underline{D}_\mathfrak{b}=\underline{D}_v\mathrm{diag}(\vartheta\circ\mathrm{Ad}(b_l)|_{i\mathfrak{u}_\mathbb{R}},\mathrm{Ad}(b_l)|_{\mathfrak{w}_\mathbb{R}\oplus\mathfrak{b}_{h,\mathbb{R}}}),\]
where $\vartheta\circ\mathrm{Ad}(b_l)|_{i\mathfrak{u}_\mathbb{R}}$ and $\mathrm{Ad}(b_l)|_{\mathfrak{w}_\mathbb{R}\oplus\mathfrak{b}_{h,\mathbb{R}}}$ stands for the matrices of the maps with respect to the bases $X_1,...,X_r$ of $i\mathfrak{u}_\mathbb{R}$, $\widetilde{X}_1,...,\widetilde{X}_r$ of $\mathfrak{b}_{l,\mathbb{R}}$ and $X_{r+1},...,X_{2n}$ of $\mathfrak{w}_\mathbb{R}\oplus\mathfrak{b}_{h,\mathbb{R}}$, and $b_l$ is the $b_l$-factor function. Particularly since $\underline{D}_v$ and $\underline{D}_\mathfrak{b}$ both give bases of the tangent space at each point, $\vartheta$ must be an isomorphism, so we can choose $\widetilde{X}_1,...,\widetilde{X}_r$ such that $\vartheta(\widetilde{X}_1)=X_1,...,\vartheta(\widetilde{X}_r)=X_r$. Then the formula above becomes
\[\underline{D}_\mathfrak{b}=\underline{D}_v\mathrm{diag}(\mathrm{Ad}(b_l)|_{i\mathfrak{u}_\mathbb{R}},\mathrm{Ad}(b_l)|_{\mathfrak{w}_\mathbb{R}\oplus\mathfrak{b}_{h,\mathbb{R}}})=\underline{D}_v\mathrm{Ad}(b_l)|_{i\mathfrak{u}_\mathbb{R}\oplus\mathfrak{w}_\mathbb{R}\oplus\mathfrak{b}_{h,\mathbb{R}}}.\]

\begin{Prop}\label{Db2}
\[(\mathrm{Ad}(b_l)|_{i\mathfrak{u}_\mathbb{R}\oplus\mathfrak{w}_\mathbb{R}\oplus\mathfrak{b}_{h,\mathbb{R}}})|_{j_\sigma^{-1}(\Omega)}\in\mathrm{GL}_{2n}(z^*\mathcal{C}^\infty_{\mathrm{dlg},r}(\Omega)),\]
and hence
\[\underline{D}_\mathfrak{b}\in\underline{D}_v\mathrm{GL}_{2n}(\underline{z}^*\mathcal{C}^\infty_{\mathrm{dlg},r}(\Omega)).\]
\end{Prop}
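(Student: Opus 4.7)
My plan is to reduce the assertion to the statement that $f\circ b_l\in z^*\mathcal{C}^\infty_{dlg,r}(\Omega)$ for every $f\in\mathcal{O}_{\mathscr{B}_l}$, and then prove this directly by iterating the chain rule. The consequence for $\underline{D}_\mathfrak{b}$ is then obtained by composing with Proposition \ref{Dv}.

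First I would verify that the matrix $\mathrm{ad}(b_l)|_{i\mathfrak{u}_\mathbb{R}\oplus\mathfrak{w}_\mathbb{R}\oplus\mathfrak{b}_{h,\mathbb{R}}}$ is well-defined and invertible, with all of its entries algebraic functions of $b_l$. The subspace is $\mathscr{B}_l$-stable because $\mathscr{B}_l\subseteq\mathscr{G}_l(F)$ normalizes $\mathscr{W}(F)$ and its center $\mathscr{U}(F)$, while $\mathscr{G}_l(F)$ commutes with $\mathscr{G}_h(F)$ in the almost direct product \eqref{GhGl}; the adjoint representation being algebraic, each entry is of the form $f\circ b_l$ with $f\in\mathcal{O}_{\mathscr{B}_l}$. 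Invertibility holds on all complex points of $\mathscr{B}_l$, so the determinant is a unit in $\mathcal{O}_{\mathscr{B}_l}$ and the entries of the inverse matrix are again of this form. It therefore suffices to prove the following key claim: for every $f\in\mathcal{O}_{\mathscr{B}_l}$, the pullback $f\circ b_l$ lies in $z^*\mathcal{C}^\infty_{dlg,r}(\Omega)$.

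For the key claim, pick linear coordinates $\lambda_1,\ldots,\lambda_r$ on $iU(F)$ dual to the basis $\xi_1,\ldots,\xi_r$ of $U(F)_\mathbb{Z}$. Since $b_l=\mu^{-1}\circ y_F$, the identities \eqref{logz} and \eqref{y'} give
\[
\lambda_j\circ y_F \;=\; \tfrac{1}{2\pi}\bigl(\log\tfrac{1}{|z_j|}+\log c\bigr) \;+\; \lambda'_j\circ p_F,\qquad 1\le j\le r,
\]
where each $\lambda'_j$ is smooth on a neighbourhood of the relatively compact set $\overline{p_\sigma(\Omega)}$. Any derivation in $\underline{D}_z$ applied to $\log\tfrac{1}{|z_j|}$ yields either the constant $-\tfrac12$ or $0$, and applied to $\lambda'_j\circ p_F$ yields another function of the same type (pulled back from $\Omega'$, hence bounded and closed under further $\underline{D}_z$). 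Consequently $\lambda_j\circ y_F\in z^*\mathcal{C}^\infty_{dlg,r}(\Omega)$. For general $f\in\mathcal{O}_{\mathscr{B}_l}$, view $g:=f\circ\mu^{-1}$ as a function on $iU(F)$; the chain rule gives
\[
D(f\circ b_l) \;=\; \sum_{j=1}^{r}\bigl(\partial g/\partial\lambda_j\bigr)(y_F)\cdot D(\lambda_j\circ y_F)\qquad\text{for }D\in\underline{D}_z,
\]
and by the last sentence of Proposition \ref{OB} each $\partial g/\partial\lambda_j$ again comes from $\mathcal{O}_{\mathscr{B}_l}$. By Proposition \ref{OB} every element of $\mathcal{O}_{\mathscr{B}_l}$ evaluated along $y_F$ is bounded by a polynomial in $\log\tfrac{1}{|z_1|},\ldots,\log\tfrac{1}{|z_r|}$, so the right-hand side is a product of elements of $z^*\mathcal{C}^\infty_{lg,r}(\Omega)$. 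Iterating the chain rule, every superposition of $\underline{D}_z$ applied to $f\circ b_l$ is a finite sum of such products and hence still lies in $z^*\mathcal{C}^\infty_{lg,r}(\Omega)$; by Example \ref{Dz} this gives $f\circ b_l\in z^*\mathcal{C}^\infty_{dlg,r}(\Omega)$, as claimed.

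The final conclusion then follows because we have already shown $\underline{D}_\mathfrak{b}=\underline{D}_v\cdot\mathrm{ad}(b_l)|_{i\mathfrak{u}_\mathbb{R}\oplus\mathfrak{w}_\mathbb{R}\oplus\mathfrak{b}_{h,\mathbb{R}}}$, and Proposition \ref{Dv} gives $\underline{D}_v=\underline{D}_z\cdot(g'\circ p_F)$ with $g'\circ p_F\in GL_{2n}(z^*\mathcal{C}^\infty_{dlg,r}(\Omega))$; combining yields $\underline{D}_\mathfrak{b}\in\underline{D}_z\,GL_{2n}(z^*\mathcal{C}^\infty_{dlg,r}(\Omega))$. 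The main obstacle I expect is keeping the chain-rule induction within $\mathcal{O}_{\mathscr{B}_l}$ when $f$ involves the fractional-power generators of $\mathcal{O}_{\mathscr{B}_l}$ over $\mathcal{O}_{\overline{\mathscr{B}_l}}$; but the identity $Df^{\alpha}=\alpha f^{\alpha-1}Df$ together with the closure of $\mathcal{O}_{\mathscr{B}_l}$ under $\partial/\partial\lambda_j$ (Proposition \ref{OB}) keeps everything inside $\mathcal{O}_{\mathscr{B}_l}$, so the polynomial bound from Proposition \ref{OB} can be reapplied at every step of the induction.
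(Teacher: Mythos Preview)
Your proposal is correct and follows essentially the same route as the paper: reduce to showing that $f\circ y_F\in z^*\mathcal{C}^\infty_{dlg,r}(\Omega)$ for all $f\in\mathcal{O}_{\mathscr{B}_l}$ (your $f\circ b_l$ is the same object via $b_l=\mu^{-1}\circ y_F$), verify that the linear coordinates $\lambda_j\circ y_F$ lie in $z^*\mathcal{C}^\infty_{dlg,r}(\Omega)$ using \eqref{logz} and \eqref{y'}, and then iterate the chain rule using the closure of $\mathcal{O}_{\mathscr{B}_l}$ under partial derivatives from Proposition~\ref{OB}. The only cosmetic difference is that you go one step further and combine with Proposition~\ref{Dv} to land in $\underline{D}_z\,GL_{2n}$, whereas the proposition as stated only asks for $\underline{D}_\mathfrak{b}\in\underline{D}_v\,GL_{2n}$; that extra step is harmless (and is exactly what is used immediately afterward).
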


\begin{proof}
It suffices to prove that every $f\in\mathcal{O}_{\mathscr{B}_l}$, viewed as a function on $iC(F)$, satisfies that $(f\circ y_F)|_{j_\sigma^{-1}(\Omega)}\in z^*\mathcal{C}^\infty_{\mathrm{dlg},r}(\Omega)$. We've proved in Proposition \ref{OB} that the composition of $y_F$ with any partial derivative of $f$ lies in $z^*\mathcal{C}^\infty_{\mathrm{lg},r}(\Omega)$. Meanwhile, on $iU(F)$ denote $y_j:=-2\pi il_j(1\leq j\leq r)$, then for every $D\in\underline{D}_z$,
\[D(f\circ y_F)=\sum_{j=1}^r(\frac{\partial f}{\partial y_j}\circ y_F)D(y_j\circ y_F).\]
By induction for every $D\in\mathfrak{U}(\underline{D}_z)$, $D(f\circ y_F)$ is a finite sum $\sum(g\circ y_F)h$, where $g$ are partial derivatives of $f$ while $h$ are finite products $\prod D'(y_j\circ y_F)$ for several $D'\in\mathfrak{U}(\underline{D}_z)$ and $1\leq j\leq r$. So it suffices to prove that
\[(y_j\circ y_F)|_{j_\sigma^{-1}(\Omega)}\in z^*\mathcal{C}^\infty_{\mathrm{dlg},r}(\Omega)\]
for $j=1,...,r$. Now by (\ref{logz}) and (\ref{y'}),
\[\begin{aligned}
y_j\circ y_F&=y_j\circ y_t+y_j\circ y'\circ p_F\\
&=\log{\frac{c}{|z_j|}}+y_j\circ y'\circ p_F.
\end{aligned}\]
$D\log{\frac{c}{|z_j|}}$ is a constant for every $D\in\underline{D}_z$, so $\log{\frac{c}{|z_j|}}$ is a section of $\mathcal{C}^\infty_{\mathrm{dlg},r}$. As for the second term, $y_j\circ y'\circ p_F$ is pulled back from a smooth function on a neighborhood of $\overline{p_\sigma(\Omega)}$, so the proof of Proposition \ref{Dv} also implies that
\[(y_j\circ y'\circ p_F)|_{j_\sigma^{-1}(\Omega)}\in z^*\mathcal{C}^\infty_{\mathrm{dlg},r}(\Omega).\qedhere\]
\end{proof}

\begin{proof}[Proof of Proposition \ref{dmglocal}]
Combining Propositions \ref{Dv} and \ref{Db2}, we see that (\ref{GLdlg}) holds, and then Proposition \ref{dmglocal} follows from Corollary \ref{dmglocalpf}.
\end{proof}

\begin{Cor}\label{fine}
The sheaves $\mathcal{I}_V^i$ are fine sheaves as in \cite[Definition 3.13]{Kodaira05}.
\end{Cor}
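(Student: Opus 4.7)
The plan is to reduce the claim to constructing partitions of unity in $\mathcal{C}^\infty_{dmg}$ and then to exhibit such partitions via Proposition \ref{dmglocal}. Since $\mathcal{I}_V^i$ is a sheaf of $\mathcal{C}^\infty_{dmg}$-modules, any global section $\chi\in\mathcal{C}^\infty_{dmg}(Sh_{\mathbb{K},\Sigma})$ yields an endomorphism of $\mathcal{I}_V^i$ by multiplication, with support contained in that of $\chi$. So fineness in the sense of \cite[Definition 3.13]{Kodaira05!} will follow as soon as $\mathcal{C}^\infty_{dmg}$ admits, subordinate to any open cover of the compact Hausdorff space $Sh_{\mathbb{K},\Sigma}$, a partition of unity by global sections.

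For the local step, around any $x\in Sh_{\mathbb{K},\Sigma}$ I would pick a $\sigma\in\Sigma(\gamma)_F$ of full dimension, a $\sigma$-neighborhood $\Omega$ of $x$, and $\sigma$-coordinates $\underline{z}:\Omega\hookrightarrow\Delta^n$ as in Definition \ref{sigma}. Proposition \ref{dmglocal} identifies $(\phi_\sigma^*\mathcal{C}^\infty_{dmg})|_\Omega$ with $(\underline{z}^*\mathcal{C}^\infty_{dlg,r})|_\Omega$. Take a standard smooth bump function $\widetilde{\chi}\in C^\infty_c(\Delta^n)$ with compact support inside $\underline{z}(\Omega)$ and equal to $1$ on a neighborhood of $\underline{z}(j_\sigma^{-1}(\phi_\sigma^{-1}(x)))$. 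The restriction $\widetilde{\chi}|_{\Delta^{n,r}}$ then lies in $\mathcal{C}^\infty_{dlg,r}(\Delta^n)$: it is bounded, and each operator in $\underline{D}_z=(z_i\partial_{z_i},\overline{z}_i\partial_{\overline{z}_i},\partial_{z_k},\partial_{\overline{z}_k})_{1\leq i\leq r<k\leq n}$ extends smoothly across the coordinate hyperplanes of $\Delta^n$, so every iterate $D\widetilde{\chi}$ for $D\in\mathfrak{U}(\underline{D}_z)$ remains smooth and bounded on $\Delta^n$, a fortiori polynomially bounded in $\log\frac{1}{|z_i|}$. Transporting $\widetilde{\chi}$ back via $\underline{z}$ and $\phi_\sigma$ (which is a homeomorphism on $\Omega$) and extending by zero across $Sh_{\mathbb{K},\Sigma}$ produces a global section $\chi_x\in\mathcal{C}^\infty_{dmg}(Sh_{\mathbb{K},\Sigma})$ supported in any prescribed neighborhood of $x$ and equal to $1$ near $x$.

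Given such local bump functions, the partition of unity is assembled by the standard procedure: extract a finite subcover $\{U_\alpha\}$ of the given cover (possible by compactness), choose bump functions $f_\alpha\in\mathcal{C}^\infty_{dmg}$ with $\operatorname{supp}(f_\alpha)\subset U_\alpha$ and $\sum_\alpha f_\alpha\geq 1$ everywhere on $Sh_{\mathbb{K},\Sigma}$, and set $\chi_\alpha:=f_\alpha/\sum_\beta f_\beta$. Since $g:=\sum_\beta f_\beta$ is a section of $\mathcal{C}^\infty_{dmg}$ bounded below by $1$ on the compact $Sh_{\mathbb{K},\Sigma}$, an induction on the order of differentiation using the Leibniz rule shows $1/g\in\mathcal{C}^\infty_{dmg}$, and Proposition \ref{ring} then gives $\chi_\alpha\in\mathcal{C}^\infty_{dmg}$. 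The endomorphisms $\eta_\alpha:=\chi_\alpha\cdot-$ of $\mathcal{I}_V^i$ have support in $U_\alpha$ and sum to the identity, verifying fineness. The only substantive point in the argument is the local verification that a standard smooth bump function on $\Delta^n$ lies in $\mathcal{C}^\infty_{dlg,r}$; this reduces to the elementary observation that the a priori singular operators $z_i\partial_{z_i}$ extend smoothly across $\{z_i=0\}$ when applied to smooth functions on the ambient polydisc.
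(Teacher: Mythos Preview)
Your proof is correct and rests on the same key observation as the paper: smooth functions on $\Delta^n$ restrict to sections of $\mathcal{C}^\infty_{dlg,r}$, so via Proposition~\ref{dmglocal} smooth functions on $Sh_{\mathbb{K},\Sigma}$ lie in $\mathcal{C}^\infty_{dmg}$. The paper packages this more efficiently by simply recording the sheaf inclusion $\mathcal{C}^\infty_{Sh_{\mathbb{K},\Sigma}}\subseteq\mathcal{C}^\infty_{dmg}$, after which $\mathcal{I}_V^i$ is automatically a module over the sheaf of smooth functions on the compact manifold $Sh_{\mathbb{K},\Sigma}$ and hence fine---this spares you the explicit partition-of-unity construction and the $1/g$ argument, since the bump functions you built are already ordinary smooth functions on $Sh_{\mathbb{K},\Sigma}$.
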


\begin{proof}
By Proposition \ref{lgsections} $\mathcal{C}^\infty_{\mathrm{lg},r}$ contains $\mathcal{C}^\infty_{\Delta^n}$ as a subsheaf, and hence so does $\mathcal{C}^\infty_{\mathrm{dlg},r}$. Then Proposition \ref{dmglocal} implies that $\mathcal{C}^\infty_{\mathrm{dmg}}$ contains $\mathcal{C}^\infty_{\mathrm{Sh}_{\mathbb{K},\Sigma}}$, so the $\mathcal{C}^\infty_{\mathrm{dmg}}$-modules $\mathcal{I}_V^i$ are fine sheaves.
\end{proof}

\subsection{Differential forms on a \texorpdfstring{$\sigma$}{}-neighborhood}\label{sheaves3}

\begin{Def}\label{dlgforms}
On $\Delta^{n,r}$, denote $\theta_k:=d\overline{z}_k/\overline{z}_k\,(1\leq k\leq r)$ and $\theta_k:=d\overline{z}_k\,(r<k\leq n)$, and for $I=\{k_1<...<k_i\}\subseteq\{1,...,n\}$ denote $\theta_I:=\theta_{k_i}\!\wedge\!...\!\wedge\!\theta_{k_1}$. Let $\mathcal{A}^{0,i}_{\mathrm{dlg},r}$ be the subsheaf
\[\bigoplus_{I\subseteq\{1,...,n\},|I|=i}\mathcal{C}^\infty_{\mathrm{dlg},r}\theta_I\]
of $j_{n,r*}\mathcal{A}^{0,i}_{\Delta^{n,r}}\cong\bigoplus_{I\subseteq\{1,...,n\},|I|=i}j_{n,r*}\mathcal{C}^\infty_{\Delta^{n,r}}\theta_I$.
\end{Def}

\begin{Prop}\label{dmgforms}
Let $\Omega$ be a $\sigma$-neighborhood of some point in $\mathrm{Sh}_{\mathbb{K},\Sigma}$ with $\sigma$-coordinates $\underline{z}=(z_1,...,z_n)$. Then analogous to (\ref{sheafisom4}), there are isomorphisms
\[(\phi_\sigma^*j_*\mathcal{A}^{0,i}_{\mathrm{Sh}_\mathbb{K}})|_\Omega\cong(j_{\sigma*}\phi_F^*\mathcal{A}^{0,i}_{\mathrm{Sh}_\mathbb{K}})|_\Omega\cong j_{\Omega*}\underline{z}_{n,r}^*\mathcal{A}^{0,i}_{\Delta^{n,r}}\cong\underline{z}^*j_{n,r*}\mathcal{A}^{0,i}_{\Delta^{n,r}},\]
and $(\phi_\sigma^*\mathcal{A}^{0,i}_{\mathrm{dmg}})|_\Omega$ coincides with $(\underline{z}^*\mathcal{A}^{0,i}_{\mathrm{dlg},r})|_\Omega$.
\end{Prop}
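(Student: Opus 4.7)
The plan is to obtain the isomorphisms exactly as in \eqref{sheafisom4}, by applying \eqref{sheafisom1} to $\mathcal{A}^{0,i}_{Sh_\mathbb{K}}$ and \eqref{sheafisom2} to $\mathcal{A}^{0,i}_{\Delta^{n,r}}$, using that $\underline{z}$ is a holomorphic chart on $\Omega$ identifying the $(0,i)$-forms pulled back from $\Delta^{n,r}$ with those induced from $\phi_F^{-1}(\Omega)$. For the equality of the two subsheaves, by Proposition \ref{sheafB}(ii) the submodule $(\phi_\sigma^*\mathcal{A}^{0,i}_{dmg})|_\Omega$ is free of rank $\binom{n}{i}$ over $(\phi_\sigma^*\mathcal{C}^\infty_{dmg})|_\Omega$ on a basis $\{\eta^I\}_{|I|=i}$ dual to a basis of $\bigwedge^i\mathfrak{p}_-\cong\bigwedge^i(\mathfrak{p}_o/\mathfrak{k}_o)$, while by Definition \ref{dlgforms} the submodule $(\underline{z}^*\mathcal{A}^{0,i}_{dlg,r})|_\Omega$ is free of the same rank over $(\underline{z}^*\mathcal{C}^\infty_{dlg,r})|_\Omega$ on $\{\theta_I\}_{|I|=i}$. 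Since Proposition \ref{dmglocal} identifies the two coefficient sheaves, the desired equality of free submodules is equivalent to the change-of-basis matrix $M$ belonging to $GL_{\binom{n}{i}}(\underline{z}^*\mathcal{C}^\infty_{dlg,r}(\Omega))$. Because $M$ is the $i$-th compound matrix of the change-of-basis matrix $M_1\in M_n$ at the level of $(0,1)$-forms and $\underline{z}^*\mathcal{C}^\infty_{dlg,r}(\Omega)$ is a ring, it is enough to show $M_1\in GL_n(\underline{z}^*\mathcal{C}^\infty_{dlg,r}(\Omega))$.

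To exhibit $M_1$, I would fix a basis $\{Y_1,\ldots,Y_n\}$ of $\mathfrak{p}_-\subseteq(\mathfrak{b}_\mathbb{R}/\mathfrak{a}_{G,\mathbb{R}})\otimes\mathbb{C}$ expressed as complex linear combinations of $\underline{D}_\mathfrak{b}$ with constant coefficients, which is possible because $\mathfrak{p}_-$ is the $-i$-eigenspace of the complex structure on $\mathfrak{b}_\mathbb{R}/\mathfrak{a}_{G,\mathbb{R}}\cong\mathfrak{g}_\mathbb{R}/\mathfrak{k}_h$, and let $\{\bar{Y}_j\}$ be the conjugate basis of $\mathfrak{p}_+$. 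Then $\{Y_j,\bar{Y}_j\}$ differs from $\underline{D}_\mathfrak{b}$ by a constant matrix in $GL_{2n}(\mathbb{C})\subseteq GL_{2n}(\underline{z}^*\mathcal{C}^\infty_{dlg,r}(\Omega))$, so combining with the key containment $\underline{D}_\mathfrak{b}\in\underline{D}_z GL_{2n}(\underline{z}^*\mathcal{C}^\infty_{dlg,r}(\Omega))$ established in the proof of Proposition \ref{dmglocal} yields a matrix $P\in GL_{2n}(\underline{z}^*\mathcal{C}^\infty_{dlg,r}(\Omega))$ expressing $\{v(Y_j),v(\bar{Y}_j)\}$ in terms of $\underline{D}_z$. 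Since $v(Y_j)\in T^{0,1}$ and $v(\bar{Y}_j)\in T^{1,0}$ while $\underline{D}_z$ splits naturally into its $(1,0)$ and $(0,1)$ halves, $P$ is block-diagonal; both diagonal blocks $P^{1,0},P^{0,1}$ then lie in $GL_n(\underline{z}^*\mathcal{C}^\infty_{dlg,r}(\Omega))$, since $P^{-1}$ is also block-diagonal with entries in the same ring. The duality $\eta^j(v(Y_k))=\delta_{jk}$ identifies $M_1$ as the inverse of the transpose of $P^{0,1}$, whence $M_1\in GL_n(\underline{z}^*\mathcal{C}^\infty_{dlg,r}(\Omega))$.

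The main difficulty has already been surmounted in Proposition \ref{dmglocal}: the containment $\underline{D}_\mathfrak{b}\in\underline{D}_z GL_{2n}(\underline{z}^*\mathcal{C}^\infty_{dlg,r}(\Omega))$ rests on the Krivine Positivstellensatz estimates together with the Hodge-theoretic identity \eqref{DDF}. The only new step is the purely linear-algebraic extraction of the antiholomorphic block of $P$, which decouples cleanly because both $\{v(Y_j),v(\bar{Y}_j)\}$ and $\underline{D}_z$ respect the splitting $T_\mathbb{C}=T^{1,0}\oplus T^{0,1}$; once this is in place, passing from $(0,1)$- to $(0,i)$-forms is automatic by taking compound matrices.
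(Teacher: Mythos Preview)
Your proposal is correct and follows essentially the same route as the paper: both rest on Proposition~\ref{sheafB}(ii), Proposition~\ref{dmglocal}, and the key relation \eqref{GLdlg}. The only difference is presentational. The paper argues directly that $\varpi\in\phi_\sigma^*\mathcal{A}^{0,i}_{dmg}(U)$ iff $\varpi(X_1\wedge\cdots\wedge X_i)\in\phi_\sigma^*\mathcal{C}^\infty_{dmg}(U)$ for all $X_1,\ldots,X_i\in\underline{D}_\mathfrak{b}$, and then uses multilinearity together with \eqref{GLdlg} and Proposition~\ref{dmglocal} to replace $\underline{D}_\mathfrak{b}$ by $\underline{D}_z$ and $\mathcal{C}^\infty_{dmg}$ by $\mathcal{C}^\infty_{dlg,r}$; since the $\theta_k$ are dual to the antiholomorphic half of $\underline{D}_z$ and a $(0,i)$-form annihilates any $(1,0)$-input, this is exactly the condition $\varpi\in\underline{z}^*\mathcal{A}^{0,i}_{dlg,r}(U)$. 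Your version instead packages this as a comparison of two free modules and explicitly extracts the $(0,1)$-block $P^{0,1}$ from the $GL_{2n}$ change of basis. That extraction is valid (a block-diagonal matrix in $GL_{2n}(R)$ over a commutative ring has blocks in $GL_n(R)$), but it is an extra step the paper sidesteps by working with the full $2n$-frame and letting the $(1,0)$-contractions vanish automatically.
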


\begin{proof}
Let's first reinterpret Proposition \ref{sheafB} (ii): comparing the case $i=0$ and general cases and tracking through (\ref{modisom}), (\ref{ringisom1}) and (\ref{Ai}), we see it is stated that for any open subset $U\subseteq\Omega$ and $(0,i)$-form $\varpi$ on $j_\sigma^{-1}(U)$, $\varpi\in\phi_\sigma^*\mathcal{A}^{0,i}_{\mathrm{dmg}}(U)$ if and only if for every set of vector fields $X_1,...,X_i$ on $j_\sigma^{-1}(U)$ coming from left-invariant vector fields on $\mathscr{B}(\mathbb{R})^\circ$,
\[\varpi(X_1\!\wedge\!...\!\wedge\!X_i)\in\phi_\sigma^*\mathcal{C}^\infty_{\mathrm{dmg}}(U).\]
Recall that elements of $\underline{D}_\mathfrak{b}$ are such vector fields and they span the tangent space at each point, thus the condition above is equivalent to
\[\varpi(X_1\!\wedge\!...\!\wedge\!X_i)\in\phi_\sigma^*\mathcal{C}^\infty_{\mathrm{dmg}}(U),\forall X_1,...,X_i\in\underline{D}_\mathfrak{b}.\]
By Proposition \ref{dmglocal} and (\ref{GLdlg}), this is equivalent to
\[\varpi(Z_1\!\wedge\!...\!\wedge\!Z_i)\in\underline{z}^*\mathcal{C}^\infty_{\mathrm{dlg},r}(U),\forall Z_1,...,Z_i\in\underline{D}_z,\]
As $\{\theta_k\}_{1\leq k\leq n}$ are dual to the anti-holomorphic tangent vector fields in $\underline{D}_z$, the above condition means precisely that $\varpi\in\underline{z}^*\mathcal{A}^{0,i}_{\mathrm{dlg},r}(U)$.
\end{proof}

\section{A fine resolution of the canonical extension}\label{section3}

Now we show that $(\mathcal{I}_V^*,j_*\overline\partial)$ forms a resolution of $\widetilde{V}^{\mathrm{can}}$. We first reduce to the case where $V$ is trivial by trivialising $(\mathcal{I}_V^*,j_*\overline\partial)$ in terms of $\widetilde{V}^{\mathrm{can}}$, then work with several complex variables and prove a Dolbeault lemma with growth conditions.

\subsection{Trivialisations}\label{fineresol1}

Recall that $\widetilde{V}^{\mathrm{can}}$ and $\mathcal{I}_V^i$ are by definition subsheaves of $j_*\widetilde{V}$ and $j_*(\widetilde{V}\otimes_{\mathcal{O}_{\mathrm{Sh}_\mathbb{K}}}\mathcal{A}^{0,i}_{\mathrm{Sh}_\mathbb{K}})$ respectively (in particular $\mathcal{A}^{0,i}_{\mathrm{dmg}}$ is a subsheaf of $j_*\mathcal{A}^{0,i}_{\mathrm{Sh}_\mathbb{K}}$). Besides by Proposition \ref{dmglocal}, $\mathcal{O}_{\mathrm{Sh}_{\mathbb{K},\Sigma}}$ is a subsheaf of $\mathcal{C}^\infty_{\mathrm{dmg}}$.

\begin{Prop}\label{triv}
The natural morphisms \[\widetilde{V}^{\mathrm{can}}\otimes_{\mathcal{O}_{\mathrm{Sh}_{\mathbb{K},\Sigma}}}\mathcal{A}^{0,i}_{\mathrm{dmg}}\rightarrow j_*(\widetilde{V}\otimes_{\mathcal{O}_{\mathrm{Sh}_\mathbb{K}}}\mathcal{A}^{0,i}_{\mathrm{Sh}_\mathbb{K}})\]
identify $\widetilde{V}^{\mathrm{can}}\otimes_{\mathcal{O}_{\mathrm{Sh}_{\mathbb{K},\Sigma}}}\mathcal{A}^{0,i}_{\mathrm{dmg}}$ with $\mathcal{I}_V^i$.
\end{Prop}

The statement is of local nature and trivial for the trivial representation $V=\mathbb{C}$. We access the general case by locally trivialising $j_*(\widetilde{V}\otimes_{\mathcal{O}_{\mathrm{Sh}_\mathbb{K}}}\mathcal{A}^{0,i}_{\mathrm{Sh}_\mathbb{K}})$. For every set $S$, denote by $\underline{S}$ the corresponding constant sheaves on topological spaces.

\begin{Prop}
Let $\Omega\subseteq(U(F)_\mathbb{Z}\backslash D)_\sigma$ be a $\sigma$-neighborhood of a point, then there are isomorphisms
\begin{equation}\label{trivj}
(\phi_\sigma^*j_*(\widetilde{V}\otimes_{\mathcal{O}_{\mathrm{Sh}_\mathbb{K}}}\mathcal{A}^{0,i}_{\mathrm{Sh}_\mathbb{K}}))|_\Omega\cong(\phi_\sigma^*j_*\mathcal{A}^{0,i}_{\mathrm{Sh}_\mathbb{K}})|_\Omega\otimes_{\underline{\mathbb{C}}}\underline{V}
\end{equation}
compatible with the differentials and identifying subsheaves
\begin{equation}\label{trivI}
(\phi_\sigma^*\mathcal{I}_V^i)|_\Omega\cong(\phi_\sigma^*\mathcal{A}^{0,i}_{\mathrm{dmg}})|_\Omega\otimes_{\underline{\mathbb{C}}}\underline{V}
\end{equation}
and (when $i=0$)
\begin{equation}\label{trivcan}
(\phi_\sigma^*\widetilde{V}^{\mathrm{can}})|_\Omega\cong
(\phi_\sigma^*\mathcal{O}_{\mathrm{Sh}_{\mathbb{K},\Sigma}})|_\Omega\otimes_{\underline{\mathbb{C}}}\underline{V}.
\end{equation}
\end{Prop}

\begin{proof}
Denote by $V_{tr}$ a trivial $P_h(\mathbb{C})$-representation of the same dimensions as $V$, then from (\ref{A0i}) and (\ref{partial}) we see that isomorphisms (\ref{trivj}) that are compatible with the differentials will result from a compatible system of $(\mathfrak{p}_o,K_o)$-equivariant isomorphisms
\begin{equation}\label{trivo}
C^\infty(\pi_o^{-1}(\phi_\sigma(U)))^{K_o\mbox{-}\mathrm{fin}}\otimes V\cong C^\infty(\pi_o^{-1}(\phi_\sigma(U)))^{K_o\mbox{-}\mathrm{fin}}\otimes V_{tr}  
\end{equation}
for all open subsets $U\subseteq\Omega$, and by Definition \ref{mgsheaves} we have (\ref{trivI}) if therein
\[C^\infty_{\mathrm{dmg}}(\pi_o^{-1}(\phi_\sigma(U)))^{K_o\mbox{-}\mathrm{fin}}\otimes V\cong C^\infty_{\mathrm{dmg}}(\pi_o^{-1}(\phi_\sigma(U)))^{K_o\mbox{-}\mathrm{fin}}\otimes V_{tr}.\]
By (\ref{ringisom1}) we can replace $C^\infty_*(\pi_o^{-1}(\phi_\sigma(U)))^{K_o\mbox{-}\mathrm{fin}}$ above with $C^\infty_*(\pi_F^{-1}(U))^{K_o\mbox{-}\mathrm{fin}}$.
Note that $g\mapsto a_g^{-1}g$ induces a bijection
$U(F)_\mathbb{Z}\backslash G(\mathbb{R})/A_G(\mathbb{R})^\circ\xrightarrow{\sim}U(F)_\mathbb{Z}\backslash G(\mathbb{R})^1$
and thus $\pi_F^{-1}(U)$ can be viewed as a subset of the latter. Let $\Omega'$ be as in Definition \ref{sigma} and take a $t_P$ as in (\ref{t1}). Consider the map
\begin{equation}\label{tP1}
\begin{gathered}
C^\infty(\pi_F^{-1}(U))\otimes V\rightarrow C^\infty(\pi_F^{-1}(U))\otimes V_{tr},\\
f\mapsto(g\mapsto t_P(a_g^{-1}g)f(g)),
\end{gathered}
\end{equation}
it is clearly a $K_o$-module isomorphism, and also $\mathfrak{p}_o$-equivariant as the computation (\ref{hol3}) shows. By construction $t_P$ is pulled back from $T(F)\backslash\Psi_F^{-1}(\Omega')\subseteq U(F)_\mathbb{C}\backslash G(\mathbb{C})$, in which we claim the image of $\pi_F^{-1}(U)$ is relatively compact. In fact, since $\pi_F^{-1}(U)$ is a $K_o$-bundle over $j_\sigma^{-1}(U)$, it suffices to check the relative compactness of $p_F(j_\sigma^{-1}(U))$ in $T(F)\backslash\Psi_F^{-1}(\Omega')/P_h(\mathbb{C})\cong\Omega'$, but we have \[p_F(j_\sigma^{-1}(U))\subseteq p_\sigma(U)\subseteq p_\sigma(\Omega)\]
and by choice $p_\sigma(\Omega)$ is relatively compact in $\Omega'$. As left-invariant vector fields on $G(\mathbb{C})$ descends to $U(F)_\mathbb{C}\backslash G(\mathbb{C})$, analogous to the proof of Proposition \ref{Dv} we have
\[\forall g\in\mathcal{O}_{P_h(\mathbb{C})},g(t_P(a_{(\cdot)}^{-1}\,\cdot))|_{\pi_F^{-1}(U)}\in C^\infty_{\mathrm{dmg}}(\pi_F^{-1}(U)).\]
Therefore $C^\infty_{\mathrm{dmg}}(\pi_F^{-1}(U))\otimes V$ corresponds to $C^\infty_{\mathrm{dmg}}(\pi_F^{-1}(U))\otimes V_{tr}$.

To deal with $\widetilde{V}^{\mathrm{can}}$ it's more convenient to think of $\mathfrak{p}_h$ and $K_h$. When $i=0$ (\ref{trivj}) amounts to the part \[(C^\infty(\pi_o^{-1}(\phi_\sigma(U))\otimes V)^{K_o}\cong C^\infty(\pi_o^{-1}(\phi_\sigma(U)))^{K_o}\otimes V_{tr}\]
of (\ref{trivo}) and can be transformed into
\[(C^\infty(\pi^{-1}(\phi_\sigma(U))\otimes V)^{K_h}\cong C^\infty(\pi^{-1}(\phi_\sigma(U)))^{K_h}\otimes V_{tr}\]
via $f\mapsto(g\mapsto a_gf(g))$. Let $\widetilde{\pi}_F:U(F)_\mathbb{Z}\backslash G(\mathbb{R})^+\rightarrow U(F)_\mathbb{Z}\backslash D$ be the quotient map, then analogous to (\ref{ringisom1}) we have $C^\infty(\pi^{-1}(\phi_\sigma(U)))\cong C^\infty(\widetilde{\pi}_F^{-1}(U))$ and (\ref{tP1}) will be transformed into
\begin{equation}\label{tP2}
\begin{gathered}
C^\infty(\widetilde{\pi}_F^{-1}(U))\otimes V\xrightarrow{\sim}C^\infty(\widetilde{\pi}_F^{-1}(U))\otimes V_{tr},\\
f\mapsto(g\mapsto t_P(g)f(g)).
\end{gathered}
\end{equation}
On the other hand, by definition (\ref{EVF}) $\mathcal{E}_{V,F}$ is the sheaf
\[U\mapsto(\mathcal{O}(T(F)\backslash\Psi_F^{-1}(U))\otimes V)^{P_h(\mathbb{C})}\]
and a trivialisation $\mathcal{E}_{V,F}|_{\Omega'}\cong\mathcal{O}_{\Omega'}\otimes_{\underline{\mathbb{C}}}\underline{V}$ will arise from $P_h(\mathbb{C})$-equivariant maps
\begin{equation}\label{tP3}
\begin{gathered}
C^\infty(T(F)\backslash\Psi_F^{-1}(U))\otimes V\xrightarrow{\sim}C^\infty(T(F)\backslash\Psi_F^{-1}(U))\otimes V_{tr},\\
f\mapsto(g\mapsto t_P(g)f(g)),
\end{gathered}
\end{equation}
which are well-defined as $t_P$ is $T(F)$-invariant. The identification (\ref{trivcan}) now follows from Definition \ref{Vcan} and the consistency between the formulas (\ref{tP2}) and (\ref{tP3}).
\end{proof}

\subsection{Several complex variables and a Dolbeault lemma}\label{fineresol2}

\begin{Prop}\label{resoln}
$(\mathcal{I}_V^*,j_*\overline\partial)$ makes a resolution of $\widetilde{V}^{\mathrm{can}}$.
\end{Prop}

As $\widetilde{V}^{\mathrm{can}}$ is locally free, Proposition \ref{triv} reduces the problem to the exactness of
\[0\rightarrow\mathcal{O}_{\mathrm{Sh}_\mathbb{K},\Sigma}\rightarrow\mathcal{C}^\infty_\mathrm{dmg}\xrightarrow{j_*\overline\partial}\mathcal{A}^{0,1}_\mathrm{dmg}\xrightarrow{j_*\overline\partial}\mathcal{A}^{0,2}_\mathrm{dmg}\xrightarrow{j_*\overline\partial}...,\]
and in turn Propositions \ref{dmgforms} reduces it to the exactness of
\[0\rightarrow\mathcal{O}_{\Delta^n}\rightarrow\mathcal{C}^\infty_{\mathrm{dlg},r}\xrightarrow{\overline\partial}\mathcal{A}^{0,1}_{\mathrm{dlg},r}\xrightarrow{\overline\partial}\mathcal{A}^{0,2}_{\mathrm{dlg},r}\xrightarrow{\overline\partial}....\]
We deal with the exactness at $\mathcal{O}_{\Delta^n}$ and elsewhere separately. 

\begin{Lem}
$\mathcal{O}_{\Delta^n}$ is the kernel of $\overline\partial:\mathcal{C}^\infty_{\mathrm{dlg},r}\rightarrow\mathcal{A}^{0,1}_{\mathrm{dlg},r}$.
\end{Lem}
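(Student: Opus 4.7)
The plan is to establish two inclusions of subsheaves of $j_{n,r*}\mathcal{C}^\infty_{\Delta^{n,r}}$: the easy direction $\mathcal{O}_{\Delta^n} \subseteq \ker(\overline\partial|_{\mathcal{C}^\infty_{dlg,r}})$, and the substantive reverse direction, which amounts to a removable-singularity statement.

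For the easy direction, I will note that a holomorphic function on $\Delta^n$ is smooth and locally bounded, hence lies in $\mathcal{C}^\infty_{lg,r}$ by Proposition \ref{lgsections}. Each operator in $\underline{D}_z$ either preserves holomorphy (the operators $z_i\partial_{z_i}$ and $\partial_{z_k}$) or annihilates any holomorphic function (the anti-holomorphic operators $\overline{z}_i\partial_{\overline{z}_i}$ and $\partial_{\overline{z}_k}$). Consequently every iterated derivative under $\mathfrak{U}(\underline{D}_z)$ of such a function is again holomorphic and locally bounded, so $\mathcal{O}_{\Delta^n} \subseteq \mathcal{C}^\infty_{dlg,r}$, and these sections obviously satisfy $\overline\partial = 0$.

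For the reverse inclusion, let $f \in \mathcal{C}^\infty_{dlg,r}(U)$ with $\overline\partial f = 0$. Then $f$ is holomorphic on $U \cap \Delta^{n,r}$, and I plan to show it extends holomorphically to $U$. Fix $p \in U$; if $p \in \Delta^{n,r}$ there is nothing to prove, so assume $p$ lies on the divisor $\{z_1\cdots z_r = 0\}$. After shrinking $U$ to an open polydisc $W$ relatively compact in a slightly larger polydisc, Proposition \ref{lgsections} furnishes constants $C,N_1,\ldots,N_r \ge 0$ with $|f(z)| \le C\prod_{i=1}^r (1 + \log\tfrac{1}{|z_i|})^{N_i}$ on $W \cap \Delta^{n,r}$. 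The elementary comparison $(\log\tfrac{1}{|z|})^N = o(|z|^{-\epsilon})$ for every $\epsilon > 0$ then upgrades this to $|f(z)| = O\bigl(\prod_{i=1}^r |z_i|^{-1/2}\bigr)$ on $W$, which is the uniform growth bound needed to apply Riemann's removable singularity theorem one variable at a time. The plan is then to extend $f$ across $\{z_1 = 0\}$, then $\{z_2 = 0\}$, and so on: for each fixed tuple of the remaining coordinates the growth $O(|z_1|^{-1/2})$ forces the negative-index Laurent coefficients of $f$ in $z_1$ to vanish by the standard contour estimate, so $f$ extends in $z_1$; the Taylor coefficients $a_n(z_2,\ldots,z_n)$, being contour integrals of $f$ in $z_1$, depend holomorphically on the remaining variables, yielding a jointly holomorphic extension (equivalently, by Hartogs on separate analyticity). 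A routine Cauchy estimate on the $a_n$ shows the extended function still satisfies a bound of the same logarithmic shape in $z_2,\ldots,z_r$, so the induction continues; after $r$ steps $f$ extends to a holomorphic function on $W$, as desired.

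The main obstacle is not deep: the entire argument is a parameterized iteration of Riemann's one-variable removable singularity theorem, the only genuine analytic content being the growth comparison $(\log\tfrac{1}{|z|})^N \ll |z|^{-\epsilon}$ that bridges the allowed logarithmic singularities and the $|z|^{-1}$ threshold of the extension theorem. No input beyond classical one-variable complex analysis is required.
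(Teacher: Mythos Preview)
Your proposal is correct and follows essentially the same route as the paper: both directions hinge on the fact that logarithmic growth is sub-polar, so Riemann's removable singularity theorem applies. The only cosmetic difference is in the bookkeeping of the extension step: the paper multiplies $f$ by $z_1\cdots z_r$ to obtain a bounded holomorphic function, extends that in one stroke via the several-variable Riemann extension theorem, and then invokes Weierstrass division by the monomials $z_1,\ldots,z_r$ to recover an extension of $f$ itself; you instead iterate the one-variable Laurent argument, propagating the logarithmic bound through the induction via the maximum principle. Both are equally elementary and yield the same conclusion.
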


\begin{proof}
$\mathcal{O}_{\Delta^n}$ is contained in the kernel as we have $\mathcal{C}^\infty_{\Delta^n}\subseteq\mathcal{C}^\infty_{\mathrm{dlg},r}$. On the other hand we show that $\mathcal{O}_{\Delta^n}$ contains the kernel of
\[\overline\partial:\mathcal{C}^\infty_{\mathrm{lg},r}\rightarrow j_{n,r*}\mathcal{A}^{0,1}_{\Delta^{n,r}}.\]
For any open subset $U\subseteq\Delta^n$ and holomorphic function $f$ on $j_{n,r}^{-1}(U)$ that is bounded by a polynomial in $\log{\frac{1}{|z_1|}},...,\log{\frac{1}{|z_r|}}$, by the Riemann extension theorem \cite[Theorem 1.15]{Kodaira05} the bounded holomorphic function $z_1...z_rf$ on $j_{n,r}^{-1}(U)$ extends to a holomorphic function on $U$, which should vanish along $U\backslash j_{n,r}^{-1}(U)$. Applying the Weierstrass division theorem inductively with Weierstrass polynomials $z_1,...,z_n$ we get that $f$ itself also extends to a holomorphic function on $U$. Therefore $\mathcal{O}_{\Delta^n}$ contains the subsheaf of $j_{n,r*}\mathcal{C}^\infty_{\Delta^{n,r}}$ generated by such $f$, which is exactly the kernel of $\overline\partial:\mathcal{C}^\infty_{\mathrm{lg},r}\rightarrow j_{n,r*}\mathcal{A}^{0,1}_{\Delta^{n,r}}$.
\end{proof}

\begin{Rmk}
So far we've shown that $\widetilde{V}^{\mathrm{can}}$ is the sheafification of the presheaf
\[U\mapsto(C^\infty_{\mathrm{mg}}(\pi_o^{-1}(U))\otimes V)^{(\mathfrak{p}_o,K_o)}.\]
Combining with Proposition \ref{mgsections} we get the degree $0$ case of our main theorem, which has also been proved differently as \cite[Proposition 3.3]{Mumford77}. Besides, one can also use the formula to make an alternative proof of the existence of $\widetilde{V}^{\mathrm{can}}$ when $\mathrm{Sh}_{\mathbb{K},\Sigma}$ is SNC. For these we only need Proposition \ref{mglocal} rather than the full power of Proposition \ref{dmglocal}.
\end{Rmk}

\begin{Lem}\label{dlgr}
On $\Delta^n$ the complex of sheaves
\[\mathcal{A}^{0,0}_{\mathrm{dlg},r}\xrightarrow{\overline\partial}\mathcal{A}^{0,1}_{\mathrm{dlg},r}\xrightarrow{\overline\partial}\mathcal{A}^{0,2}_{\mathrm{dlg},r}\xrightarrow{\overline\partial}...\]
is exact.
\end{Lem}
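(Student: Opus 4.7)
The plan is to establish exactness at each $\mathcal{A}^{0,i}_{dlg,r}$ with $i\ge1$ by a local construction near a point $p\in\Delta^n$. Given a $\bar\partial$-closed $\omega\in\mathcal{A}^{0,i}_{dlg,r}$ defined on a neighborhood of $p$, I will induct on the largest index $k$ such that some $\theta_I$ with $k\in I$ appears in $\omega$. Writing $\omega=\theta_k\wedge\alpha+\beta$ with $\alpha,\beta$ free of $\theta_k$, I seek $u=\sum_J g_J\theta_J$ with $k\notin J$ such that the $\theta_k$-component of $\bar\partial u$ cancels $\theta_k\wedge\alpha$. Since $\bar\partial\theta_m=0$ for every $m$, this reduces coefficient-by-coefficient to solving, for each coefficient $a\in\mathcal{C}^\infty_{dlg,r}$ of $\alpha$, the one-variable equation $\bar z_k\,\partial g/\partial\bar z_k=\pm a$ when $k\le r$ or $\partial g/\partial\bar z_k=\pm a$ when $k>r$. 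After subtracting such a $\bar\partial u$ from $\omega$, the remainder is still $\bar\partial$-closed and involves only $\theta_1,\ldots,\theta_{k-1}$, so induction on $k$ concludes (the base case $k=1$ is itself an instance of the one-variable problem).

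When $k>r$, the required $g$ is produced by the classical Cauchy--Green integral with a cutoff,
\[g(z)=-\frac{1}{2\pi i}\iint\chi(\zeta)\,\frac{a(z_1,\ldots,\zeta,\ldots,z_n)}{\zeta-z_k}\,d\zeta\wedge d\bar\zeta,\]
where $\chi$ is smooth, compactly supported, and identically $1$ near $z_k^0$. Since $\zeta$ ranges over a compact set, log-growth bounds on $a$ and on its $\underline{D}_z$-derivatives (obtained by differentiation under the integral) transfer uniformly to $g$ and to its derivatives. When $k\le r$, I pass to $w=\log z_k$, so that $\bar z_k\,\partial/\partial\bar z_k=\partial/\partial\bar w$ and the equation becomes the standard $\bar\partial$-equation on a half-plane $\{\mathrm{Re}\,w<\log\rho\}$ whose datum $A=a\circ\exp$ is $2\pi i$-periodic and, together with every derivative, polynomially bounded in $|\mathrm{Re}\,w|=\log(1/|z_k|)$. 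Fourier-expanding $A(s+i\theta,z')=\sum_n A_n(s,z')\,e^{in\theta}$, I solve each mode $G_n'(s)-nG_n(s)=2A_n$ using elementary explicit kernels: $G_0(s)=\int_{s_0}^s 2A_0(t,z')\,dt$; $G_n(s)=-2e^{ns}\int_s^\infty e^{-nt}A_n(t,z')\,dt$ for $n>0$; $G_n(s)=2e^{ns}\int_{-\infty}^s e^{-nt}A_n(t,z')\,dt$ for $n<0$. Rapid decay of $A_n$ in $|n|$ (coming from smoothness of $A$ in $\theta$) ensures absolute convergence of the Fourier series for $G$ and legitimises termwise differentiation, giving a smooth $2\pi i$-periodic $G$ that descends to the desired $g$.

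The main obstacle I anticipate is the verification, in the case $k\le r$, that the $g$ so produced actually lies in $\mathcal{C}^\infty_{dlg,r}$; concretely, that every $\mathfrak{U}(\underline{D}_z)$-derivative of $g$ satisfies a polynomial-in-$\log(1/|z_k|)$ bound. Derivatives by elements of $\underline{D}_z$ in coordinates other than $z_k$ commute with the Fourier-plus-ODE construction and simply replace the input $A$ by $DA$, which satisfies bounds of the same type. The derivatives $z_k\,\partial/\partial z_k$ and $\bar z_k\,\partial/\partial\bar z_k$ correspond via $w=\log z_k$ to combinations of $\partial_s$ and $\partial_\theta$; a $\partial_\theta$-derivative pulls out a factor of $n$ from each Fourier mode, absorbed by the rapid decay of $A_n$, while $\partial_s$-derivatives of $G_n$ are estimated directly from the ODE $G_n'=nG_n+2A_n$. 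Assembling these estimates uniformly across the countably many derivatives in $\mathfrak{U}(\underline{D}_z)$ furnishes the log-growth bounds needed for membership in $\mathcal{C}^\infty_{dlg,r}$ and completes the proof.
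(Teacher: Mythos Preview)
Your inductive scheme on the largest index $k$ appearing in $\omega$ is exactly the paper's, and for $k>r$ your cutoff Cauchy--Green solver is essentially Corollary~\ref{dlg10}. The genuine divergence is in the punctured--disk case $k\le r$: the paper keeps the Cauchy kernel on $\Delta^*(\rho)$ (Lemma~\ref{HP}) and then proves an ``effective'' derivative estimate (Corollary~\ref{dlg11}) by an induction that compares $(z\partial_z)^p f$ with a freshly constructed solution and bounds the harmonic difference via Lemma~\ref{harmonic}; you instead pass to $w=\log z_k$, Fourier--expand in $\mathrm{Im}\,w$, and solve a first--order ODE in $s=\mathrm{Re}\,w$ for each mode. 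Both routes work. Yours is arguably more elementary in that it avoids the harmonic--function trick, but it trades that for bookkeeping: to sum $\sum_n (in)^q G_n e^{in\theta}$ with a log bound you must use the estimate $|A_n|\le |n|^{-k}\sup_\theta|\partial_\theta^k A|$ for a $k$ depending on $q$, and the polynomial degree of the resulting bound in $\log(1/|z_k|)$ then depends on $k$. This is fine (d--log growth allows the degree to depend on the derivative), but it should be said explicitly.

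Two small repairs. First, for $n>0$ your formula $G_n(s)=-2e^{ns}\int_s^\infty e^{-nt}A_n\,dt$ is ill--posed: $A_n$ is only defined on $(-\infty,\log\rho)$. Replace the upper limit by any fixed $s_0<\log\rho$; the estimate $|G_n(s)|\lesssim |n|^{-1}\sup_{t\in[s,s_0]}|A_n(t)|$ then goes through by the substitution $u=t-s$. Second, for $k>r$ the phrase ``differentiation under the integral'' is not literally valid for $\partial/\partial z_k$, since the differentiated kernel $(\zeta-z_k)^{-2}$ is not locally integrable; use instead the standard integration--by--parts identity $\partial_{z_k}g=-\tfrac{1}{2\pi i}\iint \partial_\zeta(\chi a)/(\zeta-z_k)\,d\zeta\wedge d\bar\zeta$, which reduces $\partial_{z_k}g$ to another Cauchy--Green integral with input of the same d--log--growth type. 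With these fixes your argument is complete.
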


\begin{Rmk}
The statement of the lemma has appeared in \cite[2.4]{Harris88}, however it seems the arguments given and cited there are insufficient to prove the multi-variable cases of the lemma. In fact to proceed by induction on the number of variables, rather than the single-variable case itself one actually needs an ``effective version'' of it (Corollaries \ref{dlg10} and \ref{dlg11}), which we don't manage to find in the literature. We present here a complete proof of the lemma with all details so as to provide a self-contained reference on the topic.
\end{Rmk}

We make some preparations:

\begin{Lem}\label{HP}
For every $0<\rho<\frac{1}{3}$, suppose $g\in C^\infty(\Delta^*(\rho))$ satisfies
\[|g(z)|\leq c(\log{\frac{1}{|z|}})^N,\forall z\in\Delta^*(\rho)\]
for some $c>0$ and $N\in\mathbb{R}$, then
\[f(z):=\frac{1}{2\pi i}\int_{\Delta^*(\rho)}\frac{g(w)}{\overline{w}(w-z)}\,dw\!\wedge\!d\overline{w}\]
is a solution to $\overline{z}\frac{\partial}{\partial\overline{z}}f=g$ in $C^\infty(\Delta^*(\rho))$ and there is a $d=d(N)>0$ such that
\[|f(z)|\leq\begin{cases}
dc(\log{\frac{1}{|z|}})^{N+1}&(N>-1)\\
dc\log{\log{\frac{1}{|z|}}}&(N=-1),\\
dc&(N<-1)
\end{cases}\forall z\in\Delta^*(\rho).\]
\end{Lem}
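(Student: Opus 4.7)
The plan is to recognize $f$ as a Cauchy--Pompeiu transform and estimate it by partitioning the domain of integration according to the ratio $|w|/|z|$.

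For the PDE identity, note that $h(w) := g(w)/\overline{w}$ is smooth on $\Delta^*(\rho)$ and integrable there (the polar integral of $(\log(1/r))^N$ converges for any $N$, and the extra factor $|w|^{-1}$ is harmless in two dimensions). The standard Cauchy--Pompeiu calculus then yields that $f$ is smooth on $\Delta^*(\rho)$ and satisfies $\partial f/\partial\overline{z} = h(z) = g(z)/\overline{z}$; multiplication by $\overline{z}$ gives the equation in the statement.

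For the size bound, set $s := |z|$ and split $\Delta^*(\rho)$ into the near-zero zone $A_1 := \{|w| \leq s/2\}$, the intermediate annulus $A_2 := \{s/2 < |w| < 2s\}$, and the outer zone $A_3 := \{2s \leq |w| < \rho\}$ (with evident adjustments when $s$ is not small compared to $\rho$). The outer zone carries the principal term: on $A_3$ we have $|w-z| \geq |w|/2$, and polar coordinates followed by the substitution $u = \log(1/r)$ reduce the integral over $A_3$ to a multiple of $\int_{\log(1/\rho)}^{\log(1/(2s))} u^N\,du$, which produces precisely the three cases $(\log(1/s))^{N+1}$, $\log\log(1/s)$, and a bounded quantity appearing in the statement. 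On $A_1$ we use $|w-z| \geq s/2$ together with $\int_0^{s/2}(\log(1/r))^N\,dr \leq (s/2)(\log(2/s))^N$ to obtain a contribution of order $(\log(1/s))^N$. On $A_2$, $(\log(1/|w|))^N$ and $|w|$ are comparable to $(\log(1/s))^N$ and $s$ respectively, and the estimate $\int_{|w-z|\leq 3s}|w-z|^{-1}\,dA(w) \leq 6\pi s$ absorbs the excess $s^{-1}$, again yielding a contribution of order $(\log(1/s))^N$. In all three ranges of $N$, the quantity $(\log(1/s))^N$ is dominated by the stated bound, so the contributions from $A_1$ and $A_2$ are absorbed into that of $A_3$.

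The main subtlety will be the intermediate annulus $A_2$, where the two singularities $|w|^{-1}$ and $|w-z|^{-1}$ are simultaneously active; the key observation is that the 2D Cauchy kernel has $L^1$-norm of order $s$ over a disk of radius $O(s)$, exactly canceling the $s^{-1}$ coming from the $|w|^{-1}$ factor and preventing any extraneous logarithmic factor from being generated. The estimates on $A_1$ and $A_3$ reduce to routine polar-coordinate integration.
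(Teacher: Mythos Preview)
Your proposal is correct and follows essentially the same three-region strategy as the paper's proof. The paper's decomposition differs only cosmetically: where you take the intermediate region to be the annulus $\{s/2<|w|<2s\}$, the paper instead uses the disk $\{|w-z|<|z|/2\}$ around $z$; both choices isolate the near-$z$ singularity and lead to the same $O((\log\tfrac{1}{s})^{N})$ contribution. One small imprecision: your inequality $\int_{0}^{s/2}(\log\tfrac{1}{r})^{N}\,dr \le (s/2)(\log\tfrac{2}{s})^{N}$ holds as stated only for $N\le 0$; for $N>0$ the integrand is monotone the other way and one needs an $N$-dependent constant (the paper handles this via $\int_{0}^{s/2}(\log\tfrac{1}{r})^{\lceil N\rceil}\,dr=(s/2)\,p_{\lceil N\rceil}(\log\tfrac{2}{s})$ for a degree-$\lceil N\rceil$ polynomial $p_{\lceil N\rceil}$). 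Since the lemma allows $d=d(N)$, this does not affect your conclusion.
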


\begin{proof}
For the first part of the lemma we adapt the proof of \cite[Lemma 3.1]{Kodaira05}. For every $z_0\in\Delta^*(\rho)$, take an $\varepsilon>0$ such that $\Delta(z_0,3\varepsilon)\subseteq\Delta^*(\rho)$ and a partition of unity $\{\chi_a,\chi_b\}$ subordinate to the open cover $\{U_a:=\overline{\Delta(z_0,\varepsilon)}^c,U_b:=\Delta(z_0,2\varepsilon)\}$ of $\Delta^*(\rho)$. Then formally $f(z)$ is the sum of
\[I_a(z):=\frac{1}{2\pi i}\int_{U_a}\frac{\chi_a(w)g(w)}{\overline{w}(w-z)}\,dw\!\wedge\!d\overline{w}\]
and
\[\begin{aligned}
I_b(z)&:=\frac{1}{2\pi i}\int_{U_b}\frac{\chi_b(w)g(w)}{\overline{w}(w-z)}\,dw\!\wedge\!d\overline{w}\\
&\:=\frac{1}{2\pi i}\int_{\mathbb{C}}\frac{\chi_b(w+z)g(w+z)}{(\overline{w}+\overline{z})w}\,dw\!\wedge\!d\overline{w},
\end{aligned}\]
where $\chi_bg$ is viewed as a smooth function on $\mathbb{C}$ supported in $U_b$. The bound on $g$ implies when $z\in\Delta(z_0,\varepsilon)$, every partial derivative of either integrand with respect to $z,\overline{z}$ is $L^1$ in $w$, and the condition for applying the dominated convergence theorem to interchange the integration with a $\frac{\partial}{\partial z}$ or $\frac{\partial}{\partial\overline{z}}$ is always satisfied, so $I_a,I_b$ and hence $f$ are well-defined smooth functions. Moreover on $\Delta(z_0,\varepsilon)$ we see $I_a$ is holomorphic while it is shown in the cited proof that $\overline{z}\frac{\partial}{\partial\overline{z}}I_b=g$, so $f$ satisfies the equation.

Next for a fixed $z\in\Delta^*(\rho)$, set
\[\Delta_1:=\Delta^*(\frac{|z|}{2}),\Delta_2:=\Delta(z,\frac{|z|}{2})\cap\Delta^*(\rho),\Delta_3:=\Delta^*(\rho)\backslash(\Delta_1\cup\Delta_2)\]
and write $\lambda:=|w|$ for $w\in\Delta^*(\rho)$. Then $|w-z|\geq\frac{|z|}{2}$ for $w\in\Delta_1$, so
\begin{equation}\label{ineq}
|\frac{1}{2\pi i}\int_{\Delta_1}\frac{g(w)}{\overline{w}(w-z)}\,dw\!\wedge\!d\overline{w}|\leq\frac{4c}{|z|}\int_0^\frac{|z|}{2}(\log{\frac{1}{\lambda}})^N\,\mathrm{d}\lambda.
\end{equation}
When $N\leq 0$, the right hand side $\leq 2c(\log{\frac{2}{|z|}})^N<2^{N+1}c(\log{\frac{1}{|z|}})^N$; when $N>0$, we have
\[(\log{\frac{2}{|z|}})^{\lceil N\rceil-N}\int_0^\frac{|z|}{2}(\log{\frac{1}{\lambda}})^N\,\mathrm{d}\lambda\leq\int_0^\frac{|z|}{2}(\log{\frac{1}{\lambda}})^{\lceil N\rceil}\,\mathrm{d}\lambda=\frac{|z|}{2}p_{\lceil N\rceil}(\log{\frac{2}{|z|}}),\]
where $\lceil N\rceil$ is the least integer greater than or equal to $N$ and $p_{\lceil N\rceil}$ is a degree $\lceil N\rceil$ polynomial depending on $\lceil N\rceil$, so there is a $d'=d'(\lceil N\rceil)>0$ such that the right hand side of (\ref{ineq}) $\leq d'c(\log{\frac{1}{|z|}})^N$.

Over $\Delta_2$, we have
\[\begin{aligned}
|\frac{1}{2\pi i}\int_{\Delta_2}\frac{g(w)}{\overline{w}(w-z)}\,dw\!\wedge\!d\overline{w}|&\leq\max_{w\in\Delta_2}|\frac{g(w)}{w}|\cdot\frac{1}{2\pi}\int_{\Delta^*(\frac{|z|}{2})}\frac{|du\!\wedge\!d\overline{u}|}{|u|}\\
&\leq 2c(\log{\frac{2}{|z|}})^N<2^{N+1}c(\log{\frac{1}{|z|}})^N.
\end{aligned}\]

On $\Delta_3$ we have $|w-z|\geq\frac{|w|}{3}$, so
\[\begin{aligned}
|\frac{1}{2\pi i}\int_{\Delta_1}\frac{g(w)}{\overline{w}(w-z)}\,dw\!\wedge\!d\overline{w}|&\leq\frac{3}{2\pi}\int_{\Delta_2\cup\Delta_3}\frac{|g(w)|}{|w|^2}\,|dw\!\wedge\!d\overline{w}|\\
&\leq 6c\int_\frac{|z|}{2}^\rho(\log{\frac{1}{\lambda}})^N\,\frac{\mathrm{d}\lambda}{\lambda},
\end{aligned}\]
while
\[\int_\frac{|z|}{2}^\rho(\log{\frac{1}{\lambda}})^N\,\frac{\mathrm{d}\lambda}{\lambda}<\begin{cases}
\frac{1}{N+1}(\log{\frac{2}{|z|}})^{N+1}<\frac{2^{N+1}}{N+1}(\log{\frac{1}{|z|}})^{N+1}&(N>-1)\\
\log{\log{\frac{2}{|z|}}}<(1+\log_{\log{3}}{2})\log{\log{\frac{1}{|z|}}}&(N=-1)\\
\frac{1}{N+1}(\log{\frac{1}{\rho}})^{N+1}<\frac{1}{N+1}&(N<-1).
\end{cases}\]
Combining these estimates makes the second part of the lemma.
\end{proof}

\begin{Rmk}
The estimates above essentially all come from the d\'{e}monstration of \cite[Lemme 1]{HP86}.
\end{Rmk}

\begin{Lem}\label{harmonic}
$(i)$ For any $0<\rho'<\rho<1$ there is a $d_1=d_1(\rho',\rho)>0$ such that for every harmonic function $h$ on $\Delta(0,\rho)$ bounded by a constant $c>0$, it holds that
\[|\frac{\partial h}{\partial z}(z)|\leq d_1c,\forall z\in\Delta(0,\rho');\]
$(ii)$ For any $0<\rho'<\rho<\frac{1}{3}$ and $N\in\mathbb{R}$ there is a $d_2=d_2(\rho',\rho,N)>0$ such that for every harmonic function $h$ on  $\Delta^*(\rho)$ such that
\[|h(z)|\leq c(\log{\frac{1}{|z|}})^N,\forall z\in\Delta^*(\rho)\]
for some $c>0$, it holds that
\[|z\frac{\partial h}{\partial z}(z)|\leq d_2c(\log{\frac{1}{|z|}})^N,\forall z\in\Delta^*(\rho').\]
\end{Lem}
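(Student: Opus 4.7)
The plan is to prove (i) by a direct Cauchy-type estimate for holomorphic functions, and then bootstrap to (ii) by applying (i) on a disc of radius proportional to $|z|$.

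For (i), the key observation is that harmonicity of $h$ implies that $\partial h/\partial z$ is holomorphic on $\Delta(0,\rho)$. The holomorphic mean value property combined with Stokes' theorem then yields the identity
\[
\frac{\partial h}{\partial z}(z) = \frac{1}{\pi r^2}\int_{\Delta(z,r)}\frac{\partial h}{\partial w}(w)\,dA(w) = \frac{1}{2\pi i\, r^2}\int_{\partial\Delta(z,r)}h(w)\,d\overline{w},
\]
valid whenever $\Delta(z,r)\subseteq\Delta(0,\rho)$; the second equality uses $d(h\,d\overline{w})=\tfrac{\partial h}{\partial w}\,dw\wedge d\overline{w}$ together with $dA=\tfrac{i}{2}dw\wedge d\overline{w}$. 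The hypothesis $|h|\leq c$ then gives $|\partial h/\partial z(z)|\leq c/r$, and taking $r=\rho-\rho'$ for $|z|\leq\rho'$ yields (i) with $d_1=1/(\rho-\rho')$.

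For (ii), given $z\in\Delta^*(\rho')$, I would apply the pointwise bound $|\partial h/\partial z(z_0)|\leq C/r$ (valid for any harmonic function on $\Delta(z_0,r)$ bounded by $C$, as established en route to (i)) on the disc $\Delta(z,r)$ with $r:=\min(|z|/2,\,(\rho-\rho')/2)$. Two things need checking: first, $\Delta(z,r)\subseteq\Delta^*(\rho)$, which follows from $|w|\geq|z|/2>0$ and $|w|\leq(\rho+\rho')/2<\rho$; second, on this disc the bound $c(\log 1/|w|)^N$ should be comparable to $c(\log 1/|z|)^N$. For the latter, since $|w|\in[|z|/2,3|z|/2]$ and $|z|<1/3$ (so that $\log(1/|z|)>\log 3$ is bounded away from zero), a direct computation shows $\log(1/|w|)\in[C_2,C_1]\cdot\log(1/|z|)$ for explicit positive constants $C_1,C_2$, whence $(\log 1/|w|)^N\leq C(N)(\log 1/|z|)^N$ for some $C(N)>0$ depending only on $N$. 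Combining these gives $|\partial h/\partial z(z)|\leq cC(N)(\log 1/|z|)^N/r$, and multiplying by $|z|$ while noting $r\geq k|z|$ for $k:=\min(1/2,(\rho-\rho')/(2\rho'))$ yields (ii) with $d_2=C(N)/k$.

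The main subtlety lies in the choice of $r$ for (ii): taking $r=|z|/2$ is the natural scale for the logarithmic estimate but fails to keep the disc inside $\Delta(0,\rho)$ when $|z|$ approaches $\rho'$, so the cutoff by $(\rho-\rho')/2$ is forced, and this is what makes the constant $d_2$ depend on both $\rho$ and $\rho'$ separately rather than only on the difference $\rho-\rho'$. Aside from this bookkeeping, the argument is a standard Cauchy estimate and no further analytic input is needed.
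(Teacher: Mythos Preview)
Your argument is correct and follows essentially the same approach as the paper: both derive the pointwise Cauchy-type bound $|\partial h/\partial z(z_0)|\leq r^{-1}\max_{|w-z_0|=r}|h(w)|$ (the paper via differentiating Poisson's formula, you via the mean value property of the holomorphic function $\partial h/\partial z$ plus Stokes) and then choose $r$ appropriately. For (ii) the paper takes the single radius $\varepsilon=(\rho-\rho')|z_0|$, which already satisfies both constraints since $\rho-\rho'<1/3<1$, avoiding your case split with the $\min$; otherwise the arguments are the same.
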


\begin{proof}
For every $z_0$ in the domain of $h$, take an $\varepsilon>0$ such that $h$ is defined on $\Delta(z_0,2\varepsilon)$, then Poisson's formula
\[h(z)=\frac{1}{2\pi}\int_{|w-z_0|=\varepsilon}\frac{\varepsilon^2-|z-z_0|^2}{|w-z|^2}h(w)\,\mathrm{d}\theta,\forall z\in\Delta(z_0,\varepsilon)\]
implies
\[\frac{\partial h}{\partial z}(z)=\frac{1}{2\pi}\int_{|w-z_0|=\varepsilon}\frac{\varepsilon^2-(w-z_0)(\overline{z}-\overline{z}_0)}{(w-z)^2(\overline{w}-\overline{z})}h(w)\,\mathrm{d}\theta,\forall z\in\Delta(z_0,\varepsilon)\]
and in particular
\[\frac{\partial h}{\partial z}(z_0)=\frac{1}{2\pi}\int_{|w-z_0|=\varepsilon}\frac{h(w)}{(w-z_0)}\,\mathrm{d}\theta.\]
Therefore
\[|\frac{\partial h}{\partial z}(z_0)|\leq\frac{1}{\varepsilon}\max_{|w-z_0|=\varepsilon}{|h(w)|}.\]
In the first case, for $z_0\in\Delta(0,\rho')$ we can take $\varepsilon=\frac{\rho-\rho'}{2}$, so $d_1:=\frac{2}{\rho-\rho'}$ works; in the second case, for $z_0\in\Delta^*(\rho')$ we take $\varepsilon=(\rho-\rho')|z_0|$, then $d_2:=\frac{2^N}{\rho-\rho'}$ satisfies that
\[|z_0\frac{\partial h}{\partial z}(z_0)|<\frac{1}{\rho-\rho'}(\log{\frac{2}{|z_0|}})^N<d_2c(\log{\frac{1}{|z_0|}})^N.\qedhere\]
\end{proof}

\begin{Cor}\label{dlg10}
For every $0<\rho<1$, suppose $g\in C^\infty(\Delta(0,\rho))$ satisfies that for any $p,q\geq 0$,
\[|(\frac{\partial}{\partial z})^p(\frac{\partial}{\partial\overline{z}})^qg|\]
is bounded by a constant $c_{p,q}>0$. Write $\underline{c}_{p,q}:=(c_{p',q'})_{p'\leq p,q'\leq q}$, then for every $0<\rho'<\rho$ there are $\widetilde{c}_{p,q}=\widetilde{c}_{p,q}(\rho',\rho,\underline{c}_{p,q})>0$ depending linearly on $\underline{c}_{p,q}$ such that
\[f(z):=\frac{1}{2\pi i}\int_{\Delta(0,\rho)}\frac{g(w)}{w-z}\,dw\!\wedge\!d\overline{w}\]
satisfies that for any $p,q\geq 0$, $|(\frac{\partial}{\partial z})^p(\frac{\partial}{\partial\overline{z}})^qf|$ is bounded by $\widetilde{c}_{p,q}$ on $\Delta(0,\rho')$.
\end{Cor}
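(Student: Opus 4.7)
The plan is to split $f$ via a cutoff into a compactly supported piece (handled by the classical Cauchy--Green change of variables) and a tail piece that turns out to be holomorphic on a smaller disk (handled by Cauchy estimates, i.e.\ iterated Lemma \ref{harmonic}(i)). Fix an intermediate radius $\rho'<\rho_1<\rho$ and choose $\chi\in C^\infty_c(\Delta(0,\rho))$ with $\chi\equiv 1$ on $\Delta(0,\rho_1)$. Write $g=\chi g+(1-\chi)g$, and correspondingly split $f=f_1+f_2$, where
\[
f_1(z):=\frac{1}{2\pi i}\int_{\mathbb{C}}\frac{(\chi g)(w)}{w-z}\,dw\wedge d\overline{w},\qquad f_2(z):=\frac{1}{2\pi i}\int_{\Delta(0,\rho)}\frac{((1-\chi)g)(w)}{w-z}\,dw\wedge d\overline{w}.
\]

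For $f_1$, the first step is to translate: substituting $w\mapsto w+z$ gives
\[
f_1(z)=\frac{1}{2\pi i}\int_{\mathbb{C}}\frac{(\chi g)(w+z)}{w}\,dw\wedge d\overline{w},
\]
where $\chi g$ is extended by $0$ to a smooth compactly supported function on $\mathbb{C}$. Now $z,\overline{z}$-derivatives move directly onto $(\chi g)(w+z)$, and the kernel $1/w$ is $L^1$ on the (bounded) support in $w$. By the Leibniz rule, $|(\partial/\partial z)^p(\partial/\partial\overline{z})^q(\chi g)|$ is bounded by a fixed linear combination (depending on the $C^\infty$ norms of $\chi$ up to order $p+q$, hence on $\rho',\rho$) of the $c_{p',q'}$ with $p'\le p$, $q'\le q$. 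Integrating against $|dw\wedge d\overline{w}|/|w|$ over the support of $\chi$ then yields bounds of the required linear form on $\Delta(0,\rho')$ (indeed on all of $\mathbb{C}$).

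For $f_2$, the integrand is supported in the annulus $\rho_1<|w|<\rho$, so for $z\in\Delta(0,\rho')$ we have $|w-z|\ge\rho_1-\rho'>0$, the kernel is smooth, and $1/(w-z)$ is holomorphic in $z$. Differentiation under the integral shows that $f_2$ is holomorphic on $\Delta(0,\rho_1)$; in particular $(\partial/\partial\overline{z})^qf_2=0$ for $q\ge 1$. A direct estimate gives
\[
|f_2(z)|\le\frac{c_{0,0}}{2\pi(\rho_1-\rho')}\int_{\rho_1<|w|<\rho}|dw\wedge d\overline{w}|\le Cc_{0,0}
\]
on $\Delta(0,\rho_1)$. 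Iterating Lemma \ref{harmonic}(i) along a chain of nested disks between $\rho'$ and $\rho_1$ (or equivalently Cauchy's estimates applied to the holomorphic function $f_2$) then bounds $|(\partial/\partial z)^pf_2(z)|$ on $\Delta(0,\rho')$ by a constant depending on $\rho',\rho_1,\rho,p$ times $c_{0,0}$.

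Combining the two contributions gives the desired bound $\widetilde{c}_{p,q}$, which is manifestly a linear function of $\underline{c}_{p,q}$ (the coefficients being products of Leibniz combinatorial factors with $C^\infty$ norms of $\chi$ and the geometric constants $(\rho_1-\rho')^{-1}$, $\rho$). The only step with any subtlety is the bookkeeping in the Leibniz expansion for $f_1$ to confirm linear rather than polynomial dependence on the $c_{p',q'}$; this is immediate once one observes that each derivative of $\chi g$ is a \emph{linear} combination of the $c_{p',q'}$ with $p'\le p$, $q'\le q$. The choice of intermediate radius $\rho_1$ is otherwise free, so the constants $\widetilde{c}_{p,q}$ can be made explicit if desired.
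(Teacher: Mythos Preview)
Your argument is correct, with one small slip: the displayed bound $|f_2(z)|\le c_{0,0}\cdot[\,2\pi(\rho_1-\rho')\,]^{-1}\int|dw\wedge d\overline{w}|$ uses $|w-z|\ge\rho_1-\rho'$, which holds only for $|z|\le\rho'$, not on all of $\Delta(0,\rho_1)$ as written. To feed Lemma \ref{harmonic}(i) you need a bound on a disk strictly larger than $\Delta(0,\rho')$; just introduce a second intermediate radius $\rho'<\rho''<\rho_1$ and bound $|f_2|$ on $\Delta(0,\rho'')$ via $|w-z|\ge\rho_1-\rho''$. With that tweak the proof goes through.

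Your route is genuinely different from the paper's. The paper never introduces a cutoff; instead it inducts on $p$, constructing at each step auxiliary functions $f_{p+1}$ (the Cauchy--Green integral of $(\partial/\partial w)^{p+1}g$) and $\widetilde{f}_p$ (the conjugate integral of $f_{p+1}$), observing that $h:=(\partial/\partial z)^pf-\widetilde{f}_p$ is harmonic, and applying Lemma \ref{harmonic}(i) to pass from a bound on $h$ to one on $\partial h/\partial z$. Your cutoff-plus-translation argument for $f_1$ and Cauchy-estimate argument for the holomorphic $f_2$ is more direct and more standard. The reason the paper takes its more roundabout path is that the same scaffolding is reused verbatim for the punctured-disk analogue (Corollary \ref{dlg11}), where the relevant operators are $z\partial/\partial z$, $\overline{z}\partial/\partial\overline{z}$ and the bounds are powers of $\log(1/|z|)$; there the translation trick $w\mapsto w+z$ no longer interacts well with the operators or the growth condition, so the inductive harmonic-difference argument is what survives. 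For the plain-disk statement alone, your approach is cleaner.
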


\begin{proof}
\cite[Lemma 3.1]{Kodaira05} implies that $\frac{\partial}{\partial\overline{z}}f=g$, so we just need to bound each $|(\frac{\partial}{\partial z})^pf|$. $\frac{\partial}{\partial z}$ couldn't be directly exchanged with the integration, so instead we induct on $p$. We abbreviate any subscript $p,0$ as $p$.

When $p=0$ the estimate is direct and $\widetilde{c}_0$ is independent of $\rho'$. Suppose we've found $\widetilde{c}_p$, then $|(\frac{\partial}{\partial z})^pf|$ is bounded by $C_1:=\widetilde{c}_p(\frac{\rho'+\rho}{2},\rho,\underline{c}_p)$ on $\Delta(0,\frac{\rho'+\rho}{2})$. On $\Delta(0,\rho)$, define
\[f_{p+1}(z):=\frac{1}{2\pi i}\int_{\Delta(0,\rho)}\frac{(\frac{\partial}{\partial w})^{p+1}g(w)}{w-z}\,dw\!\wedge\!d\overline{w}\]
and
\[\widetilde{f}_p(z):=\frac{1}{2\pi i}\int_{\Delta(0,\rho)}\frac{f_{p+1}(w)}{\overline{w}-\overline{z}}\,dw\!\wedge\!d\overline{w},\]
then we have $\frac{\partial}{\partial z}\widetilde{f}_p=f_{p+1}$ and $\frac{\partial}{\partial\overline{z}}f_{p+1}=(\frac{\partial}{\partial z})^{p+1}g$,
so
\[h:=(\frac{\partial}{\partial z})^pf-\widetilde{f}_p\]
is a harmonic function. Besides $f_{p+1}$ and $\widetilde{f}_p$ are bounded by $C_2:=\widetilde{c}_0(\rho,c_{p+1})$ and $C_3:=\widetilde{c}_0(\rho,C_2)$ respectively, so $h$ is bounded by $C_1+C_3$ on $\Delta(0,\frac{\rho'+\rho}{2})$, and Lemma \ref{harmonic} (i) implies that $|(\frac{\partial}{\partial z})^{p+1}f-f_{p+1}|=|\frac{\partial}{\partial z}h|$ is bounded by $d_1(\rho',\frac{\rho'+\rho}{2})(C_1+C_3)$ on $\Delta(0,\rho')$. Therefore we can take
\[\widetilde{c}_{p+1}:=d_1(\rho',\frac{\rho'+\rho}{2})(C_1+C_3)+C_2.\qedhere\]
\end{proof}

\begin{Cor}\label{dlg11}
In Lemma \ref{HP}, suppose that for any $p,q\geq 0$ there are $c_{p,q}>0$ and $N_{p,q}\in\mathbb{R}$ such that
\[|(z\frac{\partial}{\partial z})^p(\overline{z}\frac{\partial}{\partial\overline{z}})^qg(z)|\leq c_{p,q}(\log{\frac{1}{|z|}})^{N_{p,q}},\forall z\in\Delta^*(\rho).\]
Write $\underline{c}_{p,q}:=(c_{p',q'})_{p'\leq p,q'\leq q},\underline{N}_{p,q}:=(N_{p',q'})_{p'\leq p,q'\leq q}$, then for every $0<\rho'<\rho$ there are $\widetilde{N}_{p,q}\in\mathbb{R}$ depending on $\underline{N}_{p,q}$ and $\widetilde{c}_{p,q}=\widetilde{c}_{p,q}(\rho',\rho,\underline{c}_{p,q},\underline{N}_{p,q})>0$ depending linearly on $\underline{c}_{p,q}$ such that for anny $p,q\geq 0$,
\[|(z\frac{\partial}{\partial z})^p(\overline{z}\frac{\partial}{\partial\overline{z}})^qf(z)|\leq\widetilde{c}_{p,q}\cdot(\log{\frac{1}{|z|}})^{\widetilde{N}_{p,q}},\forall z\in\Delta^*(\rho').\]
\end{Cor}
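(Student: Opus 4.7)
The plan is to reduce to the case $q=0$ and then induct on $p$, with Lemma~\ref{HP} serving as the base case $p=q=0$. Since Lemma~\ref{HP} gives $\overline z\partial_{\overline z}f = g$ and the operators $z\partial_z$ and $\overline z\partial_{\overline z}$ commute, for $q\geq 1$ we have
\[(z\partial_z)^p(\overline z\partial_{\overline z})^q f = (z\partial_z)^p(\overline z\partial_{\overline z})^{q-1}g,\]
and the hypothesis on $g$ immediately yields the desired bound with $\widetilde c_{p,q} = c_{p,q-1}$ and $\widetilde N_{p,q} = N_{p,q-1}$. So assume $q=0$ and induct on $p$.

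For the inductive step $p\to p+1$, fix $\rho' < \rho_1 < \rho$ and set
\[u_p(z) := \frac{1}{2\pi i}\int_{\Delta^*(\rho_1)}\frac{(w\partial_w)^p g(w)}{\overline w\,(w-z)}\,dw\wedge d\overline w.\]
The choice of $\rho_1$ (rather than $\rho$) is a technical convenience that will permit an honest boundary integral at $|w|=\rho_1$, where $g$ is smooth. By Lemma~\ref{HP} applied over $\Delta^*(\rho_1)$ to $(z\partial_z)^p g$, we have $\overline z\partial_{\overline z}u_p = (z\partial_z)^p g$; and $(z\partial_z)^p f$ satisfies the same equation by commutativity, so $h_p := (z\partial_z)^p f - u_p$ is holomorphic on $\Delta^*(\rho_1)$. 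The inductive bound on $(z\partial_z)^p f$ and Lemma~\ref{HP} applied to $u_p$ together give a polynomial-logarithmic bound on $|h_p|$ over $\Delta^*(\rho_1)$, and Lemma~\ref{harmonic}(ii), applied to the harmonic function $h_p$, then bounds $|z\partial_z h_p|$ over $\Delta^*(\rho')$.

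It remains to estimate $z\partial_z u_p$. The key identity is
\[\frac{z}{(w-z)^2} = -\partial_w\frac{w}{w-z},\]
which, after excising small discs around $w=0$ and $w=z$ and applying Stokes' theorem, gives
\[z\partial_z u_p(z) = u_{p+1}(z) + B_p(z),\]
where $u_{p+1}$ is the analogous integral for $(w\partial_w)^{p+1}g$ (bounded by Lemma~\ref{HP}) and $B_p$ is a boundary integral over $|w|=\rho_1$, manifestly holomorphic in $z$ on $\{|z|<\rho_1\}$ and hence bounded on $\overline{\Delta(0,\rho')}$ by a constant. Combining with the bound on $z\partial_z h_p$ completes the induction, and tracking the constants shows $\widetilde c_{p+1,0}$ depends linearly on $\underline c_{p+1,0}$.

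The main obstacle is the justification of the integration by parts: one must verify that the integrals along the two excised small circles $|w|=\epsilon$ and $|w-z|=\epsilon$ vanish in the limit $\epsilon\to 0$. This follows from a direct polar-coordinate estimate: using the polynomial-logarithmic bound on $(z\partial_z)^p g$, the former contribution is $O(\epsilon(\log 1/\epsilon)^{N_{p,0}})$ and the latter is $O(\epsilon)$, both tending to $0$. With this verification in place, the induction goes through and delivers the required estimate.
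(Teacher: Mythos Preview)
Your inductive structure matches the paper's: reduce to $q=0$, induct on $p$, subtract an auxiliary function from $(z\partial_z)^p f$ to obtain something harmonic, and invoke Lemma~\ref{harmonic}(ii). The difference lies in how you control the $z\partial_z$-derivative of the auxiliary piece. You take $u_p$ and attempt to compute $z\partial_z u_p$ directly via Stokes on the kernel identity $z/(w-z)^2=-\partial_w(w/(w-z))$; the paper instead introduces a \emph{second} auxiliary $\widetilde f_p$ via the complex conjugate of Lemma~\ref{HP}, so that $z\partial_z\widetilde f_p=f_{p+1}$ holds by construction, and then $(z\partial_z)^pf-\widetilde f_p$ is merely harmonic (not holomorphic). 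This buys the paper a clean route: no singular integral has to be differentiated.

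Your Stokes computation has a genuine gap. Before integrating by parts you must justify that $z\partial_z u_p(z)$ equals the (non-absolutely-convergent) integral $\frac{1}{2\pi i}\int \frac{z\,G(w)}{\overline w(w-z)^2}\,dw\wedge d\overline w$; since the integrand is not $L^1$ near $w=z$, one cannot simply differentiate under the integral, and your excision-then-limit sketch does not explain why the truncated derivatives converge to $z\partial_z u_p$. This can be repaired via the $\chi_a/\chi_b$ splitting from the proof of Lemma~\ref{HP} (differentiate the far part under the integral, change variables $w\mapsto w+z$ in the near part), but you have not done so. A smaller point: the $|w-z|=\epsilon$ boundary contribution is $O(\epsilon)$ only after using the cancellation $\int_0^{2\pi}e^{-2i\theta}\,d\theta=0$ in the leading term; a naive magnitude bound yields only $O(1)$, so ``direct polar-coordinate estimate'' undersells what is needed. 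The paper's use of the conjugate integral operator avoids both issues and is the cleaner fix.
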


\begin{proof}
As $\overline{z}\frac{\partial}{\partial\overline{z}}f=g$ it suffices to bound each $|(z\frac{\partial}{\partial z})^pf(z)|$. Again we induct on $p$ and abbreviate any subscript $p,0$ as $p$. When $p=0$ the estimate follows from Lemma \ref{HP}. Suppose we've found $\widetilde{c}_p$ and $\widetilde{N}_p$, then $C_1:=\widetilde{c}_p(\frac{\rho'+\rho}{2},\rho,\underline{c}_p,\underline{N}_p)$ satisfies
\[|(z\frac{\partial}{\partial z})^pf(z)|\leq C_1(\log{\frac{1}{|z|}})^{\widetilde{N}_p},\forall z\in\Delta^*(\frac{\rho'+\rho}{2}).\]
By Lemma \ref{HP}, there is an $f_{p+1}\in C^\infty(\Delta^*(\rho))$ such that $\overline{z}\frac{\partial}{\partial\overline{z}}f_{p+1}=(z\frac{\partial}{\partial z})^{p+1}g$ and
\[|f_{p+1}(z)|\leq C_2(\log{\frac{1}{|z|}})^{N'_{p+1}},\forall z\in\Delta^*(\rho),\]
where $C_2:=d(N_{p+1})c_{p+1}$ and $N'_{p+1}=\max\{N_{p+1}+1,1\}$. Then by the complex conjugate of Lemma \ref{HP} there is an $\widetilde{f}_p\in C^\infty(\Delta^*(\rho))$ such that $z\frac{\partial}{\partial z}\widetilde{f}_p=f_{p+1}$ and
\[|\widetilde{f}_p(z)|\leq C_3(\log{\frac{1}{|z|}})^{N'_{p+1}+1},\forall z\in\Delta^*(\rho)\]
for $C_3:=d(N'_{p+1})C_2$. Define $\widetilde{N}_{p+1}:=\max\{\widetilde{N}_p,N'_{p+1}+1\}$, then in the three inequalities above the exponents of $\log{\frac{1}{|z|}}$ can all be replaced by $\widetilde{N}_{p+1}$, so
\[h:=(z\frac{\partial}{\partial z})^pf-\widetilde{f}_p\]
is bounded by $(C_1+C_3)(\log{\frac{1}{|z|}})^{\widetilde{N}_{p+1}}$ on $\Delta^*(\frac{\rho'+\rho}{2})$. Meanwhile $h$ is harmonic, so by Lemma \ref{harmonic} (ii) we have
\[|(z\frac{\partial}{\partial z})^{p+1}f(z)-f_{p+1}(z)|=|z\frac{\partial h}{\partial z}(z)|\leq d_2\cdot(C_1+C_3)(\log{\frac{1}{|z|}})^{\widetilde{N}_{p+1}},\forall z\in\Delta^*(\rho'),\]
where $d_2=d_2(\rho',\frac{\rho'+\rho}{2},\widetilde{N}_{p+1})$. Therefore we can take $\widetilde{c}_{p+1}:=d_2\cdot(C_1+C_3)+C_2$.
\end{proof}

\begin{Rmk}
Qualitatively, the induction argument in the above proofs has been indicated in \cite[2.3.2]{Harris88}.
\end{Rmk}

\begin{proof}[Proof of Lemma \ref{dlgr}]
We want to show the exactness of the sequence
\[(\mathcal{A}^{0,0}_{\mathrm{dlg},r})_{\underline{z}}\xrightarrow{\overline\partial}(\mathcal{A}^{0,1}_{\mathrm{dlg},r})_{\underline{z}}\xrightarrow{\overline\partial}(\mathcal{A}^{0,2}_{\mathrm{dlg},r})_{\underline{z}}\xrightarrow{\overline\partial}...\]
for each $\underline{z}=(z_1,...,z_n)\in\Delta^n$. Without loss of generality assume $z_1=...=z_s=0$ and $z_{s+1}...z_r\neq 0$, then we observe that the translation by $-\underline{z}$ induces isomorphisms
\[(\mathcal{A}^{0,i}_{\mathrm{dlg},r})_{\underline{z}}\cong(\mathcal{A}^{0,i}_{dlg,s})_{\underline{0}}\]
that are compatible with the differentials, so it suffices to prove for $\underline{z}=\underline{0}=(0,...,0)$.

By Proposition \ref{lgsections} and Example \ref{Dz}, every germ $[\psi]\in(\mathcal{A}^{0,i}_{\mathrm{dlg},r})_{\underline{0}}$ in the kernel of $\overline\partial$ is represented by a $(0,i)$-form
\[\psi=\sum_{I\subseteq\{1,...,n\},|I|=i}\psi_I\theta_I\in\mathcal{A}^{0,i}(\Delta^{n,r}(\rho))\]
for some $0<\rho<\frac{1}{3}$ such that $\overline\partial\psi=0$ and for any $I$ and $D\in\mathfrak{U}(\underline{D}_z)$, $D\psi_I$ is bounded by a polynomial in $\log{\frac{1}{|z_1|}},...,\log{\frac{1}{|z_r|}}$. For brevity we refer to this growth condition as each $D\psi_I$ has \it log growth \rm or $\psi_I$ has \it d-log growth\rm, and note that Corollaries \ref{dlg10} and \ref{dlg11} then state that when $(n,r)=(1,0)$ and $(1,1)$ the d-log growth of $g$ implies the d-log growth of $f$ respectively. We imitate the proof of \cite[Theorem 3.3]{Kodaira05} and induct on the largest integer $m$ that appears in an $I$ with $\psi_I\neq 0$ to show that for every $0<\rho'<\rho$ there is a $(0,i-1)$-form
\[\eta=\sum_{|I'|=i-1}\eta_{I'}\theta_{I'}\in\mathcal{A}^{0,i-1}(\Delta^{n,r}(\rho'))\]
such that $\overline\partial\eta=\psi|_{\Delta^{n,r}(\rho')}$ and each $\eta_{I'}$ has d-log growth. When $m=0$, $\psi=0$, so there is nothing to prove. In general, define
\[\eta_1:=\sum_{I'\subseteq\{1,...,m-1\},|I'|=i-1}\eta_{1,I'}\theta_{I'}\in\mathcal{A}^{0,i-1}(\Delta^{n,r}(\frac{\rho'+\rho}{2})),\]
where
\[\eta_{1,I'}(\underline{z}):=\frac{1}{2\pi i}\int_{\Delta^*(\rho)}\frac{\psi_{I'\cup\{m\}}(...z_{m-1},w,z_{m+1},...,)}{\overline{w}(w-z_m)}\,dw\!\wedge\!d\overline{w}\:(m\leq r),\]
\[\eta_{1,I'}(\underline{z}):=\frac{1}{2\pi i}\int_{\Delta(0,\rho)}\frac{\psi_{I'\cup\{m\}}(...z_{m-1},w,z_{m+1},...,)}{w-z_m}\,dw\!\wedge\!d\overline{w}\:(m>r).\]
For every $z_0\in\Delta^*(\frac{\rho'+\rho}{2})$, take $\varepsilon,\chi_a$ and $\chi_b$ as in the proof of Lemma \ref{HP}, then when $z_m\in\Delta(z_0,\varepsilon)$ the d-log growth of $\psi_{I'\cup\{m\}}$ guarantees any partial derivatives of
\[\frac{\chi_a(w)\psi_{I'\cup\{m\}}(...w...)}{\overline{w}(w-z_m)},\frac{\chi_b(w+z_m)\psi_{I'\cup\{m\}}(...w+z_m...)}{(\overline{w}+\overline{z}_m)w}\]
and
\[\frac{\chi_a(w)\psi_{I'\cup\{m\}}(...w...)}{w-z_m},\frac{\chi_b(w+z_m)\psi_{I'\cup\{m\}}(...w+z_m...)}{w}\]
with respect to $z_1,\overline{z}_1,...,z_n,\overline{z}_n$ are $L^1$ in $w$, so $\eta_1$ is a smooth $(0,i-1)$-form.

As for the d-log growth of $\eta_{1,I'}$, denote $D_m:=z_m\frac{\partial}{\partial z_m},\overline{D}_m:=\overline{z}_m\frac{\partial}{\partial\overline{z}_m}$ when $m\leq r$ and $D_m:=\frac{\partial}{\partial z_m},\overline{D}_m:=\frac{\partial}{\partial\overline{z}_m}$ when $m>r$, then it suffices to check that
\[\forall D''\in\mathfrak{U}(D_m,\overline{D}_m),D'\in\mathfrak{U}(\underline{D}_z\backslash\{D_m,\overline{D}_m\}),D''D'\eta_{1,I'}\textrm{ has log growth}.\]
We can exchange $D'$ with the integration and $D'\psi_{I'\cup\{m\}}$ has log growth, so Corollary \ref{dlg10}, \ref{dlg11} imply the log growth of $D''D'\eta_{1,I'}$.

Now consider $\overline\partial\eta_1=\sum_{|I|=i}(\overline\partial\eta_1)_I\theta_I$. $\overline\partial\psi=0$ implies every $\psi_I$ is holomorphic in $z_{m+1},...,z_n$ and hence so is every $\eta_{1,I'}$, so $(\overline\partial\eta_1)_I=0$ unless $I\subseteq\{1,...,m\}$. Besides, Lemma \ref{HP} and \cite[Lemma 3.1]{Kodaira05} imply that $(\overline\partial\eta_1)_I=\psi_I$ when $m\in I$, so
\[\psi-\overline\partial\eta_1=\sum_{I\subseteq\{1,...,m-1\}}(\psi_I-(\overline\partial\eta_1)_I)\theta_I,\]
where each $\psi_I-(\overline\partial\eta_1)_I$ has d-log growth. By induction hypothesis a $(0,i-1)$-form
\[\eta_2=\sum_{|I'|=i-1}\eta_{2,I'}\theta_{I'}\in\mathcal{A}^{0,i-1}(\Delta^{n,r}(\rho'))\]
is there such that $\overline\partial\eta_2=(\psi-\overline\partial\eta_1)|_{\Delta^{n,r}(\rho')}$ and each $\eta_{2,I'}$ has d-log growth. Then $\eta=\eta_1+\eta_2$ is a $(0,i-1)$-form as desired.
\end{proof}

\begin{Prop}\label{Propdmg}
We have isomorphisms
\begin{equation}\label{dmg2}
H^i(\mathrm{Sh}_{\mathbb{K},\Sigma},\widetilde{V}^{\mathrm{can}})\cong H^i_{(\mathfrak{p}_o,K_o)}(C^\infty_{\mathrm{dmg}}(G)^\mathbb{K}\otimes V).
\end{equation}
\end{Prop}

\begin{proof}
By Proposition \ref{resoln} and Corollary \ref{fine}, $(\mathcal{I}_V^*,j_*\overline\partial)$ makes a fine resolution of $\widetilde{V}^{\mathrm{can}}$, and we've seen in Proposition \ref{mgsections} that the global sections of $(\mathcal{I}_V^*,j_*\overline\partial)$ can be identified the relative Chevalley-Eilenberg complex computing
\[H^*_{(\mathfrak{p}_o,K_o)}(C^\infty_{\mathrm{dmg}}(G)^\mathbb{K}\otimes V).\qedhere\]
\end{proof}

\begin{Rmk}
We see that Proposition \ref{Propdmg} reproves the isomorphisms of cohomology in Proposition \ref{cohisom} when $\Sigma$ and $\Sigma'$ are both SNC.
\end{Rmk}

\subsection{The Hecke action on coherent cohomology}\label{fineresol3}

\begin{Prop}\label{GAf}
(\ref{dmg2}) induces $G(\mathbb{A}_f)$-equivariant isomorphisms
\[\varinjlim_{t_e^*}H^i(\widetilde{V}^{\mathrm{can}}_\mathbb{K})\cong H^i_{(\mathfrak{p}_o,K_o)}(C^\infty_{\mathrm{dmg}}(G)\otimes V_o).\]
\end{Prop}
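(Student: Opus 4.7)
The plan is to upgrade the isomorphism of Proposition \ref{Propdmg} to a statement of compatibility with the Hecke correspondences $t_{g,\Sigma}$ at each finite level, and then take the filtered colimit over $\mathbb{K}$. The sheaves $\mathcal{I}^i_{V,\mathbb{K},\Sigma}$ of Definition \ref{mgsheaves} are manifestly functorial in $t_{g,\Sigma}$: for any $g\in G(\mathbb{A}_f)$ and neat $\mathbb{K}'\subseteq g\mathbb{K}g^{-1}$, the formula $\varphi\mapsto\bigl(Y_1\wedge\cdots\wedge Y_i\mapsto g\,\varphi(Y_1\wedge\cdots\wedge Y_i)(\cdot\,g)\bigr)$ on defining presheaves---well-defined because right translation by elements of $G(\mathbb{A}_f)$ preserves $C^\infty_{dmg}$ by Definition \ref{mg}(ii)---sheafifies to a morphism of complexes $\mathcal{I}^*_{V,\mathbb{K},\Sigma}\to (t_{g,\Sigma})_*\mathcal{I}^*_{V,\mathbb{K}',\Sigma^g}$, which by Proposition \ref{sheafcan} restricts in degree zero to a morphism $\widetilde{V}^{can}_{\mathbb{K},\Sigma}\to(t_{g,\Sigma})_*\widetilde{V}^{can}_{\mathbb{K}',\Sigma^g}$ of canonical extensions.

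This morphism must agree with the one adjoint to $i_g$: both extend the same map $\widetilde{V}_\mathbb{K}\to t_{g*}\widetilde{V}_{\mathbb{K}'}$ on the open $Sh_\mathbb{K}$ (described just above \eqref{Vo}), and two homomorphisms from $\widetilde{V}^{can}_{\mathbb{K},\Sigma}$ into $(t_{g,\Sigma})_*\widetilde{V}^{can}_{\mathbb{K}',\Sigma^g}$ that coincide on $Sh_\mathbb{K}$ must be equal, because both source and target sit inside pushforwards of sheaves on $Sh_\mathbb{K}$ (resp.\ $Sh_{\mathbb{K}'}$) by Definition \ref{Vcan}, so sections are determined by their restrictions to the dense open. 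On global sections, using Proposition \ref{mgsections} to identify the C-E complex, the Hecke map $t_g^*$ on cohomology therefore corresponds to the operator $f\otimes v\mapsto f(\cdot\,g)\otimes a_g^{-1}v$ on $C^\infty_{dmg}([G]/\mathbb{K})^{K_o-fin}\otimes V_o$, where we have harmlessly replaced $V$ by $V_o$ (which is identical as a $(\mathfrak{p}_o,K_o)$-module). This is precisely the restriction of the diagonal $G(\mathbb{A}_f)$-action on $C^\infty_{dmg}(G)\otimes V_o$ to the Hecke correspondence attached to $g$. Since filtered colimits of $(\mathfrak{p}_o,K_o)$-modules are exact and commute with $(\mathfrak{p}_o,K_o)$-cohomology of complexes, passing to the colimit over $\mathbb{K}$ yields the desired $G(\mathbb{A}_f)$-equivariant isomorphism.

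The main obstacle is the uniqueness argument invoked above: we rely on the fact that canonical extensions are defined as subsheaves of $j_*\widetilde{V}$ (Definition \ref{Vcan}), so any two $\widetilde{V}^{can}$-morphisms agreeing on the dense open $Sh_\mathbb{K}$ coincide. Consequently the local trivializations $t_P$ appearing in Propositions \ref{trivIVi} and \ref{trivVcan}, though non-canonical, pose no obstruction to equivariance: they entered only into the verification that $(\mathcal{I}_V^*, j_*\overline\partial)$ is a fine resolution, whereas the Hecke morphism of complexes constructed here is globally well-defined from the outset directly from \eqref{Vo}.
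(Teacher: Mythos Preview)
Your approach is the same as the paper's---construct a morphism of fine resolutions covering the Hecke map and read off the equivariance on global sections---but there are two technical gaps.

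First, you work with the compactification $Sh_{\mathbb{K}',\Sigma^g}$, but $\Sigma^g$ need not be SNC for $\mathbb{K}'$: SNC-ness requires each cone to be generated by part of a basis of the lattice $U(F)_\mathbb{Z}=\Gamma\cap U(F)$, and passing from $g\mathbb{K}g^{-1}$ to the smaller $\mathbb{K}'$ shrinks these lattices. Since the sheaves $\mathcal{I}^*_{V,\mathbb{K}',\Sigma^g}$ are only known to be a fine resolution of $\widetilde{V}^{can}$ in the SNC case (Proposition \ref{dmglocal}, Corollary \ref{fine}, Lemma \ref{dlgr} all use $\sigma$-coordinates), your identification $H^i(\Gamma(\mathcal{I}^*_{V,\mathbb{K}',\Sigma^g}))\cong H^i(\widetilde{V}^{can}_{\mathbb{K}'})$ is not justified. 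The paper fixes this by choosing an SNC $\Sigma'$ refining $\Sigma^g$ and working with the composite $t_{g,\Sigma}\circ r_{\Sigma',\Sigma^g}$.

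Second, even granting the morphism of complexes $\mathcal{I}^*_{V,\mathbb{K},\Sigma}\to t_{g*}\mathcal{I}^*_{V,\mathbb{K}',\Sigma'}$ and your (correct) uniqueness argument that it extends the canonical map $\widetilde{V}^{can}_{\mathbb{K},\Sigma}\to t_{g*}\widetilde{V}^{can}_{\mathbb{K}',\Sigma'}$, you still need to explain why the induced map on cohomology of global sections is $t_g^*$ as defined in \eqref{tg*}. This is not automatic: the two resolutions live on different spaces, and agreeing on the augmentation does not by itself pin down the map on higher cohomology. The paper's proof makes this identification precise via hypercohomology, and the crucial input is that each $t_{g*}\mathcal{I}^i_{V,\mathbb{K}',\Sigma'}$ is acyclic on $Sh_{\mathbb{K},\Sigma}$---which holds because the ring morphism $\mathcal{C}^\infty_{dmg}\to t_{g*}\mathcal{C}^\infty_{dmg}$ makes it a fine sheaf. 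You should state and use this.
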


\begin{proof}
We wish to check for any $g\in G(\mathbb{A}_f)$ and open subgroup $\mathbb{K}'\subseteq g\mathbb{K}g^{-1}$ the commutativity of the following diagram:
\[\begin{tikzcd}
H^i(\widetilde{V}^{\mathrm{can}}_\mathbb{K})\rar{(\ref{dmg2})}[swap]{\sim}\dar{(\ref{tg*})}&H^i_{(\mathfrak{p}_o,K_o)}(C^\infty_{\mathrm{dmg}}(G)^\mathbb{K}\otimes V_o)\dar\\
H^i(\widetilde{V}^{\mathrm{can}}_{\mathbb{K}'})\rar{(\ref{dmg2})}[swap]{\sim}&H^i_{(\mathfrak{p}_o,K_o)}(C^\infty_{\mathrm{dmg}}(G)^{\mathbb{K}'}\otimes V_o),
\end{tikzcd}\]
where the right vertical map arises from
\[\begin{gathered}
C^\infty_{\mathrm{dmg}}(G)^\mathbb{K}\otimes V_o\hookrightarrow C^\infty_{\mathrm{dmg}}(G)^{\mathbb{K}'}\otimes V_o,\\
f\mapsto gf(\cdot\,g).
\end{gathered}\]
Let $j_\Sigma:\mathrm{Sh}_\mathbb{K}\hookrightarrow\mathrm{Sh}_{\mathbb{K},\Sigma}$ and $j_{\Sigma'}:\mathrm{Sh}_{\mathbb{K}'}\hookrightarrow\mathrm{Sh}_{\mathbb{K}',\Sigma'}$ be SNC compactifications such that $\Sigma'$ refines $\Sigma^g$ and abbreviate $t_{g,\Sigma}\circ r_{\Sigma',\Sigma^g}$ as $t_g$, then (\ref{tg*}) is the composition of maps
\[H^i(\mathrm{Sh}_{\mathbb{K},\Sigma},\widetilde{V}^{\mathrm{can}}_{\mathbb{K},\Sigma})\rightarrow H^i(\mathrm{Sh}_{\mathbb{K},\Sigma},t_{g*}\widetilde{V}^{\mathrm{can}}_{\mathbb{K}',\Sigma'})\xrightarrow{\delta}H^i(\mathrm{Sh}_{\mathbb{K}',\Sigma'},\widetilde{V}^{\mathrm{can}}_{\mathbb{K}',\Sigma'}),\]
where the first map comes from the morphism of sheaves $\widetilde{V}^{\mathrm{can}}_{\mathbb{K},\Sigma}\rightarrow t_{g*}\widetilde{V}^{\mathrm{can}}_{\mathbb{K}',\Sigma'}$ adjoint to $t_g^*\widetilde{V}^{\mathrm{can}}_{\mathbb{K},\Sigma}\cong\widetilde{V}^{\mathrm{can}}_{\mathbb{K}',\Sigma'}$, and the second map can be computed as follows: if
\[0\rightarrow\widetilde{V}^{\mathrm{can}}_{\mathbb{K}',\Sigma'}\rightarrow\mathcal{J}^*\]
is an acyclic resolution such that each $t_{g*}\mathcal{J}^i$ is also acyclic, then $t_{g*}\widetilde{V}^{\mathrm{can}}_{\mathbb{K}',\Sigma'}\rightarrow t_{g*}\mathcal{J}^*$ can be viewed as a morphism between complexes of sheaves and $\delta$ is the induced map between hypercohomology:
\[\begin{aligned}
H^i(\mathrm{Sh}_{\mathbb{K},\Sigma},t_{g*}\widetilde{V}^{\mathrm{can}}_{\mathbb{K}',\Sigma'})&=\mathbb{H}^i(t_{g*}\widetilde{V}^{\mathrm{can}}_{\mathbb{K}',\Sigma'})\\
&\rightarrow\mathbb{H}^i(t_{g*}\mathcal{J}^*)\\
&\cong H^i(t_{g*}\mathcal{J}^*(\mathrm{Sh}_{\mathbb{K},\Sigma}))\\
&=H^i(\mathcal{J}^*(\mathrm{Sh}_{\mathbb{K}',\Sigma'}))\cong H^i(\mathrm{Sh}_{\mathbb{K}',\Sigma'},\widetilde{V}^{\mathrm{can}}_{\mathbb{K}',\Sigma'}).
\end{aligned}\]
Furthermore, if $0\rightarrow\widetilde{V}^{\mathrm{can}}_{\mathbb{K},\Sigma}\rightarrow\mathcal{I}^*$ is an acyclic resolution of $\widetilde{V}^{\mathrm{can}}_{\mathbb{K},\Sigma}$ and $\mathcal{I}^*\rightarrow t_{g*}\mathcal{J}^*$ is a morphism of complexes extending $\widetilde{V}^{\mathrm{can}}_{\mathbb{K},\Sigma}\rightarrow t_{g*}\widetilde{V}^{\mathrm{can}}_{\mathbb{K}',\Sigma'}$, then (\ref{tg*}) fits into the following commutative diagram:
\[\begin{tikzcd}
H^i(\mathrm{Sh}_{\mathbb{K},\Sigma},\widetilde{V}^{\mathrm{can}}_{\mathbb{K},\Sigma})\rar{\sim}\dar{(\ref{tg*})}&\mathbb{H}^i(\mathcal{I}^*)\rar{\sim}\dar&H^i(\mathcal{I}^*(\mathrm{Sh}_{\mathbb{K},\Sigma}))\dar\\
H^i(\mathrm{Sh}_{\mathbb{K}',\Sigma'},\widetilde{V}^{\mathrm{can}}_{\mathbb{K}',\Sigma'})\rar{\sim}&\mathbb{H}^i(t_{g*}\mathcal{J}^*)\rar{\sim}&H^i(\mathcal{J}^*(\mathrm{Sh}_{\mathbb{K}',\Sigma'})),
\end{tikzcd}\]
If we can take $\mathcal{I}^*=\mathcal{I}_{V,\mathbb{K},\Sigma}^*,\mathcal{J}^*=\mathcal{I}_{V,\mathbb{K}',\Sigma'}^*$ and the morphism between them to be the restriction of the morphism
\[j_{\Sigma*}(\widetilde{V}_\mathbb{K}\otimes_{\mathcal{O}_{\mathrm{Sh}_\mathbb{K}}}\mathcal{A}^{0,*}_{\mathrm{Sh}_\mathbb{K}})\rightarrow t_{g*}j_{\Sigma'*}(\widetilde{V}_\mathbb{K'}\otimes_{\mathcal{O}_{\mathrm{Sh}_{\mathbb{K}'}}}\mathcal{A}^{0,*}_{\mathrm{Sh}_{\mathbb{K}'}})\]
induced by (\ref{Vo}), then the proposition follows. It remains to check the acyclicity of each $t_{g*}\mathcal{I}_{V,\mathbb{K}',\Sigma'}^i$, but the morphism $\mathcal{C}^\infty_{\mathrm{dmg}}=\mathcal{I}_{\mathbb{C},\mathbb{K},\Sigma}^0\rightarrow t_{g*}\mathcal{I}_{\mathbb{C},\mathbb{K}',\Sigma'}^0$ makes $t_{g*}\mathcal{I}_{\mathbb{C},\mathbb{K}',\Sigma'}^0$ a fine sheaf of rings, and hence every $t_{g*}\mathcal{I}_{V,\mathbb{K}',\Sigma'}^i$ is a fine sheaf.
\end{proof}

\section{A regularization theorem}\label{section4}

The goal of this section is to prove:

\begin{Prop}\label{umg}
The inclusion $C^\infty_{\mathrm{umg}}(G)^\mathbb{K}\subseteq C^\infty_{\mathrm{dmg}}(G)^\mathbb{K}$ induces isomorphisms of $(\mathfrak{p}_o,K_o)$-cohomology with coefficients in $V$, hence by Proposition \ref{Propdmg} we have
\[H^i(\mathrm{Sh}_{\mathbb{K},\Sigma},\widetilde{V}^{\mathrm{can}})\cong H^i_{(\mathfrak{p}_o,K_o)}(C^\infty_{\mathrm{umg}}(G)^\mathbb{K}\otimes V),\]
and by Proposition \ref{GAf} we have $G(\mathbb{A}_f)$-equivariant isomorphisms
\begin{equation}\label{umgGAf}
\varinjlim_{t_e^*}H^i(\widetilde{V}^{\mathrm{can}}_\mathbb{K})\cong H^i_{(\mathfrak{p}_o,K_o)}(C^\infty_{\mathrm{umg}}(G)\otimes V_o).
\end{equation}
\end{Prop}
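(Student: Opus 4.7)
Following Franke's strategy, I would work at the level of the relative Chevalley-Eilenberg complexes
\[C^i_\star := \mathrm{Hom}_{K_o}\bigl({\bigwedge}^i(\mathfrak{p}_o/\mathfrak{k}_o),\,C^\infty_\star(G)^\mathbb{K}\otimes V\bigr),\qquad \star = umg,\,dmg,\]
and construct a chain map $T: C^\bullet_{dmg}\to C^\bullet_{dmg}$ whose image lies in $C^\bullet_{umg}$ together with a chain homotopy between $T$ and the identity. Such data immediately give a two-sided inverse in cohomology to the inclusion $C^\bullet_{umg}\hookrightarrow C^\bullet_{dmg}$.

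The key input is the Okamoto-Ozeki formula (Proposition \ref{OO}), which expresses the Laplacian $\square$ of $C^\bullet_\star$ as $\pi(\Omega) + A$ with $\Omega\in\mathfrak{Z}(\mathfrak{g})$ a Casimir-type element acting by right differentiation and $A$ a bounded $K_o$-operator on $\bigwedge^\bullet(\mathfrak{p}_o/\mathfrak{k}_o)\otimes V$; this plays the role that Kuga's formula plays in Franke's original treatment of $(\mathfrak{g},K)$-cohomology. Because $\square$ commutes with the C-E differential, the desired $T$ and homotopy reduce formally to operators of the shape $T_c = e^{-c\square}$ and $R_c = \int_0^c e^{-s\square}\,ds$, which satisfy $\mathrm{id} - T_c = \square\,R_c$; this is the content of Proposition \ref{RcTc}. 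Thus the problem becomes: produce endomorphisms of $C^\infty_{dmg}(G)^\mathbb{K}$ of this shape that carry $C^\infty_{dmg}(G)^\mathbb{K}$ into $C^\infty_{umg}(G)^\mathbb{K}$.

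To build $R_c$ and $T_c$ I would use the theory of semigroups of operators on a scale of Banach spaces weighted by $\rho^N$ (the weight function of Remark \ref{rho}), on which $\pi(\Omega)$ generates an appropriate analytic semigroup, and obtain the operators via functional calculus. The transition from ``each derivative has its own moderate bound'' to ``all derivatives are bounded by a single $\rho^N$'' is ultimately a Sobolev-type estimate, reflected in Proposition \ref{Sobolev2}: after $T_c$ performs sufficient smoothing in a weighted Sobolev sense, one converts the Sobolev control into pointwise uniform control across all right-invariant derivatives.

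The main obstacle will be step three, namely verifying that the operators manufactured by the functional calculus really define $\mathfrak{g}$-equivariant endomorphisms and, more importantly, intertwine the right action of $\mathfrak{U}(\mathfrak{g})$ in a way quantitatively compatible with the weight $\rho^N$. Franke's writeup elided these verifications by appealing to Borel's older treatment of the analogous step for automorphic local systems, but that appeal is not available here. I would therefore have to develop sufficient conditions for a linear operator to intertwine a semigroup of operators (Section \ref{reg2}) and then apply them carefully to $\pi(\Omega)$ and its heat semigroup (Section \ref{reg3}); the bookkeeping of uniform growth across all orders of differentiation is the heart of the matter.
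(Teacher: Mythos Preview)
Your proposal is the paper's own strategy: use the Okamoto--Ozeki identity to reduce $\Box$ to (a shift of) the Casimir acting on the module, build the smoothing operator from the $C_0$-semigroup generated by the Casimir on the weighted $L^2_N$-scale, and invoke the Sobolev description of Proposition~\ref{Sobolev2} to land in $C^\infty_{umg}$; the intertwining verifications of Section~\ref{reg2} are exactly the bookkeeping you anticipate. One technical point to flag: the paper does not take $T_c=e^{-c\Box}$ literally but rather the mollified integral $T_{G,\phi}=\int_0^\infty\phi(t)e^{tG}\,dt$ with $\phi$ a bump function equal to $1$ near $0$ (Corollary~\ref{TGphi2}), because the Hille--Yosida argument only yields a $C_0$-semigroup, not an analytic one, and it is the identity $x+GT_{G,\phi}x=-T_{G,\phi'}x\in\bigcap_k\mathrm{Dom}(G^k)$ (with $\phi'$ supported away from $0$) that forces the image into $C^\infty_{umg}$; also note the paper's $R_c$ is the chain map and $T_c$ furnishes the homotopy, the reverse of your labeling.
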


This is an analogue of Borel's regularization theorem \cite[3.2]{Borel83} and we will imitate Franke's proof \cite[Proof of Theorem 3]{Franke98}. Let's first reduce the problem to the case where $V$ is irreducible: if
\[0\rightarrow V'\rightarrow V\rightarrow V''\rightarrow 0\]
is a short exact sequence of holomorphic representations of $P_h(\mathbb{C})$ and the proposition holds for $V'$ and $V''$, then the long exact sequences of cohomology imply that it also holds for $V$. Next we note that by the following lemma every irreducible $V$ is inflated from a representation of $K_h$, and it is irreducible as a representation of $K_o$ since $A_G(\mathbb{C})$ should act via a character.

\begin{Lem}
Let $V$ be an irreducible finite-dimensional representation of $\mathfrak{p}_h$, then $\mathfrak{p}_-$ acts on $V$ trivially.
\end{Lem}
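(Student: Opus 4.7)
The plan is to reduce the statement to the classical pattern ``Engel's theorem $+$ ideal $+$ irreducibility'', after verifying that $\mathfrak{p}_-$ acts on $V$ by nilpotent operators.

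First, I would choose a Cartan subalgebra $\mathfrak{h}$ of $\mathfrak{g}$ contained in $\mathfrak{k}_h=\mathfrak{g}^{0,0}$; this is available because $\mathfrak{k}_h$ is the Levi component of the parabolic $\mathfrak{p}_h$, hence contains a maximal toral subalgebra of $\mathfrak{g}$. Restricting $V$ to $\mathfrak{h}$ and using that $\mathfrak{h}$ is abelian and acts on a finite-dimensional complex vector space, we get a simultaneous generalized weight-space decomposition $V=\bigoplus_\lambda V_\lambda$. The roots of $\mathfrak{p}_-$ with respect to $\mathfrak{h}$ are all nonzero, and the usual identity $[H,X_\alpha]=\alpha(H)X_\alpha$ yields $X_\alpha(V_\lambda)\subseteq V_{\lambda+\alpha}$ for each root vector $X_\alpha\in\mathfrak{p}_-$. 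Since only finitely many weights occur in $V$, iterating $X_\alpha$ eventually lands in $0$, so each $X_\alpha$ acts nilpotently.

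Second, the Hodge-type axiom of a Shimura datum forces $[\mathfrak{p}_-,\mathfrak{p}_-]=[\mathfrak{g}^{1,-1},\mathfrak{g}^{1,-1}]\subseteq\mathfrak{g}^{2,-2}=0$, so $\mathfrak{p}_-$ is abelian. The nilpotent operators $\{X_\alpha\}$ therefore pairwise commute, and any $\mathbb{C}$-linear combination of them---i.e.\ any element of $\mathfrak{p}_-$---is again nilpotent. Engel's theorem then gives $V^{\mathfrak{p}_-}\neq 0$. Because $\mathfrak{p}_-$ is an ideal of $\mathfrak{p}_h$, for $v\in V^{\mathfrak{p}_-}$, $Y\in\mathfrak{p}_h$ and $X\in\mathfrak{p}_-$ one has $X(Yv)=Y(Xv)+[X,Y]v=0$, so $V^{\mathfrak{p}_-}$ is a $\mathfrak{p}_h$-submodule. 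Irreducibility of $V$ forces $V^{\mathfrak{p}_-}=V$, which is the claim.

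There is no real obstacle: the two inputs beyond elementary Lie-algebra generalities are (i) the existence of a Cartan of $\mathfrak{g}$ in $\mathfrak{k}_h$ and (ii) the abelianness of $\mathfrak{p}_-$, and both are immediate consequences of the Hodge decomposition attached to the point $h\in X$. The most delicate sub-step is the nilpotency argument in the first paragraph, but once one has the weight decomposition of $V$ under $\mathfrak{h}$ it is routine.
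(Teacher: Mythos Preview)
Your argument is correct. The endgame---``$\mathfrak{p}_-$ is an ideal, so $V^{\mathfrak{p}_-}$ is a subrepresentation, hence all of $V$''---is exactly what the paper does. The difference is in how you produce a nonzero vector killed by $\mathfrak{p}_-$.

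The paper takes a Borel subalgebra $\mathfrak{b}\subseteq\mathfrak{p}_h$ of $\mathfrak{g}$, applies Lie's theorem to the solvable $\mathfrak{b}$ to get a common eigenvector $v$, and then observes that $\mathfrak{p}_-\subseteq[\mathfrak{b},\mathfrak{b}]$, so the eigenvalue of any $X\in\mathfrak{p}_-$ on $v$ is zero. This is a one-line route to $V^{\mathfrak{p}_-}\neq 0$: it uses neither the abelianness of $\mathfrak{p}_-$ nor an explicit weight decomposition of $V$. Your route---Cartan in $\mathfrak{k}_h$, weight-shifting to get nilpotency of root vectors, abelianness of $\mathfrak{p}_-$ to pass to arbitrary elements, then Engel---works perfectly well but is a few steps longer. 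What your approach buys is that it makes the mechanism (nilpotent action of $\mathfrak{p}_-$) visible rather than hiding it inside Lie's theorem; what the paper's approach buys is brevity and no need to invoke the Hodge-type axiom for abelianness at this stage.
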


\begin{proof}
Let $\mathfrak{b}$ be a Borel subalgebra of $\mathfrak{g}$ contained in $\mathfrak{p}_h$, then $\mathfrak{p}_-\subseteq[\mathfrak{b},\mathfrak{b}]$. By Lie's theorem there exists a nonzero vector $v\in V$ which is an eigenvector for every element of $\mathfrak{b}$ and thus annihilated by elements of $\mathfrak{p}_-$. As $V$ is irreducible, it is generated by $v$. Since $\mathfrak{p}_-$ is an ideal of $\mathfrak{p}_h$, it acts trivially on $V$.
\end{proof}

The proof of Proposition \ref{umg} for an irreducible $V$ will proceed as follows: in \ref{reg1} by proving Proposition \ref{RcTc} we reduce the problem to the existence of certain endomorphisms of $C^\infty_{\mathrm{dmg}}(G)^\mathbb{K}$, which will eventually be constructed in \ref{reg3}, while the functional analysis generalities developed in \ref{reg2} will be crucial in verifying that the endomorphisms we get are as desired.

\subsection{An application of the Okamoto-Ozeki formula}\label{reg1}

Fix a $\mathbb{K}$ and denote
\[C^\infty_\star:=C^\infty_\star(G)^\mathbb{K}\]
for $\star=umg,dmg$. Let $C_\mathfrak{g}$ be the Casimir element in $\mathfrak{Z}(\mathfrak{g})\subseteq\mathfrak{U}(\mathfrak{g})$: if $(X_i)$ is any basis of $[\mathfrak{g},\mathfrak{g}]$ and $(X'_i)$ is the dual basis with respect to the Killing form $B(\cdot,\cdot)$ on $\mathfrak{g}$, then $C_\mathfrak{g}=\sum_iX_iX'_i$.

\begin{Prop}\label{RcTc}
Suppose $M'\subseteq M$ are $(\mathfrak{g},K_o)$-modules such that for every $c\in\mathbb{C}$ there are $T_c,R_c\in\mathrm{End}_{(\mathfrak{p}_o,K_o)}(M)$ such that
\begin{equation}\label{incln1}
T_c(M')\subseteq M',R_c(M)\subseteq M'
\end{equation}
and
\begin{equation}\label{hmtpy}
R_c-\mathrm{Id}_M=\frac{1}{2}(C_\mathfrak{g}+c)T_c,
\end{equation}
then for every irreducible (and hence for every) finite-dimensional holomorphic representation $V$ of $P_h(\mathbb{C})$, the inclusion $M'\subseteq M$ induces isomorphisms
\[H^i_{(\mathfrak{p}_o,K_o)}(M'\otimes V)\cong H^i_{(\mathfrak{p}_o,K_o)}(M\otimes V).\]
Particularly if one can find maps $R_c,T_c$ as required for $M=C^\infty_{\mathrm{dmg}}$ and $M'=C^\infty_{\mathrm{umg}}$, then Proposition \ref{umg} holds.
\end{Prop}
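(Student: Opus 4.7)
The plan is to use the Okamoto--Ozeki formula (Proposition~\ref{OO}) together with the hypothesised endomorphisms $T_c, R_c$ to exhibit an explicit chain-homotopy inverse to the map
\[\iota_* \colon C^*(\mathfrak{p}_o, K_o; M' \otimes V) \longrightarrow C^*(\mathfrak{p}_o, K_o; M \otimes V)\]
induced by the inclusion $\iota \colon M' \hookrightarrow M$. It suffices to treat irreducible $V$ (the general case follows by the five-lemma on long exact sequences, as already noted in the paragraphs preceding the proposition); I abbreviate $C^*(N) := C^*(\mathfrak{p}_o, K_o; N \otimes V)$.

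First I would invoke the Okamoto--Ozeki formula in the following form: there is a scalar $c_V \in \mathbb{C}$ and, functorially in any $(\mathfrak{g}, K_o)$-module $N$, a degree-lowering $\mathbb{C}$-linear operator $\partial^*$ on $C^*(N)$ satisfying
\[d\partial^* + \partial^* d \;=\; \tfrac{1}{2}\bigl(C_\mathfrak{g} + c_V\bigr)\]
as endomorphisms of $C^*(N)$, where $C_\mathfrak{g}$ on the right denotes the operator induced by the central element $C_\mathfrak{g} \in \mathfrak{Z}(\mathfrak{g})$ acting on the $N$-factor. Irreducibility of $V$ is essential for $c_V$ to be a single scalar, by the preceding lemma that forces $\mathfrak{p}_-$ to act trivially on $V$.

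Specialise the hypothesis \eqref{hmtpy} to $c = c_V$ and apply it on $C^*(M)$. Because $T_{c_V}$ is $(\mathfrak{p}_o, K_o)$-equivariant, $(T_{c_V})_*$ commutes with the differential $d$; hence
\[(R_{c_V})_* - \mathrm{Id}_{C^*(M)} \;=\; (d\partial^* + \partial^* d) \circ (T_{c_V})_* \;=\; d \circ h + h \circ d, \qquad h := \partial^* \circ (T_{c_V})_*,\]
so $(R_{c_V})_*$ is chain-homotopic to the identity on $C^*(M)$. Note that only the commutation of $(T_{c_V})_*$ with $d$ is used; commutation with $\partial^*$ is not required. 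Using $R_{c_V}(M) \subseteq M'$ from \eqref{incln1}, factor $R_{c_V} = \iota \circ \bar R_{c_V}$ with $\bar R_{c_V} \colon M \to M'$, so that $\iota_* \circ (\bar R_{c_V})_* \simeq \mathrm{Id}_{C^*(M)}$.

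The opposite composition is handled identically: $M'$ is a $(\mathfrak{g}, K_o)$-submodule on which $T_{c_V}, R_{c_V}$ restrict to $(\mathfrak{p}_o, K_o)$-endomorphisms by \eqref{incln1}, and the identity \eqref{hmtpy} persists on $M'$ because $C_\mathfrak{g}$ preserves $M'$. Repeating the argument on $C^*(M')$ gives $(R_{c_V}|_{M'})_* = (\bar R_{c_V} \circ \iota)_* \simeq \mathrm{Id}_{C^*(M')}$. Thus $(\bar R_{c_V})_*$ is a two-sided chain-homotopy inverse to $\iota_*$, so $\iota_*$ induces isomorphisms on cohomology. The final sentence of the proposition, i.e.\ the case $(M, M') = (C^\infty_{dmg}, C^\infty_{umg})$, then reduces Proposition~\ref{umg} to the construction of appropriate $T_c, R_c$, deferred to Sections~\ref{reg2}--\ref{reg3}. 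The main obstacle lies not in Proposition~\ref{RcTc} itself --- which, given the Okamoto--Ozeki identity, is the formal chain-homotopy manipulation above --- but upstream, in establishing Proposition~\ref{OO} in the necessary algebraic generality, as an operator identity on $C^*(N)$ valid functorially in the $(\mathfrak{g}, K_o)$-module $N$, rather than in the classical analytic setting on the Hermitian symmetric domain.
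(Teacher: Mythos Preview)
Your proposal is correct and follows essentially the same route as the paper: define the degree-lowering operator $\partial^*$ (the paper's $d'$), invoke the Okamoto--Ozeki identity (Proposition~\ref{OO}) to get $dd'+d'd=-\tfrac{1}{2}(C_\mathfrak{g}+c_V)\otimes 1$, set the homotopy to be $d'\circ(T_{c_V})_*$, and use that $d'$ and $T_{c_V}$ preserve the subcomplex $C^*(M')$ to conclude that $(\bar R_{c_V})_*$ is a two-sided homotopy inverse to $\iota_*$. The paper packages the last step as a separate abstract Lemma~\ref{chain}, while you verify both homotopies directly, but the content is identical; note only that the sign you wrote for $d\partial^*+\partial^*d$ is the negative of the one in Proposition~\ref{OO}, which is harmless since it amounts to replacing $\partial^*$ by $-\partial^*$.
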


\begin{Rmk}\label{Tcpi}
For each $\pi\in\widehat{K}_o$, let $S_\pi\subseteq\widehat{K}_o$ be the finite set of $K_o$-types appearing in $\pi\otimes\mathfrak{g}$, then as the map $M\times\mathfrak{g}\rightarrow M,(m,X)\mapsto Xm$ is $K_o$-equivariant we have
\begin{equation}\label{Spi}
\forall X\in\mathfrak{g},X(M_\pi)\subseteq M_{S_\pi}.
\end{equation}
The assumption in the proposition is then equivalent to that for every $c\in\mathbb{C}$ there exist maps $T_{c,\pi}\in\mathrm{End}_{K_o}(M_\pi)$ parametrized by $\pi\in\widehat{K}_o$ such that if we denote
\[R_{c,\pi}:=\mathrm{Id}_{M_\pi}+\frac{1}{2}(C_\mathfrak{g}+c)T_{c,\pi}\]
and
\[T_{c,S}:=\bigoplus_{\pi\in S}T_{c,\pi}\in\mathrm{End}_{K_o}(M_S)\]
for every $S\subseteq\widehat{K}_o$, then
\begin{equation}\label{incln2}
T_{c,\pi}(M'_\pi)\subseteq M'_\pi,R_{c,\pi}(M_\pi)\subseteq M'_\pi
\end{equation}
and
\begin{equation}\label{intertwine0}
\forall X\in\mathfrak{p}_o,XT_{c,\pi}=T_{c,S_\pi}X
\end{equation}
as maps from $M_\pi$ to $M_{S_\pi}$. In fact, $T_c:=T_{c,\widehat{K}_o}$ and $R_c:=\mathrm{Id}_M+\frac{1}{2}(C_\mathfrak{g}+c)T_c$ will satisfy (\ref{incln1}) and (\ref{hmtpy}).
\end{Rmk}

\begin{Lem}\label{chain}
Let $(A^*, d)$ be a complex of abelian groups, $D^*$ be a subcomplex of it. Suppose there are maps $r:A^*\rightarrow D^*$ and $\varrho:A^*\rightarrow A^{*-1}$ such that $r$ is a chain map, $\varrho(D^*)\subseteq D^{*-1}$ and
\begin{equation}\label{homotopy}
\mathrm{Id}_{A^*}-\iota\circ r=d\circ\varrho+\varrho\circ d,
\end{equation}
where $\iota:D^*\subseteq A^*$ is the inclusion, then $\iota$ induces isomorphisms of cohomology.
\end{Lem}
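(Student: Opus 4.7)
The plan is to recognize this as the standard two-sided chain homotopy argument and simply verify that the hypotheses are exactly what is needed to run it in both directions, thereby showing that $\iota_*$ and $r_*$ are mutually inverse isomorphisms on cohomology.

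One direction is immediate from the given identity: since $\iota$ and $r$ are chain maps and $\mathrm{Id}_{A^*}-\iota\circ r = d\varrho+\varrho d$, the map $\iota\circ r$ is chain homotopic to $\mathrm{Id}_{A^*}$ via $\varrho$, so $\iota_*\circ r_* = \mathrm{Id}_{H^*(A^*)}$. For the reverse composition, I would exploit the condition $\varrho(B^*)\subseteq B^{*-1}$, which (together with the fact that $B^*$ is a subcomplex, so that $d$ restricts to $B^*$) lets $\varrho$ restrict to a map $\varrho_B:B^*\to B^{*-1}$. Applying the displayed identity to an element $b\in B^*$, viewed inside $A^*$ via $\iota$, and using injectivity of $\iota$, one obtains
\[
\mathrm{Id}_{B^*} - r\circ\iota \;=\; d\circ\varrho_B + \varrho_B\circ d
\]
as endomorphisms of $B^*$, so $r\circ\iota$ is chain homotopic to $\mathrm{Id}_{B^*}$ and $r_*\circ\iota_* = \mathrm{Id}_{H^*(B^*)}$.

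This is a textbook chain-homotopy argument and I do not anticipate any genuine obstacle. The only small point to be careful about is the bookkeeping around the inclusion $\iota$: one has to note that $r\circ\iota$ really is a chain map $B^*\to B^*$ (which follows from $r$ and $\iota$ both being chain maps), and that the restricted homotopy $\varrho_B$ is well-defined, which is exactly what the hypothesis $\varrho(B^*)\subseteq B^{*-1}$ guarantees. I would therefore expect the written proof to be only a few lines.
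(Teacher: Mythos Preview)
Your proposal is correct and matches the paper's own proof essentially line for line: both show $\iota_*\circ r_*=\mathrm{Id}$ directly from the homotopy identity, then use $\varrho(B^*)\subseteq B^{*-1}$ to restrict the same identity to $B^*$ and conclude $r_*\circ\iota_*=\mathrm{Id}$. The paper's write-up is just slightly terser, omitting the explicit mention of injectivity of $\iota$ and the check that $r\circ\iota$ is a chain map.
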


\begin{proof}
Denote by $\iota_*,r_*$ the maps on cohomology induced by $\iota,r$ respectively, then $\iota_*\circ r_*=\mathrm{Id}_{H^*(A)}$ since (\ref{homotopy}) implies that $\iota\circ r$ is chain homotopic to $\mathrm{Id}_{A^*}$. On the other hand (\ref{homotopy}) also implies that
\[\mathrm{Id}_{D^*}-r\circ\iota=d\circ\varrho+\varrho\circ d\]
as endomorphisms of $D^*$, so $r\circ\iota$ is chain homotopic to $\mathrm{Id}_{D^*}$ and $r_*\circ\iota_*=\mathrm{Id}_{H^*(D)}$. Therefore $\iota_*,r_*$ are inverse to each other and $\iota_*$ is an isomorphism.
\end{proof}

We wish to make Proposition \ref{RcTc} a special case of Lemma \ref{chain}, where $A^*,D^*$ are the relative Chevalley-Eilenberg complexes $C^*_{(\mathfrak{p}_o,K_o)}(M\otimes V)$ and $C^*_{(\mathfrak{p}_o,K_o)}(M'\otimes V)$. Let $r$ be the chain map induced by
\[R_c\otimes 1\in\mathrm{Hom}_{(\mathfrak{p}_o,K_o)}(M\otimes V,M'\otimes V)\]
for some $c$, then it remains to construct $\varrho$. Let $t:A^*\rightarrow A^*$ be the chain map induced by $T_c\otimes 1\in\mathrm{End}_{(\mathfrak{p}_o,K_o)}(M\otimes V)$, then by (\ref{incln1}) it preserves $D^*$. Note that for every $(\mathfrak{p}_o,K_o)$-module $M$,
\[C^i(M)\cong\mathrm{Hom}_{K_o}(\textstyle\bigwedge^i\mathfrak{p}_-,M),\]
and since $\mathfrak{p}_-$ is abelian the differential $d:C^i(M)\rightarrow C^{i+1}(M)$ is given by
\[d\varphi(Y_0\!\wedge\!...\!\wedge\!Y_i)=\sum_{j=0}^i(-1)^jY_j\varphi(Y_0\!\wedge\!...\,\widehat{Y_j}\,...\!\wedge\!Y_i).\]
When $M$ is a $(\mathfrak{g},K_o)$-module, let $X_1,...,X_n$ be an orthonormal basis of $\mathfrak{p}_-$ with respect to the sesquilinear Killing form $B(\cdot,\overline{\,\cdot\,})$ and consider the map
\[d':C^{i+1}(M\otimes V)\rightarrow\mathrm{Hom}(\textstyle\bigwedge^i\mathfrak{p}_-,M\otimes V)\]
defined by
\[d'\varphi(Y_1\!\wedge\!...\!\wedge\!Y_i)=-\sum_{l=1}^n(\overline{X}_l\otimes 1)\varphi(X_l\!\wedge\!Y_1\!\wedge\!...\!\wedge\!Y_i).\]

\begin{Prop}
The image of $d'$ is contained in $C^i(M\otimes V)$.
\end{Prop}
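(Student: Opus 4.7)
The plan is to show that the failure of $d'\varphi$ to be $K_o$-equivariant is measured by the failure of the tensor $E := \sum_{l=1}^n \overline{X}_l \otimes X_l \in \mathfrak{p}_+ \otimes \mathfrak{p}_-$ to be $K_o$-invariant, and then observe that this tensor is in fact $K_o$-invariant. Indeed, because $\mathfrak{p}_- = \mathfrak{g}^{1,-1}$ and $\mathfrak{p}_+ = \mathfrak{g}^{-1,1}$ are isotropic for $B$ and paired non-degenerately by it, the orthonormality condition $B(X_l, \overline{X}_m) = \delta_{lm}$ says precisely that $\{\overline{X}_l\}$ is the $B$-dual basis of $\{X_l\}$. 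Thus $E$ represents the identity element in $\mathrm{End}(\mathfrak{p}_-) \cong \mathfrak{p}_+ \otimes \mathfrak{p}_-$ under this canonical pairing, and is therefore independent of the chosen basis.

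Next I would unpack this invariance. Since $K_o \subseteq G(\mathbb{R})$, $\mathrm{Ad}(k)$ commutes with complex conjugation relative to $\mathfrak{g}_\mathbb{R}$, so $\mathrm{Ad}(k)\overline{X}_l = \overline{\mathrm{Ad}(k)X_l}$. Writing $X_l' := \mathrm{Ad}(k^{-1})X_l$, the basis $\{X_l'\}$ of $\mathfrak{p}_-$ is again $B(\cdot,\overline{\,\cdot\,})$-orthonormal (as $\mathrm{Ad}(k)$ preserves $B$), and basis-independence of $E$ gives
\[
\sum_{l} \overline{X}_l \otimes X_l \;=\; \sum_{l} \mathrm{Ad}(k)\overline{X_l'} \otimes \mathrm{Ad}(k) X_l'.
\]

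Now I would compute directly, for $k \in K_o$ and $\varphi \in C^{i+1}(M\otimes V)$:
\[
d'\varphi(\mathrm{Ad}(k)Y_1 \wedge \ldots \wedge \mathrm{Ad}(k)Y_i) = -\sum_l (\overline{X}_l \otimes 1)\,\varphi(X_l \wedge \mathrm{Ad}(k)Y_1 \wedge \ldots \wedge \mathrm{Ad}(k)Y_i).
\]
Reindexing via $X_l = \mathrm{Ad}(k)X_l'$, $\overline{X}_l = \mathrm{Ad}(k)\overline{X_l'}$, and applying the $K_o$-equivariance of $\varphi$, this becomes
\[
-\sum_l (\mathrm{Ad}(k)\overline{X_l'} \otimes 1) \cdot k\,\varphi(X_l' \wedge Y_1 \wedge \ldots \wedge Y_i).
\]
Using the compatibility $k \cdot (X \cdot m) = (\mathrm{Ad}(k)X) \cdot (km)$ in the $(\mathfrak{g}, K_o)$-module $M$, together with the trivial $K_o$-action twist on $V$ (we act diagonally on $M \otimes V$, and $k \otimes k$ commutes through), the above equals $(k \otimes k) \cdot d'\varphi(Y_1 \wedge \ldots \wedge Y_i)$.

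There is no real obstacle: the computation is a one-line reindexing once the $K_o$-invariance of $E$ is recorded. The only mild subtlety is to make sure that the sign/normalization in the definition of orthonormality with respect to the (possibly indefinite) Hermitian form $B(\cdot, \overline{\,\cdot\,})$ on $\mathfrak{p}_-$ does not cause trouble, but since what we really use is the canonical pairing-element description of $E$ (equivalent to the orthonormality up to sign conventions that cancel in the sum), this is harmless.
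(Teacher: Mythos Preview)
Your proof is correct and follows essentially the same route as the paper: both arguments hinge on the observation that $d'$ (equivalently, the tensor $E=\sum_l \overline{X}_l\otimes X_l$) is independent of the choice of orthonormal basis, then reindex via the $\mathrm{Ad}(k)$-translated basis and use the $K_o$-equivariance of $\varphi$ together with the compatibility $k\cdot(Xm)=(\mathrm{Ad}(k)X)\cdot(km)$. Your identification of $E$ with the identity of $\mathrm{End}(\mathfrak{p}_-)$ under the $B$-pairing is a nice conceptual justification that the paper leaves implicit, but the structure of the argument is the same.
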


\begin{proof}
It is easy to see that $d'$ is independent of choices of the basis $X_1,...,X_n$. Then since for every $k\in K_o$, $kX_1:=\mathrm{Ad}(k)X_1,...,kX_n$ is another orthonormal basis of $\mathfrak{p}_-$, we have for every $\varphi\in C^{i+1}(M\otimes V)$,
\[\begin{aligned}
d'\varphi(kY_1\!\wedge\!...\!\wedge\!kY_i)&=-\sum_{l=1}^n(k\overline{X}_l\otimes 1)\varphi(kX_l\!\wedge\!kY_1\!\wedge\!...\!\wedge\!kY_i)\\
&=-\sum_{l=1}^n(k\overline{X}_l\otimes 1)k\varphi(X_l\!\wedge\!Y_1\!\wedge\!...\!\wedge\!Y_i)\\
&=-\sum_{l=1}^nk(\overline{X}_l\otimes 1)\varphi(X_l\!\wedge\!Y_1\!\wedge\!...\!\wedge\!Y_i),
\end{aligned}\]
i.e. $d'\varphi\in\mathrm{Hom}_{K_o}(\bigwedge^i\mathfrak{p}_-,M\otimes V)=C^i(M\otimes V)$.
\end{proof}

Thus $d'$ gives a map $A^*\rightarrow A^{*-1}$, and since $M'$ is a $(\mathfrak{g},K_o)$-submodule we have $d(B^*)\subseteq B^{*-1}$. Define $\varrho:=d'\circ t$, then $\varrho(B^*)\subseteq B^{*-1}$. Now we verify (\ref{homotopy}) for an appropriate $c$ and hence finish the proof of Proposition \ref{RcTc}.

\begin{Prop}\label{OO}
The action of $\Box:=dd'+d'd$ on $C^*(M\otimes V)$ is induced by
\[-\frac{1}{2}(C_\mathfrak{g}+c_V)\otimes 1\in\mathrm{End}_{(\mathfrak{p}_o,K_o)}(M\otimes V),\]
where $c_V$ is a scalar determined by $V$. Hence (\ref{homotopy}) follows from (\ref{hmtpy}) when $c=c_V$.
\end{Prop}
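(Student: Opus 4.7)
The plan is a direct computation of $\Box=dd'+d'd$ on $C^i(M\otimes V)$, to be matched against $-\tfrac{1}{2}(C_\mathfrak{g}+c_V)\otimes 1$; this is the Okamoto--Ozeki formula adapted to the present setting. First I would expand $(dd'+d'd)\varphi(Y_0\wedge\cdots\wedge Y_i)$ using the explicit formulas for $d$ and $d'$. Two simplifications kick in immediately: abelianness of $\mathfrak{p}_-$ kills all bracket contributions to $d$, and by the preceding lemma $\mathfrak{p}_-\cdot V=0$, so each $Y_j\in\mathfrak{p}_-$ acts on $M\otimes V$ purely as $Y_j\otimes 1$. Tracking the signs produced by moving $X_l$ past $Y_0,\ldots,Y_{j-1}$, one obtains
\[ \Box\varphi = -\sum_l \overline{X}_l X_l\cdot\varphi \;+\; \sum_{l,j}(-1)^j\,[\overline{X}_l,Y_j]\cdot\varphi(X_l\wedge Y_0\wedge\cdots\widehat{Y_j}\cdots\wedge Y_i), \]
where $[\overline{X}_l,Y_j]\in[\mathfrak{p}_+,\mathfrak{p}_-]\subseteq\mathfrak{k}_h$ acts on $M\otimes V$ via the full diagonal $(\mathfrak{p}_o,K_o)$-structure.

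Next I would complete $(X_l)\cup(\overline{X}_l)$ to a basis of $\mathfrak{g}$ by a $-B$-orthonormal basis of $\mathfrak{k}_h$, yielding the Casimir decomposition $C_\mathfrak{g}=C_{\mathfrak{k}_h}+2\sum_l\overline{X}_l X_l+H$ with $H:=\sum_l[X_l,\overline{X}_l]\in\mathfrak{k}_h$. Substituting turns the leading piece into $-\tfrac{1}{2}C_\mathfrak{g}+\tfrac{1}{2}(C_{\mathfrak{k}_h}+H)$, so the task reduces to showing that the second, off-diagonal sum cancels $\tfrac{1}{2}(C_{\mathfrak{k}_h}+H)\cdot\varphi$ up to an operator acting only on $V$. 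For this I would invoke the $K_o$-equivariance of $\varphi$, namely that for $Z\in\mathfrak{k}_o$ one has $Z\cdot\varphi(Y_1\wedge\cdots\wedge Y_i) = \sum_k\varphi(Y_1\wedge\cdots\wedge[Z,Y_k]\wedge\cdots\wedge Y_i)$ with $Z\cdot$ the diagonal action on $M\otimes V$. Splitting $[\overline{X}_l,Y_j]$ into its $\mathfrak{k}_o$-part and a central $\mathfrak{a}_G$-part (which acts as scalars on both $M$ and $V$), applying this identity inside each summand, and reindexing the resulting double sum produces exactly $\tfrac{1}{2}(C_{\mathfrak{k}_h}+H)\cdot\varphi$ on the $M$-side (cancelling the contribution from the Casimir decomposition) together with a residual operator of the form $\Xi_V\otimes\mathrm{Id}_M$ acting only on the $V$-factor.

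Since $\Xi_V$ is assembled from brackets in $\mathfrak{g}$ and from the $\mathfrak{k}_h$-action on $V$, it is manifestly $K_h$-equivariant on $V$; irreducibility of $V$ as a $K_o$-module and Schur's lemma then force $\Xi_V=c_V\cdot\mathrm{Id}_V$ for a scalar $c_V$ depending only on $V$, completing the identification $\Box=-\tfrac{1}{2}(C_\mathfrak{g}+c_V)\otimes 1$. The consequence that \eqref{homotopy} follows from \eqref{hmtpy} with $c=c_V$ is then mechanical: since $T_c\otimes 1$ is a $(\mathfrak{p}_o,K_o)$-endomorphism of $M\otimes V$ it commutes with $d$, so $d\varrho+\varrho d=(dd'+d'd)\circ t=\Box\circ t=-\tfrac{1}{2}(C_\mathfrak{g}+c_V)T_c\otimes 1=(\mathrm{Id}_M-R_c)\otimes 1=\mathrm{Id}-\iota\circ r$.

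The main obstacle will be the combinatorial bookkeeping inside the $K_o$-equivariance rewrite: one must verify, term by term, that applying the equivariance identity to $\sum_{l,j}(-1)^j[\overline{X}_l,Y_j]\cdot\varphi(X_l\wedge\cdots\widehat{Y_j}\cdots)$ reproduces precisely $\tfrac{1}{2}(C_{\mathfrak{k}_h}+H)\cdot\varphi$ on the $M$-side, with the surplus living entirely in $\mathrm{End}(V)$. The interplay between the $(-1)^j$ signs, the double sum over the basis of $\mathfrak{p}_-$, and the adjoint action of $\mathfrak{k}_h$ on $\mathfrak{p}_\pm$ is the combinatorial heart of the Okamoto--Ozeki identity and must be tracked carefully.
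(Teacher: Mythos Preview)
Your approach is essentially the paper's, but two points deserve correction and sharpening.

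First, a small slip: in the commutator term the operator $[\overline{X}_l,Y_j]$ acts as $[\overline{X}_l,Y_j]\otimes 1$ on $M\otimes V$, not diagonally. This is because $d'$ is defined with $\overline{X}_l\otimes 1$ and, since $\mathfrak{p}_-$ kills $V$, each $Y_j$ also acts as $Y_j\otimes 1$; their commutator therefore only touches the $M$-factor.

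Second, the paper makes precise exactly the ``combinatorial bookkeeping'' you flag as the main obstacle, by isolating two short lemmas. The first is a Bianchi-type identity in $\mathfrak{k}_o\otimes\mathfrak{p}_-$: for any $Y\in\mathfrak{p}_-$, $\sum_l[\overline{X}_l,Y]\otimes X_l=\sum_p X_p\otimes[X_p,Y]$, where $(X_p)$ is a $-B$-orthonormal basis of $\mathfrak{k}_{o,\mathbb{R}}\cap[\mathfrak{g},\mathfrak{g}]$; this converts the off-diagonal sum cleanly into $\sum_p(X_p^2\otimes 1+X_p\otimes X_p)\varphi$ via $K_o$-equivariance of $\varphi$, with no need to split off an $\mathfrak{a}_G$-piece. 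The second lemma is that the element $Z:=\sum_l[X_l,\overline{X}_l]+\sum_p X_p^2\in\mathfrak{U}(\mathfrak{k}_o)$ annihilates every $\bigwedge^i\mathfrak{p}_-$; this is the genuine content behind your claim that the surplus lives entirely in $\mathrm{End}(V)$, and its proof again uses the Bianchi identity together with the Jacobi identity and abelianness of $\mathfrak{p}_-$. With these in hand, the operator reduces to $-\tfrac{1}{2}(C_\mathfrak{g}\otimes 1 - Z + 1\otimes Z)$, where the diagonal $Z$ vanishes on $C^i$ and $1\otimes Z$ is the scalar $c_V$ by Schur. Your deduction of \eqref{homotopy} from \eqref{hmtpy} at the end is exactly right.
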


\begin{proof}
First note that since $t$ is a chain map,
\[\begin{aligned}
d\circ\varrho+\varrho\circ d&=d\circ d'\circ t+d'\circ t\circ d\\
&=d\circ d'\circ t+d'\circ d\circ t\\
&=\Box\circ t,\\
\end{aligned}\]
so if $\Box$ is as described then the action of $d\circ\varrho+\varrho\circ d$ is induced by
\[-\frac{1}{2}(C_\mathfrak{g}+c_V)T_c\otimes 1\in\mathrm{End}_{(\mathfrak{p}_o,K_o)}(M\otimes V),\]
which by (\ref{hmtpy}) equals $(1-R_c)\otimes 1$ when $c=c_V$, and thus (\ref{homotopy}) holds. Now we compute $\Box\,$:

\it Step 1. \rm For every $\varphi\in C^i(M\otimes V)$,
\[\begin{aligned}
dd'\varphi(Y_1\!\wedge\!...\!\wedge\!Y_i)&=\sum_{j=1}^i(-1)^{j-1}Y_jd'\varphi(Y_1\!\wedge\!...\,\widehat{Y_j}\,...\!\wedge\!Y_i)\\
&=\sum_{j=1}^i\sum_{l=1}^n(-1)^j Y_j(\overline{X}_l\otimes 1)\varphi(X_l\!\wedge\!Y_1\!\wedge\!...\,\widehat{Y_j}\,...\!\wedge\!Y_i)\\
&=-\sum_{j=1}^i\sum_{l=1}^n Y_j(\overline{X}_l\otimes 1)\varphi(Y_1\!\wedge\!...\,\widehat{Y_j}\!\wedge\!X_l\!\wedge\!...\!\wedge\!Y_i),
\end{aligned}\]
and
\[\begin{aligned}
d'd\varphi(Y_1\!\wedge\!...\!\wedge\!Y_i)=&-\sum_{l=1}^n(\overline{X}_l\otimes 1)d\varphi(X_l\!\wedge\!Y_1\!\wedge\!...\!\wedge\!Y_i)\\
=&-\sum_{l=1}^n(\overline{X}_l\otimes 1)X_l\varphi(Y_1\!\wedge\!...\!\wedge\!Y_i)\\
&+\sum_{l=1}^n\sum_{j=1}^i(-1)^{j-1} (\overline{X}_l\otimes 1)Y_j\varphi(X_l\!\wedge\!Y_1\!\wedge\!...\,\widehat{Y_j}\,...\!\wedge\!Y_i)\\
=&-(\sum_{l=1}^n\overline{X}_lX_l\otimes 1)\varphi(Y_1\!\wedge\!...\!\wedge\!Y_i)\\
&+\sum_{l=1}^n\sum_{j=1}^i(\overline{X}_l\otimes 1)Y_j\varphi(Y_1\!\wedge\!...\,\widehat{Y_j}\!\wedge\!X_l\!\wedge\!...\!\wedge\!Y_i),
\end{aligned}\]
where in the last step we use that $\mathfrak{p}_-$ acts on $V$ trivially. Hence
\[\begin{aligned}
\Box\varphi(Y_1\!\wedge\!...\!\wedge\!Y_i)=&-(\sum_{l=1}^n\overline{X}_lX_l\otimes 1)\varphi(Y_1\!\wedge\!...\!\wedge\!Y_i)\\
&+\sum_{j=1}^i\sum_{l=1}^n([\overline{X}_l,Y_j]\otimes 1)\varphi(Y_1\!\wedge\!...\,\widehat{Y_j}\!\wedge\!X_l\!\wedge\!...\!\wedge\!Y_i).
\end{aligned}\]

\it Step 2. \rm Let $X_{n+1},...,X_{n+m}$ be an orthonormal basis of $\mathfrak{k}_{o,\mathbb{R}}\cap[\mathfrak{g},\mathfrak{g}]$ with respect to $-B(\cdot,\cdot)$, then

\begin{Lem}\label{Bianchi}
In $\mathfrak{k}_o\otimes\mathfrak{p}_-$ it holds that for every $Y\in\mathfrak{p}_-$,
\[\sum_{l=1}^n[\overline{X}_l,Y]\otimes X_l=\sum_{p=n+1}^{n+m}X_p\otimes[X_p,Y].\]
\end{Lem}

\begin{proof}
For any $l\leq n<p$, the coefficients of $X_p\otimes X_l$ in the left and right hand sides are both $-B([\overline{X}_l,Y],X_p)=B(\overline{X}_l,[X_p,Y])$.
\end{proof}

By the lemma,
\[\begin{aligned}
&\sum_{j=1}^i\sum_{l=1}^n([\overline{X}_l,Y_j]\otimes 1)\varphi(Y_1\!\wedge\!...\,\widehat{Y_j}\!\wedge\!X_l\!\wedge\!...\!\wedge\!Y_i)\\
=&\sum_{j=1}^i\sum_{p=n+1}^{n+m}(X_p\otimes 1)\varphi(Y_1\!\wedge\!...\!\wedge\![X_p,Y_j]\!\wedge\!...\!\wedge\!Y_i)\\
=&\sum_{p=n+1}^{n+m}(X_p\otimes 1)X_p\varphi(Y_1\!\wedge\!...\!\wedge\!Y_i)\\
=&\sum_{p=n+1}^{n+m}(X_p^2\otimes 1+X_p\otimes X_p)\varphi(Y_1\!\wedge\!...\!\wedge\!Y_i),
\end{aligned}\]
where the second equality is due to the $\mathfrak{k}_o$-equivariance of $\varphi$. Therefore the action of $\Box$ on $C^i(M\otimes V)$ is induced by
\[-\sum_{l=1}^n\overline{X}_lX_l\otimes 1+\sum_{p=n+1}^{n+m}(X_p^2\otimes 1+X_p\otimes X_p)\in\mathrm{End}(M\otimes V).\]

\it Step 3. \rm Recall that $C_\mathfrak{g}=\sum_{l=1}^n(\overline{X}_lX_l+X_l\overline{X}_l)-\sum_{p=n+1}^{n+m}X_p^2$ and denote
\[Z:=\sum_{l=1}^n[X_l,\overline{X}_l]+\sum_{p=n+1}^{n+m}X_p^2\in\mathfrak{U}(\mathfrak{k}_o),\]
then
\begin{equation}\label{Box}
\begin{aligned}
&-\sum_{l=1}^n\overline{X}_lX_l\otimes 1+\sum_{p=n+1}^{n+m}(X_p^2\otimes 1+X_p\otimes X_p)\\
=&-\frac{1}{2}\sum_{l=1}^n(\overline{X}_lX_l+X_l\overline{X}_l-[X_l,\overline{X}_l])\otimes 1\\
&+\frac{1}{2}\sum_{p=n+1}^{n+m}(X_p^2\otimes 1+(X_p\otimes 1+1\otimes X_p)^2-1\otimes X_p^2)\\
=&-\frac{1}{2}(C_\mathfrak{g}\otimes 1-Z+1\otimes Z),
\end{aligned}
\end{equation}
where the term $Z$ means the diagonal action. It is easy to see that $Z$ is independent of choices of the bases $X_1,...,X_n$ and $X_{n+1},...,X_{n+m}$, then since the adjoint action of $K_o$ preserves the Killing form it also preserves $Z\in\mathfrak{U}(\mathfrak{k}_o)$. Thus by Schur's lemma $Z$ acts on the irreducible $K_o$-representation $V$ as a scalar $c_V$.

\it Step 4. \rm Now the right hand side of (\ref{Box}) differs from $-\frac{1}{2}(C_\mathfrak{g}+c_V)\otimes 1$ by $\frac{1}{2}Z$. As $C^i(M\otimes V)=\mathrm{Hom}_{K_o}(\bigwedge^i\mathfrak{p}_-,M\otimes V)$, it suffices to prove that

\begin{Lem}
$Z$ acts trivially on $\bigwedge^i\mathfrak{p}_-$ for every $i$.
\end{Lem}

\begin{proof}
For any $Y_1,...,Y_i\in\mathfrak{p}_-$,
\[\begin{aligned}
\sum_{p=n+1}^{n+m}X_p^2(Y_1\!\wedge\!...\!\wedge\!Y_i)=&\sum_{p=n+1}^{n+m}\sum_{j=1}^i Y_1\!\wedge\!...\!\wedge\![X_p,[X_p,Y_j]]\!\wedge\!...\!\wedge\!Y_i\\
+2&\sum_{p=n+1}^{n+m}\sum_{1\leq j<k\leq i}Y_1\!\wedge\!...\!\wedge\![X_p,Y_j]\!\wedge\!...\!\wedge\![X_p,Y_k]\!\wedge\!...\!\wedge\!Y_i.
\end{aligned}\]
By Lemma \ref{Bianchi}, the first term on the right hand side equals
\[\begin{aligned}
&\sum_{l=1}^n\sum_{j=1}^i Y_1\!\wedge\!...\!\wedge\![[\overline{X}_l,Y_j],X_l]\!\wedge\!...\!\wedge\!Y_i\\
=&-\sum_{l=1}^n\sum_{j=1}^i Y_1\!\wedge\!...\!\wedge\![[X_l,\overline{X}_l],Y_j]\!\wedge\!...\!\wedge\!Y_i\\
=&-\sum_{l=1}^n[X_l,\overline{X}_l](Y_1\!\wedge\!...\!\wedge\!Y_i),
\end{aligned}\]
where the first equality is due to the Jacobi identity and commutativity of $\mathfrak{p}_-$. Parallelly by Lemma \ref{Bianchi} and the Jacobi identity trick, for any $j<k$,
\[\begin{aligned}
&\sum_{p=n+1}^{n+m}Y_1\!\wedge\!...\!\wedge\![X_p,Y_j]\!\wedge\!...\!\wedge\![X_p,Y_k]\!\wedge\!...\!\wedge\!Y_i\\
&=\sum_{l=1}^n Y_1\!\wedge\!...\!\wedge\![[\overline{X}_l,Y_k],Y_j]\!\wedge\!...\!\wedge\!X_l\!\wedge\!...\!\wedge\!Y_i\\
&=\sum_{l=1}^n Y_1\!\wedge\!...\!\wedge\![[\overline{X}_l,Y_j],Y_k]\!\wedge\!...\!\wedge\!X_l\!\wedge\!...\!\wedge\!Y_i,
\end{aligned}\]
i.e. it is both alternating and symmetric in $Y_j$ and $Y_k$ and thus equals $0$. Therefore
\[\sum_{p=n+1}^{n+m}X_p^2(Y_1\!\wedge\!...\!\wedge\!Y_i)=-\sum_{l=1}^n[X_l,\overline{X}_l](Y_1\!\wedge\!...\!\wedge\!Y_i),\]
\[Z(Y_1\!\wedge\!...\!\wedge\!Y_i)=(\sum_{l=1}^n[X_l,\overline{X}_l]+\sum_{p=n+1}^{n+m}X_p^2)(Y_1\!\wedge\!...\!\wedge\!Y_i)=0.\qedhere\]
\end{proof}

\end{proof}

\begin{Rmk}
The computation of $\Box$ was done by Okamoto-Ozeki \cite[4.1]{OO67}. We include a derivation of the result to make the paper better self-contained.
\end{Rmk}

\begin{Rmk}
Proposition \ref{RcTc} can be proved similarly when $\mathfrak{p}_o$ and $V$ are replaced by $\mathfrak{m}_G$ and any finite-dimensional representation of $G(\mathbb{R})$ respectively, using Kuga's formula \cite[(6.9)]{MM63} instead of the Okamoto-Ozeki formula. This, together with the construction of $T_{c,\pi}$, makes Franke's alternative proof of Borel's regularization theorem.
\end{Rmk}

\subsection{Intertwining operators for semigroups of operators}\label{reg2}

The desired maps $T_{c,\pi}\in\mathrm{End}_{K_o}(C^\infty_{\mathrm{dmg},\pi})$ as in Remark \ref{Tcpi} will be constructed using semigroups of operators. We first review and prove some functional analysis preliminary.

\begin{Def}
(i) Let $X,Y$ be vector spaces. A \it linear operator \rm $A$ from $X$ to $Y$, denoted as $A:X\dashrightarrow Y$, is a linear map from a subspace $\mathrm{Dom}(A)\subseteq X$ to $Y$.\\
(ii) If the domain of $A':X\dashrightarrow Y$ contains $\mathrm{Dom}(A)$ and $A'|_{\mathrm{Dom}(A)}=A$, then $A'$ is called an \it extension \rm of $A$.\\
(iii) For $B:Y\dashrightarrow Z$, $BA$ is usually understood to be a linear operator from $X$ to $Z$ with domain
\[\mathrm{Dom}_B(A):=\big\{x\in\mathrm{Dom}(A):Ax\in\mathrm{Dom}(B)\big\}.\]
(iv) When $Y=X$, the \it resolvent set \rm of $A$ is the set of scalars $\lambda$ such that $\lambda-A$ maps $\mathrm{Dom}(A)$ bijectively to $X$.\\
(v) A linear operator $A:X\dashrightarrow Y$ between Banach spaces is said to be \it closed \rm if its graph in $X\times Y$ is closed.\\
(vi) When $X,Y$ are both Hilbert spaces and $\mathrm{Dom}(A)$ is dense in $X$, the \it adjoint \rm of $A$ is the linear operator $A^*:Y\dashrightarrow X$ defined on
\[\mathrm{Dom}(A^*):=\big\{y\in Y:\exists x\in X\textrm{ s.t. }\langle Au,y\rangle=\langle u,x\rangle,\forall u\in\mathrm{Dom}(A)\big\}\]
sending each $y$ to $x\in X$ uniquely determined by the above relation.
\end{Def}

\begin{Rmk}
For every closed subspace $X$ of a Hilbert space $Y$ we see that the inclusion $i_X:X\hookrightarrow Y$ and orthogonal projection $p_X:Y\twoheadrightarrow X$ are adjoints of each other.
\end{Rmk}

\begin{Lem}\label{adj1}
$(i)$ Let $A:X\dashrightarrow Y$ be a linear operator between Hilbert spaces with dense domain, then its adjoint $A^*:Y\dashrightarrow X$ is closed.\\
$(ii)$ For any scalars $c,d\in\mathbb{C}$ it holds that $(cA+d)^*=\overline{c}A^*+\overline{d}$.\\
$(iii)$ If $Z$ is another Hilbert space and $B:Y\dashrightarrow Z$ satisfies that $\mathrm{Dom}(B)\subseteq Y$ and $\mathrm{Dom}_B(A)\subseteq X$ are both dense, then $(BA)^*$ is an extension of $A^*B^*$.\\
$(iv)$ Suppose $\mathrm{Dom}_B(A)=\mathrm{Dom}(A)$, then
\[\mathrm{Dom}_{A^*}(B^*)=\mathrm{Dom}((BA)^*)\cap\mathrm{Dom}(B^*).\]
Particularly when $Z$ is a closed subspace of $Y$ and $B=p_Z$ we have
\[A^*i_Z=(p_ZA)^*.\]
$(v)$ When $X$ is a closed subspace of $Y$ and $A=i_X$, suppose $B=Bi_Xp_X$, i.e.
\begin{equation}\label{decomp}
\mathrm{Dom}(B)=(\mathrm{Dom}(B)\cap X)\oplus(\mathrm{Dom}(B)\cap X^\perp)
\end{equation}
and $B|_{\mathrm{Dom}(B)\cap X^\perp}=0$, then $B^*=i_X(Bi_X)^*$.
\end{Lem}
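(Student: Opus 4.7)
The plan is to prove each part by direct computation from the definition of the adjoint, carefully tracking domains.

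For (i), the graph of $A^*$ is the intersection over $u \in \mathrm{Dom}(A)$ of the sets $\{(y,x) \in Y \times X \mid \langle Au,y\rangle - \langle u,x\rangle = 0\}$, and each such set is closed by continuity of the inner product; so the graph is closed. For (ii), split into $c = 0$ (trivial) and $c \ne 0$, in which case one checks that $y \in \mathrm{Dom}((cA+d)^*)$ iff $\overline{c}y \in \mathrm{Dom}(A^*)$ (a rescaling) iff $y \in \mathrm{Dom}(A^*)$, and the value follows from the identity $\langle (cA+d)u, y\rangle = \langle Au, \overline{c}y\rangle + \langle u, \overline{d}y\rangle$. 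For (iii), given $z \in \mathrm{Dom}_{A^*}(B^*)$ and $u \in \mathrm{Dom}_B(A)$, the chain $\langle BAu, z\rangle = \langle Au, B^*z\rangle = \langle u, A^*B^*z\rangle$ immediately places $z$ in $\mathrm{Dom}((BA)^*)$ with the required value, showing $(BA)^*$ extends $A^*B^*$.

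For (iv), the equality of domains is the one real step. The inclusion $\mathrm{Dom}_{A^*}(B^*) \subseteq \mathrm{Dom}((BA)^*) \cap \mathrm{Dom}(B^*)$ is essentially (iii). Conversely, given $z \in \mathrm{Dom}((BA)^*) \cap \mathrm{Dom}(B^*)$, the hypothesis $\mathrm{Dom}_B(A) = \mathrm{Dom}(A)$ is exactly what permits the chain $\langle Au, B^*z\rangle = \langle BAu, z\rangle = \langle u, (BA)^*z\rangle$ to hold for \emph{every} $u \in \mathrm{Dom}(A)$, putting $B^*z$ into $\mathrm{Dom}(A^*)$. Specialising to $B = p_Z$: here $B^* = i_Z$ and $\mathrm{Dom}(B) = Y$ so the hypothesis is automatic; one then observes that $\mathrm{Dom}((p_Z A)^*) \subseteq Z$ by codomain considerations, so the intersection with $Z = \mathrm{Dom}(i_Z)$ is superfluous, giving the stated equality $A^* i_Z = (p_Z A)^*$.

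For (v), take $A = i_X$, so $A^* = p_X$. Given $z \in \mathrm{Dom}(B^*)$, note $\mathrm{Dom}(Bi_X) = X \cap \mathrm{Dom}(B)$, so for $u_X \in \mathrm{Dom}(Bi_X)$ we get $\langle Bi_X u_X, z\rangle = \langle u_X, B^*z\rangle_Y = \langle u_X, p_X B^*z\rangle_X$, showing $z \in \mathrm{Dom}((Bi_X)^*)$ with $(Bi_X)^* z = p_X B^* z$. Conversely, given $z \in \mathrm{Dom}((Bi_X)^*)$ and $u \in \mathrm{Dom}(B)$, use the hypothesised decomposition \eqref{decomp} to write $u = u_X + u_{X^\perp}$ with $Bu = Bu_X = Bi_X u_X$ and compute
\[
\langle Bu, z\rangle = \langle Bi_X u_X, z\rangle = \langle u_X, (Bi_X)^* z\rangle_X = \langle u, i_X (Bi_X)^* z\rangle_Y,
\]
the last equality because $i_X(Bi_X)^* z \in X$ is orthogonal to $u_{X^\perp}$. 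This places $z$ in $\mathrm{Dom}(B^*)$ with $B^* z = i_X (Bi_X)^* z$, which is the required identity of operators (noting that $\mathrm{Dom}(i_X(Bi_X)^*) = \mathrm{Dom}((Bi_X)^*)$ since the range of $(Bi_X)^*$ already lies in $X$). The main subtlety throughout is keeping the domain bookkeeping precise, particularly in (iv) where the failure of $\mathrm{Dom}_B(A) = \mathrm{Dom}(A)$ would break the converse, and in (v) where the decomposition hypothesis is exactly the ingredient that converts a statement on $X$ back to a statement on all of $\mathrm{Dom}(B)$.
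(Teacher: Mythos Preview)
Your proof is correct and follows essentially the same approach as the paper for parts (i)--(iv): direct verification from the definition of the adjoint with careful domain tracking. The one notable difference is in (v): you prove both domain inclusions by a direct elementwise computation using the decomposition \eqref{decomp}, whereas the paper instead applies (iii) twice---once to $B=(Bi_X)\circ p_X$ to get that $B^*$ extends $i_X(Bi_X)^*$, and once to $Bi_X=B\circ i_X$ to get that $(Bi_X)^*$ extends $p_XB^*$---and then observes these two extensions force equality of domains. Your argument is slightly more hands-on while the paper's is slicker in reusing (iii); both are valid.
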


\begin{proof}
(i) Suppose $(y_n)$ is a sequence in $Y$ such that $y_n\rightarrow y$ and $A^*y_n\rightarrow x$, then for every $u\in\mathrm{Dom}(A)$,
\[\langle Au,y\rangle=\lim_n{\langle Au,y_n\rangle}=\lim_n{\langle u,A^*y_n\rangle}=\langle u,x\rangle,\]
thus $y\in\mathrm{Dom}(A^*)$ and $A^*y=x$.\\
(ii) For every $y\in Y$ the linear functionals $\langle(cA+d)\,\cdot\,,y\rangle$ and $c\,\langle A\,\cdot\,,y\rangle$ on $\mathrm{Dom}(A)$ differ by $\langle\cdot,\overline{d}y\rangle$ and are bounded at the same time, so $(cA+d)^*=\overline{c}A^*+\overline{d}$.\\
(iii) For any $z\in\mathrm{Dom}_{A^*}(B^*)$ and $u\in\mathrm{Dom}_B(A)$,
\[\langle BAu,z\rangle=\langle Au,B^*z\rangle=\langle u,A^*B^*z\rangle,\]
thus $z\in\mathrm{Dom}((BA)^*)$ and $(BA)^*z=A^*B^*z$.\\
(iv) We just need to show that $\mathrm{Dom}((BA)^*)\cap\mathrm{Dom}(B^*)\subseteq\mathrm{Dom}_{A^*}(B^*)$.
For any $z\in\mathrm{Dom}((BA)^*)\cap\mathrm{Dom}(B^*)$ and $u\in\mathrm{Dom}_B(A)$ we have
\[\langle Au,B^*z\rangle=\langle BAu,z\rangle=\langle u,(BA)^*z\rangle,\]
which implies that $B^*z\in\mathrm{Dom}(A^*),z\in\mathrm{Dom}_{A^*}(B^*)$ when $\mathrm{Dom}_B(A)=\mathrm{Dom}(A)$.\\
(v) By (iii) we have $B^*=(Bi_Xp_X)^*$ is an extension of $p_X^*(Bi_X)^*=i_X(Bi_X)^*$ and $(Bi_X)^*$ is an extension of $i_X^*B^*=p_XB^*$, so $B^*$ and $(Bi_X)^*$ have the same domain and $B^*=i_X(Bi_X)^*$.
\end{proof}

Combining (iv) and (v) we get:

\begin{Cor}\label{adj2}
Let $X,Z$ be closed subspaces of a Hilbert space $Y$, $B:Y\dashrightarrow Y$ be a linear operator with dense domain which satisfies (\ref{decomp}) and maps $\mathrm{Dom}(B)\cap X^\perp$ into $Z^\perp$, then
\[B^*i_Z=i_X(p_ZBi_X)^*,\]
i.e. $(p_ZBi_X)^*$ is the restriction of $B^*$ to $\mathrm{Dom}(B^*)\cap Z$.
\end{Cor}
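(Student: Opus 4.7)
\textbf{Proof proposal for Corollary \ref{adj2}.} The plan is to derive the identity by chaining the particular case of Lemma \ref{adj1}(iv) with Lemma \ref{adj1}(v), taking $p_Z B$ as the auxiliary operator bridging them.

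First, I would apply the particular case of Lemma \ref{adj1}(iv), with the operator called ``$A$'' there played by our $B$ and the ambient closed subspace being $Z$. The hypothesis $\mathrm{Dom}_{p_Z}(B) = \mathrm{Dom}(B)$ of (iv) is automatic since $p_Z$ is bounded and everywhere-defined on $Y$, and $\mathrm{Dom}(B)$ is dense by assumption. This yields
\[
B^* i_Z = (p_Z B)^*.
\]

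Next, I would verify that $p_Z B$ satisfies the hypotheses of Lemma \ref{adj1}(v) with respect to the subspace $X$. Its domain equals $\mathrm{Dom}(B)$, so the orthogonal splitting \eqref{decomp} is inherited directly from $B$. Moreover, the hypothesis $B(\mathrm{Dom}(B) \cap X^\perp) \subseteq Z^\perp$ together with $p_Z|_{Z^\perp} = 0$ forces $p_Z B$ to vanish on $\mathrm{Dom}(B) \cap X^\perp$. Hence Lemma \ref{adj1}(v) applies to $p_Z B$ and produces
\[
(p_Z B)^* = i_X (p_Z B i_X)^*.
\]

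Chaining the two displays gives $B^* i_Z = i_X (p_Z B i_X)^*$, which is the stated identity. The reformulation — that $(p_Z B i_X)^*$ is the restriction of $B^*$ to $\mathrm{Dom}(B^*) \cap Z$ — follows by reading off that $i_Z$ is the inclusion $Z \hookrightarrow Y$, that $i_X$ is the inclusion $X \hookrightarrow Y$, and that both inclusions are isometric. I do not anticipate any substantive obstacle: the corollary is essentially a bookkeeping composition of (iv) and (v), and the only care required is in checking the density and domain conditions, which are automatic thanks to the boundedness of $p_Z$.
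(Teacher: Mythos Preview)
Your proposal is correct and matches the paper's approach exactly: the paper simply says ``Combining (iv) and (v) we get'' Corollary~\ref{adj2}, and you have spelled out precisely that combination, with the auxiliary operator $p_ZB$ bridging the two parts and the hypotheses checked correctly.
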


\begin{Lem}\label{intertwine1}
Suppose $A:X\dashrightarrow Y$ is a closed linear operator between Banach spaces, $B_n:X\rightarrow X,C_n:Y\rightarrow Y(n\geq 0)$ are bounded linear maps such that
\begin{itemize}
\item $B_nx\rightarrow B_0x$ for every $x\in X$, $C_ny\rightarrow C_0y$ for every $y\in Y$ and
\item $AB_n$ is an extension of $C_nA$ for every $n\geq 1$,
\end{itemize}
then $AB_0$ is also an extension of $C_0A$.
\end{Lem}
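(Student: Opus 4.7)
The proof will be a direct application of the closed graph property of $A$, combined with passage to the limit in the intertwining relations for $n\geq 1$.

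First I would unpack what the hypothesis ``$AB_n$ is an extension of $C_nA$'' means in terms of domains. Since $C_n:Y\to Y$ is defined on all of $Y$, the operator $C_nA$ has domain $\mathrm{Dom}(A)$, and so the extension hypothesis is equivalent to the statement that for every $x\in\mathrm{Dom}(A)$, the vector $B_nx$ lies in $\mathrm{Dom}(A)$ and $A(B_nx)=C_n(Ax)$. This is the key reformulation that makes the pointwise convergence directly useful.

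Next, fix any $x\in\mathrm{Dom}(A)$. By the pointwise convergence $B_nx\to B_0x$ in $X$ and $C_n(Ax)\to C_0(Ax)$ in $Y$, together with the identity $A(B_nx)=C_n(Ax)$ for $n\geq 1$ established above, I obtain a sequence of pairs $(B_nx,\,A(B_nx))$ in the graph of $A$ converging to $(B_0x,\,C_0(Ax))$ in $X\times Y$. Since $A$ is closed, this limit pair must also lie in the graph of $A$, so $B_0x\in\mathrm{Dom}(A)$ and $A(B_0x)=C_0(Ax)$. This is precisely the statement that $AB_0$ extends $C_0A$.

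There is essentially no obstacle here; the lemma is a soft functional-analytic observation. The only thing to be slightly careful about is to avoid confusing the total space $X$ (on which each $B_n$ is defined) with the possibly proper subspace $\mathrm{Dom}(A)$, and to note that the boundedness of $C_n$ (ensuring $\mathrm{Dom}(C_nA)=\mathrm{Dom}(A)$) is what makes the extension hypothesis yield an honest pointwise identity on all of $\mathrm{Dom}(A)$, which is what one needs to feed into the closedness argument.
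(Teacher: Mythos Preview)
Your proof is correct and follows essentially the same approach as the paper: both observe that boundedness of $C_n$ gives $\mathrm{Dom}(C_nA)=\mathrm{Dom}(A)$, translate the extension hypothesis into the pointwise identity $A(B_nx)=C_n(Ax)$ on $\mathrm{Dom}(A)$, and then apply closedness of $A$ to the convergent sequence $(B_nx,AB_nx)\to(B_0x,C_0Ax)$.
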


\begin{proof}
First note that for every $n\geq 0$, $\mathrm{Dom}_{C_n}(A)=\mathrm{Dom}(A)$ as $C_n$ is bounded. Then by assumption we have $\mathrm{Dom}(A)\subseteq\mathrm{Dom}_A(B_n)$ for every $n\geq 1$ and for every $x\in\mathrm{Dom}(A)$,
\[B_nx\rightarrow B_0x\textrm{ and }AB_nx=C_nAx\rightarrow C_0Ax,\]
thus the closedness of $A$ implies that $B_0x\in\mathrm{Dom}(A)$ and $AB_0x=C_0Ax$.
\end{proof}

A \it $C_0$-semigroup of operators \rm over a Banach space $X$ is a family of bounded linear maps $S(t):X\rightarrow X\,(t\geq 0)$ such that
\begin{itemize}
\item $S(0)=\mathrm{Id}_X$,
\item $S(t+t')=S(t)S(t')$ for any $t,t'\geq 0$ and
\item $\lim_{t\rightarrow 0^+}S(t)x=x$ for every $x\in X$.
\end{itemize}
(Note that the second and third conditions imply that $S(\cdot)x:[0,+\infty)\rightarrow X$ is continuous for every $x\in X$.) Its \it (infinitesimal) generator \rm $G:X\dashrightarrow X$ is the linear operator defined on
\[\mathrm{Dom}(G):=\big\{x\in X:\lim_{h\rightarrow 0^+}\frac{S(h)x-x}{h}\textrm{ exists}\big\}\]
sending each $x$ to the above limit.

\begin{Lem}\label{TGphi1}
Suppose $\phi\in C_c([0,+\infty),\mathbb{R})$ is $C^1$ on $(0,+\infty)$ and $\phi':(0,+\infty)\rightarrow\mathbb{R}$ has a limit at $0$, then for every $x\in X$,
\[T_{G,\phi}x:=\int_0^{+\infty}\phi(t)S(t)x\,\mathrm{d}t\]
belongs to $\mathrm{Dom}(G)$ and
\[GT_{G,\phi}x=-\phi(0)x-T_{G,\phi'}x.\]
\end{Lem}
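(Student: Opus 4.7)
The plan is to compute $\frac{S(h)-\mathrm{Id}}{h}T_{G,\phi}x$ directly and show it converges to $-\phi(0)x-T_{G,\phi'}x$ as $h\to 0^+$; this will simultaneously establish that $T_{G,\phi}x\in\mathrm{Dom}(G)$ and yield the claimed formula.

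First I would note that since $t\mapsto S(t)x$ is continuous and $\phi$ is compactly supported, the Bochner integral defining $T_{G,\phi}x$ makes sense and belongs to $X$. Then, pulling the bounded operator $S(h)$ inside the integral and substituting $s=t+h$, one gets
\[
S(h)T_{G,\phi}x=\int_0^{+\infty}\phi(t)S(t+h)x\,dt=\int_h^{+\infty}\phi(s-h)S(s)x\,ds,
\]
so that
\[
\frac{S(h)-\mathrm{Id}}{h}T_{G,\phi}x=\int_h^{+\infty}\frac{\phi(s-h)-\phi(s)}{h}S(s)x\,ds-\frac{1}{h}\int_0^h\phi(s)S(s)x\,ds.
\]
The second term tends to $\phi(0)x$ as $h\to 0^+$ by continuity of $s\mapsto\phi(s)S(s)x$ at $s=0$.

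For the first term I would invoke dominated convergence. Fixing $R>0$ with $\mathrm{supp}(\phi)\subseteq[0,R]$ and restricting to $0<h\leq 1$, the integrand is supported in $[0,R+1]$; on this compact set $\|S(s)x\|$ is bounded by the standard local boundedness property of $C_0$-semigroups, while $|\phi(s-h)-\phi(s)|/h\leq\sup|\phi'|$ by the mean value theorem (using that $\phi'$ extends continuously to $[0,+\infty)$ and is compactly supported). The pointwise limit of the difference quotient is $-\phi'(s)$, so the integral converges to $-\int_0^{+\infty}\phi'(s)S(s)x\,ds=-T_{G,\phi'}x$.

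Combining, $\lim_{h\to 0^+}\frac{S(h)-\mathrm{Id}}{h}T_{G,\phi}x=-\phi(0)x-T_{G,\phi'}x$, which by definition of $G$ gives $T_{G,\phi}x\in\mathrm{Dom}(G)$ with the asserted value. This is essentially routine semigroup theory; the only mild subtlety is verifying the dominated convergence hypothesis, which reduces to the local boundedness of $\|S(s)x\|$ on compact intervals and the uniform bound on difference quotients of $\phi$.
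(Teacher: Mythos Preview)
Your proof is correct and follows essentially the same approach as the paper: both compute the difference quotient $\frac{S(h)-\mathrm{Id}}{h}T_{G,\phi}x$, split it into the integral over $[0,h]$ (handled by continuity at $0$) and the integral over $[h,\infty)$ of a shifted difference quotient of $\phi$ against $S(s)x$, and pass to the limit using the mean value theorem bound on the difference quotients. The only cosmetic difference is that the paper phrases the last step via uniform continuity of $\phi'$ (hence uniform convergence of the difference quotients), whereas you invoke dominated convergence with the local boundedness of $\|S(s)x\|$; the content is the same.
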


\begin{proof}
For every $h>0$,
\[\begin{aligned}
\frac{S(h)T_{G,\phi}x-T_{G,\phi}x}{h}&=\frac{1}{h}\int_0^{+\infty}\phi(t)[S(t+h)-S(t)]x\,\mathrm{d}t\\
&=-\frac{1}{h}\int_0^h\phi(t)S(t)x\,\mathrm{d}t-\int_h^{+\infty}\frac{\phi(t)-\phi(t-h)}{h}S(t)x\,\mathrm{d}t.
\end{aligned}\]
When $h\rightarrow 0^+$, by the continuity of $\phi(t)$ and $S(t)x$ at $t=0$ the first term above has limit $-\phi(0)x$; on the other hand by assumption $\phi'$ is uniformly continuous, therefore by the mean value theorem the second term has limit
\[-\int_0^{+\infty}\phi'(t)S(t)x\,\mathrm{d}t=-T_{G,\phi'}x.\qedhere\]
\end{proof}

\begin{Cor}\label{TGphi2}
$(i)$ In the lemma if $\phi$ is supported in $(0,+\infty)$ and smooth, then
\[\forall x\in X,T_{G,\phi}x\in\bigcap_{k\geq 1}\mathrm{Dom}(G^k).\]
$(ii)$ If $\phi:[0,+\infty)\rightarrow\mathbb{R}$ is compactly supported, identically $1$ in a neighborhood of $0$ and smooth, then for every $x\in X$, $T_{G,\phi}x\in\mathrm{Dom}(G)$ and
\[x+GT_{G,\phi}x\in\bigcap_{k\geq 1}\mathrm{Dom}(G^k).\]
In particular $T_{G,\phi}$ preserves each $\mathrm{Dom}(G^k)$ as well as $\bigcap_{k\geq 1}\mathrm{Dom}(G^k)$.
\end{Cor}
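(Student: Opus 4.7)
The plan is to derive both parts from Lemma \ref{TGphi1} by iterating the formula
\[GT_{G,\phi}x = -\phi(0)x - T_{G,\phi'}x\]
and exploiting how smoothness and support behave under differentiation.

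For part (i), since $\phi$ is smooth and supported in $(0,+\infty)$, the derivative $\phi'$ has the same properties, and in particular $\phi(0) = 0$. So Lemma \ref{TGphi1} gives $T_{G,\phi}x \in \mathrm{Dom}(G)$ with $GT_{G,\phi}x = -T_{G,\phi'}x$. Iterating, an easy induction on $k$ yields $T_{G,\phi}x \in \mathrm{Dom}(G^k)$ and $G^kT_{G,\phi}x = (-1)^k T_{G,\phi^{(k)}}x$ for every $k\geq 1$, with no boundary terms appearing because $\phi^{(j)}(0) = 0$ for all $j$. Thus $T_{G,\phi}x \in \bigcap_{k\geq 1}\mathrm{Dom}(G^k)$.

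For part (ii), Lemma \ref{TGphi1} applied with $\phi(0)=1$ gives
\[x + GT_{G,\phi}x = -T_{G,\phi'}x.\]
Since $\phi \equiv 1$ in a neighborhood of $0$, the derivative $\phi'$ vanishes on a neighborhood of $0$, so $\phi'$ is smooth with support in $(0,+\infty)$. Part (i) applied to $\phi'$ then shows $T_{G,\phi'}x \in \bigcap_{k\geq 1}\mathrm{Dom}(G^k)$, which gives $x + GT_{G,\phi}x \in \bigcap_{k\geq 1}\mathrm{Dom}(G^k)$.

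For the final ``in particular'' assertion, the identity $GT_{G,\phi}x = -x - T_{G,\phi'}x$ shows that if $x \in \mathrm{Dom}(G^k)$, then $GT_{G,\phi}x$ is the sum of $-x \in \mathrm{Dom}(G^k)$ and an element of $\bigcap_j \mathrm{Dom}(G^j)$, hence lies in $\mathrm{Dom}(G^k)$; equivalently, $T_{G,\phi}x \in \mathrm{Dom}(G^{k+1}) \subseteq \mathrm{Dom}(G^k)$. Induction makes this formal, and the case $x \in \bigcap_k \mathrm{Dom}(G^k)$ follows at once. There is no serious obstacle here: the only content beyond Lemma \ref{TGphi1} is the stability of the hypotheses on $\phi$ under differentiation, together with reducing case (ii) to case (i) by noting that $\phi'$ is supported away from $0$.
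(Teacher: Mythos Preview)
Your proof is correct and is exactly the intended derivation from Lemma \ref{TGphi1}; the paper in fact states the corollary without proof, treating it as immediate, and your iteration of the formula $GT_{G,\phi}x=-\phi(0)x-T_{G,\phi'}x$ together with the observation that $\phi'$ is supported away from $0$ in case (ii) is precisely what makes it so.
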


The Hille-Yosida theorem characterises infinitesimal generators of $C_0$ semigroups of operators. We need the following special case of it:

\begin{Lem}\label{HY}
Let $X$ be a Banach space. Suppose the domain of linear operator $G:X\dashrightarrow X$ is dense and there is a $\lambda_0\in\mathbb{R}$ such that every $\lambda>\lambda_0$ belongs to the resolvent set of $G$ and satisfies that
\begin{equation}\label{resolv1}
|(\lambda-G)^{-1}|\leq\frac{1}{\lambda-\lambda_0},
\end{equation}
then $G$ is the generator of a $C_0$-semigroup of operators $(e^{tG})_{t\geq 0}$ over $X$ which can be characterized as follows: for every $\lambda>\lambda_0$ denote
\[G_{\lambda,\lambda_0}:=(\lambda-\lambda_0)^2(\lambda-G)^{-1}-(\lambda-2\lambda_0):X\rightarrow X,\]
then for any $t>0$ and $x\in X$,
\begin{equation}\label{etG}
e^{tG}x=\lim_{\lambda\rightarrow+\infty}e^{tG_{\lambda,\lambda_0}}x,
\end{equation}
where the exponential $e^{tG_{\lambda,\lambda_0}}$ is defined by power series in the usual way. Moreover when $X$ is a Hilbert space (\ref{resolv1}) is equivalent to that for every $x\in\mathrm{Dom}(G)$,
\begin{equation}\label{resolv2}
\mathrm{Re}\,\langle Gx,x\rangle\leq\lambda_0\langle x,x\rangle.
\end{equation}
\end{Lem}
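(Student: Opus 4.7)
The plan is to reduce to the standard contraction case via a spectral shift, then carry out the Yosida approximation construction. Set $G' := G - \lambda_0$; the hypothesis on $G$ translates to the statement that every $\mu > 0$ lies in the resolvent set of $G'$ with $|(\mu-G')^{-1}| \leq 1/\mu$, and writing $\mu = \lambda - \lambda_0$ one checks directly that
\[
G_{\lambda,\lambda_0} \;=\; \mu^2(\mu-G')^{-1} - \mu + \lambda_0 \;=\; G'_\mu + \lambda_0,
\]
where $G'_\mu := \mu G'(\mu-G')^{-1}$ is the classical Yosida approximation. Thus it suffices to produce a contraction $C_0$-semigroup $S'(t)$ with generator exactly $G'$ satisfying $\lim_{\mu\to\infty}e^{tG'_\mu}x = S'(t)x$; setting $e^{tG}x := e^{t\lambda_0}S'(t)x$ will then yield the desired semigroup and the formula \eqref{etG}.

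For the contraction case, I would first establish the basic convergences $J_\mu x := \mu(\mu-G')^{-1}x \to x$ as $\mu \to \infty$ for every $x \in X$, and $G'_\mu x = J_\mu G'x \to G'x$ for every $x \in \mathrm{Dom}(G')$; the first follows from the identity $J_\mu x - x = (1/\mu)J_\mu G'x$ on $\mathrm{Dom}(G')$ together with $|J_\mu| \leq 1$, extended to $X$ by density. Each $G'_\mu$ is bounded, and writing $e^{tG'_\mu} = e^{-t\mu}e^{t\mu J_\mu}$ combined with $|J_\mu^k| \leq 1$ gives $|e^{tG'_\mu}| \leq 1$. The key Cauchy estimate
\[
|e^{tG'_\mu}x - e^{tG'_{\mu'}}x| \;\leq\; t\,|G'_\mu x - G'_{\mu'}x| \quad (x \in \mathrm{Dom}(G'))
\]
comes from differentiating $s \mapsto e^{sG'_\mu}e^{(t-s)G'_{\mu'}}x$ and exploiting commutativity of $G'_\mu$ with $G'_{\mu'}$ (a consequence of the first resolvent identity). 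Strong convergence $e^{tG'_\mu}x \to S'(t)x$ uniform on compact $t$-intervals then holds on $\mathrm{Dom}(G')$ and extends to $X$ by the uniform contraction bound. The semigroup and strong-continuity properties pass to the limit directly. To check that the generator $H$ of $S'$ equals $G'$ and not merely extends it, one uses $S'(t)x - x = \int_0^t S'(s)G'x\,ds$ for $x \in \mathrm{Dom}(G')$ to see $H$ extends $G'$, and then observes that $1$ lies in the resolvent sets of both $G'$ and $H$, so injectivity of $1-H$ forces $\mathrm{Dom}(H) = \mathrm{Dom}(G')$. Undoing the shift completes the construction.

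For the Hilbert-space equivalence: if $\mathrm{Re}\langle G'x, x\rangle \leq 0$ on $\mathrm{Dom}(G')$, Cauchy--Schwarz gives $|(\mu-G')x|\,|x| \geq \mathrm{Re}\langle(\mu-G')x,x\rangle \geq \mu|x|^2$, so $|(\mu-G')x| \geq \mu|x|$; combined with the hypothesis that $\mu$ lies in the resolvent set of $G'$ this yields the resolvent norm bound. Conversely, the bound gives $|(\mu-G')x|^2 \geq \mu^2|x|^2$ for $x \in \mathrm{Dom}(G')$; expanding the left side as $\mu^2|x|^2 - 2\mu\,\mathrm{Re}\langle G'x,x\rangle + |G'x|^2$, rearranging to $\mathrm{Re}\langle G'x,x\rangle \leq |G'x|^2/(2\mu)$, and letting $\mu \to \infty$ gives the dissipativity.

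The main technical obstacle is the Cauchy estimate and passage to the limit for the Yosida approximants, which hinges on pairwise commutativity of the resolvents via the first resolvent identity and on the uniform contraction bound $|e^{tG'_\mu}| \leq 1$; the rest is essentially formal manipulation and bookkeeping between the shifted and unshifted operators.
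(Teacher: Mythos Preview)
Your proposal is correct and follows the standard Yosida-approximation route, which is exactly what the paper invokes: its proof consists entirely of citations to Lax's textbook for the Hille--Yosida theorem and to the Lumer--Phillips reformulation for the Hilbert-space equivalence. You have simply spelled out that textbook argument, including the reduction to the contraction case via the shift $G' = G - \lambda_0$ and the identification of $G_{\lambda,\lambda_0}$ with the shifted Yosida approximant $G'_\mu + \lambda_0$, so the approaches coincide.
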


\begin{proof}
The claim that $G$ is the generator of a $C_0$-semigroup of operators is \cite[Theorem 34.7 (ii)]{Lax02}, while the characterisation of the semigroup is part of the proof of the theorem. The last assertion is Lumer and Phillips' reformulation of the Hille-Yosida condition \cite[Lemma 34.10]{Lax02}.
\end{proof}

\begin{Rmk}
It is easy to see \cite[Theorem 34.6]{Lax02} that the generator uniquely determines a $C_0$-semigroup of operators, so in particular $(e^{tG})_{t\geq 0}$ is independent of the choice of $\lambda_0$.
\end{Rmk}

\begin{Lem}\label{intertwine2}
Suppose $A:X\dashrightarrow Y,B:X\dashrightarrow X,C:Y\dashrightarrow Y$ are linear operators between Banach spaces such that
\begin{itemize}
\item $A$ is closed,
\item $B,C$ satisfy the assumptions in Lemma \ref{HY},
\item $\mathrm{Dom}(B)\subseteq\mathrm{Dom}(A)$ and
\item $CA$ is an extension of $AB$,
\end{itemize}
then $Ae^{tB}$ is an extension of $e^{tC}A$ for every $t>0$.
\end{Lem}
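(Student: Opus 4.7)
The plan is to reduce the intertwining relation at the semigroup level to an intertwining relation at the resolvent level (the Yosida approximation), and then pass to the limit via Lemma \ref{intertwine1}.

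First, I would show that for every $\lambda$ larger than both Hille--Yosida constants the identity $A(\lambda-B)^{-1}=(\lambda-C)^{-1}A$ holds on all of $\mathrm{Dom}(A)$. Given $x\in\mathrm{Dom}(A)$, set $y:=(\lambda-B)^{-1}x$. Then $y\in\mathrm{Dom}(B)\subseteq\mathrm{Dom}(A)$, and the resolvent equation $By=\lambda y-x$ shows that $By\in\mathrm{Dom}(A)$ as well, i.e.\ $y\in\mathrm{Dom}_A(B)$. The hypothesis that $CA$ extends $AB$ then gives $ABy=CAy$, so applying $A$ to $\lambda y-By=x$ yields $(\lambda-C)Ay=Ax$, which (as $\lambda$ lies in the resolvent set of $C$) is the desired identity. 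The only subtlety here is the domain bookkeeping, which works precisely because the hypothesis $\mathrm{Dom}(B)\subseteq\mathrm{Dom}(A)$ lets us deduce $By\in\mathrm{Dom}(A)$ automatically from the resolvent equation.

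Second, set $B_{\lambda,\lambda_0^B}:=(\lambda-\lambda_0^B)^2(\lambda-B)^{-1}-(\lambda-2\lambda_0^B)$ and the analogous $C_{\lambda,\lambda_0^C}$ as in Lemma \ref{HY}. These are bounded everywhere-defined operators. From the first step we see that $B_{\lambda,\lambda_0^B}$ preserves $\mathrm{Dom}(A)$ and that $A B_{\lambda,\lambda_0^B}x=C_{\lambda,\lambda_0^C}Ax$ for $x\in\mathrm{Dom}(A)$ (after a trivial book-keeping of the shift constants, which are just scalars). A straightforward induction then gives $A B_{\lambda,\lambda_0^B}^n=C_{\lambda,\lambda_0^C}^n A$ on $\mathrm{Dom}(A)$ for every $n\geq 0$.

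Third, I would transfer this to the bounded-operator exponentials using closedness of $A$. For $x\in\mathrm{Dom}(A)$, the partial sums
\[
S_N:=\sum_{n=0}^N\frac{t^n}{n!}B_{\lambda,\lambda_0^B}^n x
\]
lie in $\mathrm{Dom}(A)$, converge to $e^{tB_{\lambda,\lambda_0^B}}x$ in $X$, and their images $AS_N=\sum_{n=0}^N\frac{t^n}{n!}C_{\lambda,\lambda_0^C}^n Ax$ converge to $e^{tC_{\lambda,\lambda_0^C}}Ax$ in $Y$. Closedness of $A$ then gives $e^{tB_{\lambda,\lambda_0^B}}x\in\mathrm{Dom}(A)$ and $Ae^{tB_{\lambda,\lambda_0^B}}x=e^{tC_{\lambda,\lambda_0^C}}Ax$, i.e.\ $Ae^{tB_{\lambda,\lambda_0^B}}$ is an extension of $e^{tC_{\lambda,\lambda_0^C}}A$.

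Finally, I would invoke \eqref{etG}, which says that $e^{tB_{\lambda,\lambda_0^B}}x\to e^{tB}x$ in $X$ and $e^{tC_{\lambda,\lambda_0^C}}y\to e^{tC}y$ in $Y$ as $\lambda\to\infty$. Choosing a sequence $\lambda_n\to\infty$ and applying Lemma \ref{intertwine1} to the bounded operators $B_n:=e^{tB_{\lambda_n,\lambda_0^B}}$ on $X$ and $C_n:=e^{tC_{\lambda_n,\lambda_0^C}}$ on $Y$ (with limits $B_0=e^{tB}$ and $C_0=e^{tC}$) yields that $Ae^{tB}$ is an extension of $e^{tC}A$, which is the claim. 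No step presents a serious obstacle; the only vigilance required is the domain bookkeeping in the first step, which is where all the hypotheses — closedness of $A$, the domain inclusion $\mathrm{Dom}(B)\subseteq\mathrm{Dom}(A)$, and the extension relation $CA\supseteq AB$ — are simultaneously used.
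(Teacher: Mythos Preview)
Your approach is essentially the same as the paper's: reduce to the Yosida approximants via \eqref{etG}, check the intertwining at the resolvent level, and pass to the limit using Lemma~\ref{intertwine1}. Your first step (the resolvent identity on $\mathrm{Dom}(A)$) is in fact more explicit than the paper's somewhat terse final reduction.

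One point deserves care: your parenthetical ``trivial book-keeping of the shift constants'' is not correct as written. If $\lambda_0^B\neq\lambda_0^C$ then
\[
AB_{\lambda,\lambda_0^B}x=(\lambda-\lambda_0^B)^2(\lambda-C)^{-1}Ax-(\lambda-2\lambda_0^B)Ax
\]
is \emph{not} equal to $C_{\lambda,\lambda_0^C}Ax$; the scalar coefficients genuinely differ. The fix is what the paper does as its very first step: observe that if the Hille--Yosida estimate \eqref{resolv1} holds for some $\lambda_0$ then it holds for any larger one, so you may replace both $\lambda_0^B$ and $\lambda_0^C$ by their maximum and work with a common $\lambda_0$ throughout. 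With that adjustment your argument goes through unchanged.
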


\begin{proof}
First note that in Lemma \ref{HY} if $G$ satisfies the assumption for one value of $\lambda_0$ then it also satisfies the assumption when $\lambda_0$ becomes larger. Therefore we can find a $\lambda_0\in\mathbb{R}$ such that
\[\forall x\in X,e^{tB}x=\lim_{\lambda\rightarrow+\infty}e^{tB_{\lambda,\lambda_0}}x\textrm{ and }\forall y\in Y,e^{tC}y=\lim_{\lambda\rightarrow+\infty}e^{tC_{\lambda,\lambda_0}}y.\]
Then by Lemma \ref{intertwine1} it suffices to show that $Ae^{tB_{\lambda,\lambda_0}}$ is an extension of $e^{tC_{\lambda,\lambda_0}}A$ for every $\lambda>\lambda_0$. Again since the exponential of a bounded linear map is defined by power series and can be approximated by polynomials of the map, it suffices to prove that $AB_{\lambda,\lambda_0}^n$ is an extension of $C_{\lambda,\lambda_0}^nA$ for every $n\geq 1$. By induction this follows from the $n=1$ case, which is equivalent to that $A(\lambda-B)^{-1}$ is an extension of $(\lambda-C)^{-1}A$, or that $(\lambda-C)A$ is an extension of $A(\lambda-B)$.
Now since $\mathrm{Dom}(B)\subseteq\mathrm{Dom}(A)$ we have
\[\mathrm{Dom}_A(\lambda-B)=\mathrm{Dom}_A(B)\]
for every $\lambda$, so it suffices to assume that $CA$ is an extension of $AB$.
\end{proof}

\begin{Cor}\label{intertwine3}
Let $X\hookrightarrow Y$ be a injective bounded linear map between Banach spaces, $B:X\dashrightarrow X,C:Y\dashrightarrow Y$ be linear operators satisfying the assumptions in Lemma \ref{HY} such that $C$ is an extension of $B$, then
\[e^{tB}=e^{tC}|_X\]
for every $t>0$.
\end{Cor}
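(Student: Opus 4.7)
My plan is to deduce this as a direct application of Lemma \ref{intertwine2} with $A$ taken to be the injection $X\hookrightarrow Y$ itself. Since $A$ is bounded and defined on all of $X$, it is automatically closed (its graph is a continuous image of $X$, hence closed in $X\times Y$), and $\mathrm{Dom}(A)=X\supseteq\mathrm{Dom}(B)$, so the domain condition of Lemma \ref{intertwine2} is free. The hypothesis that $C$ extends $B$, translated through the injection $A$, says precisely that $A(\mathrm{Dom}(B))\subseteq\mathrm{Dom}(C)$ with $CAx=ABx$ for $x\in\mathrm{Dom}(B)$, i.e.\ $CA$ is an extension of $AB$ in the sense used by Lemma \ref{intertwine2}.

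Invoking Lemma \ref{intertwine2} then gives that $Ae^{tB}$ is an extension of $e^{tC}A$ for every $t>0$. To finish, I observe that both $e^{tB}$ and $e^{tC}$ are bounded on all of $X$ and $Y$ respectively (they are defined everywhere as elements of the corresponding $C_0$-semigroups), so the domain of $e^{tC}A$ is all of $X$; hence the ``extension'' is already an equality, yielding $Ae^{tB}=e^{tC}A$ on $X$, which is the content of $e^{tB}=e^{tC}|_X$.

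There is essentially no obstacle here; the only subtlety is bookkeeping about how to read ``$C$ is an extension of $B$'' when $X$ is embedded in $Y$ via a map rather than as a literal subspace, and about how $A$ (being bounded and everywhere defined) interacts cleanly with the domain conventions in Lemma \ref{intertwine2}. No additional semigroup theory is needed beyond what has already been set up; the whole point of stating Lemma \ref{intertwine2} in the generality it was stated is that this corollary drops out immediately.
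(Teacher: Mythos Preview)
Your proof is correct and matches the paper's intended approach: the corollary is stated without proof precisely because it drops out of Lemma~\ref{intertwine2} by taking $A$ to be the inclusion, exactly as you do. One small quibble: your parenthetical justification that $A$ is closed (``its graph is a continuous image of $X$, hence closed'') is not quite the right reasoning---continuous images of closed sets need not be closed---but the conclusion is of course standard (if $x_n\to x$ and $Ax_n\to y$, continuity gives $Ax_n\to Ax$, so $y=Ax$).
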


\begin{Cor}\label{intertwine4}
Suppose $A:Y\dashrightarrow X,B:X\dashrightarrow X,C:Y\dashrightarrow Y$ are linear operators between Hilbert spaces with dense domains such that
\begin{itemize}
\item $B^*,C^*$ satisfy the assumptions in Lemma \ref{HY},
\item $\mathrm{Dom}(B^*)\subseteq\mathrm{Dom}(A^*)$,
\item $\mathrm{Dom}_A(C)=\mathrm{Dom}(C)$ and
\item $BA$ is an extension of $AC$,
\end{itemize}
then $(i)$ $A^*e^{tB^*}$ is an extension of $e^{tC^*}A^*$ for every $t>0$, and $(ii)$ $A^*T_{B^*,\phi}$ is an extension of $T_{C^*,\phi}A^*$ for every $\phi$ as in Lemma \ref{TGphi1}.
\end{Cor}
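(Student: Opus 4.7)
The plan is to derive part (i) as a direct consequence of Lemma~\ref{intertwine2} applied to the adjoint triple $(A^*,B^*,C^*)$, and then to extract part (ii) by passing $A^*$ through the Bochner integral defining $T_{B^*,\phi}$ using closedness of $A^*$. For part (i), the adjoint $A^*\colon Y\dashrightarrow X$ is automatically closed by Lemma~\ref{adj1}(i), the Hille--Yosida hypotheses on $B^*$ and $C^*$ are among the assumptions, and the domain inclusion $\mathrm{Dom}(B^*)\subseteq\mathrm{Dom}(A^*)$ is given. So the only nontrivial hypothesis of Lemma~\ref{intertwine2} left to verify is that $C^*A^*$ is an extension of $A^*B^*$.

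This intertwining is the crux of the argument. Starting from $y\in\mathrm{Dom}(A^*B^*)$, so that $y\in\mathrm{Dom}(B^*)$ and $B^*y\in\mathrm{Dom}(A^*)$, I would compute, for every $z\in\mathrm{Dom}(C)$,
\[
\langle Cz,A^*y\rangle=\langle ACz,y\rangle=\langle BAz,y\rangle=\langle Az,B^*y\rangle=\langle z,A^*B^*y\rangle,
\]
using $AC=BA$ (valid since $\mathrm{Dom}_A(C)=\mathrm{Dom}(C)$ forces $Az\in\mathrm{Dom}(B)$ for $z\in\mathrm{Dom}(C)$), the definition of $B^*$ applied to $Az\in\mathrm{Dom}(B)$ against $y\in\mathrm{Dom}(B^*)$, and finally the definition of $A^*$ against $B^*y\in\mathrm{Dom}(A^*)$. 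This exhibits $A^*y$ as satisfying the adjoint pairing condition for $C$, so $A^*y\in\mathrm{Dom}(C^*)$ with $C^*A^*y=A^*B^*y$, as desired; Lemma~\ref{intertwine2} then delivers (i).

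For part (ii), I would integrate the identity from (i). For $y\in\mathrm{Dom}(A^*)$ the element $e^{tC^*}A^*y=A^*e^{tB^*}y$ is well-defined for every $t>0$. Approximating $T_{B^*,\phi}y$ by Riemann sums $\sum_i\phi(t_i)e^{t_iB^*}y\,\Delta t_i$, each sum lies in $\mathrm{Dom}(A^*)$ with $A^*$-image $\sum_i\phi(t_i)e^{t_iC^*}A^*y\,\Delta t_i$, and the two sums converge to $T_{B^*,\phi}y$ and $T_{C^*,\phi}A^*y$ respectively (the former by definition of the Bochner integral, the latter because $e^{tC^*}$ is a $C_0$-semigroup and $\phi$ is compactly supported). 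Closedness of $A^*$ then forces $T_{B^*,\phi}y\in\mathrm{Dom}(A^*)$ with $A^*T_{B^*,\phi}y=T_{C^*,\phi}A^*y$, proving that $A^*T_{B^*,\phi}$ extends $T_{C^*,\phi}A^*$.

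The main obstacle to anticipate is getting the direction of inclusion right in the intertwining step: merely taking adjoints of $AC=BA$ only yields $(AC)^*=(BA)^*$, and Lemma~\ref{adj1}(iii) gives only $(AC)^*\supseteq C^*A^*$ and $(BA)^*\supseteq A^*B^*$, which combine to the weaker statement $(AC)^*\supseteq A^*B^*$. The hypothesis $\mathrm{Dom}(B^*)\subseteq\mathrm{Dom}(A^*)$ is precisely the ingredient that upgrades this to the desired $C^*A^*\supseteq A^*B^*$ through the explicit pairing computation above; without it, the adjoint of a composition cannot be split across factors.
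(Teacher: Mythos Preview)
Your proof is correct and follows essentially the same route as the paper: verify the hypotheses of Lemma~\ref{intertwine2} for the triple $(A^*,B^*,C^*)$, with the only nontrivial check being that $C^*A^*$ extends $A^*B^*$, and then deduce (ii) from (i) via Riemann sums and closedness of $A^*$ (the paper packages this last step as an appeal to Lemma~\ref{intertwine1}). The one stylistic difference is that the paper obtains the extension $C^*A^*\supseteq A^*B^*$ by invoking Lemma~\ref{adj1}(iii),(iv) abstractly, whereas you unwind those lemmas into the explicit pairing computation $\langle Cz,A^*y\rangle=\langle z,A^*B^*y\rangle$; the content is the same.
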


\begin{proof}
For $A^*,B^*,C^*$ we see that the second and third assumption in Lemma \ref{intertwine2} have already been assumed, and by Lemma \ref{adj1} (i) $A^*$ is closed, so it remains to verify that $C^*A^*$ is an extension of $A^*B^*$. By Lemma \ref{adj1} (iii) $C^*A^*$ and $A^*B^*$ have a common extension $(AC)^*$ (which extends $(BA)^*$), and by Lemma \ref{adj1} (iv)
\[\mathrm{Dom}_{C^*}(A^*)=\mathrm{Dom}((AC)^*)\cap\mathrm{Dom}(A^*).\]
On the other hand we have 
\[\mathrm{Dom}_{A^*}(B^*)\subseteq\mathrm{Dom}(B^*)\subseteq\mathrm{Dom}(A^*),\]
so $\mathrm{Dom}_{A^*}(B^*)\subseteq\mathrm{Dom}_{C^*}(A^*)$ and $C^*A^*$ is an extension of $A^*B^*$. Now since by definition $T_{B^*,\phi}$ and $T_{C^*,\phi}$ can be approximated by Riemann sums, (ii) follows from (i) via Lemma \ref{intertwine1}.
\end{proof}

\begin{Cor}\label{intertwine5}
Let $X$ be a unitary representation of a group $K$, $B:X\dashrightarrow X$ be a linear operator with dense domain such that $B^*$ satisfies the assumptions in Lemma \ref{HY} and $B=kBk^{-1}$ for every $k\in K$, then $e^{tB^*}:X\rightarrow X$ is $K$-equivariant for every $t>0$.
\end{Cor}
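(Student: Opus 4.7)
The plan is to deduce this corollary as a direct application of Corollary~\ref{intertwine4}. For each fixed $k \in K$, set $Y := X$, take $A := k^{-1}: X \to X$ (an everywhere-defined bounded unitary operator, so in particular closed with dense domain), and let $B = C$ both be our given operator $B$. Then $A^* = k$ is also everywhere-defined and bounded on $X$, so $\mathrm{Dom}(B^*) \subseteq X = \mathrm{Dom}(A^*)$ trivially, and $\mathrm{Dom}_A(C) = \mathrm{Dom}(C)$ holds because $A$ is defined on all of $X$. The hypothesis that $B^*$ satisfies the conditions in Lemma~\ref{HY} is given.

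The one identity to check is $AC = BA$, i.e.\ $k^{-1} B = B k^{-1}$ as operators $X \dashrightarrow X$. This is exactly the content of the hypothesis $B = kBk^{-1}$, read as an equality of operators: namely, $k$ preserves $\mathrm{Dom}(B)$ and commutes with $B$ there, equivalently $k^{-1}$ preserves $\mathrm{Dom}(B)$ and $k^{-1} B y = B k^{-1} y$ for $y \in \mathrm{Dom}(B)$, so that $\mathrm{Dom}(AC) = \mathrm{Dom}(B) = \mathrm{Dom}(BA)$ and the two operators agree on this common domain.

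Given these verifications, Corollary~\ref{intertwine4}(i) yields that $A^* e^{tB^*} = k\, e^{tB^*}$ is an extension of $e^{tC^*} A^* = e^{tB^*}\, k$ for every $t > 0$. But by Lemma~\ref{HY} the operator $e^{tB^*}: X \to X$ is bounded and everywhere defined, and $k$ is likewise bounded and everywhere defined, so both compositions $k\, e^{tB^*}$ and $e^{tB^*}\, k$ have domain all of $X$. Hence the extension relation forces the equality
\[
k\, e^{tB^*} = e^{tB^*}\, k \quad \text{on } X,
\]
which is precisely the $K$-equivariance of $e^{tB^*}$ for every $t > 0$.

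There is no real obstacle here; the only mildly delicate point is the bookkeeping on domains when interpreting $B = kBk^{-1}$ as an operator identity so that the hypothesis $AC = BA$ of Corollary~\ref{intertwine4} holds on the nose rather than merely up to extension.
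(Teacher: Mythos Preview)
Your proof is correct and follows exactly the approach the paper intends: Corollary~\ref{intertwine5} is stated immediately after Corollary~\ref{intertwine4} without proof precisely because it is the specialization you describe, taking $Y=X$, $A=k^{-1}$, and $C=B$. Your bookkeeping on domains is accurate, and the final step---that an extension between two everywhere-defined bounded operators is an equality---is the right way to conclude.
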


\subsection{Proof of Proposition \ref{umg}}\label{reg3}

Fix a weight function $\rho:[G]/K_o\mathbb{K}\rightarrow\mathbb{R}_{>0}$ as in Remark \ref{rho} (iii) and consider the weighted $L^2$-spaces
\[L^2_N:=\big\{f:[G]/\mathbb{K}\rightarrow\mathbb{C}:\int_{[G]}\rho^{-N}|f|^2\,dg<\infty\big\}/\!\sim\]
where $f$ are measurable functions and $\sim$ denotes equality almost everywhere, with inner product
\[\langle f_1,f_2\rangle_N:=\int_{[G]}\rho^{-N}f_1\overline{f_2}\,dg,\]
where $N\in\mathbb{Z}_{\geq 0}$ and $dg$ is induced from the Haar measure on $G(\mathbb{A})$. From Remark \ref{rho} (iii) we see that $L^2_N\subseteq L^2_{N'}$ with the inclusion being bounded when $N\leq N'$, and
\[C^\infty_{\mathrm{dmg}}\subseteq\bigcup_{N\geq 0}L^2_N.\]
On the other hand, it follows from the Euclidean case \cite[8.17 Proposition]{Folland99} that $C^\infty_c:=C^\infty_c([G]/\mathbb{K})^{K_o\mbox{-}\mathrm{fin}}$ is dense in every $L^2_N$. As $\rho$ is $K_o$-invariant, each $L^2_N$ is a unitary representation of $K_o$ under right translations and the above inclusions are all $K_o$-equivariant. With a slight abuse of notation we denote by $i_\star$ and $p_\star$ the inclusion of and orthogonal projection to the $\star$-isotypic component or part for $C^\infty_c$ and every $L^2_N$, where $\star$ could be any $\pi\in\widehat{K}_o$ or $S\subseteq\widehat{K}_o$, except that for projection from $L^2_N$ we require $S$ to be finite.

Now we define for any $D\in\mathfrak{U}(\mathfrak{g})$ and $N\geq 0$ a linear operator $D_N:L^2_N\dashrightarrow L^2_N$ as follows: let $D^\dag\in\mathfrak{U}(\mathfrak{g})$ be the \it formal adjoint \rm of $D$, i.e. $D^\dag$ depends antilinearly on $D$ and if $D=X_1...X_k$ with $X_1,...,X_k\in\mathfrak{g}$ then $D^\dag=(-1)^k\overline{X}_k...\overline{X}_1$; denote by $D_c$ the action of $D$ on $C^\infty_c$, then
\[D_N:=(\rho^ND^\dag_c\rho^{-N})^*.\]

\begin{Prop}\label{DN}
$(i)$ $D_{N'}$ is an extension of $D_N$ whenever $N'\geq N$.\\
$(ii)$ $f\in L^2_N$ belongs to $\mathrm{Dom}(D_N)$ if and only if its distributional derivative $Df\in L^2_N$, in which case $D_Nf=Df$. Note that for smooth functions, distributional derivatives are just the usual derivatives.\\
$(iii)$ If finite subsets $S_1,S_2\subseteq\widehat{K}_o$ satisfy $D^\dag(C^\infty_{c,S_2^c})\subseteq C^\infty_{c,S_1^c}$, then for every $N\geq 0$,
\[D_{N,S_1,S_2}:=(p_{S_1}\rho^ND^\dag_c\rho^{-N}i_{S_2})^*:L^2_{N,S_1}\dashrightarrow L^2_{N,S_2}\]
is the restriction of $D_N$ to $\mathrm{Dom}(D_N)\cap L^2_{N,S_1}$.
\end{Prop}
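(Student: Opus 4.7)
My plan is to identify the operator $D_N$ with the distributional derivative by $D$, which handles (i) and (ii) at once, and then to deduce (iii) from Corollary \ref{adj2}.

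Since $\rho$ is smooth and strictly positive, multiplication by $\rho^{\pm N}$ is a bijection $C^\infty_c \to C^\infty_c$, so the operator $\rho^N D^\dag_c \rho^{-N}$ on $L^2_N$ has domain exactly $C^\infty_c$, which is dense by the remark preceding the proposition. For $f, g \in L^2_N$ and $h \in C^\infty_c$, the substitution $u = \rho^{-N} h$ (also ranging over $C^\infty_c$) gives
\[
\langle \rho^N D^\dag_c \rho^{-N} h, f\rangle_N = \int_{[G]} (D^\dag u)\, \bar{f}\, dg, \qquad \langle h, g\rangle_N = \int_{[G]} u\, \bar{g}\, dg.
\]
Therefore $f \in \mathrm{Dom}(D_N)$ with $D_N f = g$ is equivalent to $\int (D^\dag u)\, \bar{f}\, dg = \int u\, \bar{g}\, dg$ for every $u \in C^\infty_c$. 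Taking complex conjugates and using $\overline{D^\dag u} = D^t \bar{u}$, where $D^t = (-1)^k X_k \cdots X_1$ is the ordinary transpose, this is precisely the statement that $Df = g$ in the sense of distributions. Hence $\mathrm{Dom}(D_N) = \{f \in L^2_N : Df \in L^2_N \text{ distributionally}\}$ and $D_N f = Df$, proving (ii). Part (i) follows at once: for $N' \geq N$, $L^2_N \subseteq L^2_{N'}$ transports both $f$ and $Df$, so $\mathrm{Dom}(D_N) \subseteq \mathrm{Dom}(D_{N'})$ and the operators agree on the smaller domain.

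For (iii) I apply Corollary \ref{adj2} with $Y = L^2_N$, $Z = L^2_{N, S_1}$, $X = L^2_{N, S_2}$ and $B = \rho^N D^\dag_c \rho^{-N}$. Both hypotheses rest on the $K_o$-invariance of $\rho$: multiplication by $\rho^{\pm N}$ is $K_o$-equivariant and therefore preserves isotypic components, and the same fact, via the isometry $\rho^{-N/2}: L^2_N \xrightarrow{\sim} L^2_0$, shows that $L^2_{N, S_2}^\perp = L^2_{N, S_2^c}$ inside $L^2_N$. The decomposition $\mathrm{Dom}(B) = C^\infty_c = C^\infty_{c, S_2} \oplus C^\infty_{c, S_2^c}$ verifies \eqref{decomp}. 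For the remaining hypothesis, take $u \in C^\infty_{c, S_2^c}$: then $\rho^{-N} u \in C^\infty_{c, S_2^c}$, so by assumption $D^\dag(\rho^{-N} u) \in C^\infty_{c, S_1^c}$, and finally $Bu \in C^\infty_{c, S_1^c} \subseteq L^2_{N, S_1}^\perp = Z^\perp$. Corollary \ref{adj2} therefore yields $D_{N, S_1, S_2} = (p_{S_1} B i_{S_2})^*$ equal to the restriction of $B^* = D_N$ to $\mathrm{Dom}(D_N) \cap L^2_{N, S_1}$, which is exactly the content of (iii).

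The only subtlety is the bookkeeping between the formal adjoint $D^\dag$ (which conjugates coefficients) and the transpose $D^t$ (which defines distributional derivatives); once this is tracked carefully, every remaining step is either an identity in the universal enveloping algebra or a direct invocation of Corollary \ref{adj2}.
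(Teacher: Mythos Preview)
Your proof is correct and follows essentially the same route as the paper: both unwind the adjoint via the substitution $u=\rho^{-N}h$ to obtain the integration-by-parts identity $\int(D^\dag u)\,\bar f\,dg=\int u\,\bar g\,dg$, and both dispatch (iii) by invoking Corollary~\ref{adj2}. Your packaging of (i) and (ii) together through this $N$-independent condition is slightly more streamlined than the paper's separate computations, and your verification of the hypotheses of Corollary~\ref{adj2} spells out what the paper leaves as a one-line citation; the only mild looseness is that since $C^\infty_c$ here means the $K_o$-\emph{finite} test functions, identifying the condition with ``$Df=g$ distributionally'' against all test functions tacitly uses their density, though neither (i) nor (ii) as stated actually needs that direction.
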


\begin{proof}
As for (i), if $f\in\mathrm{Dom}(D_N)$ then by definition we have
\[\forall h\in C^\infty_c,\langle\rho^ND^\dag(\rho^{-N}h),f\rangle_N=\langle h,D_Nf\rangle_N,\]
hence for every $h\in C^\infty_c$,
\[\begin{aligned}
\langle\rho^{N'}D^\dag(\rho^{-N'}h),f\rangle_{N'}&=\langle\rho^ND^\dag(\rho^{-N'}h),f\rangle_N\\
&=\langle\rho^{N-N'}h,D_Nf\rangle_N\\
&=\langle h,D_Nf\rangle_{N'},
\end{aligned}\]
i.e. $f\in\mathrm{Dom}(D_{N'})$ and $D_{N'}f=D_Nf$. (ii) is essentially by definition: $Df=\widetilde{f}\in L^2_N$ if and only if for every $h\in C^\infty_c$,
\[\int_{[G]}D^\dag(\rho^{-N}h)\overline{f}\,dg=\langle h,\widetilde{f}\rangle_N,\]
while the left hand side is just $\langle\rho^ND^\dag(\rho^{-N}h),f\rangle_N$. Finally (iii) is a special case of Corollary \ref{adj2}.
\end{proof}

\begin{Cor}\label{DNS}
$(i)$ Combining (i) and (iii) we see that $D_{N',S_1,S_2}$ is an extension of $D_{N,S_1,S_2}$ whenever $N'\geq N$.\\
$(ii)$ Combining (ii) and (iii) it follows that $f\in C^\infty_{\mathrm{dmg},S_1}$ lies in $\mathrm{Dom}(D_{N,S_1,S_2})$ if and only if $f$ and $Df$ both belong to $L^2_N$. In particular we have
\[C^\infty_{\mathrm{dmg},S_1}\subseteq\bigcup_{N\geq 0}\mathrm{Dom}(D_{N,S_1,S_2}).\]
Moreover, $D_{N,S_1,S_2}$ coincides with $D$ on $C^\infty_{\mathrm{dmg},S_1}\cap\mathrm{Dom}(D_{N,S_1,S_2})$.
\end{Cor}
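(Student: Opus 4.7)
The plan is to recognize that Corollary \ref{DNS} is purely a formal consequence of the three parts of Proposition \ref{DN}, together with the basic growth-vs-integrability relationship built into the weight function $\rho$ of Remark \ref{rho}. I would start by recording the structural observation from part (iii): $D_{N,S_1,S_2}$ is nothing but the restriction of $D_N$ to $\mathrm{Dom}(D_N)\cap L^2_{N,S_1}$, so both assertions of the corollary reduce to statements about $D_N$ on the $S_1$-isotypic part.

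For statement (i), given $f\in\mathrm{Dom}(D_{N,S_1,S_2})\subseteq\mathrm{Dom}(D_N)\cap L^2_{N,S_1}$, I would invoke Proposition \ref{DN}(i) to conclude $f\in\mathrm{Dom}(D_{N'})$ with $D_{N'}f=D_Nf$, note that the inclusion $L^2_N\hookrightarrow L^2_{N'}$ is $K_o$-equivariant so restricts to $L^2_{N,S_1}\subseteq L^2_{N',S_1}$, and then reapply (iii) at level $N'$ to place $f$ in $\mathrm{Dom}(D_{N',S_1,S_2})$ with the value matching $D_{N,S_1,S_2}f$.

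For statement (ii), take $f\in C^\infty_{dmg,S_1}$. Since $f$ is smooth its distributional derivatives agree with the classical ones, so Proposition \ref{DN}(ii) says $f\in\mathrm{Dom}(D_N)$ is equivalent to $f,Df\in L^2_N$, and on this locus $D_Nf=Df$. Because $f$ is already $S_1$-isotypic, $f\in L^2_N$ is the same as $f\in L^2_{N,S_1}$, so part (iii) upgrades the membership in $\mathrm{Dom}(D_N)$ to $\mathrm{Dom}(D_{N,S_1,S_2})$ without changing the value, giving both the ``if and only if'' and the coincidence with $D$.

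The only nonformal input needed is the ``in particular'' clause: for each individual $f\in C^\infty_{dmg,S_1}$ one must produce some $N$ with $f,Df\in L^2_N$. Here I would use the characterization of moderate growth in Remark \ref{rho}: both $f$ and $Df$ are bounded by $c\rho^{N_0}$ for some $c,N_0$, whence $\rho^{-N}|f|^2,\rho^{-N}|Df|^2\leq c^2\rho^{2N_0-N}$, and these are integrable on $[G]$ as soon as $N$ exceeds $2N_0$ plus the exponent for which $\rho^{-\,\cdot}$ becomes integrable on $[G]$ (a standard property of weight functions built from sufficiently rapidly growing norms). This is the only point where anything beyond a diagram chase is used, and even it is a routine bound; I do not anticipate any serious obstacle.
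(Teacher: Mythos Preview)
Your proposal is correct and is exactly the formal unwinding the paper intends; the paper gives no separate proof because the statement is meant to follow directly from Proposition~\ref{DN}(i)--(iii). Two minor remarks: first, Proposition~\ref{DN}(ii) is stated only as an implication, so for the ``only if'' half of (ii) you should note that for smooth $f\in\mathrm{Dom}(D_N)$ the same integration-by-parts identity forces $D_Nf=Df\in L^2_N$; second, the integrability input you single out is already recorded in the paper just before the corollary as $C^\infty_{dmg}\subseteq\bigcup_{N\geq 0}L^2_N$, and since $C^\infty_{dmg}$ is $\mathfrak{g}$-stable this immediately gives a common $N$ for $f$ and $Df$.
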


When $D=C_\mathfrak{g}$, $D^\dag_c=C_{\mathfrak{g},c}$ is $K_o$-equivariant (because the Killing form is $K_o$-invariant), hence for each finite subset $S_1\subseteq\widehat{K}_o$ we can take $S_2=S_1$; when $D=X\in\mathfrak{g}$, $D^\dag=-\overline{X}$ and dual to (\ref{Spi}) we have
\[\forall\pi\in\widehat{K}_o,\overline{X}(C^\infty_{c,S_\pi^c})\subseteq C^\infty_{c,\{\pi\}^c},\]
thus for each singleton $S_1=\{\pi\}$ we can take $S_2=S_\pi$. For any finite subset $S\subseteq\widehat{K}_o$ and element $\pi\in\widehat{K}_o$ we denote
\[C_{\mathfrak{g},N,S}:=C_{\mathfrak{g},N,S,S}:L^2_{N,S}\dashrightarrow L^2_{N,S},\]
\[C_{\mathfrak{g},N,\pi}:=C_{\mathfrak{g},N,\{\pi\}}:L^2_{N,\pi}\dashrightarrow L^2_{N,\pi},\]
\[X_{N,\pi}:=X_{N,\{\pi\},S_\pi}:L^2_{N,\pi}\dashrightarrow L^2_{N,S_\pi},\]
then Proposition \ref{DN} (iii) implies that $C_{\mathfrak{g},N,S}=\bigoplus_{\pi\in S}C_{\mathfrak{g},N,\pi}$.

\begin{Prop}\label{Sobolev1}
For any $N\geq 0$ and $\pi\in\widehat{K}_o$, $C_{\mathfrak{g},N,\pi}$ satisfies the assumptions in Lemma \ref{HY} and we have
\begin{equation}\label{incln3}
\mathrm{Dom}(C_{\mathfrak{g},N,\pi})\subseteq\mathrm{Dom}(X_{N,\pi})
\end{equation}
for every $X\in\mathfrak{g}$.
\end{Prop}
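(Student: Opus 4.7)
The plan is to extract both claims from a single integration-by-parts identity on the core $C^\infty_{c,\pi}$ and then propagate the resulting estimate to all of $\mathrm{Dom}(C_{\mathfrak{g},N,\pi})$. Using the bases $(X_l)_{l=1}^n\subset\mathfrak{p}_-$ and $(X_p)_{p=n+1}^{n+m}\subset\mathfrak{k}_o\cap[\mathfrak{g},\mathfrak{g}]$ from Step~3 of the proof of Proposition~\ref{OO}, I would write $C_\mathfrak{g}=\sum_l(\overline{X}_lX_l+X_l\overline{X}_l)-\sum_pX_p^2$. Since $G$ is reductive, $[G]/\mathbb{K}$ carries a two-sided invariant measure, so a direct calculation yields, for $h_1,h_2\in C^\infty_c$ and any $X\in\mathfrak{g}$,
\[\langle Xh_1,h_2\rangle_N=-\langle h_1,\overline{X}h_2\rangle_N+N\langle(X\log\rho)h_1,h_2\rangle_N.\]
Applying this identity twice to each second-order summand of $C_\mathfrak{g}$ and taking real parts, for $f\in C^\infty_{c,\pi}$ one obtains
\[\mathrm{Re}\langle C_\mathfrak{g}f,f\rangle_N=-\sum_l(\|X_lf\|_N^2+\|\overline{X}_lf\|_N^2)+\sum_p\|X_pf\|_N^2+N\cdot R(f),\]
where $R(f)$ collects cross terms of the shape $\langle(X\log\rho)(Xf),f\rangle_N$. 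By \eqref{dlog} each $|X\log\rho|$ is globally bounded, so Young's inequality absorbs $N\cdot R(f)$ into half of the derivative-square terms plus a $C(N)\|f\|_N^2$ contribution. Because the $\mathfrak{k}_o$-action on the $\pi$-isotype factors through the bounded map $d\pi$, each $\|X_pf\|_N\leq\|d\pi(X_p)\|\cdot\|f\|_N$. Collecting these bounds produces a constant $\lambda_0=\lambda_0(N,\pi)>0$ for which
\begin{equation}\tag{$\star$}
\mathrm{Re}\langle C_\mathfrak{g}f,f\rangle_N\leq-\tfrac12\sum_l(\|X_lf\|_N^2+\|\overline{X}_lf\|_N^2)+\lambda_0\|f\|_N^2,\qquad f\in C^\infty_{c,\pi}.
\end{equation}

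The main obstacle is promoting $(\star)$ to all of $\mathrm{Dom}(C_{\mathfrak{g},N,\pi})$. By construction $C_{\mathfrak{g},N,\pi}=A^*$ where $A:=p_\pi\rho^NC_\mathfrak{g}\rho^{-N}i_\pi$ on $C^\infty_{c,\pi}$; using the formal self-adjointness $C_{\mathfrak{g},c}^\dag=C_{\mathfrak{g},c}$ in the chosen bases, a distributional integration by parts identifies $C_{\mathfrak{g},N,\pi}f$ with the distributional Casimir $C_\mathfrak{g}f$ for every $f\in\mathrm{Dom}(C_{\mathfrak{g},N,\pi})$. It then suffices to show that $C^\infty_{c,\pi}$ is a core. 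I would establish this by mollification: after the unitary transfer $f\mapsto\rho^{-N/2}f$ to $L^2_{0,\pi}$, the transformed Casimir differs from $C_\mathfrak{g}$ by a first-order differential operator with bounded coefficients, and its principal part on the $\pi$-isotype is elliptic along the $\mathfrak{p}_o$-directions on the vector bundle over the complete locally symmetric space $[G]/K_o\mathbb{K}$ associated to $\pi$. Standard elliptic regularity together with cut-offs on $[G]$ and left convolution by a $K_o$-biinvariant approximate identity on $G(\mathbb{R})$ (which preserves the $\pi$-isotype) then produces a sequence in $C^\infty_{c,\pi}$ converging to any $f\in\mathrm{Dom}(C_{\mathfrak{g},N,\pi})$ in the graph norm, and $(\star)$ passes to the limit.

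Once $(\star)$ holds throughout $\mathrm{Dom}(C_{\mathfrak{g},N,\pi})$, both conclusions follow. The numerical-range bound $\mathrm{Re}\langle C_{\mathfrak{g},N,\pi}f,f\rangle_N\leq\lambda_0\|f\|_N^2$ combined with Cauchy--Schwarz gives injectivity of $\lambda-C_{\mathfrak{g},N,\pi}$ with closed range and the resolvent estimate \eqref{resolv1} for every $\lambda>\lambda_0$; density of the range reduces to injectivity of $\bar\lambda-C_{\mathfrak{g},N,\pi}$, immediate from the same numerical-range bound, so $\lambda$ belongs to the resolvent set and the hypotheses of Lemma~\ref{HY} hold. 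For \eqref{incln3}, discarding all but the $X_l$-summands of $(\star)$ yields
\[\sum_l(\|X_lf\|_N^2+\|\overline{X}_lf\|_N^2)\leq 2\|C_{\mathfrak{g},N,\pi}f\|_N\|f\|_N+2\lambda_0\|f\|_N^2<\infty,\]
so $Xf\in L^2_N$ for every $X\in\mathfrak{p}_o=\mathfrak{k}_o\oplus\mathfrak{p}_-$, the $\mathfrak{k}_o$-part being automatic via $d\pi$. Proposition~\ref{DN}(ii) then places $f\in\mathrm{Dom}(X_N)$, and Proposition~\ref{DN}(iii), since $f\in L^2_{N,\pi}$, refines this to $f\in\mathrm{Dom}(X_{N,\pi})$, proving \eqref{incln3}.
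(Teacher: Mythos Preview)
Your starting estimate $(\star)$ is essentially the paper's coercivity inequality for the sesquilinear form $I_{N,\lambda}$, so the computational heart is the same. The divergence is in how you promote it from $C^\infty_{c,\pi}$ to all of $\mathrm{Dom}(C_{\mathfrak{g},N,\pi})$, and here there are two genuine problems.

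First, your surjectivity step is wrong as written. Density of $\mathrm{Ran}(\lambda-C_{\mathfrak{g},N,\pi})$ is equivalent to injectivity of $\bar\lambda-(C_{\mathfrak{g},N,\pi})^*$, not of $\bar\lambda-C_{\mathfrak{g},N,\pi}$ itself. By construction $C_{\mathfrak{g},N,\pi}=B^*$ with $B=p_\pi\rho^N C_{\mathfrak{g},c}\rho^{-N}i_\pi$, so $(C_{\mathfrak{g},N,\pi})^*=\overline{B}$; for $N>0$ this is \emph{not} $C_{\mathfrak{g},N,\pi}$ because the weight destroys symmetry. The repair is to run the same integration-by-parts on $B$ (which differs from $C_\mathfrak{g}$ by lower-order terms with $\log\rho$-bounded coefficients) and obtain a second numerical-range bound, but you do not do this.

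Second, your core argument is circular as sketched. Multiplying a smooth $f_\epsilon$ by a cut-off $\chi_R$ produces a commutator $[C_\mathfrak{g},\chi_R]f_\epsilon$ that is first order in $f_\epsilon$, and controlling it in $L^2_N$ already requires the global bounds $X_lf_\epsilon\in L^2_N$ that you are trying to establish. ``Standard elliptic regularity'' only gives local smoothness. One can break the circle with a Gaffney--Roelcke energy trick (test against $\chi_R^2 f$ and use completeness of the locally symmetric space together with uniformly bounded $|\nabla\chi_R|$), but this is real work you have not indicated.

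The paper sidesteps both issues by working variationally: it defines $I_{N,\lambda}$ directly on the Sobolev space $W^1_{N,\pi}$, applies Lax--Milgram to get that $\lambda-C_{\mathfrak{g},N,\pi}$ maps $W^1_{N,\pi}\cap\mathrm{Dom}(C_{\mathfrak{g},N,\pi})$ \emph{onto} $L^2_{N,\pi}$, and cites \cite[p.~193]{Franke98} for density of $(\lambda-C_{\mathfrak{g},c})(C^\infty_{c,\pi})$ to deduce injectivity. Surjectivity on $W^1\cap\mathrm{Dom}$ plus global injectivity then forces $\mathrm{Dom}(C_{\mathfrak{g},N,\pi})\subseteq W^1_{N,\pi}$, which is exactly \eqref{incln3}; the form identity $\langle f,C_{\mathfrak{g},N,\pi}f\rangle_N=\lambda\|f\|_N^2-I_{N,\lambda}(f,f)$ then delivers \eqref{resolv2}. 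No core statement is needed, and no separate treatment of the adjoint is required.
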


\begin{proof}
Let $X_1,...,X_n$ and $X_{n+1},...,X_{n+m}$ be bases of $\mathfrak{p}_-$ and $\mathfrak{k}_{o,\mathbb{R}}\cap[\mathfrak{g},\mathfrak{g}]$ as in the proof of Proposition \ref{OO}, $W^1_{N,\pi}\subseteq L^2_{N,\pi}$ be the subspace consisting of $f\in L^2_{N,\pi}$ whose distributional derivative $Xf$ belongs to $L^2_N$ for every $X\in\mathfrak{g}$, then by Proposition \ref{DN} (ii) $W^1_{N,\pi}\subseteq\mathrm{Dom}(X_N)$ for every $X\in\mathfrak{g}$. Equip $W^1_{N,\pi}$ with the inner product
\[\langle f_1,f_2\rangle_{W^1_{N,\pi}}:=\langle f_1,f_2\rangle_N+\sum_{l=1}^n(\langle X_lf_1,X_lf_2\rangle_N+\langle\overline{X}_lf_1,\overline{X}_lf_2\rangle_N),\]
then it becomes a Hilbert space, $C^\infty_{c,\pi}$ is a dense subspace of it and the inclusion $W^1_{N,\pi}\subseteq L^2_{N,\pi}$ is bounded. For every $\lambda\in\mathbb{R}$ consider the sesquilinear form
\[\begin{aligned}
I_{N,\lambda}(f_1,f_2):=&\sum_{l=1}^n(\langle\rho^NX_l(\rho^{-N}f_1),X_lf_2\rangle_N+\langle\rho^N\overline{X}_l(\rho^{-N}f_1),\overline{X}_lf_2\rangle_N)\\
&-\sum_{p=n+1}^{n+m}\langle\rho^NX_p(\rho^{-N}f_1),X_pf_2\rangle_N+\lambda\langle f_1,f_2\rangle_N\\
=&-N\sum_{l=1}^n(\langle X_l(\log{\rho})f_1,X_lf_2\rangle_N+\langle\overline{X}_l(\log{\rho})f_1,\overline{X}_lf_2\rangle_N)\\
&-\sum_{p=n+1}^{n+m}\langle X_pf_1,X_pf_2\rangle_N+(\lambda-1)\langle f_1,f_2\rangle_N+\langle f_1,f_2\rangle_{W^1_{N,\pi}}
\end{aligned}\]
on $W^1_{N,\pi}$, which by (\ref{dlog}) is well-defined and bounded. On the one hand for any $f\in W^1_{N,\pi}$ and $h\in C^\infty_{c,\pi}$ we have
\begin{equation}\label{byparts}
\begin{aligned}
\forall X\in\mathfrak{g},\langle\rho^NX(\rho^{-N}h),Xf\rangle_N&=\int_{[G]}X(\rho^{-N}h)\overline{Xf}\,dg\\
&=-\int_{[G]}\overline{X}X(\rho^{-N}h)\overline{f}\,dg\\
&=-\langle\rho^N\overline{X}X(\rho^{-N}h),f\rangle_N
\end{aligned}
\end{equation}
and hence $I_{N,\lambda}(h,f)=\lambda\langle h,f\rangle_N-\langle\rho^NC_{\mathfrak{g},c}(\rho^{-N}h),f\rangle_N$. Then by definition $\widetilde{f}$ in $L^2_{N,\pi}$ equals $(\lambda-C_{\mathfrak{g},N,\pi})f$ if and only if
\begin{equation}\label{LM}
I_{N,\lambda}(h,f)=\langle h,\widetilde{f}\rangle_N
\end{equation}
holds for all $h\in C^\infty_{c,\pi}$, and by continuity this is equivalent to that (\ref{LM}) holds for all $h\in W^1_{N,\pi}$. On the other hand we see that for every $f\in W^1_{N,\pi}$,
\[\begin{aligned}
I_{N,\lambda}(f,f)=&-N\sum_{l=1}^n(\langle X_l(\log{\rho})f,X_lf\rangle_N+\langle\overline{X}_l(\log{\rho})f,\overline{X}_lf\rangle_N)\\
&-\sum_{p=n+1}^{n+m}\|X_pf\|_N^2+(\lambda-1)\|f\|_N^2+\|f\|_{W^1_{N,\pi}}^2.
\end{aligned}\]
Let $M_1=M_1(\rho)>0$ be a common upper bound of all $|X_l(\log{\rho})|$ and $|\overline{X}_l(\log{\rho})|$, $M_2=M_2(\pi)>0$ be such that $\sum_{p=n+1}^{n+m}\|X_pv\|_\pi^2\leq M_2\|v\|_\pi^2$ for every $v\in\pi$ with $\pi$ thought of as an irreducible unitary representation of $K_o$, then we have
\[\begin{aligned}
\mathrm{Re}\,I_{N,\lambda}(f,f)\geq&-NM_1\sum_{l=1}^n(\|f\|_N\|X_lf\|_N+\|f\|_N\|\overline{X}_lf\|_N)\\
&+(\lambda-M_2-1)\|f\|_N^2+\|f\|_{W^1_{N,\pi}}^2\\
\geq&-\frac{1}{2}\sum_{l=1}^n(\|X_lf\|_N^2+\|\overline{X}_lf\|_N^2)\\
&+(\lambda-nN^2M_1^2-M_2-1)\|f\|_N^2+\|f\|_{W^1_{N,\pi}}^2\\
=&(\lambda-nN^2M_1^2-M_2-\frac{1}{2})\|f\|_N^2+\frac{1}{2}\|f\|_{W^1_{N,\pi}}^2.
\end{aligned}\]
Specially when $\lambda\geq\lambda_0:=nN^2M_1^2+M_2+\frac{1}{2}$ it holds that
\begin{equation}\label{resolv3}
\mathrm{Re}\,I_{N,\lambda}(f,f)\geq\frac{1}{2}\|f\|_{W^1_{N,\pi}}^2
\end{equation}
and thus $I_{N,\lambda}$ satisfies the conditions of the Lax-Milgram theorem \cite[Theorem 6.6]{Lax02}, which implies that every bounded linear functional on $W^1_{N,\pi}$ is of the form $I_{N,\lambda}(\cdot,f)$ for some $f\in W^1_{N,\pi}$. In particular for every $\widetilde{f}\in L^2_{N,\pi}$ there exists $f\in W^1_{N,\pi}$ such that (\ref{LM}) holds for all $h\in W^1_{N,\pi}$, i.e. $\lambda-C_{\mathfrak{g},N,\pi}$ maps $W^1_{N,\pi}\cap\mathrm{Dom}(C_{\mathfrak{g},N,\pi})$ onto $L^2_{N,\pi}$. If we have $\lambda-C_{\mathfrak{g},N,\pi}$ is injective, then it follows that
\begin{equation}\label{incln4}
\mathrm{Dom}(C_{\mathfrak{g},N,\pi})\subseteq W^1_{N,\pi}
\end{equation}
and $\lambda$ belongs to the resolvent set of $C_{\mathfrak{g},N,\pi}$. Notice that analogous to (\ref{byparts}) we also have for any $f\in W^1_{N,\pi}$ and $h\in C^\infty_{c,\pi}$,
\[I_{N,\lambda}(f,h)=\lambda\langle f,h\rangle_N-\langle f,C_{\mathfrak{g},c}h\rangle_N,\]
so the Lax-Milgram theorem also implies that $\lambda-\rho^NC_{\mathfrak{g},N,\pi}\rho^{-N}$ maps onto $L^2_{N,\pi}$. As indicated on \cite[p.193]{Franke98}, one can use Friedrichs mollifiers to approximate every $f\in\mathrm{Dom}(C_{\mathfrak{g},N,\pi})$ by $f_n\in C^\infty_{c,\pi}$ such that $C_{\mathfrak{g},c}f_n\rightarrow C_{\mathfrak{g},N,\pi}f$ in $L^2_{N,\pi}$, and the approximation could be taken stronger such that
\[\rho^Nf_n\rightarrow\rho^Nf\text{ and }\rho^NC_{\mathfrak{g},c}f_n\rightarrow\rho^NC_{\mathfrak{g},N,\pi}f.\]
Hence $(\lambda-\rho^NC_{\mathfrak{g},c}\rho^{-N})(C^\infty_{c,\pi})$ is dense in $L^2_{N,\pi}$, and $\lambda-C_{\mathfrak{g},N,\pi}$ is injective. We see that (\ref{incln4}) implies (\ref{incln3}) as well as that
\[\forall f\in\mathrm{Dom}(C_{\mathfrak{g},N,\pi}),\langle f,C_{\mathfrak{g},N,\pi}f\rangle_N=\lambda\langle f,f\rangle_N-I_{N,\lambda}(f,f),\]
which indicates that the inequality (\ref{resolv2}) follows from (\ref{resolv3}).
\end{proof}

\begin{Cor}\label{Sobolev1.5} 
$(i)$ $C_{\mathfrak{g},N,S_\pi}X_{N,\pi}$ is an extension of $X_{N,\pi}C_{\mathfrak{g},N,\pi}$.\\
$(ii)$ For any $k\geq 1$ and
$X_1,...,X_k\in\mathfrak{g}$,
\[\mathrm{Dom}(C_{\mathfrak{g},N,\pi}^k)\subseteq\mathrm{Dom}(
(X_k)_N(X_{k-1})_N...(X_1)_N),\]
where $(X_l)_N=(X_l)_{N,S_{\pi,l-1},S_{\pi,l}}$ and $S_{\pi,l}$ is the set of $K_o$-types appearing in $\pi\otimes\mathfrak{g}^{\otimes l}$.
\end{Cor}

\begin{proof}
(i) $C_{\mathfrak{g},N,S_\pi}X_{N,\pi}$ and $X_{N,\pi}C_{\mathfrak{g},N,\pi}$ have a common extension $(C_{\mathfrak{g}}X)_{N,\{\pi\},S_\pi}$, and by Lemma \ref{adj1} (iii) the assertion follows from that $\mathrm{Dom}(C_{\mathfrak{g},N,\pi})\subseteq\mathrm{Dom}(X_{N,\pi})$.
(ii) By (i) we have
\[\forall f\in\mathrm{Dom}(C_{\mathfrak{g},N,\pi}^k),X\in\mathfrak{g},X_{N,\pi}f\in\mathrm{Dom}(C_{\mathfrak{g},N,S_\pi}^{k-1}).\]
The conclusion then follows from induction on $k$. 
\end{proof}

\begin{Prop}\label{Sobolev2}
For every $\pi\in\widehat{K}_o$ we have
\[C^\infty_{\mathrm{dmg},\pi}=\bigcap_{k\geq 1}\bigcup_{N\geq 0}\mathrm{Dom}(C_{\mathfrak{g},N,\pi}^k)\]
and
\[C^\infty_{\mathrm{umg},\pi}=\bigcup_{N\geq 0}\bigcap_{k\geq 1}\mathrm{Dom}(C_{\mathfrak{g},N,\pi}^k).\]
\end{Prop}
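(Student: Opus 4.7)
The plan is to interpret $\mathrm{Dom}(C_{\mathfrak{g},N,\pi}^k)$ as a weighted $L^2$-Sobolev space of order $2k$ on the $\pi$-isotypic part of $[G]/\mathbb{K}$, and then to extract pointwise growth information from Sobolev embedding. The statement thus becomes a precise description of how the regularity order $k$ trades off against the weight exponent $N$ under the Casimir.

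First I would dispatch the easy inclusions ``$\subseteq$'' using Proposition \ref{DN}(ii). If $f\in C^\infty_{dmg,\pi}$, then for every $k\geq 1$ the function $C_\mathfrak{g}^k f$ is smooth, of type $\pi$, and of moderate growth, hence lies in some $L^2_{N_k}$; iterating Proposition \ref{DN}(ii) gives $f\in\mathrm{Dom}(C_{\mathfrak{g},N_k,\pi}^k)$. If moreover $f\in C^\infty_{umg,\pi}$, the uniform moderate growth lets us take $N_k=N$ independent of $k$, which gives $f\in\bigcap_k\mathrm{Dom}(C_{\mathfrak{g},N,\pi}^k)$.

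For the reverse inclusions, the key reduction is to replace $C_\mathfrak{g}$ by an elliptic operator on the $\pi$-isotypic component. Let $X_{n+1},\dots,X_{n+m}$ be the orthonormal basis of $\mathfrak{k}_{o,\mathbb{R}}\cap[\mathfrak{g},\mathfrak{g}]$ appearing in the proof of Proposition \ref{OO}, set $C_{\mathfrak{k}_o}:=-\sum_p X_p^2$ and note that by Schur's lemma $C_{\mathfrak{k}_o}$ acts on $\pi$ by a scalar $c_\pi$. On type-$\pi$ functions one has $C_\mathfrak{g}=2c_\pi-\Omega$ where
\[
\Omega := -C_\mathfrak{g}+2C_{\mathfrak{k}_o}
= -\sum_{l=1}^{n}(\overline{X}_l X_l + X_l \overline{X}_l)-\sum_{p=n+1}^{n+m}X_p^2
\]
is a second-order left-$G(\mathbb{R})$-invariant operator whose principal symbol, viewed on $[G]/\mathbb{K}$, is (up to sign) a Riemannian metric and hence elliptic. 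Iterated elliptic regularity then shows that $f\in\mathrm{Dom}(C_{\mathfrak{g},N,\pi}^k)$ implies $f\in H^{2k}_{\mathrm{loc}}$, and a G\r{a}rding-type inequality with commutator estimates controlled by \eqref{dlog} upgrades this to $Df\in L^2_{N'}$ for every left-invariant differential operator $D$ of order at most $2k$, where $N'=N'(N,k)$ depends only on $N$ in the $umg$ situation because a single $N$ works for all $k$.

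The final and most delicate step is to pass from weighted $L^2$ information to pointwise growth of every derivative. On each relatively compact chart of $[G]/\mathbb{K}$, Sobolev embedding converts an $H^{2k}$-bound into a $C^{2k-d/2-1}$-pointwise bound, where $d=\dim_\mathbb{R} G(\mathbb{R})$; to assemble these local bounds into a uniform estimate of the form $|Df(g)|\leq c_D\,\rho(g)^{N''/2}$ one must check that translating by a fixed compact set distorts $\rho$ only by a bounded factor, which is ensured by \eqref{dlog}. Letting $k\to\infty$ yields moderate growth for all derivatives and proves ``$\supseteq$'' in the first equation; the fact that $N''$ depends only on $N$ (not on the order of $D$) yields ``$\supseteq$'' in the second. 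The main obstacle is exactly this quantitative patching: verifying that the G\r{a}rding and Sobolev constants combine into a position-uniform pointwise bound, so that in the $umg$ case a single weight exponent suffices for all left-invariant derivatives simultaneously.
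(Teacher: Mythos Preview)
Your proposal is correct and is essentially the same argument as the paper's, with the difference that the paper outsources the two hard steps to \cite{Franke98} while you sketch their proofs inline. Specifically, the paper cites \cite[Proposition~1]{Franke98}, which states exactly that $\mathrm{Dom}(C_{\mathfrak{g},N,\pi}^k)$ coincides with the weighted Sobolev space $\{f:Df\in L^2_N\text{ for all }D\in\mathfrak{U}(\mathfrak{g})\text{ of degree}\leq 2k\}$---note the weight stays at $N$, which is precisely what you need and are extracting from the G\r{a}rding inequality together with the commutator control \eqref{dlog}; and it cites \cite[Proposition~2]{Franke98} for the uniform pointwise bound $|f|\leq c\,\rho^{N/2+N_0}$ with $N_0$ independent of $k$, which is your ``quantitative patching'' step. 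Your device of replacing $C_\mathfrak{g}$ by the elliptic $\Omega=-C_\mathfrak{g}+2C_{\mathfrak{k}_o}$ on the $\pi$-isotypic part is the standard way to prove both of Franke's propositions, so there is no genuine divergence in strategy. The easy inclusions via Proposition~\ref{DN}(ii) match the paper's appeal to Corollary~\ref{DNS}(ii).
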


\begin{proof}
By Corollary \ref{Sobolev1.5} (ii), Proposition \ref{DN} (ii) and the Sobolev embedding theorem \cite[Theorem 5.6.6 (ii)]{Evans98}, every function in $\mathrm{Dom}(C_{\mathfrak{g},N,\pi}^k)$ is $C^r$ whenever $k>r+\frac{1}{2}\dim{\mathfrak{g}/\mathfrak{a}_G}$, hence the 
functions in
\[\bigcap_{k\geq 1}\bigcup_{N\geq 0}\mathrm{Dom}(C_{\mathfrak{g},N,\pi}^k)\]
are all smooth. Besides, the characterisation of $\mathrm{Dom}(C_{\mathfrak{g},N,\pi}^k)$ implies that the spaces
\[\bigoplus_{\pi\in\widehat{K}_o}\bigcap_{k\geq 1}\mathrm{Dom}(C_{\mathfrak{g},N,\pi}^k)\]
for $N\geq 0$ as well as $\bigoplus_{\pi\in\widehat{K}_o}\bigcap_{k\geq 1}\bigcup_{N\geq 0}\mathrm{Dom}(C_{\mathfrak{g},N,\pi}^k)$ are all stable under the action of $\mathfrak{U}(\mathfrak{g})$. On the other hand \cite[Proposition 2]{Franke98} shows that there is an $N_0\geq 0$ depending on $\rho$ such that when $k\geq\dim_\mathbb{R}{\mathrm{Sh}_\mathbb{K}}$, every $f\in\mathrm{Dom}(C_{\mathfrak{g},N,\pi}^k)$ is bounded by a multiple of $\rho^{N/2+N_0}$. Therefore we have
\[\bigcap_{k\geq 1}\bigcup_{N\geq 0}\mathrm{Dom}(C_{\mathfrak{g},N,\pi}^k)\subseteq C^\infty_{\mathrm{dmg},\pi}\]
and
\[\bigcup_{N\geq 0}\bigcap_{k\geq 1}\mathrm{Dom}(C_{\mathfrak{g},N,\pi}^k)\subseteq C^\infty_{\mathrm{umg},\pi}.\]
Inclusions in the other direction clearly follow from Corollary \ref{DNS} (ii).
\end{proof}

Now we construct maps $T_{c,\pi}\in\mathrm{End}_{K_o}(C^\infty_{\mathrm{dmg},\pi})$ for any $c\in\mathbb{C}$ and $\pi\in\widehat{K}_o$. By Lemma \ref{HY} and Proposition \ref{Sobolev1}, for each $N\geq 0$,
\[G_{N,\pi}:=\frac{1}{2}(C_{\mathfrak{g},N,\pi}+c)\]
is the generator of a $C_0$-semigroup of operators $(e^{tG_{N,\pi}})_{t\geq 0}$ over $L^2_{N,\pi}$. Pick a $\phi$ as in Corollary \ref{TGphi2} (ii) and for every $x\in C^\infty_{\mathrm{dmg},\pi}\cap L^2_{N,\pi}$ define
\[T_{c,\pi}x:=T_{G_{N,\pi},\phi}x=\int_0^{+\infty}\phi(t)e^{tG_{N,\pi}}x\,\mathrm{d}t\in L^2_{N,\pi},\]
then it follows from Corollary \ref{DNS} (i) and Corollary \ref{intertwine3} that $T_{c,\pi}$ is well-defined as a map from $C^\infty_{\mathrm{dmg},\pi}$ to $\bigcup_{N\geq 0}L^2_{N,\pi}$, and Corollary \ref{intertwine5} implies that $T_{c,\pi}$ is $K_o$-equivariant.

\begin{Prop}
The image of each $T_{c,\pi}$ is within $C^\infty_{\mathrm{dmg},\pi}$, and $(T_{c,\pi})_{\pi\in\widehat{K}_o}$ satisfies (\ref{incln2}) and (\ref{intertwine0}) for $M=C^\infty_{\mathrm{dmg}}$ and $M'=C^\infty_{\mathrm{umg}}$. Consequently Proposition \ref{umg} follows from Proposition \ref{RcTc} and Remark \ref{Tcpi}.
\end{Prop}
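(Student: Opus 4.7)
The plan is to establish the three claims---that $T_{c,\pi}$ sends $C^\infty_{dmg,\pi}$ into itself, that the inclusions in \eqref{incln2} hold, and that the intertwining \eqref{intertwine0} holds---by combining the semigroup formula $T_{c,\pi}x=\int_0^\infty\phi(t)\,e^{tG_{N,\pi}}x\,dt$ with the operator-theoretic machinery of Section \ref{reg2} and the characterizations of $C^\infty_{dmg,\pi}$ and $C^\infty_{umg,\pi}$ in Proposition \ref{Sobolev2}. Throughout, for $x\in C^\infty_{dmg,\pi}$ I would freely enlarge $N$ to accommodate the finite power of $C_{\mathfrak{g},N,\pi}$ at hand, which is legitimate by Proposition \ref{DN}(i).

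I would first dispose of \eqref{incln2}. Because $\phi\equiv 1$ near $0$, Lemma \ref{TGphi1} rewrites $R_{c,\pi}x=x+G_{N,\pi}T_{c,\pi}x$ as $-T_{G_{N,\pi},\phi'}x$, and since $\phi'$ is smooth with compact support in $(0,+\infty)$, Corollary \ref{TGphi2}(i) lands $R_{c,\pi}x$ in $\bigcap_{k\geq 1}\mathrm{Dom}(G_{N,\pi}^k)=\bigcap_{k\geq 1}\mathrm{Dom}(C_{\mathfrak{g},N,\pi}^k)\subseteq C^\infty_{umg,\pi}$, giving the second inclusion. The other half, $T_{c,\pi}(C^\infty_{umg,\pi})\subseteq C^\infty_{umg,\pi}$, is immediate from the stability assertion at the end of Corollary \ref{TGphi2}(ii). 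For preservation of $C^\infty_{dmg,\pi}$ I would induct on $k$ using the identity
\[
C_{\mathfrak{g},N,\pi}T_{c,\pi}x=2(R_{c,\pi}x-x)-cT_{c,\pi}x,
\]
enlarging $N$ at each step so that both $R_{c,\pi}x\in C^\infty_{umg,\pi}$ and $x\in C^\infty_{dmg,\pi}$ land in $\mathrm{Dom}(C_{\mathfrak{g},N,\pi}^k)$, and then reading off $T_{c,\pi}x\in\mathrm{Dom}(C_{\mathfrak{g},N,\pi}^{k+1})$; Proposition \ref{Sobolev2} concludes that $T_{c,\pi}x\in C^\infty_{dmg,\pi}$.

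The main obstacle is the intertwining \eqref{intertwine0}. My plan is to apply Corollary \ref{intertwine4}(ii) with $A^*=X_{N,\pi}$, $B^*=G_{N,\pi}$ and $C^*=G_{N,S_\pi}:=\frac{1}{2}(C_{\mathfrak{g},N,S_\pi}+c)$, whose conclusion $A^*T_{B^*,\phi}\supseteq T_{C^*,\phi}A^*$ reads $X_{N,\pi}T_{c,\pi}\supseteq T_{c,S_\pi}X_{N,\pi}$; evaluating at $x\in C^\infty_{dmg,\pi}\subseteq\mathrm{Dom}(X_{N,\pi})$ and invoking Proposition \ref{DN}(ii) to identify $X_{N,\pi}$ with the pointwise derivative then yields \eqref{intertwine0}. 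The Hille--Yosida hypothesis on $B^*$ and $C^*$ is supplied by Proposition \ref{Sobolev1}, and the containment $\mathrm{Dom}(B^*)\subseteq\mathrm{Dom}(A^*)$ is \eqref{incln3}. The delicate step is the algebraic identity $AC=BA$ for the closures $A,B,C$ of the corresponding densely defined predual operators on $C^\infty_c$: on the common core this reduces, after clearing conjugation by $\rho^N$, to the commutation of $C_\mathfrak{g}$ with $\overline X$ in $\mathfrak{U}(\mathfrak g)$ (since $C_\mathfrak{g}$ is central) together with the $K_o$-equivariance of $C_\mathfrak{g}$ (which lets $p_{\{\pi\}}$ pass through $\rho^N C_\mathfrak{g}\rho^{-N}$); one then has to propagate both the identity and the inclusion $\mathrm{Dom}(C)\subseteq\mathrm{Dom}(A)$ to the closures, the latter being an elliptic-regularity statement to the effect that control of the weighted Casimir supplies control of first-order derivatives. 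With the three claims in hand, Proposition \ref{RcTc} together with Remark \ref{Tcpi} then delivers Proposition \ref{umg}.
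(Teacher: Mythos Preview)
Your proposal is correct and follows the same overall plan as the paper, but you have introduced an unnecessary complication in the intertwining step that the paper's setup is specifically designed to avoid.

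For the preservation of $C^\infty_{dmg,\pi}$ and $C^\infty_{umg,\pi}$, the paper simply invokes the last sentence of Corollary~\ref{TGphi2}(ii), which already says $T_{G,\phi}$ preserves each $\mathrm{Dom}(G^k)$ and their intersection; together with Proposition~\ref{Sobolev2} this gives both preservation statements in one line. Your inductive argument works too, but is redundant.

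The real difference is in how you handle \eqref{intertwine0}. You take $A,B,C$ to be the \emph{closures} of the predual operators and then worry about propagating $AC=BA$ and $\mathrm{Dom}(C)\subseteq\mathrm{Dom}(A)$ to those closures, calling this an elliptic-regularity issue. The paper never does this: it applies Corollary~\ref{intertwine4} with $A,B,C$ equal to the predual operators themselves, namely
\[
A=-p_\pi\rho^N\overline{X}_c\rho^{-N}i_{S_\pi},\quad B=\tfrac{1}{2}(p_\pi\rho^N C_{\mathfrak{g},c}\rho^{-N}i_\pi+\overline{c}),\quad C=\tfrac{1}{2}(p_{S_\pi}\rho^N C_{\mathfrak{g},c}\rho^{-N}i_{S_\pi}+\overline{c}),
\]
each with domain the appropriate isotypic part of $C^\infty_c$. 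Then $\mathrm{Dom}_A(C)=\mathrm{Dom}(C)$ is automatic (differential operators and isotypic projections send $C^\infty_c$ to $C^\infty_c$), and $AC=BA$ is a purely algebraic identity on $C^\infty_c$ that reduces to $C_\mathfrak{g}\overline{X}=\overline{X}C_\mathfrak{g}$ in $\mathfrak{U}(\mathfrak{g})$. Since the adjoint of an operator equals the adjoint of its closure, the starred operators $A^*,B^*,C^*$ are the same as in your formulation, so the hypotheses involving stars (Hille--Yosida for $B^*,C^*$ and $\mathrm{Dom}(B^*)\subseteq\mathrm{Dom}(A^*)$) are unaffected. The point of Corollary~\ref{intertwine4} is precisely that only the unstarred identity $AC=BA$ needs to hold, and one is free to choose the smallest convenient domains for $A,B,C$; no passage to closures or regularity argument is required.
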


\begin{proof}
By Corollary \ref{TGphi2} (ii) $T_{c,\pi}$ preserves $\mathrm{Dom}(C_{\mathfrak{g},N,\pi}^k)$ for any $N\geq 0$ and $k\geq 1$, hence by Proposition \ref{Sobolev2} it also preserves
$C^\infty_{\mathrm{dmg},\pi}$ and $C^\infty_{\mathrm{umg},\pi}$.
Corollary \ref{DNS} (ii) then implies that for every $x\in C^\infty_{\mathrm{dmg},\pi}\cap\mathrm{Dom}(C_{\mathfrak{g},N,\pi})$,
\[x+\frac{1}{2}(C_\mathfrak{g}+c)T_{c,\pi}x=x+G_{N,\pi}T_{G_{N,\pi},\phi}x,\]
which by Corollary \ref{TGphi2} (ii) lies in $\bigcap_{k\geq 1}\mathrm{Dom}(C_{\mathfrak{g},N,\pi}^k)\subseteq C^\infty_{\mathrm{umg},\pi}$, so the second inclusion in (\ref{incln2}) also holds.

Again by Corollary \ref{DNS} (ii) we have $C^\infty_{\mathrm{umg},\pi}\subseteq\bigcup_{N\geq 0}\mathrm{Dom}(X_{N,\pi})$ for any $\pi\in\widehat{K}_o$ and $X\in\mathfrak{p}_o$, thus to prove (\ref{intertwine0}) it suffices to verify for every $N\geq 0$ that $X_{N,\pi}T_{G_{N,\pi},\phi}$ is an extension of $(\bigoplus_{\pi'\in S_\pi}T_{G_{N,\pi'},\phi})X_{N,\pi}$. Proposition \ref{Sobolev1} implies that
\[G_{N,S_\pi}:=\frac{1}{2}(C_{\mathfrak{g},N,S_\pi}+c)\]
also satisfies the assumptions in Lemma \ref{HY}, and the $C_0$-semigroup of operators it generates is clearly $(\bigoplus_{\pi\in S_\pi}e^{tG_{N,\pi}}:L^2_{N,S_\pi}\rightarrow L^2_{N,S_\pi})_{t\geq 0}$, so we have
\[(\bigoplus_{\pi'\in S_\pi}T_{G_{N,\pi'},\phi})=T_{G_{N,S_\pi},\phi}.\]
By Lemma \ref{adj1} (ii), $G_{N,\pi}$ and $G_{N,S_\pi}$ are the adjoints of
\[B:=\frac{1}{2}(p_\pi\rho^NC_{\mathfrak{g},c}\rho^{-N}i_\pi+\overline{c})\]
and
\[C:=\frac{1}{2}(p_{S_\pi}\rho^NC_{\mathfrak{g},c}\rho^{-N}i_{S_\pi}+\overline{c})\]
respectively. Now we want to apply Corollary \ref{intertwine4} for $A:=-p_\pi\rho^N\overline{X}_c\rho^{-N}i_{S_\pi}$, $B$ and $C$. We see that the first two assumptions there are guaranteed by Proposition \ref{Sobolev1}, and clearly $\mathrm{Dom}_A(C)=\mathrm{Dom}(C)=C^\infty_{c,\pi}$. Since $C_{\mathfrak{g},c}$ and $\rho^{\pm N}$ commute with any $i_\star$ and $p_\star$, it follows from $C_{\mathfrak{g},c}\overline{X}_c=\overline{X}_cC_{\mathfrak{g},c}$ that $BA=AC$. Therefore Corollary \ref{intertwine4} (ii) implies that $X_{N,\pi}T_{G_{N,\pi},\phi}$ is an extension of $T_{G_{N,S_\pi},\phi}X_{N,\pi}$.
\end{proof}

\section{Coherent cohomology and automorphic forms}\label{section5}

Let $\mathcal{A}(G)$ be the space of automorphic forms on $G$. In this section we show that the inclusion $\mathcal{A}(G)^{A_G(\mathbb{R})^\circ}\subseteq C^\infty_{\mathrm{umg}}(G)$ also induces isomorphisms of $(\mathfrak{p}_o,K_o)$-cohomology with coefficients in $V$, hence in (\ref{umgGAf}) we can replace the latter by the former. This is parallel to \cite[Theorem 18]{Franke98} and also relies on the $\mathfrak{Fin}$-acyclicity established in that paper.

For every pair $(\mathfrak{g},K)$ of a complex Lie algebra and a compact Lie group satisfying the conditions in \cite[1.64]{KV95}, denote by $\mathcal{C}(\mathfrak{g},K)$ the category of $(\mathfrak{g},K)$-modules. When $\mathfrak{g}$ is reductive, in \cite{Franke98} Franke considered for every ideal $\mathcal{J}\subseteq\mathfrak{Z}(\mathfrak{g})$ of finite codimension the functor $\mathfrak{Fin}_\mathcal{J}:\mathcal{C}(\mathfrak{g},K)\rightarrow\mathcal{C}(\mathfrak{g},K)$,
\[M\mapsto\mathfrak{Fin}_\mathcal{J}M:=\big\{m\in M:\mathcal{J}^nm=0\textrm{ for some }n\big\}.\]
$\mathfrak{Fin}_\mathcal{J}$ is evidently left-exact and its right derived functors are denoted as $\mathfrak{Fin}_\mathcal{J}^i:=R^i\mathfrak{Fin}_\mathcal{J}$. Franke showed that

\begin{Lem}\label{Franke}
$(i)$ $\mathfrak{Fin}_\mathcal{J}$ preserves injectives.\\
$(ii)$ For any reductive group $G$ over $\mathbb{Q}$ and ideal $\mathcal{J}\subseteq\mathfrak{Z}(\mathfrak{m}_G)$ of finite codimension, $C^\infty_{\mathrm{umg}}(G)$ and $\mathcal{A}(G)^{A_G(\mathbb{R})^\circ}$ are $\mathfrak{Fin}_\mathcal{J}$-acyclic.
\end{Lem}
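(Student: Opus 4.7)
The plan is to handle (i) categorically and to reduce (ii) to the corresponding theorems in \cite{Franke98}.

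For (i), I would identify $\mathfrak{Fin}_\mathcal{J}$ as a right adjoint to an exact inclusion. Let $\mathcal{C}(\mathfrak{g},K)_\mathcal{J} \subseteq \mathcal{C}(\mathfrak{g},K)$ denote the full subcategory of modules on which $\mathcal{J}$ acts locally nilpotently. This subcategory is closed under submodules, quotients, and arbitrary colimits, so the inclusion $i:\mathcal{C}(\mathfrak{g},K)_\mathcal{J} \hookrightarrow \mathcal{C}(\mathfrak{g},K)$ is exact. For any $M$ in the subcategory and any $N \in \mathcal{C}(\mathfrak{g},K)$, every morphism $M \to N$ factors through $\mathfrak{Fin}_\mathcal{J}(N)$, yielding a natural isomorphism $\mathrm{Hom}(iM,N) \cong \mathrm{Hom}(M,\mathfrak{Fin}_\mathcal{J}(N))$. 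Hence $\mathfrak{Fin}_\mathcal{J}$ is right adjoint to $i$, and the standard homological fact that right adjoints of exact functors preserve injectives yields (i).

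For the automorphic part of (ii), I would exploit the defining $\mathfrak{Z}(\mathfrak{g})$-finiteness of automorphic forms together with the hypothesis $\mathcal{J} \subseteq \mathfrak{Z}(\mathfrak{m}_G)$, viewed inside $\mathfrak{Z}(\mathfrak{g})$ via the decomposition $\mathfrak{g} = \mathfrak{a}_G \oplus \mathfrak{m}_G$. Each $f \in \mathcal{A}(G)^{A_G(\mathbb{R})^o}$ generates a finite-dimensional $\mathfrak{Z}(\mathfrak{g})$-submodule whose primary decomposition over the Artinian quotient $\mathfrak{Z}(\mathfrak{g})/\mathrm{Ann}(f)$ is canonical. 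Assembling these primary decompositions gives a $(\mathfrak{g},K)$-equivariant direct sum decomposition $\mathcal{A}(G)^{A_G(\mathbb{R})^o} = \bigoplus_\mathfrak{m} \mathcal{A}(G)^{A_G(\mathbb{R})^o}[\mathfrak{m}^\infty]$ over maximal ideals $\mathfrak{m} \subseteq \mathfrak{Z}(\mathfrak{g})$, with respect to which $\mathfrak{Fin}_\mathcal{J}$ is the projection onto the finitely many summands indexed by $\mathfrak{m} \supseteq \mathcal{J}$. I would then construct an injective resolution of $\mathcal{A}(G)^{A_G(\mathbb{R})^o}$ inside $\mathcal{C}(\mathfrak{g},K)$ that respects this $\mathfrak{m}$-primary decomposition and stays locally $\mathfrak{Z}$-finite, so that $\mathfrak{Fin}_\mathcal{J}$---being the identity on the relevant summands and zero on the rest---remains exact on the resolution, forcing all higher $\mathfrak{Fin}_\mathcal{J}^i$ to vanish.

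The hard part, and the real substance of the lemma, is the $\mathfrak{Fin}_\mathcal{J}$-acyclicity of $C^\infty_{umg}(G)$, for which every element is only known to have uniformly moderate growth, not to be $\mathfrak{Z}(\mathfrak{g})$-finite. This is the main theorem of \cite{Franke98}; its proof constructs a Franke filtration of $C^\infty_{umg}(G)$ indexed by cuspidal data, identifies the successive quotients through Langlands' theory of residual and continuous Eisenstein series attached to Levi subgroups, and uses these identifications to inductively reduce $\mathfrak{Fin}_\mathcal{J}$-acyclicity to the cuspidal case where $\mathfrak{Z}$-finiteness is automatic. I would appeal to this statement as a black box rather than attempt to reprove it, since the aim in Section \ref{section5} is only to transport Franke's acyclicity result from the $(\mathfrak{g},K)$-setting to the $(\mathfrak{p}_o,K_o)$-setting via a forgetful argument---the action of $\mathcal{J}$ is intrinsic to the module and does not see the relative Lie algebra cohomology being computed.
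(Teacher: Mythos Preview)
Your treatment of (i) via the right-adjoint-to-exact-inclusion argument is correct; the paper simply cites \cite[Theorem~7(1)]{Franke98} for this. You and the paper likewise agree that the $\mathfrak{Fin}_\mathcal{J}$-acyclicity of $C^\infty_{umg}(G)$ is the main theorem of \cite{Franke98} and must be quoted as a black box.

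The gap is in your handling of $\mathcal{A}(G)^{A_G(\mathbb{R})^o}$. The primary decomposition $\bigoplus_\mathfrak{m} \mathcal{A}(G)^{A_G(\mathbb{R})^o}[\mathfrak{m}^\infty]$ is correct and is exactly what the paper sets up in the lemma immediately following this one. But your next step---building an injective resolution in $\mathcal{C}(\mathfrak{g},K)$ that ``stays locally $\mathfrak{Z}$-finite''---does not work: nonzero injective objects of $\mathcal{C}(\mathfrak{g},K)$ are essentially never locally $\mathfrak{Z}(\mathfrak{g})$-finite, so no such resolution exists. (Working instead with injectives in the subcategory of locally $\mathfrak{Z}$-finite modules computes the wrong derived functors, since $\mathfrak{Fin}_\mathcal{J}^i$ is by definition computed with injectives in the ambient $\mathcal{C}(\mathfrak{g},K)$.)

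The paper circumvents this by invoking a criterion extracted from the proof of \cite[Theorem~7(2)]{Franke98}: a module $M'$ is $\mathfrak{Fin}_\mathcal{J}$-acyclic as soon as either $M'=\mathfrak{Fin}_\mathcal{J}M'$ or some $D\in\mathcal{J}$ acts on $M'$ as an automorphism. After the primary decomposition, every summand with $\mathcal{J}\subseteq\mathfrak{m}$ meets the first condition, and every summand with $\mathcal{J}\not\subseteq\mathfrak{m}$ meets the second (any $D\in\mathcal{J}\setminus\mathfrak{m}$ is a unit in $\mathfrak{Z}(\mathfrak{g})/\mathfrak{m}^n$ for every $n$, hence acts invertibly on the $\mathfrak{m}$-primary part). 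This yields acyclicity without ever touching an injective resolution of the automorphic forms directly.
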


\begin{proof}
(i) is \cite[Theorem 7(1)]{Franke98}, and the $\mathfrak{Fin}_\mathcal{J}$-acyclicity of $C^\infty_{\mathrm{umg}}(G)^{A_G(\mathbb{R})^\circ}$ follows from \cite[Theorem 16]{Franke98}. The $\mathfrak{Fin}_\mathcal{J}$-acyclicity of $\mathcal{A}(G)^{A_G(\mathbb{R})^\circ}$ is implied by the following lemma.
\end{proof}

\begin{Lem}
If a $(\mathfrak{g},K)$-module $M$ satisfies that $\mathfrak{Z}(\mathfrak{g})m$ is finite-dimensional for every $m\in M$, then $M$ is $\mathfrak{Fin}_\mathcal{J}$-acyclic for every ideal $\mathcal{J}\subseteq\mathfrak{Z}(\mathfrak{g})$ of finite codimension.
\end{Lem}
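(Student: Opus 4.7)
The plan is to reduce, by standard devissage, to the case of modules annihilated by a power of a single maximal ideal of $\mathfrak{Z}(\mathfrak{g})$, and then to exhibit an injective resolution on which $\mathfrak{Fin}_\mathcal{J}$ acts either as the identity or as the zero functor.

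First I would observe that the hypothesis exhibits $M$ as a filtered union of $(\mathfrak{g},K)$-submodules each annihilated by an ideal of finite codimension in $\mathfrak{Z}(\mathfrak{g})$: for any $m\in M$ the $K$-span $V_m$ is finite-dimensional by the local $K$-finiteness built into $(\mathfrak{g},K)$-modules, hence so is $\mathfrak{Z}(\mathfrak{g})V_m$ (which coincides with $\mathbb{C}[K]\cdot\mathfrak{Z}(\mathfrak{g})m$ because $\mathrm{Ad}(K)$ acts trivially on $\mathfrak{Z}(\mathfrak{g})$), and the $(\mathfrak{g},K)$-submodule $\mathfrak{U}(\mathfrak{g})V_m$ is annihilated by $\mathrm{ann}_{\mathfrak{Z}(\mathfrak{g})}(V_m)$. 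Since $\mathcal{C}(\mathfrak{g},K)$ is a Grothendieck abelian category in which $\mathfrak{Fin}_\mathcal{J}$ commutes with filtered colimits of monomorphisms, the derived functors $\mathfrak{Fin}_\mathcal{J}^i$ do as well, so one may assume $M$ itself is annihilated by an ideal $\mathcal{I}$ of finite codimension. Then the Artinian ring $\mathfrak{Z}(\mathfrak{g})/\mathcal{I}$ decomposes as a finite product of local Artinian rings, and the resulting idempotents split $M$ into a $(\mathfrak{g},K)$-stable finite direct sum $\bigoplus_i M_i$ with $\mathfrak{m}_i^{k_i}M_i=0$ for finitely many maximal ideals $\mathfrak{m}_i$; additivity of derived functors on finite direct sums reduces the problem to each $M_i$ individually.

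The heart of the proof will be to construct an injective resolution of $M_i$ in $\mathcal{C}(\mathfrak{g},K)$ whose every term is $\mathfrak{m}_i$-power torsion. Starting from any embedding of $M_i$ into an injective $I^0$, I would pass to $J^0:=\mathfrak{Fin}_{\mathfrak{m}_i}(I^0)$, which still contains $M_i$ (because $\mathfrak{m}_i^{k_i}M_i=0$) and is injective by Lemma \ref{Franke}(i); iterating on successive cokernels yields $0\to M_i\to J^\bullet$ with the desired property. A direct calculation then identifies $\mathfrak{Fin}_\mathcal{J}(J^j)$: if $\mathcal{J}\subseteq\mathfrak{m}_i$ then $\mathfrak{m}_i^n$-annihilation automatically forces $\mathcal{J}^n$-annihilation, so $\mathfrak{Fin}_\mathcal{J}(J^j)=J^j$; if on the other hand $\mathcal{J}\not\subseteq\mathfrak{m}_i$, choosing $z\in\mathcal{J}\setminus\mathfrak{m}_i$ with associated central character $\chi_i(z)\neq 0$, any $x\in\mathfrak{Fin}_\mathcal{J}(J^j)$ annihilated by $\mathfrak{m}_i^k$ and $\mathcal{J}^{k'}$ is killed by both $(z-\chi_i(z))^k$ and $z^{k'}$, which are coprime in $\mathbb{C}[z]$, forcing $x=0$ and hence $\mathfrak{Fin}_\mathcal{J}(J^j)=0$. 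In either case the complex $\mathfrak{Fin}_\mathcal{J}(J^\bullet)$ has cohomology concentrated in degree zero, yielding $\mathfrak{Fin}_\mathcal{J}^j M_i=0$ for $j>0$.

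The main subtlety I anticipate is justifying that $\mathfrak{Fin}_\mathcal{J}^i$ commutes with the filtered colimits appearing in the first reduction. The functor $\mathfrak{Fin}_\mathcal{J}$ itself clearly does when the transition maps are monomorphisms, since the $\mathcal{J}^n$-annihilation relation is detected at a finite stage; transferring this to the derived functors relies on the standard fact that in a Grothendieck abelian category a left exact functor preserving filtered colimits of monomorphisms has derived functors with the same property. Beyond this, the remaining verifications—the $K$-equivariance of the idempotent decomposition (via triviality of $\mathrm{Ad}(K)$ on $\mathfrak{Z}(\mathfrak{g})$) and the stability of $\mathfrak{m}_i$-power torsion under the iterative construction of $J^\bullet$—are essentially bookkeeping.
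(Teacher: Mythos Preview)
Your argument is sound and takes a route genuinely different from the paper's. The paper also begins with the primary decomposition $M=\bigoplus_{\mathfrak{m}\in\mathrm{Spm}\,\mathfrak{Z}(\mathfrak{g})}\mathfrak{Fin}_\mathfrak{m}M$, but instead of your filtered-colimit reduction it fixes generators $D_1,\ldots,D_l$ of $\mathcal{J}$ and regroups this possibly infinite sum into a \emph{finite} one $M=M_0\oplus\cdots\oplus M_l$, where $M_0$ collects the summands with $\mathfrak{m}\supseteq\mathcal{J}$ and $M_j$ those for which $D_j$ is the first generator not in $\mathfrak{m}$; then $\mathfrak{Fin}_\mathcal{J}M_0=M_0$ while $D_j\in\mathcal{J}$ acts invertibly on each $M_j$, and the paper simply cites Franke's criterion that either condition forces $\mathfrak{Fin}_\mathcal{J}$-acyclicity. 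Your steps 3--4 effectively supply a self-contained proof of that criterion via tailored injective resolutions, which is a gain; conversely the paper's generator-indexed grouping buys freedom from the colimit-commutation issue you yourself flag, whose stated justification (``a left exact functor preserving filtered colimits of monomorphisms has derived functors with the same property'') is not a standard theorem as phrased and would need more work to make rigorous in $\mathcal{C}(\mathfrak{g},K)$. If you wish to eliminate that subtlety, you can replace your first two reductions by the paper's finite regrouping and retain your resolution argument for each piece.
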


\begin{proof}
The assumption implies that $M=\bigoplus_{\mathcal{I}\in\mathrm{Spm}\mathfrak{Z}(\mathfrak{g})}\mathfrak{Fin}_\mathcal{I}M$, where $\mathrm{Spm}\,\mathfrak{Z}(\mathfrak{g})$ is the set of maximal ideals of $\mathfrak{Z}(\mathfrak{g})$. Let $D_1,...,D_l$ be a set of generators of $\mathcal{J}$, define
\[M_0:=\bigoplus_{\mathcal{J}\subseteq\mathcal{I}\in\mathrm{Spm}\mathfrak{Z}(\mathfrak{g})}\mathfrak{Fin}_\mathcal{I}M\]
and
\[M_j:=\bigoplus_{\mathcal{I}\in\mathrm{Spm}\mathfrak{Z}(\mathfrak{g}),D_1,...D_{j-1}\in\mathcal{I},D_j\notin\mathcal{I}}\mathfrak{Fin}_\mathcal{I}M,1\leq j\leq l,\]
then $M=\bigoplus_{j=0}^lM_j$, $M_0=\mathfrak{Fin}_\mathcal{J}M_0$ and $D_j$ acts on $M_j$ as an automorphism for $j=1,...,l$. It is shown in the proof of \cite[Theorem 7(2)]{Franke98} that a $(\mathfrak{g},K)$-module $M'$ is $\mathfrak{Fin}_\mathcal{J}$-acyclic if either $M'=\mathfrak{Fin}_\mathcal{J}M'$ or there is a $D\in\mathcal{J}$ acting on $M'$ as an automorphism, so $M_0,...,M_l$ and hence $M$ are all $\mathfrak{Fin}_\mathcal{J}$-acyclic.
\end{proof}

\begin{Cor}\label{acyc}
In Lemma \ref{Franke} (ii), for every compact open subgroup $\mathbb{K}\subseteq G(\mathbb{A}_f)$, $C^\infty_{\mathrm{umg}}(G)^\mathbb{K}$ and $\mathcal{A}(G)^{A_G(\mathbb{R})^\circ\mathbb{K}}$ are also $\mathfrak{Fin}_\mathcal{J}$-acyclic.
\end{Cor}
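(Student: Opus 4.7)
The plan is to show that if $M$ is one of $C^\infty_{umg}(G)$ or $\mathcal{A}(G)^{A_G(\mathbb{R})^o}$, then $M^\mathbb{K}$ is a direct summand of $M$ in $\mathcal{C}(\mathfrak{g},K_o)$; after that, additivity of derived functors together with Lemma \ref{Franke}(ii) will force $\mathfrak{Fin}_\mathcal{J}^i M^\mathbb{K} = 0$ for $i > 0$ immediately.

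First I would check that the right-translation action of $\mathbb{K} \subseteq G(\mathbb{A}_f)$ on $M$ commutes with the $(\mathfrak{g}, K_o)$-action. This is because $\mathfrak{g}$ acts by right differentiation at the archimedean place and $K_o$ acts by right translation at the archimedean place, while $\mathbb{K}$ acts at the finite places; under the decomposition $G(\mathbb{A}) = G(\mathbb{R}) \cdot G(\mathbb{A}_f)$ the images of these two factors centralize each other. In particular $M^\mathbb{K}$ is a $(\mathfrak{g}, K_o)$-submodule of $M$.

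Next I would construct a $(\mathfrak{g}, K_o)$-equivariant projector $e_\mathbb{K}\colon M \twoheadrightarrow M^\mathbb{K}$. By the smoothness convention of the notations section, every $m \in M$ is stabilized by some compact open subgroup of $G(\mathbb{A}_f)$, so its $\mathbb{K}$-orbit is finite and the formula
\[ e_\mathbb{K}(m) := \frac{1}{[\mathbb{K} : \mathbb{K}_m]} \sum_{k \in \mathbb{K}/\mathbb{K}_m} k m \]
defines a $\mathbb{C}$-linear projection onto $M^\mathbb{K}$; by the commutativity of actions noted above it is $(\mathfrak{g}, K_o)$-equivariant. Therefore $M = M^\mathbb{K} \oplus \ker e_\mathbb{K}$ as an object of $\mathcal{C}(\mathfrak{g}, K_o)$.

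Finally, since $\mathfrak{Fin}_\mathcal{J}^i$ is an additive functor, the decomposition yields $\mathfrak{Fin}_\mathcal{J}^i M \cong \mathfrak{Fin}_\mathcal{J}^i M^\mathbb{K} \oplus \mathfrak{Fin}_\mathcal{J}^i (\ker e_\mathbb{K})$, and the vanishing of the left-hand side for $i > 0$ supplied by Lemma \ref{Franke}(ii) forces each summand to vanish. The argument is essentially formal, and the only point requiring genuine care is the commutativity of the $\mathbb{K}$- and $(\mathfrak{g}, K_o)$-actions; this is straightforward once one writes the actions out at the level of $G(\mathbb{A})$. I therefore do not anticipate any serious obstacle.
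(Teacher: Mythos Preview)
Your proposal is correct and follows essentially the same approach as the paper: the paper's proof simply notes that since $\mathbb{K}$ is compact, the $\mathbb{K}$-fixed part of any $(\mathfrak{m}_G,K,\mathbb{K})$-module is a direct summand, and that a direct summand of a $\mathfrak{Fin}_\mathcal{J}$-acyclic module is again $\mathfrak{Fin}_\mathcal{J}$-acyclic. Your explicit construction of the averaging projector $e_\mathbb{K}$ just spells out what the paper leaves implicit.
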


\begin{proof}
As $\mathbb{K}$ is compact, the $\mathbb{K}$-fixed part of any $(\mathfrak{m}_G,K,\mathbb{K})$-module $M$ is a direct summand of it. If $M$ is $\mathfrak{Fin}_\mathcal{J}$-acyclic as a $(\mathfrak{m}_G,K)$-module, then so is $M^\mathbb{K}$.
\end{proof}

\begin{Lem}
Let $V$ be a finite-dimensional representation of $\mathfrak{p}_h$, then there exists an ideal $\mathcal{J}\subseteq\mathfrak{Z}(\mathfrak{m}_G)$ of finite codimension such that for any $\mathfrak{m}_G$-module $W$ and $\mathfrak{p}_o$-homomorphism $f:V\rightarrow W$, $\mathcal{J}f(V)=0$.
\end{Lem}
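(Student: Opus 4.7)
The plan is to reduce the claim to an annihilation statement inside the generalized Verma module $M(V):=\mathfrak{U}(\mathfrak{m}_G)\otimes_{\mathfrak{U}(\mathfrak{p}_o)}V$. By the universal property of this induction, any $\mathfrak{p}_o$-homomorphism $f:V\rightarrow W$ into an $\mathfrak{m}_G$-module extends uniquely to an $\mathfrak{m}_G$-homomorphism $\widetilde{f}:M(V)\rightarrow W$ with $\widetilde{f}(1\otimes v)=f(v)$, so it suffices to exhibit a finite-codimensional ideal $\mathcal{J}\subseteq\mathfrak{Z}(\mathfrak{m}_G)$ that annihilates $1\otimes V\subseteq M(V)$; I will in fact establish the stronger $\mathcal{J}\cdot M(V)=0$.

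First I would pick a filtration $0=V_0\subsetneq V_1\subsetneq\cdots\subsetneq V_k=V$ by $\mathfrak{p}_o$-submodules with each subquotient $\tau_j:=V_j/V_{j-1}$ irreducible. The lemma immediately preceding this one applies to $\tau_j$ (viewed as an irreducible $\mathfrak{p}_o$-representation) and gives that $\mathfrak{p}_-$ acts trivially on it, so $\tau_j$ is an irreducible $\mathfrak{k}_o$-representation inflated to $\mathfrak{p}_o$ by zero on $\mathfrak{p}_-$. Since $\mathfrak{U}(\mathfrak{m}_G)$ is a free right $\mathfrak{U}(\mathfrak{p}_o)$-module by PBW, the functor $M(-)$ is exact, and the filtration of $V$ produces a filtration by $\mathfrak{m}_G$-submodules $M(V_j)\subseteq M(V)$ whose successive quotients are the generalized Verma modules $M(\tau_j)$.

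The crucial input is the standard fact that each such $M(\tau_j)$ admits an infinitesimal character $\chi_j:\mathfrak{Z}(\mathfrak{m}_G)\rightarrow\mathbb{C}$. In our Hermitian setting $\mathfrak{k}_o$ contains a Cartan of $\mathfrak{m}_G$, so a Borel of $\mathfrak{k}_o$ extends to a Borel of $\mathfrak{m}_G$ upon adjoining the abelian subalgebra $\mathfrak{p}_-$; a highest weight vector $v_\lambda\in\tau_j$ for the former yields a highest weight vector $1\otimes v_\lambda\in M(\tau_j)$ for the latter, on which $\mathfrak{Z}(\mathfrak{m}_G)$ acts by the corresponding Harish-Chandra scalar. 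This scalar then spreads to all of $1\otimes\tau_j$ using the $\mathfrak{U}(\mathfrak{k}_o)$-irreducibility of $\tau_j$, and from there to all of $M(\tau_j)$ by centrality of $\mathfrak{Z}(\mathfrak{m}_G)$.

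Finally I would set $\mathcal{J}:=\prod_{j=1}^k\ker(\chi_j)$, a product of maximal ideals. Since each $\mathfrak{Z}(\mathfrak{m}_G)/\ker\chi_j\cong\mathbb{C}$, the quotient $\mathfrak{Z}(\mathfrak{m}_G)/\mathcal{J}$ is a finite-dimensional Artinian algebra, yielding the finite codimension. The annihilation $\mathcal{J}\cdot M(V)=0$ is then immediate by induction on $j$: if $\prod_{l<j}\ker\chi_l$ kills $M(V_{j-1})$, then any $y\in\ker\chi_j$ kills $M(\tau_j)=M(V_j)/M(V_{j-1})$ and so carries $M(V_j)$ into $M(V_{j-1})$, whence $(\prod_{l<j}\ker\chi_l)\cdot\ker\chi_j$ kills $M(V_j)$. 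The main obstacle I anticipate is precisely the input on infinitesimal characters of generalized Verma modules induced from irreducible Levi representations; the remaining steps are formal manipulations with parabolic induction and filtrations.
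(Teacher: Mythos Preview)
Your proof is correct. Both your argument and the paper's proceed by composition series on $V$, reduce to the irreducible case, and then take the product of the resulting codimension-one ideals. The difference is in how the irreducible case is handled and in the overall packaging.

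You introduce the generalized Verma module $M(V)=\mathfrak{U}(\mathfrak{m}_G)\otimes_{\mathfrak{U}(\mathfrak{p}_o)}V$ and use its universal property to reduce everything to a single annihilation statement there; then you appeal to the existence of an infinitesimal character on each $M(\tau_j)$, established via a highest-weight argument. The paper never mentions $M(V)$: for irreducible $V$ it invokes directly the parabolic Harish--Chandra map $h_{\mathfrak{p}_o}:\mathfrak{Z}(\mathfrak{m}_G)\to\mathfrak{Z}(\mathfrak{k}_o)$, characterized by $z-h_{\mathfrak{p}_o}(z)\in\mathfrak{U}(\mathfrak{m}_G)\mathfrak{p}_-$, and takes $\mathcal{J}=h_{\mathfrak{p}_o}^{-1}(\mathcal{I})$ for the codimension-one ideal $\mathcal{I}\subseteq\mathfrak{Z}(\mathfrak{k}_o)$ killing $V$. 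Since $f$ is $\mathfrak{p}_o$-equivariant and $\mathfrak{p}_-$ kills $V$, one reads off $zf(v)=f(h_{\mathfrak{p}_o}(z)v)=0$ for $z\in\mathcal{J}$. This is of course the same computation that produces your infinitesimal character $\chi_j$, just unpacked: your $\ker\chi_j$ is exactly the paper's $h_{\mathfrak{p}_o}^{-1}(\mathcal{I}_j)$. Your route has the advantage of yielding the slightly stronger conclusion $\mathcal{J}\cdot M(V)=0$ (so $\mathcal{J}$ kills the whole $\mathfrak{m}_G$-submodule generated by $f(V)$, not just $f(V)$), at the cost of invoking the generalized Verma machinery; the paper's route is more hands-on and avoids introducing $M(V)$ altogether.
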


\begin{proof}
When $V$ is irreducible, it is annihilated by $\mathfrak{p}_-$ as well as an ideal $\mathcal{I}\subseteq\mathfrak{Z}(\mathfrak{k}_o)$ of codimension 1. As in \cite[1]{Howe00} there is an algebra homomorphism $h_{\mathfrak{p}_o}$ from $\mathfrak{Z}(\mathfrak{m}_G)$ to $\mathfrak{Z}(\mathfrak{k}_o)$ such that $z-h_{\mathfrak{p}_o}(z)\in\mathfrak{U}(\mathfrak{m}_G)\mathfrak{p}_-$ for every $z\in\mathfrak{Z}(\mathfrak{m}_G)$, so we can let  $\mathcal{J}:=h_{\mathfrak{p}_o}^{-1}(\mathcal{I})$.

On the other hand if $V'\subseteq V$ is a subrepresentation and $\mathcal{J}',\mathcal{J}''$ are ideals as desired for $V',V/V'$ respectively, then we can take $\mathcal{J}:=\mathcal{J}'\mathcal{J}''$. In fact, let $W'$ be the $\mathfrak{g}$-submodule of $W$ generated by $f(V')$, then $\mathcal{J}'W'=0$ while the image of $f(V)$ in $W/W'$ is annihilated by $\mathcal{J}''$, so $f(V)$ is annihilated by $\mathcal{J}$. Therefore by induction the lemma holds for every $V$.
\end{proof}

Let $\mathcal{F}=\mathcal{F}_{\mathfrak{m}_G,K_o}^{\mathfrak{p}_o,K_o}:\mathcal{C}(\mathfrak{m}_G,K_o)\rightarrow\mathcal{C}(\mathfrak{p}_o,K_o)$ be the forgetful functor, then the lemma implies that there is an ideal $\mathcal{J}\subseteq\mathfrak{Z}(\mathfrak{m}_G)$ of finite codimension such that
\begin{equation}\label{functors}
\mathrm{Hom}_{(\mathfrak{p}_o,K_o)}(V^*,-)\circ\mathcal{F}\circ\mathfrak{Fin}_\mathcal{J}=\mathrm{Hom}_{(\mathfrak{p}_o,K_o)}(V^*,-)\circ\mathcal{F}.
\end{equation}
We also have:

\begin{Lem}
$(i)$ There are natural isomorphisms
\[R^i\mathrm{Hom}_{(\mathfrak{p}_o,K_o)}(V^*,-)\cong H^i_{(\mathfrak{p}_o,K_o)}(-\otimes V);\]
$(ii)$ $\mathcal{F}$ is exact and preserves injectives.
\end{Lem}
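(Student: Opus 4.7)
Both parts of the lemma are standard facts about relative Lie-algebra cohomology, and my plan is to reduce each to an exactness check via an appropriate adjunction.

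For (i), I would first handle the case $i=0$ via the tensor-hom adjunction. Since $V$ is finite-dimensional, the natural map
\[
\mathrm{Hom}_{(\mathfrak{p}_o,K_o)}(V^*,M)\;\xrightarrow{\sim}\;(M\otimes V)^{(\mathfrak{p}_o,K_o)}=H^0_{(\mathfrak{p}_o,K_o)}(M\otimes V),
\]
defined by $\varphi\mapsto\sum_i\varphi(v_i^*)\otimes v_i$ for dual bases $\{v_i\},\{v_i^*\}$, is a natural isomorphism of functors in $M$. To propagate this to all $i$, the decisive fact is that the exact endofunctor $-\otimes V$ on $\mathcal{C}(\mathfrak{p}_o,K_o)$ preserves injectives: by the same adjunction with the roles of $V$ and $V^*$ swapped, for any injective $I$ one has $\mathrm{Hom}_{(\mathfrak{p}_o,K_o)}(-,I\otimes V)\cong\mathrm{Hom}_{(\mathfrak{p}_o,K_o)}(-\otimes V^*,I)$, which is a composition of two exact functors, hence exact. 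Therefore if $M\to J^\bullet$ is an injective resolution then $M\otimes V\to J^\bullet\otimes V$ is another injective resolution, and applying the $i=0$ identification term by term gives a natural isomorphism of complexes whose cohomology is the asserted isomorphism.

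For (ii), exactness of $\mathcal{F}$ is immediate, since a sequence of $(\mathfrak{m}_G,K_o)$-modules is exact iff it is exact on underlying complex vector spaces, and the same criterion characterises exactness in $\mathcal{C}(\mathfrak{p}_o,K_o)$. For preservation of injectives I would exhibit an exact left adjoint, namely the induction
\[
\mathrm{Ind}(N):=\mathfrak{U}(\mathfrak{m}_G)\otimes_{\mathfrak{U}(\mathfrak{p}_o)}N,
\]
equipped with left multiplication by $\mathfrak{m}_G$ and the diagonal $K_o$-action (adjoint on $\mathfrak{U}(\mathfrak{m}_G)$, given on $N$), together with the standard Frobenius reciprocity
\[
\mathrm{Hom}_{(\mathfrak{m}_G,K_o)}(\mathrm{Ind}\,N,W)\cong\mathrm{Hom}_{(\mathfrak{p}_o,K_o)}(N,\mathcal{F}(W)).
\]
Exactness of $\mathrm{Ind}$ comes from the vector-space decomposition $\mathfrak{m}_G=\mathfrak{p}_+\oplus\mathfrak{p}_o$ together with Poincar\'e--Birkhoff--Witt, which yield an isomorphism $\mathfrak{U}(\mathfrak{m}_G)\cong\mathfrak{U}(\mathfrak{p}_+)\otimes_{\mathbb{C}}\mathfrak{U}(\mathfrak{p}_o)$ of right $\mathfrak{U}(\mathfrak{p}_o)$-modules; in particular $\mathfrak{U}(\mathfrak{m}_G)$ is free, hence flat, over $\mathfrak{U}(\mathfrak{p}_o)$ on the right.

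The only real nuisance is the bookkeeping of $K_o$-equivariance in the construction of $\mathrm{Ind}$, but because the compact group is the \emph{same} on both sides of $\mathcal{F}$ no issue of restricting $K$-types or passing to $K$-finite vectors arises---one only needs that the adjoint $K_o$-action on $\mathfrak{U}(\mathfrak{m}_G)$ stabilises the subalgebra $\mathfrak{U}(\mathfrak{p}_o)$, which is clear as $K_o$ normalises $\mathfrak{p}_o$. Thus neither part of the lemma presents a serious obstacle; both follow from tensor-hom adjunction plus PBW once the adjoints are identified.
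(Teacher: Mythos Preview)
Your proposal is correct and follows essentially the same route as the paper. For (i) the paper factors $\mathrm{Hom}_{(\mathfrak{p}_o,K_o)}(V^*,-)=\mathrm{Hom}_{(\mathfrak{p}_o,K_o)}(\mathbb{C},-)\circ\mathrm{Hom}_\mathbb{C}(V^*,-)$ and cites \cite[2.53(c)]{KV95} for the fact that the inner functor is exact and preserves injectives, which is exactly your tensor--hom adjunction argument since $\mathrm{Hom}_\mathbb{C}(V^*,-)\cong -\otimes V$; for (ii) the paper cites \cite[2.57(d)]{KV95}, whose content is precisely your exact-left-adjoint-via-PBW argument (the ``$P$'' functor of Knapp--Vogan specialised to the case where the compact group does not change).
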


\begin{proof}
(i) We have $\mathrm{Hom}_{(\mathfrak{p}_o,K_o)}(V^*,-)=\mathrm{Hom}_{(\mathfrak{p}_o,K_o)}(\mathbb{C},-)\circ\mathrm{Hom}_\mathbb{C}(V^*,-)$. By \cite[2.53 (c)]{KV95} $\mathrm{Hom}_\mathbb{C}(V^*,-)$ is exact and preserves injectives, thus
\[\begin{aligned}
R^i\mathrm{Hom}_{(\mathfrak{p}_o,K_o)}(V^*,-)&=R^i(\mathrm{Hom}_{(\mathfrak{p}_o,K_o)}(\mathbb{C},-)\circ\mathrm{Hom}_\mathbb{C}(V^*,-))\\
&\cong H^i_{(\mathfrak{p}_o,K_o)}(-)\circ\mathrm{Hom}_\mathbb{C}(V^*,-)\\
&=H^i_{(\mathfrak{p}_o,K_o)}(-\otimes V).
\end{aligned}\]
(ii) $\mathcal{F}$ is clearly exact, and according to \cite[2.57 (d)]{KV95} it preserves injectives.
\end{proof}

\begin{Cor}\label{derived}
There are natural isomorphisms
\[R^i(\mathrm{Hom}_{(\mathfrak{p}_o,K_o)}(V^*,-)\circ\mathcal{F})\cong H^i_{(\mathfrak{p}_o,K_o)}(-\otimes V)\circ\mathcal{F}.\]
\end{Cor}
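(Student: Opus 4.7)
The plan is to read off the corollary as the standard fact that composing a left-exact functor with an exact functor that preserves injectives has derived functors given by post-composition. Concretely, for any $(\mathfrak{m}_G,K_o)$-module $M$ I would pick an injective resolution $M\to I^\bullet$ in $\mathcal{C}(\mathfrak{m}_G,K_o)$ and then invoke part (ii) of the preceding lemma twice: exactness of $\mathcal{F}$ guarantees that $\mathcal{F}(I^\bullet)$ is still a resolution of $\mathcal{F}(M)$, and preservation of injectives guarantees that each $\mathcal{F}(I^k)$ is injective in $\mathcal{C}(\mathfrak{p}_o,K_o)$. Thus $\mathcal{F}(I^\bullet)$ is an injective resolution of $\mathcal{F}(M)$.

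Applying this, both sides of the asserted isomorphism are computed from the same complex. On the one hand,
\[
R^i\bigl(\mathrm{Hom}_{(\mathfrak{p}_o,K_o)}(V^*,-)\circ\mathcal{F}\bigr)(M)=H^i\bigl(\mathrm{Hom}_{(\mathfrak{p}_o,K_o)}(V^*,\mathcal{F}(I^\bullet))\bigr),
\]
using that $I^\bullet$ is an injective resolution computing the right derived functors of the composition; on the other hand,
\[
\bigl(R^i\mathrm{Hom}_{(\mathfrak{p}_o,K_o)}(V^*,-)\bigr)(\mathcal{F}(M))=H^i\bigl(\mathrm{Hom}_{(\mathfrak{p}_o,K_o)}(V^*,\mathcal{F}(I^\bullet))\bigr),
\]
using that $\mathcal{F}(I^\bullet)$ is an injective resolution of $\mathcal{F}(M)$. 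These are manifestly equal, and the identifications are natural in $M$ because injective resolutions are functorial up to chain homotopy.

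Finally I would compose with the natural isomorphism of part (i) of the preceding lemma, evaluated at $\mathcal{F}(M)$, to rewrite $R^i\mathrm{Hom}_{(\mathfrak{p}_o,K_o)}(V^*,\mathcal{F}(M))$ as $H^i_{(\mathfrak{p}_o,K_o)}(\mathcal{F}(M)\otimes V)$, which is exactly the right-hand side of the claimed formula. No step presents a real obstacle here; the content is entirely housed in the preceding lemma, and the only thing to check carefully is that the two natural isomorphisms being composed are compatible, which follows from the fact that everything in sight is constructed from the single complex $\mathrm{Hom}_{(\mathfrak{p}_o,K_o)}(V^*,\mathcal{F}(I^\bullet))$.
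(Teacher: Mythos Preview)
Your argument is correct and is exactly the standard deduction the paper has in mind: the corollary is stated without proof precisely because parts (i) and (ii) of the preceding lemma immediately give $R^i(\mathrm{Hom}_{(\mathfrak{p}_o,K_o)}(V^*,-)\circ\mathcal{F})\cong (R^i\mathrm{Hom}_{(\mathfrak{p}_o,K_o)}(V^*,-))\circ\mathcal{F}\cong H^i_{(\mathfrak{p}_o,K_o)}(-\otimes V)\circ\mathcal{F}$ via the exactness and injective-preservation of $\mathcal{F}$.
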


Finally we can prove the main theorem of this paper:

\begin{Thm}\label{main}
The inclusion $\mathcal{A}(G)^{A_G(\mathbb{R})^\circ\mathbb{K}}\subseteq C^\infty_{\mathrm{umg}}(G)^\mathbb{K}$ induces isomorphisms of $(\mathfrak{p}_o,K_o)$-cohomology with coefficients in $V$, therefore we have isomorphisms
\[H^i(\mathrm{Sh}_{\mathbb{K},\Sigma},\widetilde{V}^{\mathrm{\mathrm{can}}})\cong H^i_{(\mathfrak{p}_o,K_o)}(\mathcal{A}(G)^{A_G(\mathbb{R})^\circ\mathbb{K}}\otimes V)\]
and $G(\mathbb{A}_f)$-equivariant isomorphisms
\[\varinjlim_{t_e^*}H^i(\widetilde{V}^{\mathrm{can}}_\mathbb{K})\cong H^i_{(\mathfrak{p}_o,K_o)}(\mathcal{A}(G)^{A_G(\mathbb{R})^\circ}\otimes V_o).\]
\end{Thm}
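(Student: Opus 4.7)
The plan is to combine Franke's $\mathfrak{Fin}_\mathcal{J}$-acyclicity result (Lemma \ref{Franke}(ii), Corollary \ref{acyc}) with a Grothendieck spectral sequence argument to transfer information between $C^\infty_{umg}(G)^\mathbb{K}$ and $\mathcal{A}(G)^{A_G(\mathbb{R})^o\mathbb{K}}$ at the level of $(\mathfrak{p}_o,K_o)$-cohomology with coefficients in $V$. Fix once and for all an ideal $\mathcal{J}\subseteq\mathfrak{Z}(\mathfrak{m}_G)$ of finite codimension supplied by the lemma preceding Corollary \ref{derived}, so that the composite $H:=\mathrm{Hom}_{(\mathfrak{p}_o,K_o)}(V^*,-)\circ\mathcal{F}$, with $\mathcal{F}=\mathcal{F}_{\mathfrak{m}_G,K_o}^{\mathfrak{p}_o,K_o}$, satisfies the crucial equality $H\circ\mathfrak{Fin}_\mathcal{J}=H$ coming from \eqref{functors}.

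The heart of the argument is then the Grothendieck spectral sequence for the pair $(H,\mathfrak{Fin}_\mathcal{J})$: since $\mathfrak{Fin}_\mathcal{J}$ is left exact and preserves injectives by Lemma \ref{Franke}(i), it yields
\[R^pH(R^q\mathfrak{Fin}_\mathcal{J}(M))\Rightarrow R^{p+q}(H\circ\mathfrak{Fin}_\mathcal{J})(M)=R^{p+q}H(M)\]
for every $(\mathfrak{m}_G,K_o)$-module $M$. When $M$ is $\mathfrak{Fin}_\mathcal{J}$-acyclic the spectral sequence collapses to an edge isomorphism $R^iH(\mathfrak{Fin}_\mathcal{J}M)\xrightarrow{\sim}R^iH(M)$ induced by the inclusion $\mathfrak{Fin}_\mathcal{J}M\hookrightarrow M$, and after identifying $R^iH$ with $H^i_{(\mathfrak{p}_o,K_o)}(-\otimes V)\circ\mathcal{F}$ via Corollary \ref{derived} this reads as a natural isomorphism
\[H^i_{(\mathfrak{p}_o,K_o)}(\mathfrak{Fin}_\mathcal{J}(M)\otimes V)\xrightarrow{\sim}H^i_{(\mathfrak{p}_o,K_o)}(M\otimes V).\]

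Next I would feed the two $\mathfrak{Fin}_\mathcal{J}$-acyclic modules $\mathcal{A}(G)^{A_G(\mathbb{R})^o\mathbb{K}}\subseteq C^\infty_{umg}(G)^\mathbb{K}$ into this isomorphism. The bridge between them is the equality of $\mathfrak{Fin}_\mathcal{J}$-parts
\[\mathfrak{Fin}_\mathcal{J}(C^\infty_{umg}(G)^\mathbb{K})=\mathfrak{Fin}_\mathcal{J}(\mathcal{A}(G)^{A_G(\mathbb{R})^o\mathbb{K}}),\]
which holds because any $f\in C^\infty_{umg}(G)^\mathbb{K}$ annihilated by $\mathcal{J}^n$ generates a finite-dimensional module over $\mathfrak{Z}(\mathfrak{m}_G)$ (as $\mathfrak{Z}(\mathfrak{m}_G)/\mathcal{J}^n$ is finite dimensional) and hence, being also $K_o$-finite, $A_G(\mathbb{R})^o$-invariant and of uniform moderate growth, is already an automorphic form. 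Chasing the chain of inclusions $\mathcal{A}(G)^{A_G(\mathbb{R})^o\mathbb{K}}\supseteq\mathfrak{Fin}_\mathcal{J}(\mathcal{A}(G)^{A_G(\mathbb{R})^o\mathbb{K}})=\mathfrak{Fin}_\mathcal{J}(C^\infty_{umg}(G)^\mathbb{K})\subseteq C^\infty_{umg}(G)^\mathbb{K}$ through the preceding isomorphism shows that the natural inclusion induces an iso on $H^i_{(\mathfrak{p}_o,K_o)}(-\otimes V)$; composing with Proposition \ref{umg} gives the first asserted isomorphism of the theorem, and the $G(\mathbb{A}_f)$-equivariant version follows by taking the direct limit over $\mathbb{K}$ exactly as in Proposition \ref{GAf}.

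I expect the main obstacle to have been absorbed by the prior machinery: Franke's acyclicity theorem packages the delicate Eisenstein series analysis, and the regularization Proposition \ref{umg} handles the analytic passage from $C^\infty_{dmg}$ to $C^\infty_{umg}$. The point genuinely specific to coherent cohomology is ensuring that the non-reductive nature of $(\mathfrak{p}_o,K_o)$ does not derail the argument; this is precisely what the forgetful functor $\mathcal{F}$, exact and injective-preserving, together with the carefully-chosen $\mathcal{J}$ satisfying $H\circ\mathfrak{Fin}_\mathcal{J}=H$, achieve, enabling $\mathfrak{Fin}_\mathcal{J}$ (defined via the reductive centre $\mathfrak{Z}(\mathfrak{m}_G)$, not $\mathfrak{Z}(\mathfrak{p}_o)$) to interact meaningfully with $(\mathfrak{p}_o,K_o)$-cohomology.
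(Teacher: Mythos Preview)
Your proposal is correct and follows essentially the same approach as the paper: choose $\mathcal{J}$ so that \eqref{functors} holds, run the Grothendieck spectral sequence for $(\mathrm{Hom}_{(\mathfrak{p}_o,K_o)}(V^*,-)\circ\mathcal{F},\,\mathfrak{Fin}_\mathcal{J})$, and use Corollary~\ref{acyc} to collapse it for both $M=\mathcal{A}(G)^{A_G(\mathbb{R})^o\mathbb{K}}$ and $M=C^\infty_{umg}(G)^\mathbb{K}$, the two being linked through the equality $\mathfrak{Fin}_\mathcal{J}\mathcal{A}(G)^{A_G(\mathbb{R})^o\mathbb{K}}=\mathfrak{Fin}_\mathcal{J}C^\infty_{umg}(G)^\mathbb{K}$. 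Your explicit justification of this last equality (via $\mathfrak{Z}(\mathfrak{m}_G)/\mathcal{J}^n$ being finite-dimensional, so that $\mathfrak{Z}$-finiteness is automatic) is a welcome detail the paper leaves implicit.
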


\begin{proof}
Let $\mathcal{J}\subseteq\mathfrak{Z}(\mathfrak{m}_G)$ be an ideal of finite codimension making (\ref{functors}) hold. We will show that the inclusions of
\[\mathfrak{Fin}_\mathcal{J}\mathcal{A}(G)^{A_G(\mathbb{R})^\circ\mathbb{K}}=\mathfrak{Fin}_\mathcal{J}C^\infty_{\mathrm{umg}}(G)^\mathbb{K}\]
into $\mathcal{A}(G)^{A_G(\mathbb{R})^\circ\mathbb{K}}$ and $C^\infty_{\mathrm{umg}}(G)^\mathbb{K}$ both induce isomorphisms of $(\mathfrak{p}_o,K_o)$ cohomology with coefficients in $V$. By Lemma \ref{Franke} (i) and Corollary \ref{derived}, for every $(\mathfrak{m}_G,K_o)$-module $M$ (\ref{functors}) gives rise to a Grothendieck spectral sequence
\[E_2^{p,q}=H^p_{(\mathfrak{p}_o,K_o)}(\mathfrak{Fin}_\mathcal{J}^q(M)\otimes V)\Rightarrow H^{p+q}_{(\mathfrak{p}_o,K_o)}(M\otimes V).\]
Corollary \ref{acyc} then implies that when $M=\mathcal{A}(G)^{A_G(\mathbb{R})^\circ\mathbb{K}}$ and $C^\infty_{\mathrm{umg}}(G)^\mathbb{K}$ the spectral sequences degenerate at $E_2$-pages and give the desired isomorphisms.
\end{proof}

\begin{Rmk}
It is pointed out to us by Michael Harris that the result here has been used for \cite[5.3.11]{HZ94} and our work hence justifies the proof of that theorem.
\end{Rmk}

\bf Acknowledgement. \rm
I am grateful to my PhD advisor Richard Taylor for suggesting the problem, the countless discussions we had and proofreading the first draft of the paper. I am indebted to Michael Harris, Olivier Ta\"{i}bi and the anonymous referees for valuable comments, corrections and suggestions about the paper, which I appreciate very much. I wish to thank Vincent Pilloni for organising a groupe de travail on the paper, where some of the comments and corrections arose. I would like to thank William Casselman, Laurent Clozel, Phillip Griffiths, Kai-Wen Lan, Yiannis Sakellaridis, Wilfried Schmid, Peter Scholze, Jesse Silliman, Chris Skinner, Jingwei Xiao, Liang Xiao, Xinyi Yuan and Wei Zhang for sharing their knowledge and thoughts about this work and related topics. This paper was written during my study at Princeton University as well as visits at Beijing International Center for Mathematical Research and Stanford University, and revised when I'm a postdoc at the University of Cambridge. I wish to thank Ruochuan Liu, Jack Thorne and the institutions for their ever kind hospitality. This work was partially supported by the National Science Foundation grant DMS-1252158 and the European Research Council grant agreement No. 714405.

\bibliographystyle{amsalpha}
\bibliography{bibCCSVAF}

\end{document}